\g@addto@macro\normalsize{%
  \setlength\abovedisplayskip{3pt}
  \setlength\belowdisplayskip{4pt}
  \setlength\abovedisplayshortskip{3pt}
  \setlength\belowdisplayshortskip{4pt}
}
\renewcommand*{\section}{%
\@startsection {section}{1}{\z@}%
  {-1.8ex \@plus -0.7ex \@minus -.1ex}%
  {1.2ex \@plus.1ex}%
  {\normalfont\Large\bfseries}%
}
\def\thm@space@setup{%
  \thm@preskip=0.1cm
  \thm@postskip=\thm@preskip 
}
 \newcommand{\lab}[1]{\label{#1}}                
\newcommand{\remove}[1]{}
\newcommand{\jt}[1]{{#1}}
\newcommand\eqn[1]{(\ref{#1})}
\newcommand{\be}{\begin{equation}}
\newcommand{\bel}[1]{\begin{equation}\lab{#1}\ }
\newcommand{\ee}{\end{equation}}
\newcommand{\bea}{\begin{eqnarray}}
\newcommand{\eea}{\end{eqnarray}}
\newcommand{\bean}{\begin{eqnarray*}}
\newcommand{\eean}{\end{eqnarray*}}
\newtheorem{thm}{Theorem}
\newtheorem{lemma}[thm]{Lemma}
\newtheorem{definition}[thm]{Definition}
\def\proof{\noindent{\bf Proof.\ }  }
\def\qed{~~\vrule height8pt width4pt depth0pt}
\def\heavy{{\mathcal H}}
\def\light{{\mathcal L}}
\def\ex{{\mathbb E}}
\def\pr{{\mathbb P}}
 \def\P{{\cal P}}
\def\C{{\cal C}}
\def\MM{{\bf M}}
\def\heavyaccept{{\Phi}_0}
\def\heavyacceptone{{\Phi}_2}
\def\accept{{\mathcal A}_0}
\def\heavyindices{{\mathcal I}}
\def\heavyindicespair{{\mathcal J}}
\def\UB{{\overline m}}
\def\LB{{\underline m}}
\def\LBS{\widehat{{\underline m}}}
\def\b{\widehat{b}}
\def\B{\widehat{B}}
\def\multiplicity{m}
\def\state{{\mathcal S}}
\def\class{{\mathcal C}}
\def\imax{{i_1}}
\def\Z{{\mathcal Z}}
\def\lea{{\le_a}}
\def\a{{\mathcal A}}
\def\O{{\mathcal O}}
\def\R{{\mathcal R}}
  \def\pairstars{{\mathfrak z}}
\def\calf{{\mathcal F}}
\def\Simple{{\mathcal S}}
\def\PL{{PLD}}
\newcommand{\plib}{{\em plib}}
\def\eps{\epsilon}
\def\ss{\smallskip}
\def\ms{\medskip}
\def\non{\nonumber}
\def\no{\noindent}
\date{}
\title{Uniform generation of random graphs with power-law degree sequences}
\author{
Pu Gao\thanks{Research supported by NSERC and ARC DP160100835.}\\
School of Mathematics\\
 Monash University\\
jane.gao@monash.edu
\and
Nicholas Wormald\thanks{Research supported by  ARC DP160100835.}
\\
School of Mathematics\\
Monash University\\
nick.wormald@monash.edu }
\begin{document}
\maketitle

\begin{abstract}

We give a linear-time algorithm that approximately uniformly generates  a  random  simple  graph  with a power-law degree sequence whose exponent is at least  2.8811. While sampling graphs with power-law degree sequence of exponent at least 3 is fairly easy, and many samplers work efficiently in this case, the problem becomes dramatically more difficult when the exponent drops below 3; ours is the first provably practicable sampler for this case. We also show that with  an appropriate  rejection scheme, our algorithm can be tuned into an exact uniform sampler. The running time of the exact sampler is $O(n^{2.107})$   with high probability, and  $O(n^{4.081})$ in expectation.
\end{abstract}

\section{Introduction}
\lab{s:intro}
 We consider the following problem. Given a sequence of \jt{nonnegative} integers $\bf d$ with even sum, how can we generate uniformly at random a graph with degree sequence $\bf d$? Motivation to do this can come from testing algorithms, to test null hypotheses in statistics (see Blitzstein and Diaconis~\cite{BD} for example), or to simulate networks.
If the degree sequence comes from a `real-world' network then it will often follow  a power law (defined precisely below), and often the `exponent' of this power law is between 2 and 3. We give a linear-time algorithm for generating such graphs approximately uniformly at random  in the sense that the total variation distance between the output a distribution from the uniform one is $o(1)$ as the number of vertices goes to infinity. There has been no such approximate sampler before with good guaranteed performance, i.e.\ small error in distribution  and  low time complexity,   for  power-law degree sequences with $\gamma$ below 3. We will also give an exact uniform sampler, \jt{for $\gamma$ less than but sufficiently close to 3}, which has a reasonably low time complexity: $n^{2.107}$ with high probability and $n^{4.081}$ in expectation.

Research on uniform generation of graphs with prescribed degrees has a long history. Tinhofer~\cite{T} studied the case of bounded regular  degrees and described an algorithm without bounding how far the output is from the uniform distribution. Soon afterwards, a uniform sampler using a simple rejection scheme (described in Section~\ref{sec:general}) arose as an immediate consequence of several enumeration methods~\cite{BC,B}. This algorithm works efficiently only when the maximum degree is $O(\sqrt{\log n})$, where $n$ is the number of vertices. A major breakthrough that significantly relaxed the constraint on the maximum degree was by McKay and Wormald~\cite{MWgen}. Their algorithm efficiently generates random graphs whose degree sequence satisfies $\Delta=O(m^{1/4})$, where $\Delta$ denotes the maximum degree and $m$ is the number of edges in the graph. The expected running time of  this  algorithm  is  $O(\Delta^4 n^2)$. Very recently, the authors of the current paper developed a new algorithm which successfully generates random $d$-regular graphs for $d=o(\sqrt{n})$, with expected running time $O(nd^3)$. This new algorithm  is related to  the McKay-Wormald algorithm  but contains  significant  new features which make it possible to cope with larger degrees  without essentially increasing the runtime. 

While the uniform generation of graphs whose degrees are regular, or close to regular, has already been  found  challenging, generating graphs whose degrees are very far from being regular  seems much more difficult. However, a desire  to generate  such  graphs  materialised in recent years. Because of the important roles of the  internet and social networks in modern society, much attention has been paid to  graphs with real-world network properties. One of the most prominent  traits of many real-world networks is that their degree distribution follows the so-called power law, usually with parameter $\gamma$ between 2 and 3 (i.e.\ the  number of vertices with degree $i$ is  roughly proportional to  $i^{-\gamma}$).  Graphs with such degree distributions are sparse but have vertices with very large degrees. There is more than one definition of a power law degree sequence examined in the literature, and we consider the common one resulting when the degrees are independently distributed with a power law with parameter $\gamma$. For this, the maximum degree is roughly $n^{1/(\gamma-1)}$.

Generating random graphs with power-law degrees for $\gamma>3$ is quite easy using the above-mentioned simple rejection scheme or the McKay-Wormald algorithm. However, a radical change occurs when the power-law parameter $\gamma$ drops from above 3 to below. A notable difference is that the second moment of the degree distribution changes from finite to infinite. As a result, the running time of  the simple rejection scheme changes from linear to super-polynomial time. The running time required for the McKay-Wormald algorithm also becomes super-polynomial when $\gamma<3$. Another major difference is the appearance of vertices with degree well above $\sqrt m $. To date, no algorithm has been proved to \jt{uniformly} generate such graphs in time that is useful in practice, with any reasonable performance guarantee on the distribution.

Some known approximation algorithms have polynomial running times without being provably practical. Jerrum and Sinclair~\cite{JS} gave an approximate sampler using the Markov Chain Monte Carlo method for generating graphs with so-called P-stable  degree sequences. Sufficient conditions for P-stability and non-P-stability are given by Jerrum, McKay and Sinclair~\cite{JMS}, but none of these conditions are met by power-law degree sequences with $2<\gamma<3$. On the other hand, the authors found enumeration formulae for power-law degree sequences~\cite{GW} that imply P-stability when $\gamma>1+\sqrt 3$, and one can show using a related but simpler argument that P-stability holds  for all $\gamma>2$. However  the degree of the polynomial bound for the running time of this algorithm is too high to make it suitable for any practical use with a provable approximation bound. Recently Greenhill~\cite{G4} used a different Markov chain, an extension of the one used for the regular case in~\cite{CDG}, to approximately generated non-regular graphs. The mixing time of that chain is bounded by $\Delta^{14}m^{10}$, and the algorithm only works for degree sequences that are not too far from   regular. In particular, it is not applicable to power-law degree sequences of the type we consider that have  $\gamma$  below 3. In~\cite{BD} it is explained how to use sequential importance sampling to estimate probabilities in the uniform random graph with given degrees, using statistics from non-uniform samplers. However, there are no useful theoretical bounds on the number of samples required to achieve desirable accuracy of such estimates. As stated in~\cite[Section 8]{BD}, ``Obtaining adequate bounds on the variance for importance sampling algorithms is an open research problem in most cases of interest.'' Using a uniform sampler completely nullifies this problem.

In this paper, we present a linear-time approximate sampler \PL*\ for graphs with power-law degree sequences, whose output  has a distribution that differs from uniform  by $o(1)$ in total variation distance.  This approximate sampler, like others such as in~\cite{SW,KV,BKS,Zhao}, has   weaker properties  than the ones from  MCMC-based algorithms. Its approximation error  depends on $n$ and  cannot be improved by running the algorithm longer. However, it has the big  advantage of a  linear running time.
None of  the previously proposed asymptotic approximate samplers~\cite{SW,KV,BKS,Zhao}  is effective in the non-regular case  when $\Delta$ is above $m^{1/3}$, whereas the type of power-law degree sequences we consider can handle $\Delta$   well above $m^{1/2}$. So  \PL*\ greatly extends the family of degree sequences for which we can generate random graphs asymptotically uniformly and fast.

Using an appropriate rejection scheme, we tune \PL* into an exact uniform sampler \PL\ which,
for    $\gamma>2.8811$, has a running time that is $O(n^{4.081})$ in expectation, and $O(n^{2.107})$  asymptotically almost surely (a.a.s.).  
Most of the running time  of \PL\ is spent  computing certain probabilities in the rejection scheme.
The reason that the expected   running time is much greater  than our bound on the likely running time  is  that our algorithm has a component that computes  a certain graph function. This particular component is unlikely to be needed on any given run, but it is so time-costly that, even so, it contributes immensely to the expected running time.

We next formally define  power law sequences. There have been different versions in the literature. The definition by Chung and Lu~\cite{CL} requires the number of vertices with degree $i$  to be  $O(i^{-\gamma} n)$, uniformly for all $i$. Hence the maximum degree is $O(n^{1/\gamma})$,  which is well below $\sqrt{n}$.  
 However, it has been pointed out~\cite{N} that this definition leads to misleading results, as in many real networks there are vertices with significantly higher degrees. A more realistic model~\cite{N,V} is to consider degrees composed of independent power-law variables. In this model the maximum degree is of order $n^{1/(\gamma-1)}$, which matches the statistics in the classical preferential attachment model by Barab\'{a}si and Albert~\cite{BA} for modelling complex networks.  In this paper, we will use a more relaxed definition~\cite{GW} which includes the latter version of power-law sequences.  
Let $X$ be a random variable with   power law distribution with parameter $\gamma>1$, i.e.\ $\pr(X=i) = c i^{-\gamma}$, where $c^{-1}=\sum_{i=1}^{\infty} i^{-\gamma}$, then for $\gamma>1$ we have $\pr(X\ge x)  =O(i^{1-\gamma})$. 
As vertices with degree  0 can be ignored in a generation algorithm, we  assume that the minimum degree is at least 1. 

 \begin{definition}\lab{def:powerlaw}
A sequence ${\bf d}=(d_1,\ldots,d_n)$ is power-law distribution-bounded (\plib\ for short) with parameter $\gamma>1$  if the minimum component in ${\bf d}$ is at least $1$ and there is a constant $K>0$ independent of $n$ such that the number of components  that are at least $i$ is at most $Kni^{1-\gamma}$ for all $i\ge 1$. 
\end{definition}

In \plib\ sequences, the maximum element can reach as high as   $n^{1/(\gamma-1)}$, making both enumeration and generation of graphs with such degree sequences more challenging than the Chung-Lu version mentioned above.  It is easy to see that if ${\bf d}$ is composed of independent power-law variables with exponent $\gamma$ then {\bf d} is a \plib\ sequence with parameter $\gamma'$ for any $\gamma'<\gamma$.


  Our main result for the approximate sampler  \PL*, defined in Sections~\ref{sec:general} and~\ref{sec:PLstar}, is as  follows.

  \begin{thm}\lab{thm:approximate}
Assume ${\bf d}=(d_1,\ldots,d_n)$ is a \plib\ sequence with parameter $\gamma>21/10+\sqrt{61}/10\approx 2.881024968$ and $\sum_{1\le i\le n} d_i$ is even. Then 
\PL*\   runs in time $O(n)$ in expectation  and generates a graph with degree sequence ${\bf d}$  whose distribution differs from uniform by $o(1)$ in total variation distance.  \end{thm}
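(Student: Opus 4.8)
The plan is to build the sampler \PL*\ on the configuration model (pairings) and analyze the probability that a random pairing is simple, broken down by which edges create multiplicity or loops. First I would recall the simple rejection scheme of Section~\ref{sec:general}: generate a uniform random pairing of $\sum d_i$ points grouped into buckets of sizes $d_1,\dots,d_n$, and restart if the resulting multigraph is not simple. For \plib\ sequences with $\gamma<3$ the expected number of loops and multiple edges is super-constant (indeed growing like a power of $n$), so naive rejection fails; the key idea, following the structure of the McKay--Wormald and the authors' $d$-regular samplers, is to perform a sequence of \emph{local switchings} that remove the bad substructures one at a time, while carefully choosing the switching probabilities (possibly with auxiliary rejection steps, the ``b-rejections'') so that the net distribution over simple graphs is exactly (or asymptotically) uniform.

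Then I would separate the vertices into \emph{heavy} and \emph{light} according to a threshold around $\sqrt m$ (this is what the macros $\heavy$, $\light$ in the preamble signal). The point of \plib\ with $\gamma>21/10+\sqrt{61}/10$ is a quantitative bound: the number of vertices of degree $\ge i$ is at most $Kni^{1-\gamma}$, which controls moments of the degree sequence — in particular $\sum_i d_i^2$, $\sum_i d_i^3$, and the contribution of heavy vertices to sums like $\sum d_i d_j$ over incident or potential-multi-edge pairs. I expect the proof to show that a.a.s.\ the initial pairing has only $O(n^{c})$ bad edges for an explicit $c<1$, that the switchings terminate quickly, and that at each step the ratio of forward to backward switching counts is $1+o(1)$ uniformly — with the error terms summing to $o(1)$ over all steps, which is exactly where the numerical threshold $2.881\ldots$ comes from. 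The total variation bound would then follow by a standard coupling/telescoping argument over the (bounded, whp) number of switching phases, and the expected linear running time from the fact that each switching is a local operation touching $O(1)$ points in expectation and the expected number of switchings is $O(n)$.

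The main obstacle, I expect, is controlling the switching-number ratios uniformly when heavy vertices are present: a single switching that fixes a multi-edge at a heavy vertex can, in the worst case, involve or create bad structures among the $\Theta(n^{1/(\gamma-1)})$ half-edges at that vertex, and the number of ways to perform (or undo) such a switching fluctuates by more than a $1+o(1)$ factor unless one sets up the right \emph{classes} of switchings and the right auxiliary rejections. Getting the estimates to close — i.e.\ showing the accumulated multiplicative errors are $1+o(1)$ — forces both the heavy/light threshold and the restriction $\gamma>21/10+\sqrt{61}/10$; the quadratic whose larger root is $2.881\ldots$ should emerge from balancing the error contribution of heavy-vertex switchings (which improves as $\gamma$ grows) against the number of bad edges to be removed (which worsens as $\gamma$ shrinks). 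A secondary technical point is verifying that the degree-sequence moment bounds needed at every step are implied purely by Definition~\ref{def:powerlaw} with no extra hypotheses, so that the theorem holds for all \plib\ sequences with the stated $\gamma$, not just those arising from i.i.d.\ power-law degrees.
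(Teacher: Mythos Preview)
Your architecture (configuration model, heavy/light split, switchings) is right, but the proof strategy for the TV bound differs from the paper's, and a key technical ingredient is missing. The paper does not bound TV distance by telescoping forward/backward switching-count ratios through \PL*. Instead it first builds an \emph{exact} uniform sampler \PL\ --- which is \PL*\ equipped with f-, b-, pre-b-, and t-rejections whose probabilities are chosen so the output is uniform by design (Lemma~\ref{lem:uniformity}) --- and defines \PL*\ as \PL\ with those rejections stripped out (keeping only the checks $P\in\Phi_0$ and $P\in\a_0$). The TV bound is then a one-line coupling: \PL\ and \PL*\ produce the same output unless \PL\ rejects or chooses a switching type other than I or III, and this event has probability $o(1)$. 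All the analytic work goes into bounding the rejection probabilities of \PL, not into tracking a product of ratios through \PL*.

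More seriously, your assertion that forward/backward ratios are $1+o(1)$ uniformly is exactly what fails for the basic type-I (double-edge) switching here: the count $b(P)=|\Z(P)|$ of inverse switchings varies too much across $P\in\state_i$, the variation being driven by the five substructures $H_1,\ldots,H_5$ of Figure~\ref{f:H}. The paper's remedy is a family of \emph{booster} switchings (types III--VII, Figures~\ref{f:typeIII} and~\ref{f:IV}--\ref{f:VII}) that create doublets landing in $\Z_1(P),\ldots,\Z_5(P)$, applied with probabilities $\rho_\tau(i)$ obtained as the solution of the recursive system~\eqn{initial}--\eqn{atmost1}; without them the b-rejection probability in \PL\ is not $o(1)$, and a direct telescoping argument in \PL*\ would likewise fail to close. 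You also do not capture Stage~1: for $\gamma<3$ heavy multi-edges can have unbounded multiplicity $m$, and they are handled by a new $m$-way switching (Definition~\ref{def:heavymultiple}) that removes all $m$ parallel edges at once, followed by a randomized inverse $1$-way switching to restore the correct edge/non-edge probability between the heavy pair --- a mechanism absent from~\cite{MWgen,GWreg} and essential here. Finally, the heavy/light threshold is not ``around $\sqrt m$'' but is set by a parameter $\delta$ via $h=n^{1-\delta(\gamma-1)}$, with $\delta$ chosen in the range~\eqn{deltaRange}; the constraint $\gamma>21/10+\sqrt{61}/10$ arises from the feasibility of this system of inequalities on $\delta$ (Section~\ref{sec:delta}), not directly from balancing a single error term.
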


A general description of our uniform sampler, which we call \PL,  will be given in Section~\ref{sec:general},  and  its formal definition  in Sections~\ref{sec:heavy}  and~\ref{sec:double}.  Our result for the exact uniform sampler is the following.

\begin{thm}\lab{thm:main}
Assume ${\bf d}=(d_1,\ldots,d_n)$ is a \plib\ sequence with parameter $\gamma>21/10+\sqrt{61}/10\approx 2.881024968$ and $\sum_{1\le i\le n} d_i$ is even. Then 
\PL\  uniformly generates  a random simple graph  with degree sequence ${\bf d}$, with  running time $O(n^{4.081})$ in expectation, and $O(n^{2.107})$ with high probability.
\end{thm}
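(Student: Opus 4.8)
The plan is to build the exact sampler \PL\ on top of the approximate sampler \PL*\ from Theorem~\ref{thm:approximate} by interposing a rejection step that corrects the small discrepancy between the output distribution of \PL*\ and the uniform distribution, and then to bound the cost of that rejection. First I would recall the standard ``simple rejection'' paradigm (to be described in Section~\ref{sec:general}): if an algorithm produces a graph $G$ with probability $q(G)$, and we can compute, for each $G$, a number $p(G)$ proportional to the target uniform weight together with an upper bound $M\ge \max_G p(G)/q(G)$, then accepting $G$ with probability $p(G)/(M q(G))$ yields an exact uniform sample, and the expected number of trials is $M\sum_G q(G) \big/ (\text{normalisation})$. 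The point of basing \PL\ on \PL*\ rather than on a pairing-model generator directly is that \PL*\ already has output probabilities within $o(1)$ (in total variation) of uniform, so the ratio $p(G)/q(G)$ is bounded by $1+o(1)$ on the overwhelming majority of graphs, and only a rare ``bad'' event forces the large correction factor. This is exactly the phenomenon flagged in the introduction: a time-costly component (computing a certain graph function, presumably the number of simple graphs with degree sequence $\bf d$, or some conditional analogue) is almost never invoked, but when it is, it dominates the expectation.

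Next I would organise the work into the two running-time bounds separately. For the a.a.s.\ bound $O(n^{2.107})$: condition on the typical behaviour of \PL*, under which the acceptance probability in the rejection step is $1-o(1)$, so a.a.s.\ only $O(1)$ trials of \PL*\ are needed, each costing $O(n)$ by Theorem~\ref{thm:approximate}; the remaining cost is that of evaluating the acceptance probability, which requires computing the relevant probabilities $p(G)$ and $q(G)$. I would show that, on the typical event, these probabilities are products over the ``heavy'' and ``light'' parts of the degree sequence of quantities computable in time $O(n^{2.107})$ — the exponent coming from the number of heavy vertices (roughly $n^{1/(\gamma-1)}$, which for $\gamma$ near $2.881$ is about $n^{0.53}$) times the cost of the per-heavy-vertex bookkeeping, or from the size of a small auxiliary structure. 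For the expectation bound $O(n^{4.081})$: I would bound the probability of the ``bad'' event (the one forcing the expensive graph-function computation) by some $n^{-a}$, and bound the cost of that computation by $n^{b}$ with $b-a \le 4.081$; since $\gamma > 21/10+\sqrt{61}/10$ is precisely the threshold needed for Theorem~\ref{thm:approximate}, the same inequality should make these exponents work out, and $4.081$ should emerge as $b-a$ (or as the max of several such differences).

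The key steps, in order, are: (i) define the rejection scheme precisely, identifying $p(G)$, $q(G)$ and the bound $M$, and verify it outputs a uniform simple graph (correctness is immediate from the rejection identity once $M$ is a valid bound); (ii) identify the ``typical'' event $\mathcal E$ for \PL*'s internal randomness on which the acceptance probability is $1-o(1)$, and show $\pr(\mathcal E)=1-o(1)$; (iii) on $\mathcal E$, give an $O(n^{2.107})$-time procedure to compute the acceptance probability, and conclude the a.a.s.\ bound; (iv) off $\mathcal E$, bound the worst-case cost of computing the acceptance probability (this is the expensive graph-function step) and bound $\pr(\overline{\mathcal E})$, then combine via $\mathbb E[\text{cost}] \le \pr(\mathcal E)\cdot O(n^{2.107}) + \pr(\overline{\mathcal E})\cdot n^{b}$ to get $O(n^{4.081})$; (v) check that the constraint $\gamma > 21/10+\sqrt{61}/10$ is exactly what makes both the probability bounds and the runtime exponents consistent.

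The main obstacle I expect is step (iii)–(iv): getting a genuinely computable and sufficiently cheap handle on the acceptance probability. The probability $q(G)$ that \PL*\ outputs a particular $G$ is defined through a multi-stage process (handling heavy vertices, then light vertices, with switchings or rejections along the way), so writing $q(G)$ in closed form — or at least in a form evaluable in polynomial time with the claimed exponent — requires carefully unwinding that process and exploiting independence/product structure that holds on the typical event. Off the typical event, one must fall back to essentially enumerating or exactly counting something (the expensive graph function), and the delicate part is simultaneously (a) keeping the probability of needing this small enough and (b) keeping its cost bounded by $n^{4.081+o(1)}$, with the power-law parameter $\gamma$ controlling the trade-off through the number and degrees of heavy vertices. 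Verifying that the arithmetic of these competing exponents closes at exactly $2.107$ and $4.081$ under the hypothesis $\gamma>21/10+\sqrt{61}/10$ is where the real care is needed.
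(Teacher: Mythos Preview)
Your proposal has the relationship between \PL\ and \PL*\ inverted, and this leads the whole plan astray. In the paper, \PL*\ is obtained from \PL\ by \emph{removing} the rejection steps, not the other way round. There is no post-hoc accept/reject on the final output $G$, no computation of $q(G)$ or of any global graph-counting function. Instead, \PL\ maintains exact uniformity \emph{throughout} the run by inserting f-rejections and b-rejections (and pre-b-rejections) at every switching step: at step $k$ one computes the number $f_\tau(P)$ of valid switchings of the chosen type on the current pairing $P$, and the number $b(P')$ (and $\b_\tau(P',\pairstars)$) of ways the resulting pairing $P'$ could have been reached, and rejects with probabilities $1-f_\tau(P)/\UB_\tau(i)$ and $1-\LB(j)/b(P')$. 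These local counts, together with the recursively computed type-probabilities $\rho_\tau(i)$, force the expected number of visits to any $P\in\state_i$ to depend only on $i$, which gives uniformity on $\state_0$ directly.

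Consequently the running-time analysis is not about bounding an acceptance probability, but about bounding the cost of computing the structural counts $f_\tau(P)$, $b(P)$, $\b_\tau(P,\pairstars)$ at each step. Phases 1--2 cost $o(n)$. In Phase~5 the dominant cost is evaluating $\b_\tau(P,\pairstars)$ for the booster types $\tau\in\{III,IV,V,VI,VII\}$, which requires enumerating copies of the small subgraphs $H_i$ in Figure~\ref{f:H} plus some extra edges, by brute-force search. The split between the two time bounds is this: types $IV$--$VII$ are selected with total probability $O(\xi)$ per step, and since $B_D\xi=o(1)$ they are a.a.s.\ never invoked over the whole run; on that event only types $I$ and $III$ occur and one gets $O(n^{2.107})$ (with data structures for $3$-path counts). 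But the \emph{expected} contribution of the rare type-$VII$ steps, namely $B_D\rho^*_{VII}$ times the cost $O(L_2 I_3 I_8)$ of enumerating the relevant structures, dominates and yields $O(n^{4.081})$. The ``expensive graph function'' alluded to in the introduction is precisely this subgraph-enumeration for the rare booster switchings, not any global enumeration of simple graphs. Your steps (iii)--(iv) would therefore need to be replaced entirely: there is no closed-form $q(G)$ to evaluate, and the exponents $2.107$ and $4.081$ arise from combinatorial bounds on $I_j$, $L_k$, $M_k$ and the $\rho^*_\tau$, not from a probability-times-cost trade-off on a bad event $\overline{\mathcal E}$ of the kind you describe.
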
                  
  
  We remark that the same statements are immediately implied for the Chung-Lu version of power law sequences (which we called power-law density-bounded in~\cite{GW}), however for that version one can easily modify our methods to reduce the bound on $\gamma$ somewhat.  This is because the maximum degree in the Chung-Lu version is much smaller (well below $\sqrt{n}$), and moreover, all multiple edges are likely to have bounded multiplicity. Hence, the major difficult issues we face in this paper do not appear in the Chung-Lu version of power law sequences. 
  
\remove{
\no {\bf  Remarks }. 
\begin{description}
\item{(a)} $21/10+\sqrt{61}/10\approx 2.881024968$.
\item{(b)} The upper bound $\gamma<3$ is \nk{included} for convenience \nk{---} the simple rejection scheme or the McKay-Wormald algorithm in~\cite{MWgen} will work for $\gamma>3$ and our \nk{method}\nm{It's a bit   weird to say that a {\em result} easily extends to $\gamma=3$ ... in what sense?   I think the point is the method works.} in this paper extends easily to the case $\gamma=3$.
\end{description}
  }
  
  We remark that our argument easily gives slightly improved  time complexity bounds for any particular $\gamma$ in the range given in Theorem~\ref{thm:main}; to simplify the presentation we have taken a uniform bound on a complicated function of $\gamma$.

  We describe the general framework of both \PL\ and \PL* in Section~\ref{sec:general}. The formal definition of \PL\ is given throughout Sections~\ref{sec:heavy} and~\ref{sec:double},  together with key lemmas used for bounding the running time. We focus on the design of the algorithm including its definition, and the specification of various parameters involved in the algorithm definition, leaving  many of the straightforward proofs to  the Appendix, particularly those similar to the material in~\cite{GWreg}. We prove Theorems~\ref{thm:main} and~\ref{thm:approximate} in Sections~\ref{sec:proofMain} and~\ref{sec:approximate}.

\section{General description}\lab{sec:general}

As in~\cite{MWgen} and~\cite{GWreg}, we will use the pairing model. Here each vertex $i$ is represented as a bin containing exactly $d_i$ points. Let $\Phi$ be the set of perfect matchings  of  these $\sum_{1\le i\le n} d_i$ points. Each element in $\Phi$ is called a {\em pairing}, and  each set of two matched points  is called a {\em pair}. Given a pairing $P$,  
  let $G(P)$ denote the graph obtained from $P$ by contracting each bin into a vertex and represent each pair in $P$ as an edge. Typically, $G(P)$ is a multigraph, since it can have edges in parallel (multiple edges) joining the same vertices. An edge that is not multiple we call a {\em single} edge. The pairs of $P$ are often referred to as  edges, as in $G(P)$. A loop is an edge with both end points contained in the same vertex. The graph is simple if it has no loops or multiple edges.
  
   The simple rejection scheme repeatedly generates a random $P\in\Phi$ until $G(P)$ is simple.  Let ${\mathcal G}_{{\bf d}}$ denote the set of simple graphs with degree sequence {\bf d}.
An easy calculation shows that every simple graph in ${\mathcal G}_{{\bf d}}$ corresponds to exactly $\prod_{i=1}^n d_i!$ pairings in $\Phi$. Hence, the output of the rejection scheme is uniformly distributed over ${\mathcal G}_{{\bf d}}$. The problem is that, when the degrees become very non-regular, it takes too many iterations for the rejection scheme to find a simple $G(P)$. With power-law degree sequences as in Theorems~\ref{thm:approximate} and~\ref{thm:main}, the rejection scheme is doomed  to run in super-polynomial time.

The McKay-Wormald algorithm~\jt{\cite{MWgen}} does not reject $P$ if $G(P)$ is a multigraph. Instead, it uses switching operations (e.g.\ see Figures~\ref{f:loop} and~\ref{f:triple}) to switch away the multiple edges. These switchings  slightly distort  the distribution of the pairings away from uniform, and so some rejection scheme is used to correct the distribution. The algorithm in~\cite{GWreg} included major rearrangements in set-up and analysis in order to incorporate   new features that reduce the probability of having a rejection, and thereby extended the family of degree sequences manageable. 
Both~\cite{MWgen} and~\cite{GWreg} only deal with multiplicities at most three. However, it is easy to see that for a \plib\ degree sequence with parameter $\gamma<3$, there are multiedges of multiplicity as high as some power of $n$.   Previous exact generation algorithms have never reached the density at which  unbounded multiplicity occurs in the pairing model, and new considerations for the design of \PL* and \PL\ stem from this. One may think that switchings for low multiplicities such as in Figures~\ref{f:loop} and~\ref{f:triple} work for any multiplicity. However, these switchings would cause difficulties for high multiplicities because (a) the error arising in the analysis is too hard to control, and (b) incorporating unbounded multiplicities into the counting scheme would result in  super-polynomial running time.

One new feature in the present paper is that we treat vertices of large degree differently. We will specify a set $\heavy$ of such vertices, which we call {\em heavy}.  Other vertices are called {\em light}.  An edge in $P$ is {\em heavy} if its both end vertices are heavy. In particular, a  loop is   heavy if it is at a heavy vertex. 
Non-heavy edges are called {\em light}.
The algorithm \PL* contains two stages. In the first stage, it uses some switchings, defined in Section~\ref{sec:defheavy}, to turn every heavy multiple edge $ij$ into either a non-edge, or a single edge, and  to switch away all heavy loops. 
It can be shown that with a non-vanishing probability, the final pairing is in the following set
 \be\lab{def:a0}
\a_0=\left\{P\subseteq \Phi:\ G_{[\heavy]}(P)\ \mbox{is simple},\ 
\begin{array}{l}
  L(P)\le B_L;\  D(P)\le B_D;\  T(P)\le B_T;\\
 G(P)\ \mbox{contains no other types of multiple edges}
\end{array}\right\},
\ee
where \jt{$G_{[\heavy]}(P)$ denotes the subgraph of $G(P)$ induced by $\heavy$,} $L(P)$, $D(P)$ and $T(P)$ denote the number of light simple loop, light double edges, and light triple edges in $P$, and $B_L$, $B_D$ and $B_T$ are prescribed parameters specified in~\eqn{imax} in Section~\ref{sec:double}. \PL* restarts until the final pairing $P$ is in $\a_0$, and this is the first stage of \PL*.  We will show that the distribution of the output of the first stage of \PL* is almost uniform over $\a_0$. Algorithm \PL\ is \PL* accompanied with a rejection scheme so that the output of Stage 1 is uniformly distributed over $\a_0$. After Stage 1, \PL* enters the second stage, in which it repeatedly removes light loops, and then light triple edges using random ``valid'' switchings in Figures~\ref{f:loop} and~\ref{f:triple}, which were also used in~\cite{GWreg}. Valid switchings mean that the performance of the switching does not destroy more than one multiple edge (or loop), or \jt{create} new multiple edges (or loops). 
\begin{figure}[!tbp]
  \centering
  \begin{minipage}[b]{0.4\textwidth}
    \includegraphics[width=7cm]{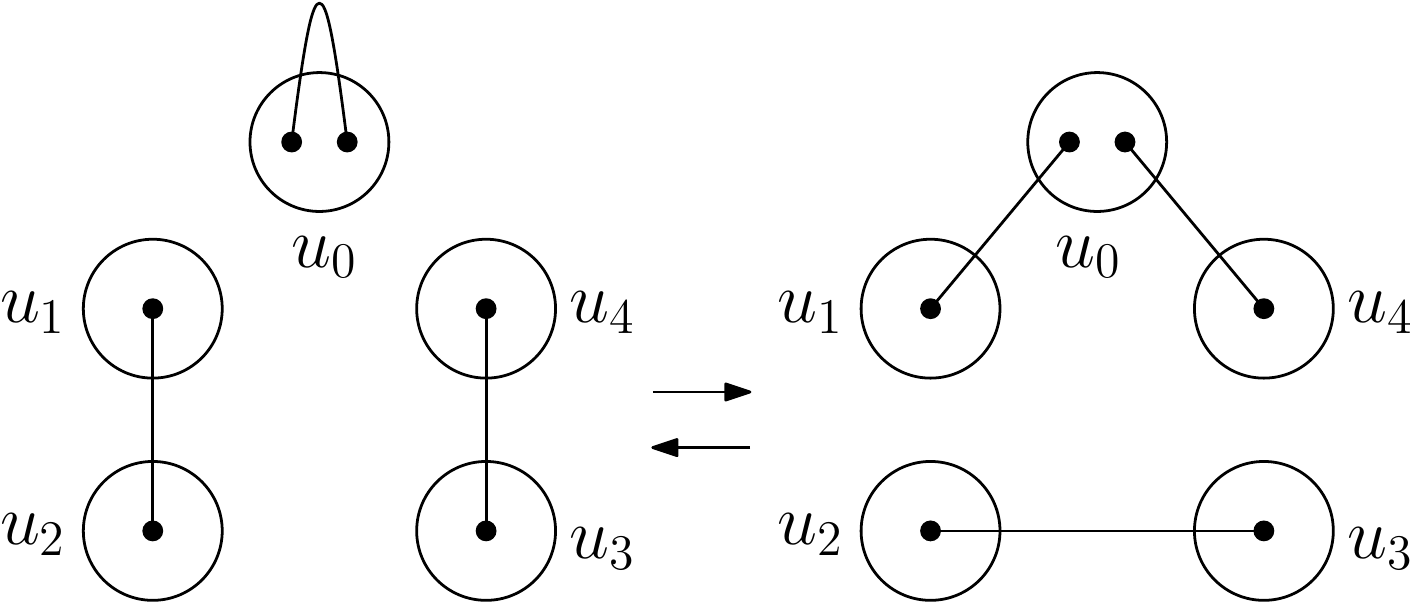}
    \caption{\it   Switching for a light loop} 
    \lab{f:loop}
  \end{minipage}
  \hfill
  \begin{minipage}[b]{0.45\textwidth}
    \includegraphics[width=5cm]{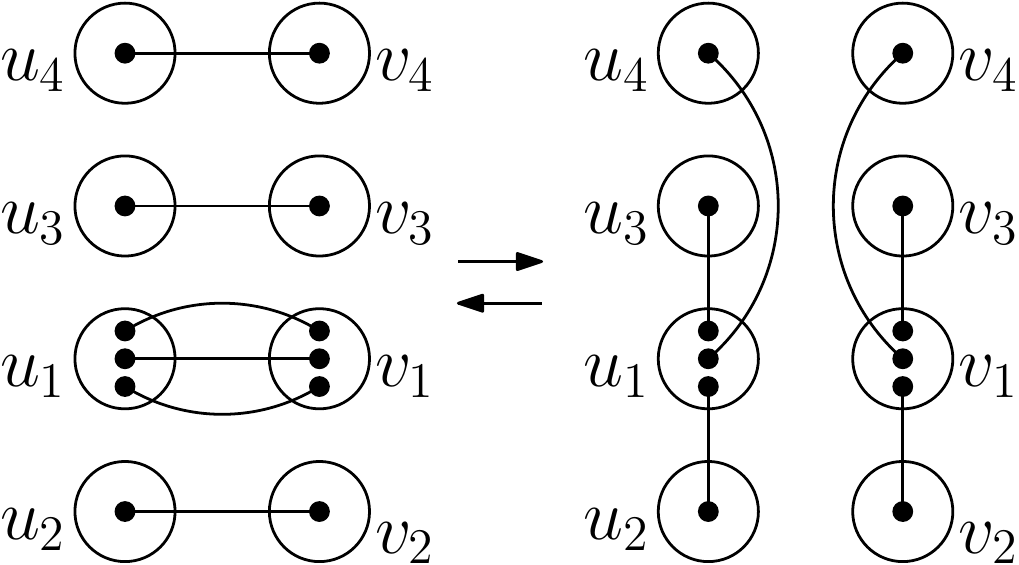}
     \caption{\it  Switching for a light triple edge} 
\lab{f:triple}
  \end{minipage}
\end{figure}
To remove the double edges, we will use parameters $\rho_{III}(i)$, specified in Section~\ref{sec:rho}. In each iteration, if the current pairing $P$ contains $i$ double edges, then with probability $\rho_{III}(i)$, it performs a random valid switching as in Figure~\ref{f:typeIII}, and with probability $1-\rho_{III}(i)$, it performs a random valid switching as in Figure~\ref{f:typeI}. Note that the switching in Figure~\ref{f:typeI} is the typical type (called type I) used to switch away a double edge. The switching in Figure~\ref{f:typeIII} is a new type (called type III) to remedy the distribution distortion caused by  merely performing type I switchings.
\begin{figure}[!tbp]
  \centering
  \begin{minipage}[b]{0.4\textwidth}
    \includegraphics[width=8.5cm]{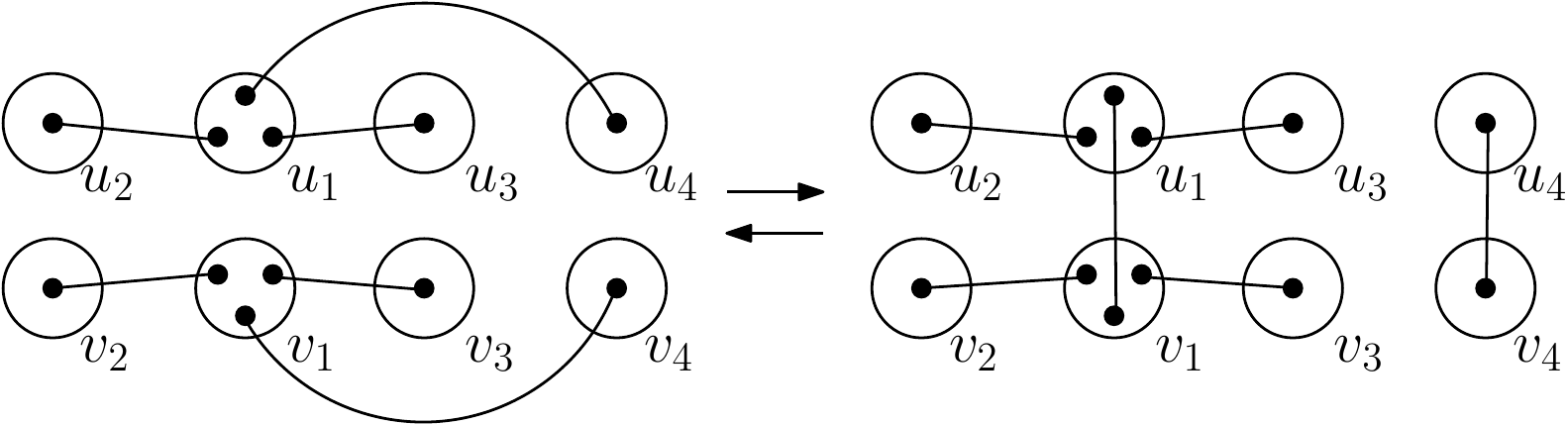}
    \caption{\it  Type III} 
    \lab{f:typeIII}
  \end{minipage}
  \hfill
  \begin{minipage}[b]{0.4\textwidth}
    \includegraphics[width=7cm]{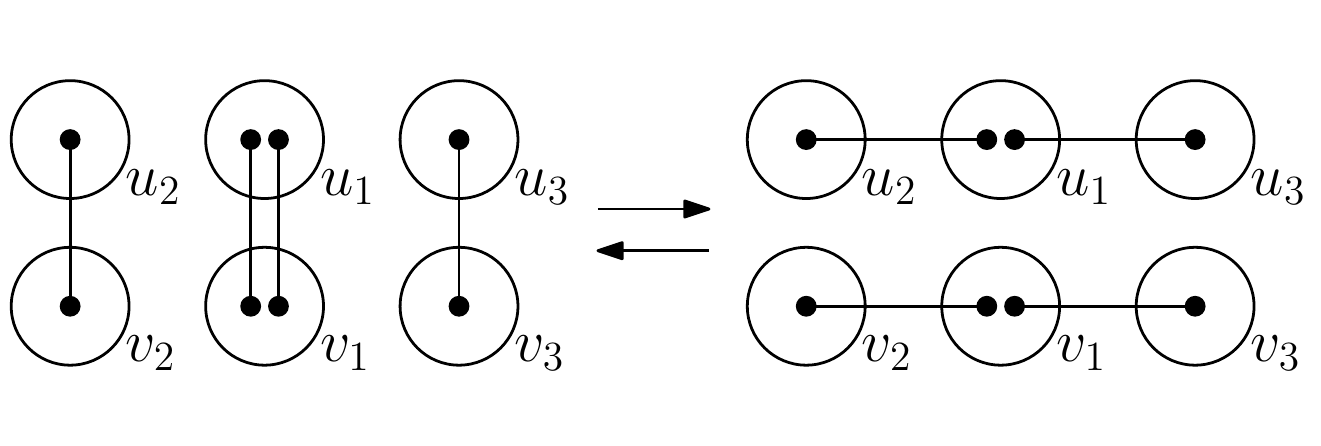}
         \caption{\it  Type I} 
\lab{f:typeI}
  \end{minipage}
\end{figure}

We will prove:
\vspace{-0.5em}
\begin{enumerate}[itemsep=-0.2em]
\item[(a)] all parameters $\rho_{III}(i)$ can be computed in time $o(n)$;
\item[(b)] the number of iterations in the second stage is $o(n)$ in expectation and with high probability;
\item[(c)] starting from a uniformly random $P\in \a_0$, 
the second stage of \PL* outputs an asymptotically uniform random graph with target degree sequence ${\bf d}$.
\end{enumerate}

Even though the type III switching is new, and some other new considerations will be brought in to cope with various complications, the main ideas involved in the second stage of \PL*\ \jt{are} similar to \jt{those} in~\cite{GWreg}. Indeed, we will tune \PL* into \PL\ with introduction of more types of switchings, and with various sorts of rejections. The output of \PL\ is a uniformly random graph with the target degree sequence (see Lemma~\ref{lem:uniformity}).  We can show that the probability that \PL\ ever performs a rejection or other types of switchings is $o(1)$. This allows us to couple \PL\ and \PL* and deduce that the output distributions of \PL* differs from \PL, which is uniform, by $o(1)$, and thus derive our main theorem. The full details of \PL\ and \PL* for the second stage will be presented in Section~\ref{sec:double}. 

Our major innovative ideas lie in the first stage of \PL\ and \PL*.  We will later specify a set of pairings $\heavyaccept\subseteq \Phi$ (in Section~\ref{sec:Phi0}) such that with a non-vanishing probability  a random pairing in $\Phi$ is in $\heavyaccept$. Roughly speaking, $\Phi_0$ excludes pairings \jt{in which} some vertex is incident with too many heavy multiple edges or heavy loops. The algorithm starts by repeatedly generating a random pairing $P\in\Phi$ until $P\in\heavyaccept$. Then, \PL* enters two phases (both in Stage one) sequentially where  heavy multiple edges and then heavy loops are eliminated using switchings. 


\remove{
The algorithm \PL\ contains five phases. Initially, \PL\ generates a uniformly random pairing $P_0$. If $P_0$ satisfies some required conditions, then the algorithm enters each phase sequentially. In each phase, one type of multiple edges are switched away using switching operations. Rejection can occur with a small probability in every step of the algorithm and when that happens the algorithm restarts. If no rejection occurs by the last step of Phase 5, the graph corresponding to the final  pairing is outputted. The framework of \PL\ is shown in Figure~\ref{f:frame}. We will prove that the output is uniformly distributed in ${\mathcal G}_{{\bf d}}$.
\begin{figure}[htb]

 \vspace{-0cm}
 \hbox{
 \centerline{\includegraphics[width=6cm]{frame}}}
 \caption{\it   Overall scheme for \PL} 

\lab{f:frame}

\end{figure}
}

Since our analysis is tailored for the power-law degree sequence, we start by stating some properties of power-law degree sequences.
 In the rest of this paper, we will assume that    ${\bf d}$ is  such that $\sum_i d_i$ is even and Definition~\ref{def:powerlaw} is satisfied for some fixed $K>0$ and   $\gamma>2.5$, that is
\bel{plib}
\mbox{$\displaystyle \sum_{ j\ge i} N_j \le Kni^{1-\gamma}$,\quad for all $1\le i\le\Delta$}
\ee
where $N_j=|t:d_t=j|$. 
    A stronger lower bound on $\gamma$ will be imposed later. We will also assume without loss of generality in the proof that $\gamma<3$, since a \plib\ sequence with parameter $\gamma\ge 3$ is also a \plib\ sequence with parameter $\gamma'$ for every $\gamma'<\gamma$. 


 Recall that $\Delta$ denotes the maximum degree. Without loss of generality we may assume that $\Delta=d_1\ge \cdots \ge d_n\ge 1$.
 For every $1\le i\le n$, the number of vertices with degree at least $d_i$ is at least $i$, so~\eqn{plib} implies that
$
i\le K n(d_i)^{1-\gamma}$.
This gives 
\be
d_i\le (Kn/i)^{1/(\gamma-1)}\quad  \mbox{for all}\ 1\le i\le n. \lab{di}
\ee
For a positive integer $k$, define   $M_k=\sum_{1\le i\le n} [d_i]_k$ where $[x]_k$ denotes $\prod_{i=0}^{k-1}(x-i)$.
It follows immediately from~\eqn{di}  using $\gamma>2$ that 
\bea
\Delta&\le& (Kn)^{1/(\gamma-1)}\non\\ 
M_1&=&\Theta(n)\lab{Delta}\\
M_k&=& O(\Delta^k)=  O(n^{k/(\gamma-1)})\ \mbox{for every fixed integer $k\ge 2$}. \non
\eea


Let $0<\delta<1/2$ be a constant to be specified later. 
Define 
\bea
& &  h=n^{1-\delta(\gamma-1)}, \lab{h}\\
& &\heavy=\{i: 1\le i\le h\},\quad \light=[n]\setminus \heavy. \lab{heavy}
\eea
Immediately from~\eqn{di}, 
\bel{dh}
d_h\le K^{1/(\gamma-1)} n^{\delta},
\ee
and so
every vertex in $\light$, the set of light vertices, has degree $O(n^{\delta})$. 

Finally, for  $k\ge 1$  define $H_k=\sum_{v\in \heavy} [d_v]_k$ and  $L_k=M_k-H_k$. 
\ms

\section{First stage of \PL\ and \PL*: heavy multiple edge reductions}\lab{sec:heavy}

Recall that \PL\  (\PL*) first generates a uniformly random pairing $P_0\in \Phi_0$, where $\Phi_0\subseteq \Phi$ is the set of pairings satisfying some initial conditions. Our next step is to specify $\Phi_0$. 

 Suppose firstly that $M_2 < M_1$. In this case, we define $\Phi_0$ to be the set of simple pairings in $\Phi$. Janson~\cite{Janson} showed that under this condition, there is a  probability bounded away from 0, uniformly for all $n$, that $P_0\in\Phi_0 $. So if $P_0$ is not simple, \PL\  (\PL*) rejects it, and otherwise, the remaining phases are defined to pass $P_0$ directly to the output. In this case, \PL\  (\PL*) is equivalent to the simple rejection scheme. Henceforth in the paper, we assume
\bel{M2}
M_2 \ge M_1.
\ee

\subsection{Specifying $\heavyaccept$}
\lab{sec:Phi0}

Given $i<j$, let $W_{i,j}(P)$ denote the number of points in vertex $i$ that belong to heavy loops or heavy multiple edges  in $P$ with one end in $i$ and the other end {\em not} in $j$. Let $W_{i}(P)$ denote the number of pairs that are in a heavy non-loop multiple edge, with one end in $i$.  For any $i\le j$, let $m_{i,j}=m_{i,j}(P)$ denote the number of pairs between $i$ and $j$ in $P$.   
Define $\eta=\sqrt{M_2^2H_1/M_1^3}$ and
 
\be
\heavyaccept=\left\{P\in\Phi:\ 
\begin{array}{ll}
m_{i,j}I_{m_{i,j}\ge 2}W_{i,j}\le \eta d_i &  \forall 1\le i,j\le h \mbox{ with } i\ne j;\\
m_{i,i} W_i\le \eta d_i &\forall 1\le i\le h;\\
\sum_{1\le i<j\le h} m_{i,j}I_{m_{i,j}\ge 2} \, \le\,  4M_2^2/M_1^2; &\\
\sum_{1\le i\le h} m_{i,i}\, \le\,  4M_2/M_1.& 
\end{array}
\right\}\lab{Wij}
\ee
\ss

 The following lemma ensures that the probability of an initial rejection  is at most $1/4+o(1)$.
\begin{lemma}\lab{lem:Phi0}  Assume that $\eta=o(1)$, i.e.\ $M_2^2H_1/M_1^3=o(1)$. If   $P$ is a random pairing in $\Phi$ then the probability that $P\in\heavyaccept$ is at least $1/4+o(1)$.
\end{lemma}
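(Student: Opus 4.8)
The plan is to show that each of the four constraints in~\eqn{Wij} fails with small probability, and then apply a union bound. Specifically, I would aim to prove that the second-moment conditions (the last two lines of~\eqn{Wij}) each fail with probability at most $o(1)$, by computing the expectation of the relevant sums and applying Markov's inequality with the factor of $4$ in the numerator providing the slack; and that the first two conditions (the per-pair conditions involving $W_{i,j}$ and $W_i$) each fail with probability at most $1/8+o(1)$, so that the total failure probability is at most $1/4+o(1)$, giving $\pr(P\in\heavyaccept)\ge 3/4 - o(1)\ge 1/4+o(1)$. (In fact the lemma only claims $1/4$, so there is considerable room; presumably the precise bookkeeping of which events get which share of the budget is chosen for convenience in later sections.)

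For the third constraint, $\sum_{1\le i<j\le h} m_{i,j}I_{m_{i,j}\ge 2}$, I would first note that $m_{i,j}I_{m_{i,j}\ge 2}\le m_{i,j}(m_{i,j}-1)$, and then use the standard pairing-model estimate that the expected number of ordered pairs of disjoint pairs both joining $i$ and $j$ is at most $[d_i]_2[d_j]_2/(M_1-O(1))^2$, so that $\ex\sum_{i<j} m_{i,j}(m_{i,j}-1) = O\big(\sum_{i<j}[d_i]_2[d_j]_2/M_1^2\big) \le O(H_2^2/M_1^2) = O(M_2^2/M_1^2)$. Then Markov gives that this sum exceeds $4M_2^2/M_1^2$ with probability bounded by a constant strictly less than $1$; to push the failure probability to $o(1)$ one either sharpens the constant in the expectation bound (using $H_2 = o(M_2)$ or a similar gap that should follow from the power-law bounds~\eqn{Delta}, noting that the heavy part has only $h=n^{1-\delta(\gamma-1)}$ vertices) or restricts attention to heavy vertices where $\sum_{v\in\heavy}[d_v]_2$ can be bounded away from $M_2$. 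The fourth constraint, $\sum_{1\le i\le h} m_{i,i}$, is handled the same way: $\ex m_{i,i}\le [d_i]_2/(2(M_1-1))$, so $\ex\sum_{i\le h} m_{i,i} = O(H_2/M_1) = o(M_2/M_1)$ using that heavy vertices contribute a lower-order share of $M_2$, and Markov finishes it with room to spare.

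For the first constraint, fix $i\ne j$ with $i,j\le h$ and consider $m_{i,j}I_{m_{i,j}\ge 2}W_{i,j}$. The idea is that $W_{i,j}$ counts points at $i$ lying in heavy multiedges/loops not involving $j$; conditioning on $m_{i,j}$, the quantity $W_{i,j}$ is stochastically dominated by a sum over the other $d_i - m_{i,j}$ points at $i$ of indicators that each such point lies in a heavy multiedge or loop, and for each such point the probability of being in a \emph{multiple} edge to some heavy vertex $k$ is $O(\sum_{k\le h}[d_k]_2/M_1^2) \cdot d_k$-type bound, while the probability of being in a heavy loop is $O(d_i/M_1)$. Summing, one gets $\ex(W_{i,j}\mid m_{i,j}) = O\big(d_i(H_2/M_1^2)\cdot(\text{something})\big)$; combining with $\ex(m_{i,j}I_{m_{i,j}\ge 2}\mid\cdots) = O([d_i]_2[d_j]_2/M_1^2)$ and the definition $\eta = \sqrt{M_2^2 H_1/M_1^3}$, one should find that $\ex(m_{i,j}I_{m_{i,j}\ge 2}W_{i,j})/(\eta d_i)$ summed over all $i<j\le h$ is $o(1)$, whence by Markov the probability that \emph{any} of these $O(h^2)$ events fails is $o(1)$. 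The analogous computation handles $m_{i,i}W_i\le\eta d_i$. The union bound over all four families of events then yields the claim.

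The main obstacle I anticipate is getting the first-constraint estimate to come out right: one has to carefully handle the conditioning (the value of $m_{i,j}$ is not independent of $W_{i,j}$, and $W_{i,j}$ itself is a sum of non-independent indicators over the remaining points at $i$), and one has to verify that the particular scaling $\eta=\sqrt{M_2^2 H_1/M_1^3}$ is exactly what makes $\sum_{i<j\le h}\ex(m_{i,j}I_{m_{i,j}\ge2}W_{i,j})/(\eta d_i) = o(1)$ — this is presumably where the hypothesis $\eta=o(1)$ and the power-law bounds~\eqn{Delta} on $M_k$ and $H_k$ get used, and where a more delicate argument than a crude first-moment bound may be needed (for instance splitting the contribution to $W_{i,j}$ into ``loops'' versus ``multiedges to another heavy vertex'' and bounding each separately, or using a switching/injection argument to control the joint event rather than multiplying marginal expectations). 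The second-moment constraints, by contrast, should be routine given~\eqn{Delta}.
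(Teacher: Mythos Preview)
Your core method---compute expectations and apply Markov's inequality---is exactly what the paper does (it states the result as a direct corollary of Lemma~\ref{lem:typical}). However, you have the budget allocation backwards, and your attempt to repair it introduces a false claim.

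In your detailed treatment of the first two (per-pair) conditions you correctly find that
\[
\sum_{i\ne j,\,i,j\le h}\ex\!\Big(\frac{m_{i,j}I_{m_{i,j}\ge 2}\,W_{i,j}}{d_i}\Big)=O\!\left(\frac{M_2^2H_1}{M_1^3}\right)=O(\eta^2),
\]
and since all summands are nonnegative, a single violating pair already forces the whole sum above $\eta$; hence by Markov the failure probability is $O(\eta)=o(1)$. The loop condition is handled the same way. So it is the \emph{first two} constraints that cost $o(1)$ each, not $1/8+o(1)$.

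For the last two (global-sum) constraints you correctly observe that Markov against the threshold $4M_2^2/M_1^2$ (respectively $4M_2/M_1$) gives a failure probability bounded by a fixed constant below $1$---indeed $(1/2+o(1))/4=1/8+o(1)$. Your attempt to push this to $o(1)$ by asserting ``heavy vertices contribute a lower-order share of $M_2$'' (i.e.\ $H_2=o(M_2)$) is \emph{not} valid: the \plib\ hypothesis is only an upper tail bound on the degrees, and for $\gamma<3$ one can have, e.g., a single vertex of degree $\Theta(n^{1/(\gamma-1)})$ with all others of degree $1$, which satisfies \plib, has $M_2\ge M_1$, and has $H_2=(1-o(1))M_2$. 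So this sharpening step fails.

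Fortunately no sharpening is needed. With $o(1)+o(1)$ for the per-pair conditions and $1/8+o(1)$ each for the two global sums, the total failure probability is at most $1/4+o(1)$, giving $\pr(P\in\heavyaccept)\ge 3/4-o(1)$, more than the claimed $1/4+o(1)$. Once you swap which pair of constraints gets the $o(1)$ budget and which gets $1/8$, your argument coincides with the paper's.
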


  Next we  define the switchings used in Stage 1.

\subsection{Heavy switchings}
\lab{sec:defheavy}

 Given a pairing $P$, let $m(P,i,j)$ denote the number of pairs between $i$ and $j$ in $P$, and set $\multiplicity(P,i) = m(P,i,i)$, the number of loops at $i$.

We first define the switchings we are going to use to switch away heavy multiple edges between a fixed pair of heavy vertices $(i,j)$, $i<j$.  The switching that we are going to define will change the multiplicity of $ij$ from some $m\ge 1$ to $0$ (i.e.\ no pair between $i$ and $j$). We will only use this switching directly if $m(P,i,j)\ge 2$, and we will also use an inverse version with a certain probability  to change the multiplicity from 0 to $1$. The inverse switching is needed for the present problem because  $i$ and $j$ may be quite likely to have an edge between them.  

\begin{definition}\lab{def:heavymultiple}
 (Heavy $m$-way switching at $(i,j)$ on $P$.) 

 This is defined for an ordered pair $(i,j)$ of vertices   with $\multiplicity(P,i,j)=m\ge 1$. 
Label the endpoints of the $m$ pairs between $i$ and $j$ as $2g-1$ and $2g$, $1\le g\le m$, where points $1,3,\ldots,2m-1$ are contained in vertex $i$.
Pick $m$ distinct light pairs $x_1,\ldots,x_m$. Label the endpoints of
$x_g$ as $2m+2g-1$ and $2m+2g$.  The switching operation replaces pairs $\{2g-1,2g\}$ and $\{2m+2g-1,2m+2g\}$ by $\{2g-1,2m+2g-1\}$ and $\{2g,2m+2g\}$. This is a valid switching if  no new heavy multiple edges or heavy loops are created. See Figure~\ref{f:heavy1}.

If $S$ is a heavy multiple edge switching creating $P'$ from $P$, then the inverse of $S$ is an inverse heavy multiple edge switching creating $P$ from $P'$. 	
\end{definition}


\begin{figure}[!tbp]
  \centering
  \begin{minipage}[b]{0.45\textwidth}
 \hbox{\centerline{\includegraphics[width=7cm]{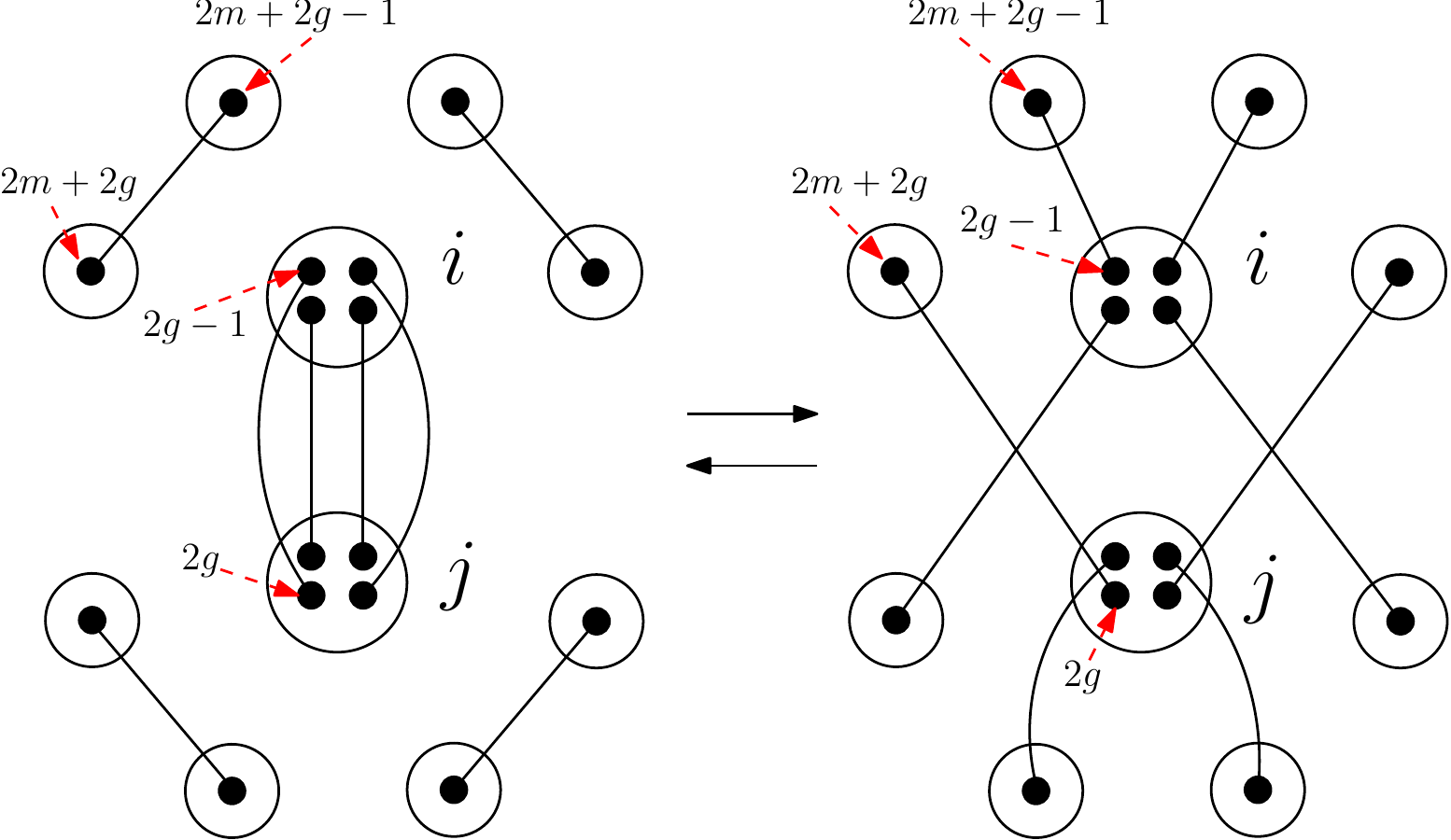}}}
\caption{ Switching for   heavy multiple edge }
\lab{f:heavy1}
  \end{minipage}
  \hfill
  \begin{minipage}[b]{0.45\textwidth}
\hbox{\centerline{\includegraphics[width=7cm]{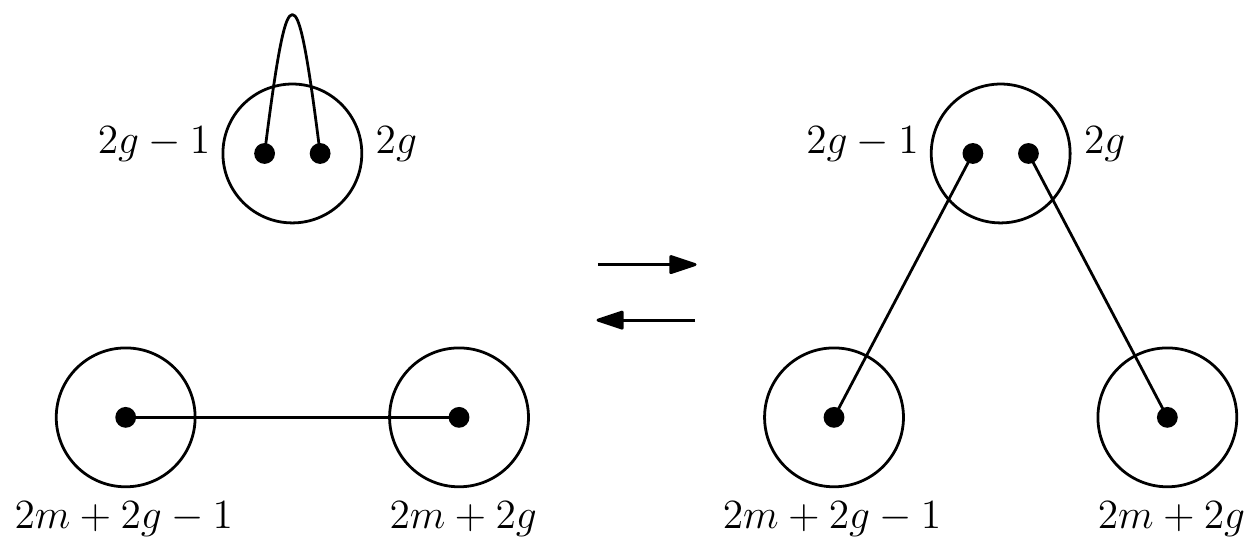}}}
\caption{ Switching for   heavy multiple loop }
\lab{f:heavy2}
  \end{minipage}
\end{figure}
 

\begin{definition}
(Heavy $m$-way loop switching at $i$.)

 This is defined if $\multiplicity(P,i)=m\ge 1$.  
Label the endpoints of the $m$ loops at $i$ as $2g-1$ and $2g$, $1\le g\le m$. Pick $m$ distinct light pairs in $P$,  labelling the endpoints of the $g$-th pair  $2m+2g-1$ and $2m+2g$. The switching replaces pairs
$\{2g-1,2g\}$ and $\{2m+2g-1,2m+2g\}$ by $\{2g-1,2m+2g-1\}$ and $\{2g,2m+2g\}$. This switching is valid if no new heavy multiple edges or heavy loops are simultaneously created.  See Figure~\ref{f:heavy2}. 
\end{definition}




Note from the above definitions that  heavy multiple edge  and heavy loop switchings are permitted to create  or destroy  light multiple edges or light loops. This was not the case for the switchings introduced in~\cite{GW} for enumeration purposes, and is another feature of our arguments not present in~\cite{MWgen} and~\cite{GWreg}. A heavy multiple edge switching on the pair $(i,j)$ still does not affect other heavy multiple edges.

\subsection{\PL: Phases 1 and 2}
Now we are ready to define the first two phases of \PL. We define \PL* in Section~\ref{sec:PLstar}.

\subsubsection{Phase 1: heavy multiple edge reduction}
~\lab{sec:heavy-uniform}

 Given a pairing $P$, we define  an $h\times h$ symmetric matrix $\MM(P)$   as follows. For each $1\le i\le j\le h$, if  $ij$ is not a multiple edge or loop  in $G(P)$, then the $ij$ and $ji$ entries in $\MM(P)$ are set as $\clubsuit$; otherwise, these entries are set as the multiplicity of the edge $ij$. Therefore, in this matrix, all the diagonal entries take values in $\{\clubsuit\}\cup \mathbb{N}_{\ge 1}$ and all off-diagonal entries take values in $\{\clubsuit\}\cup \mathbb{N}_{\ge 2}$ where $\mathbb{N}_{\ge k}$ denotes the set of integers that are at least $k$. We call $\MM(P)$ the {\em signature} of $P$. Given such a matrix $\MM$, we let $\class(\MM)$ denote the set of pairings that have $\MM$ as their signature. Phase 1 of \PL\ uses the  heavy $m$-way switchings of  Definition~\ref{def:heavymultiple} to switch away heavy multiple edges.  Each switching step converts a pairing uniformly distributed in $\class(\MM)$ for some $\MM$   to a pairing uniformly distributed in $\class(\MM')$ where $\MM'$ is obtained from $\MM$ by setting a symmetric pair of its off-diagonal entries to $\clubsuit$. By the end of Phase 1, if no rejection occurs, the resulting pairing has a signature with all off-diagonal entries being $\clubsuit$,  and hence has no heavy multiple edges.

Recall $\heavyaccept$ in~\eqn{Wij}. When Phase 1 starts, $P_0$ is uniformly distributed in $\heavyaccept$. Let $\MM_0=\MM(P_0)$.

Recall that $\heavy$ denotes the set of heavy vertices. Let $\heavyindicespair$ be a  list  of all pairs $(i,j)$ with $i<j$ and both $i,j\in\heavy$; i.e.\ $\heavyindicespair$ is an enumeration of all pairs of heavy vertices. This defines an ordering of elements in $\heavyindicespair$: $(i',j')\preceq (i,j)$ if $(i',j')$ appears before $(i,j)$ in  the list  $\heavyindicespair$. For each $k\ge 1$, let $\MM_k$ denote the matrix obtained from $\MM_0$ by setting the  $ij$ and $ji$   entries of  $\MM_0$ to $\clubsuit$,  for each $ij$ among the first $k$ entries in the list
$\heavyindicespair$.

Let $k\ge 1$  and suppose $(i,j)$ be the $k$-th element of $\heavyindicespair$.
 For any non-negative integer $m$, let $\C(\MM_k,i,j,m)$ denote the set of pairings with $ij$ having multiplicity $m$, and all other edges between heavy vertices satisfying the signature $\MM_k$ (ignoring its $ij$ entry). 
Given $m\ge 1$ and a pairing  $P\in \C(\MM_k,i,j,m)$,  
let $f_{i,j}(P)$ denote the number of  heavy $m$-way switchings at $(i,j)$ where $m$ is the multiplicity of $ij$ in $P$. Given $m\ge 1$ and $P'\in \C(\MM_k,i,j,0)$, let $b_{i,j}(P',m)$ denote the number of inverse heavy $m$-way switchings applicable to $P'$; these convert  $P'$ to some $P\in \C(\MM_k,i,j,m)$.  

Note that $W_{i,j}(P)$ is determined by the signature of $P$; hence we may write $W_{i,j}(\MM(P))$ for $W_{i,j}(P)$. For any $m\ge 1$, define
\bea
\UB^f_{i,j}(\MM_k,m)&=&m!M_1^m;\lab{boundsmultiplefu}\\
\LB^f_{i,j}(\MM_k,m)&=&m!(M_1-H_1-2m)^m;\lab{boundsmultiplefl}\\
\UB^b_{i,j}(\MM_k,m)&=&[d_i-W_{i,j}]_m[d_j-W_{j,i}]_m;\lab{boundsmultiplebu}\\
\LB^b_{i,j}(\MM_k,m)&=&[d_i-W_{i,j}]_m[d_j-W_{j,i}]_m-mh^2[d_i-W_{i,j}]_{m-1}[d_j-W_{j,i}]_{m-1},\lab{boundsmultiplebl}
\eea
where $W_{i,j}=W_{i,j}(\MM_k)$ and $W_{j,i}=W_{j,i}(\MM_k)$.

By simple counting arguments (see Lemma~\ref{lem:bounds}) we have that for every $m\ge 1$ and for every $P\in \C(\MM_k,i,j,m)$ and $P'\in \C(\MM_k,i,j,0)$,
\[
\LB^f_{i,j}(\MM_{k},m)\le f_{i,j}(P)\le \UB^f_{i,j}(\MM_{k},m), \quad \LB^b_{i,j}(\MM_{k},m)\le b_{i,j}(P',m)\le \UB^b_{i,j}(\MM_{k},m).
\]

Phase 1 of \PL\ is  defined inductively as follows. For each $1\le k\le |\heavyindicespair|$,  let $P_{k}$ denote the pairing obtained after the $k$-th step of Phase 1. To define this step, if $\multiplicity(P_{k-1},i,j)\le 1$, then $P_k=P_{k-1}$; otherwise, let $m=\multiplicity(P_{k-1},i,j)$ and do the following sub-steps:
\begin{enumerate}[itemsep=-0.2em]
\item[(i)] Choose a random heavy  $m$-way switching at $(i,j)$ on  $P$. Let $P'$ be the pairing created by $S$; 
\item[(ii)] Perform an f-rejection with probability $1-f_{i,j}(P)/\UB^f_{i,j}(\MM_{k},m)$; then perform a b-rejection with probability $1-\LB^b_{i,j}(\MM_{k},m)/b_{i,j}(P',m)$;
\item[(iii)] If no f or b-rejection took place, set $P_k^*=P'$;
\item[(iv)] Choose a random {\em inverse} heavy  1-way  switching $S'$ at $(i,j)$ on $P'$; let $P''$ denote the pairing created by $S'$;
\item[(v)] With probability $1/(1+\UB^b_{i,j}(\MM_k,1)/\LB^f_{i,j}(\MM_k,1))$, set $P_k=P_k^*=P'$; with the remaining probability, perform an f-rejection (w.r.t.\ $S'$) with probability $1-b_{i,j}(P')/\UB^b_{i,j}(\MM_k,1)$ and perform a b-rejection with probability $1-\LB^{f}_{i,j}(\MM_k,1)/f_{i,j}(P'')$. If no f- or b-rejection occurred, set $P_k=P''$.    
\end{enumerate}

\begin{lemma}\lab{lem:heavy-rej}
The probability of an $f$-rejection or $b$-rejection during Phase 1 is $o(1)$, if $\delta>(3-\gamma)/(\gamma-2)$, $\delta>1/(2\gamma-3)$, $\delta<1/2$ and $1-\delta>1/(\gamma-1)$.
\end{lemma}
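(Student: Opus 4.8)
The plan is to bound the probability of an f-rejection or b-rejection in any single step of Phase 1, and then sum over the at most $h^2$ steps. Fix $k$ and let $(i,j)$ be the $k$-th element of $\heavyindicespair$, with $m=\multiplicity(P_{k-1},i,j)\ge 2$. From Lemma~\ref{lem:bounds} (the simple counting bounds quoted just before the statement) the f-rejection probability is at most $1-\LB^f_{i,j}(\MM_k,m)/\UB^f_{i,j}(\MM_k,m)$ and similarly for the b-rejection and for the inverse-switching steps (iv)--(v). So the whole task reduces to showing that each of the ratios
\[
\frac{\LB^f_{i,j}(\MM_k,m)}{\UB^f_{i,j}(\MM_k,m)}=\left(\frac{M_1-H_1-2m}{M_1}\right)^m,\qquad
\frac{\LB^b_{i,j}(\MM_k,m)}{\UB^b_{i,j}(\MM_k,m)}=1-\frac{mh^2}{[d_i-W_{i,j}]\,[d_j-W_{j,i}]}\cdot(\text{ratio of falling factorials})
\]
is $1-o(h^{-2})$, uniformly over all heavy $(i,j)$ and all relevant $m$, and likewise for the $m=1$ versions appearing in (iv)--(v).

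First I would collect the quantitative inputs. From~\eqn{Delta} we have $M_1=\Theta(n)$ and $M_2=O(n^{2/(\gamma-1)})$; from the definition $H_1=\sum_{v\le h}d_v$ one estimates $H_1=O\big(h\cdot d_h\big)$ or more sharply $H_1=O(n\cdot h^{1-(\gamma-1)/(1-\delta)}\cdot\ldots)$ — in any case, using~\eqn{di} and $h=n^{1-\delta(\gamma-1)}$ one gets a bound of the form $H_1=O(n^{\theta})$ with $\theta<1$ precisely when $1-\delta>1/(\gamma-1)$, which is one of the hypotheses. Also $m=O(\eta d_i/\ldots)$ is small, coming from the constraints defining $\heavyaccept$ in~\eqn{Wij}: there $m_{i,j}I_{m_{i,j}\ge 2}W_{i,j}\le \eta d_i$ and $\sum m_{i,j}I_{m_{i,j}\ge 2}\le 4M_2^2/M_1^2$, so no single multiplicity $m$ is larger than $4M_2^2/M_1^2=O(n^{2(3-\gamma)/(\gamma-1)})$, a quantity that is $o(h^{1/2})$-ish under the stated constraints. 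For the f-ratio: $(1-(H_1+2m)/M_1)^m\ge 1-m(H_1+2m)/M_1$, and we need $m(H_1+2m)/M_1=o(h^{-2})=o(n^{-2(1-\delta(\gamma-1))})$; plugging the bounds for $m$ and $H_1$ turns this into an explicit inequality among $\gamma$ and $\delta$, which I expect to be exactly $\delta>(3-\gamma)/(\gamma-2)$ (controlling the $m^2/M_1$ term via $M_2^2/M_1^2$) together with $1-\delta>1/(\gamma-1)$ (controlling $H_1/M_1$). For the b-ratio the error term is $mh^2/\big([d_i-W_{i,j}][d_j-W_{j,i}]\big)$; here the danger is that $d_i$ or $d_j$ could be as small as possible among heavy vertices, namely $d_h=\Theta(n^\delta)$, while the numerator carries $h^2=n^{2(1-\delta(\gamma-1))}$ and an extra factor $m$. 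Requiring this to be $o(h^{-2})$ — i.e. $mh^4=o(d_id_j)$ — with $d_i,d_j\gtrsim n^\delta$ gives $m\,n^{4(1-\delta(\gamma-1))}=o(n^{2\delta})$, and after substituting $m=O(M_2^2/M_1^2)$ this should reduce to the remaining two constraints $\delta>1/(2\gamma-3)$ and $\delta<1/2$.

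The key steps, in order: (1) quote Lemma~\ref{lem:bounds} to reduce a rejection in step $k$ to the two ratios above (and their $m=1$ analogues for (iv)--(v)); (2) record the estimates $M_1=\Theta(n)$, $M_2=O(n^{2/(\gamma-1)})$, $H_1=O(n^{\max\{0,\;1-\delta(\gamma-1)\}\cdot\text{something}})$ with a clean bound showing $H_1=o(M_1)$ under $1-\delta>1/(\gamma-1)$, and the upper bound $m\le 4M_2^2/M_1^2$ valid on $\heavyaccept$ and preserved along Phase 1 (since heavy switchings at other pairs don't affect the $(i,j)$ edge, as remarked after Definition~\ref{def:heavymultiple}, and the $\heavyaccept$-type bounds degrade only negligibly); (3) use $1-(1-x)^m\le mx$ to linearise the f-ratio deficit and a one-term bound for the b-ratio deficit; (4) show each deficit is $o(h^{-2})$ under the four stated inequalities, checking that each inequality is used — I'd organise this as: $\delta<1/2$ ensures $h\to\infty$ slowly enough and that light pairs are plentiful, $1-\delta>1/(\gamma-1)$ kills $H_1/M_1$, $\delta>(3-\gamma)/(\gamma-2)$ handles the $m^2/M_1$ and $m h^2/d^2$-with-$m\sim M_2^2/M_1^2$ terms, $\delta>1/(2\gamma-3)$ handles the $mh^4/d_id_j$ term with the worst-case $d_i=d_j=n^\delta$; (5) sum the per-step bound over the at most $\binom h2$ steps to conclude $o(1)$.

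The main obstacle is step (4) for the b-rejection: the inverse heavy switching lands on $P'$ where the edge $ij$ is absent, and the count $b_{i,j}(P',m)$ is small (roughly $[d_i-W_{i,j}]_m[d_j-W_{j,i}]_m$, which for worst-case heavy vertices of degree $\Theta(n^\delta)$ and $m$ up to $\sim M_2^2/M_1^2$ can be only polynomially large), so the relative error $mh^2/(\ldots)$ from "accidental" creations is delicate — this is precisely where we need $d_i, d_j$ bounded below by $n^\delta$ (via $i,j\le h$ and~\eqn{dh}-type reasoning) and where the constraint $\delta>1/(2\gamma-3)$ will be forced. A secondary subtlety is verifying that the $\heavyaccept$-membership bounds in~\eqn{Wij}, in particular the per-pair bound $m_{i,j}W_{i,j}\le \eta d_i$ and the global $\sum m_{i,j}\le 4M_2^2/M_1^2$, remain valid (up to $1+o(1)$ factors) throughout Phase 1 rather than just at its start; this should follow because each heavy switching at $(i,j)$ only removes the edge $ij$ and affects light pairs, leaving $W$-quantities at other heavy vertices unchanged, but it needs to be stated carefully. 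I would relegate the purely arithmetic verification of the four $\gamma$-$\delta$ inequalities to the Appendix, presenting in the body only the reduction and the statement of which inequality controls which error term.
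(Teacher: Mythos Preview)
Your plan has a genuine gap. The strategy of showing that each step's conditional rejection probability is $o(h^{-2})$ and then summing over all $\binom{h}{2}$ pairs is too crude to yield the lemma under its stated hypotheses. Take the f-rejection in sub-step (ii): your linearised bound $m(H_1+2m)/M_1$ with the worst-case $m\le 4M_2^2/M_1^2$ from $\heavyaccept$ gives $O(n^{4/(\gamma-1)-2-\delta(\gamma-2)})$, and requiring this to be $o(h^{-2})=o(n^{-2+2\delta(\gamma-1)})$ forces $\delta>4/\big((\gamma-1)(3\gamma-4)\big)$, which for $\gamma\approx 2.88$ is about $0.46$---strictly stronger than the lemma's $\delta>1/(2\gamma-3)\approx 0.36$. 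For the b-rejection your assertion that the worst case is $d_i=d_j\approx n^\delta$ is unfounded: the \plib\ condition and~\eqn{dh} provide only an \emph{upper} bound $d_h\le Cn^{\delta}$, not a lower bound, so a heavy vertex may have degree $2$; then the conditional b-deficit $mh^2/(d_id_j)$ can be of order $h^2$, nowhere near $o(h^{-2})$.

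The paper's proof avoids both problems by not bounding uniformly. For each pair $(i,j)$ it multiplies the conditional rejection probability by the probability that the step is active at all, using the pairing-model estimate $\ex(m_{i,j}I_{m_{i,j}\ge 2})=O\big(\min\{d_id_j/M_1,(d_id_j)^2/M_1^2\}\big)$, and then evaluates the resulting sums $\sum_{i<j\le h}\min\{(d_id_j)^k/n,(d_id_j)^{k+1}/n^2\}$ via Lemma~\ref{lem:min}. The crucial point is that the $d_id_j$ factor in the activity probability \emph{cancels} the $1/(d_id_j)$ in the b-rejection bound $mh^2/(d_id_j)$, so no lower bound on heavy degrees is ever needed. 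After this cancellation the four inequalities on $(\gamma,\delta)$ fall out exactly: $\delta>(3-\gamma)/(\gamma-2)$ from the term $O(n^{-\delta(\gamma-2)})\cdot O(n^{3-\gamma}\log n)$ (f-rejection and b-rejection in sub-step (v)), $\delta>1/(2\gamma-3)$ from $h^2\cdot O(n^{-2\delta(\gamma-2)})$ (sub-steps (ii) and (v) combined), while $\delta<1/2$ and $1-\delta>1/(\gamma-1)$ are used to invoke Lemma~\ref{lem:min} and to ensure $H_1/M_1=o(1)$. Your outline correctly identifies the ratios to control, but the missing ingredient is this activity-weighting and the summation lemma.
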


\subsubsection{Phase 2: heavy loop reduction}

The heavy loop reduction is similar to the heavy multiple edge reduction but simpler. Let $\heavyindices$ be an ordering of elements in $\heavy$. At step $k$ of Phase 2, for $k\ge 1$, the algorithm switches away all loops at the $k$-th vertex in $\heavyindices$ using the heavy loop switching, if there are any.  In each step, sub-steps (i) and (ii) are the same as in heavy multiple edge reduction, except that $f_{ij}(P)$ and $b_{ij}(P',m)$ are replaced by $f_i(P)$ and $b_i(P',m)$, and $\UB^f_{ij}(\MM_k,m)$ and $\LB^b_{ij}(\MM_k,m)$ are replaced by $\UB^f_{i}(\MM_k,m)$ and $\LB^b_{i}(\MM_k,m)$, defined as below: 
\bea
\UB_i^f(\MM_k,m)&=& 2^m m! M_1^{m};\lab{loopswtUBf}\\
\LB_i^f(\MM_k,m)&=& 2^m m! (M_1-H_1-2m)^{m}; \\
\UB_i^b(\MM_k,m)&=& [d_i-W_i]_{2m}; \\
\LB_i^b(\MM_k,m)&=& [d_i-W_i]_{2m}-mh^2[d_i-W_i]_{2m-2}.\lab{loopswtLBb}
\eea
Moreover, $P_k$ is set \jt{to} $P'$ as long as no f or b-rejection happend. There are no sub-steps (iii)--(v).

It is easy to show that for any $P\in  \C(\MM_k,i,i,m)$,
$\LB_i^f(\MM_k,m)\le f_{i}(P)\le \UB_i^f(\MM_k,m)$; and for any $P\in \C(\MM_k)$,
$\LB_i^b(\MM_k,m)\le b_{i}(P,m)\le \UB_i^b(\MM_k,m)$.


 \begin{lemma}\lab{lem:heavyloopcond}
If $2/(\gamma-1)<1+\delta(\gamma-2)$ and $\delta(\gamma-1)>2/3$, then the probability that a rejection happens during Phase 2 is $o(1)$.
\end{lemma}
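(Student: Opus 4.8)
The plan is to imitate the proof of Lemma~\ref{lem:heavy-rej}, exploiting that Phase~2 is structurally simpler (only an f-rejection and a b-rejection per step, no sub-steps (iii)--(v)). The key structural observation is that the number of heavy loops at a heavy vertex $i$ at the step of Phase~2 that processes $i$ equals $\multiplicity(P_0,i)$, where $P_0$ is the initial pairing drawn uniformly from $\heavyaccept$: the heavy $m$-way switchings of Phase~1 are valid and hence never create heavy loops, nor do they touch any (heavy loops are not light pairs), while an earlier Phase~2 step processing a vertex $i'\ne i$, being valid, creates no heavy loop at $i$; only the step processing $i$ removes the loops at $i$. Consequently the probability that a rejection occurs in Phase~2 is at most the expected number of such rejections, which is a sum over the $h$ steps of quantities depending only on $\multiplicity(P_0,i)$, on $d_i,h,H_1,M_1$, and on the independent random switching choices. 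Since $\pr(P\in\heavyaccept)=\Omega(1)$ by Lemma~\ref{lem:Phi0} and the summands are nonnegative, it suffices to bound the corresponding expectation when $P$ is a uniformly random pairing in $\Phi$.

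First I would record the per-step bounds. Fix a step processing $i$ and set $m=\multiplicity(P,i)$; if $m=0$ there is nothing to prove, so take $m\ge1$. After Phase~1 there are no heavy multiple edges, so $W_i=0$ throughout Phase~2 and \eqn{loopswtUBf}--\eqn{loopswtLBb} give $\UB_i^b(\MM_k,m)=[d_i]_{2m}$ and $\LB_i^b(\MM_k,m)=[d_i]_{2m}-mh^{2}[d_i]_{2m-2}$. From $\LB_i^f\le f_i\le\UB_i^f$ and $\LB_i^b\le b_i\le\UB_i^b$, the f-rejection probability is at most
\[
1-\frac{\LB_i^f(\MM_k,m)}{\UB_i^f(\MM_k,m)}=1-\Bigl(1-\frac{H_1+2m}{M_1}\Bigr)^{m}\le\frac{m(H_1+2m)}{M_1}
\]
(using $H_1=O(n^{1-\delta(\gamma-2)})=o(M_1)$ by \eqn{di} and $m\le\Delta/2=o(M_1)$ by \eqn{Delta}), and if $2m\le d_i/2$ then $[d_i]_{2m}\ge(d_i/2)^{2}[d_i]_{2m-2}$, so that when also $mh^{2}<d_i^{2}/4$ we get $\LB_i^b>0$ and the b-rejection probability is at most
\[
1-\frac{\LB_i^b(\MM_k,m)}{\UB_i^b(\MM_k,m)}=\frac{mh^{2}[d_i]_{2m-2}}{[d_i]_{2m}}=O\Bigl(\frac{mh^{2}}{d_i^{2}}\Bigr).
\]
For $P$ uniform in $\Phi$, $\ex[\multiplicity(P,i)]=\binom{d_i}{2}/(M_1-1)=O(d_i^{2}/M_1)$ and $\ex[\multiplicity(P,i)^{2}]=O(d_i^{4}/M_1^{2}+d_i^{2}/M_1)$.

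Then I would split on the event $B_i=\{2\multiplicity(P,i)>d_i/2\}\cup\{\multiplicity(P,i)\ge d_i^{2}/(4h^{2})\}$, bounding step $i$'s contribution by $1$ on $B_i$ and by the two displayed estimates on $B_i^{c}$. Markov's inequality and $\ex[\multiplicity(P,i)]=O(d_i^{2}/M_1)$ give $\pr(B_i)=O(d_i/M_1+h^{2}/M_1)$, hence, using that $\heavy$ has $h$ elements, $\sum_{i\in\heavy}d_i=H_1$ and $h=n^{1-\delta(\gamma-1)}$,
\[
\sum_{i\in\heavy}\pr(B_i)=O\Bigl(\tfrac{H_1}{M_1}+\tfrac{h^{3}}{M_1}\Bigr)=O\bigl(n^{-\delta(\gamma-2)}+n^{2-3\delta(\gamma-1)}\bigr)=o(1)
\]
by $\delta(\gamma-1)>2/3$. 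Summing the f-rejection bound over $i\in\heavy$ and inserting the moments, together with $\sum_{i\in\heavy}d_i^{2}=O(n^{2/(\gamma-1)})$ (here $2/(\gamma-1)>1$ as $\gamma<3$) and $\sum_{i\in\heavy}d_i^{4}=O(n^{4/(\gamma-1)})$ from \eqn{di}, yields $O\bigl(H_1n^{2/(\gamma-1)}/M_1^{2}+n^{4/(\gamma-1)}/M_1^{3}+n^{2/(\gamma-1)}/M_1^{2}\bigr)$, which is $o(1)$ exactly when $2/(\gamma-1)<1+\delta(\gamma-2)$ (the last two exponents being negative since $\gamma>5/2$); summing the b-rejection bound gives $\sum_{i\in\heavy}O(h^{2}\ex[\multiplicity(P,i)]/d_i^{2})=O(h^{3}/M_1)=O(n^{2-3\delta(\gamma-1)})=o(1)$, again by $\delta(\gamma-1)>2/3$. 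Adding the three contributions proves the lemma.

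The delicate point is the b-rejection: the correction term $mh^{2}[d_i]_{2m-2}$ in $\LB_i^b$ carries the large factor $h^{2}$, so $\LB_i^b$ fails to be positive as soon as a heavy vertex carries $\Omega(d_i^{2}/h^{2})$ loops (and certainly once it carries close to $d_i/2$ of them), in which case a b-rejection is forced; showing that such vertices are sufficiently rare in a random pairing is where the hypothesis $\delta(\gamma-1)>2/3$ --- equivalently $h=o(n^{1/3})$, so $h^{3}=o(M_1)$ --- is used, via the first-moment bound on loop counts. Everything else is bookkeeping with the power-law moment estimates \eqn{Delta} and the first two moments of loop multiplicities, plus the easy but essential observation that heavy loop counts are invariant under Phase~1 and under Phase~2 up to the step that removes them, which is what legitimises computing the expectation over a uniformly random $P\in\Phi$.
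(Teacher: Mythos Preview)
Your proof is correct and follows essentially the same approach as the paper's: bound the per-step f-rejection probability by $O(mH_1/M_1)$ and the per-step b-rejection probability by $O(mh^2/d_i^2)$, then take expectations over the initial pairing and sum over $i\in\heavy$, obtaining $O(H_1M_2/M_1^2)$ and $O(h^3/M_1)$ respectively, which are $o(1)$ under the two stated hypotheses.

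The differences are refinements rather than a new route. The paper simply absorbs $2m$ into $H_1$ in the f-rejection estimate (legitimate since $2m\le d_i\le\Delta\le H_1$), whereas you keep the $m^2/M_1$ term separate and bound it via second moments; both work. More substantively, you explicitly isolate the event $B_i$ on which $\LB_i^b$ may fail to be positive and bound its probability by Markov, whereas the paper's sketch tacitly uses the formal bound $mh^2/d_i^2$ even when it could exceed $1$; your treatment is cleaner. You also spell out why heavy loop counts are invariant under Phase~1 and earlier Phase~2 steps, which the paper leaves implicit (via the signature). One small omission: your invariance argument mentions only the forward heavy $m$-way switchings of Phase~1, but Phase~1 substeps~(iv)--(v) also perform an inverse heavy $1$-way switching; the same reasoning (validity forbids creating heavy loops, and an inverse of a valid switching cannot destroy one since the corresponding forward would have had to create it) shows these too leave heavy loops untouched.
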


\subsubsection{Uniformity}
Recall
\[
\heavyacceptone=\{P\subseteq \heavyaccept:\ G_{[\heavy]}(P)\ \mbox{is simple}\}.
\]
We have the following lemma confirming the uniformity of the output of the first two phases.
\begin{lemma}\lab{lem:afterheavy}
Let $P_0$ be a random pairing in $\heavyaccept$ and $P'$ be the output   after the first two phases, assuming that no rejection occurred. Then $P'$ has the uniform distribution on $\Phi_2$.
\end{lemma}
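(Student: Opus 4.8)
The plan is to condition on the value of the random signature $\MM_0=\MM(P_0)$, follow the conditional distribution of the current pairing step by step through Phases~1 and~2, show that it stays uniform on an explicitly described set of pairings, and identify that set at the end of Phase~2 with $\Phi_2$, independently of $\MM_0$. Two structural points set this up. First, each of the four inequalities defining $\heavyaccept$ in~\eqn{Wij} depends on $P$ only through $\MM(P)$: the quantities $m_{i,j}$ and $m_{i,i}$ are signature entries, and $W_{i,j}(P),W_i(P)$ count the loop-points and heavy-multiple-edge-points at $i$, which are read off from $\MM(P)$. Hence $\heavyaccept$ is a union of classes $\class(\MM)$, so for every value $\MM$ of $\MM_0$ occurring with positive probability, the conditional law of $P_0$ given $\MM_0=\MM$ is uniform on $\class(\MM)$. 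Second, a valid heavy $m$-way switching at $(i,j)$ only rewires pairs between $i$ and $j$ together with light pairs, and creates no heavy loop and (as remarked after Definition~\ref{def:heavymultiple}) no heavy multiple edge other than possibly a single one at $(i,j)$; thus step~$k$ of Phase~1, at the $k$-th pair $(i,j)$ of $\heavyindicespair$, changes only the $(i,j)$ entry of the signature, from its value $m$ (nothing happens if $m\le 1$) to $0$ or $1$. Consequently, with $\MM_0$ fixed, after step~$k$ of Phase~1 the pairing $P_k$ lies in $\mathcal D_k:=\C(\MM_k,i,j,0)\cup\C(\MM_k,i,j,1)$, the set of pairings whose signature equals $\MM_0$ on the diagonal, is $\clubsuit$ on the first $k$ off-diagonal entries of $\heavyindicespair$, and equals $\MM_0$ elsewhere; moreover $\mathcal D_{k-1}=\C(\MM_k,i,j,m)$ when $\MM_0$ has $(i,j)$ entry $m\ge 2$.

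The core of the argument is the inductive claim: conditionally on the value of $\MM_0$ and on no $f$- or $b$-rejection in steps $1,\dots,k$, the pairing $P_k$ is uniform on $\mathcal D_k$; equivalently $\pr(P_k=Q\text{ and no rejection so far})$ has a common value $\alpha_k$ over $Q\in\mathcal D_k$. The base case is the first structural point. For the inductive step at the $k$-th pair $(i,j)$: if $\MM_0$ has $(i,j)$ entry $\clubsuit$ then $\mathcal D_k=\mathcal D_{k-1}$ and $P_k=P_{k-1}$, so there is nothing to prove; otherwise $\mathcal D_{k-1}=\C(\MM_k,i,j,m)$ with $m\ge 2$, and one runs the standard switching-with-rejection computation. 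Fix $Q'\in\C(\MM_k,i,j,0)$; summing over $Q\in\mathcal D_{k-1}$ and over forward switchings $S$ taking $Q$ to $Q'$, the relevant factors are $1/f_{i,j}(Q)$ (choice of $S$), the $f$-survival $f_{i,j}(Q)/\UB^f_{i,j}(\MM_k,m)$, the $b$-survival $\LB^b_{i,j}(\MM_k,m)/b_{i,j}(Q',m)$, and the stay probability $p:=1/(1+\UB^b_{i,j}(\MM_k,1)/\LB^f_{i,j}(\MM_k,1))$ of sub-step~(v); since forward and inverse heavy switchings are mutually inverse, $\sum_{Q}\#\{S:Q\to Q'\}=b_{i,j}(Q',m)$, which cancels the denominator $b_{i,j}(Q',m)$ and leaves $\pr(P_k=Q'\text{ and no rejection})=\alpha_{k-1}\,p\,\LB^b_{i,j}(\MM_k,m)/\UB^f_{i,j}(\MM_k,m)$, free of $Q'$. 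For $Q''\in\C(\MM_k,i,j,1)$, the same computation through sub-step~(iii) followed by the $(1-p)$-branch of sub-steps~(iv)--(v), now also using $\sum_{Q}\#\{S:Q\to P'\}=b_{i,j}(P',m)$ and $\sum_{P'}\#\{S':P'\to Q''\}=f_{i,j}(Q'')$, gives $\pr(P_k=Q''\text{ and no rejection})=\alpha_{k-1}(1-p)\LB^b_{i,j}(\MM_k,m)\LB^f_{i,j}(\MM_k,1)\big/\big(\UB^f_{i,j}(\MM_k,m)\cdot\UB^b_{i,j}(\MM_k,1)\big)$, free of $Q''$. The value of $p$ is precisely the one equating these two expressions, so $P_k$ has a common value $\alpha_k$ on all of $\mathcal D_k$, and dividing by $\pr(\text{no rejection})$ proves the claim. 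All rejection probabilities used here lie in $[0,1]$ because of the inequalities $\LB^f\le f\le\UB^f$ and $\LB^b\le b\le\UB^b$ quoted from Lemma~\ref{lem:bounds}.

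After all $|\heavyindicespair|$ steps, $P$ is uniform, given the value of $\MM_0$ and no rejection, on the set of pairings whose signature is $\clubsuit$ off the diagonal and equals $\MM_0$ on the diagonal. Phase~2 repeats this argument vertex by vertex with the heavy $m$-way loop switchings and the bounds~\eqn{loopswtUBf}--\eqn{loopswtLBb}; here the target loop-multiplicity is always $0$, so there is no inverse step and the computation is the one-sided version of the one above, again preserving conditional uniformity at each step. Its conclusion is that $P'$ is uniform, given the value of $\MM_0$ and no rejection, on $\mathcal F:=\{P\in\Phi:\ G_{[\heavy]}(P)\text{ is simple}\}$, which no longer depends on $\MM_0$. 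Every $P\in\mathcal F$ has all-$\clubsuit$ signature, so the four inequalities of~\eqn{Wij} hold trivially; hence $\mathcal F\subseteq\heavyaccept$ and $\mathcal F=\Phi_2$. Averaging over the value of $\MM_0$ removes that conditioning, and so $P'$ conditioned on no rejection is uniform on $\Phi_2$.

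The step I expect to be the main obstacle is the bookkeeping of the inductive step of Phase~1: verifying (a) that $\heavyaccept$ is a union of signature classes, which is what makes it legitimate to condition on $\MM_0$ and restart the induction, (b) that forward and inverse heavy switchings are exact inverses of one another, which yields the double-counting identities $\sum\#\{S:Q\to Q'\}=b_{i,j}(Q',m)$ and $\sum\#\{S':P'\to Q''\}=f_{i,j}(Q'')$, and (c) that the sub-step~(v) probability $p$ is calibrated exactly so that the multiplicity-$0$ and multiplicity-$1$ parts of $\mathcal D_k$ receive equal mass. The rest --- the reduction to Phase~2 and the identification $\mathcal F=\Phi_2$ --- is routine.
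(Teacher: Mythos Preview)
Your proposal is correct and follows essentially the same route as the paper's proof: condition on the signature $\MM_0$, show by induction that after step $k$ the pairing is uniform on $\C(\MM_k)$ via the standard forward/backward double-counting with $f$- and $b$-rejections, treat Phase~2 as the one-sided version, and identify the terminal class with $\Phi_2$.

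If anything, your write-up is slightly more careful than the paper's on two points. First, you make explicit that $\heavyaccept$ is a union of signature classes, which legitimises conditioning on $\MM_0$; the paper uses this implicitly via the remark that $W_{i,j}(P)$ depends only on $\MM(P)$. Second, you handle the final de-conditioning properly: after Phase~2 the target set $\mathcal F=\{P:G_{[\heavy]}(P)\text{ simple}\}$ is independent of $\MM_0$, so averaging over $\MM_0$ preserves uniformity, and your observation that an all-$\clubsuit$ signature automatically satisfies the four inequalities in~\eqn{Wij} gives $\mathcal F=\Phi_2$. The paper's appendix states the analogous intermediate claim that the Phase~1 output is uniform on $\Phi_1$; that step of the argument would actually need care (the terminal class after Phase~1 still depends on the diagonal of $\MM_0$), but this is irrelevant for the lemma since only the Phase~2 conclusion is needed, and there your averaging argument is exactly what is required.
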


\subsection{Stage 1 of \PL*}
\lab{sec:PLstar}
The first stage of \PL* is just \PL\ without rejection. I.e. Phase 1 of \PL* consists of substeps (i), (iv) and (v) without f-rejection or b-rejection. Phase 2 of \PL* repeatedly switches away heavy loops using random heavy loop switchings.

\section{Proof of Theorem~\ref{thm:main}}\lab{sec:proofMain}

\remove{



\bel{delta}
f_1(\gamma)<\delta<f_2(\gamma), \quad 5/2<\gamma<3
\ee
where
\[
f_1(\gamma)=\max\left\{\frac{4/(\gamma-1)-2}{\gamma-2},\frac{3-\gamma}{\gamma-2},\frac{1}{2\gamma-3}\right\},
\]
\bean
f_2(\gamma)&=&\min\Big\{\frac12,1-\frac{1}{\gamma-1},\frac{2}{7-\gamma}, \frac{2-2/(\gamma-1)-(2\gamma-3)/(\gamma-1)^2}{3-\gamma},\frac{2-3/(\gamma-1)}{4-\gamma} \Big\}
\eean 
It is easy to see that \eqn{delta} is feasible whenever
\[
21/10+\sqrt{61}/10<\gamma<3,
\]
where $21/10+\sqrt{61}/10\approx 2.881024968$.

For $\gamma$ in that range, we may choose any
\bel{deltaRange}
\frac{1}{2\gamma-3}<\delta<\frac{2-3/(\gamma-1)}{4-\gamma}
\ee
for the algorithm. To optimise the running time, which we discuss next, we will choose $\delta$ just slightly above $1/(2\gamma-3)$. See the Appendix for a detailed reduction of all required constraints to~\eqn{delta}.
\ss
}

\no {\bf Uniformity}

By Lemma~\ref{lem:afterheavy}, \PL\ 
generates a uniformly random pairing $P\in\heavyacceptone$ after the first stage. In Section~\ref{sec:double} we show that the second stage of \PL\ outputs a uniformly random graphs with a plib degree sequence with parameter $\gamma$; see Lemma~\ref{lem:uniformity}.\ss




\no {\bf Running time}

The initial generation of $P\in\Phi$ takes $O(n)$ time, as there are only $O(n)$ points in the pairing model. 

If $(\gamma,\delta)$ satisfies all conditions in Lemmas~\ref{lem:Phi0}, ~\ref{lem:heavy-rej} and~\ref{lem:heavyloopcond} then \PL\ only restarts $O(1)$ times in expectation in Stage 1. In Section~\ref{sec:delta} we show that if $\gamma>21/10+\sqrt{61}/10\approx 2.881024968$ then there exists $\delta$ satisfying 
\bel{deltaRange}
\frac{1}{2\gamma-3}<\delta<\min\Big\{1-\frac{1}{\gamma-1}, \frac{2-2/(\gamma-1)-(2\gamma-3)/(\gamma-1)^2}{3-\gamma} \Big\},
\ee
and that with any such choice of $\delta$, the probability of any rejection occurring in the second stage is $o(1)$.   It only remains to bound the time complexity of \PL\ assuming no rejection occurs.

We first bound the running time in the first two phases. Each step of the algorithm involves computing $f_{i,j}(P)$ and $b_{i,j}(P,m)$ in Phase 1, and $f_i(P)$ and $b_i(P,m)$ in Phase 2.  In the next lemma we bound the computation time for such functions. The proof is in the Appendix. 
\begin{lemma}\lab{lem:heavy-compute}
\begin{enumerate}
\item[(a)] The running time for computing $f_{i,j}(P)$ and $f_i(P)$ in Phases 1 and 2, given $P$, is $O(H_1+|\heavy|m(P,i,j))$ and $O(H_1+|\heavy|m(P,i,i))$ respectively.
\item[(b)] 
The running time for computing $b_{i,j}(P,m)$ and $b_i(P,m)$ in Phases 1 and 2, given $P$, is $O(\Delta+m)$.
\end{enumerate}
\end{lemma}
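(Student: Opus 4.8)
The plan is to derive closed‑form inclusion--exclusion expressions for $f_{i,j}(P)$, $f_i(P)$, $b_{i,j}(P,m)$ and $b_i(P,m)$, and then check that every ingredient of each expression is extractable either by one scan over the points contained in vertices of $\heavy$ (for the $f$'s) or by a scan over the points of $i$ and $j$ (for the $b$'s), followed by low‑order arithmetic. For $f_{i,j}(P)$ with $m=m(P,i,j)$: a heavy $m$-way switching at $(i,j)$ is, up to the $m!$ factor matching the $m$ heavy pairs to slots, an ordered choice of $m$ distinct oriented light pairs $(u_1\to a_1,v_1\to b_1),\dots,(u_m\to a_m,v_m\to b_m)$, where the $i$-side of slot $g$ is re-attached to the vertex $a_g$ of $u_g$ and the $j$-side to the vertex $b_g$ of $v_g$. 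Validity forbids exactly: $a_g$ or $b_g$ lying in $\{i,j\}$; two heavy $a_g$'s being equal, or a heavy $a_g$ lying in the set $N_H(i)$ of heavy vertices adjacent to $i$ in $G(P)$; and the mirror conditions for the $b_g$'s and $N_H(j)$. Since a light pair has at most one heavy endpoint, each slot has one of three mutually exclusive types (both endpoints light; heavy $i$-side; heavy $j$-side), and summing over the type pattern of the $m$ slots expresses $f_{i,j}(P)$ as a finite sum, indexed by $(r_i,r_j)$ with $r_i+r_j\le m$, of binomial factors times $[N_0]_{m-r_i-r_j}\,r_i!\,e_{r_i}\,r_j!\,e_{r_j}$, where $N_0$ is the number of light pairs with both endpoints light and $e_r$ is the $r$-th elementary symmetric function of the multiset $\{\,\text{(number of light pairs incident to }w)\ :\ w\in\heavy\setminus(N_H(i)\cup\{i,j\})\,\}$ (and analogously with $j$ in place of $i$). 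The count $f_i(P)$ for a heavy loop switching at $i$ has the same shape, except that now all $2m$ re-attached endpoints must avoid $\{i\}\cup N_H(i)$.

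For the timing of the $f$'s, a single pass over every point contained in a vertex of $\heavy$, testing for each whether its partner lies in a light vertex, simultaneously produces $N_0$, all of the at most $|\heavy|$ nonzero ``light pairs incident to $w$'' counts, and the exact number of heavy pairs and heavy loops; this costs $O(H_1)$. The neighbourhoods $N_H(i)$ and $N_H(j)$ are obtained by scanning the $d_i$ points of $i$ and the $d_j$ points of $j$, which is $O(\Delta)\subseteq O(H_1)$ since $1\in\heavy$ forces $H_1\ge\Delta$. Given these, the elementary symmetric functions $e_0,\dots,e_m$ for each of the two neighbourhoods are built by the one-term-at-a-time recurrence over the $\le|\heavy|$ relevant vertices in $O(|\heavy|\,m)$ time, and the remaining sum, after grouping by $s=r_i+r_j$, amounts to one polynomial product of degree $\le m$ plus an $O(m)$-term outer sum, hence $O(m^2)$ arithmetic. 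Finally, because the defining conditions of $\heavyaccept$ are preserved through Phase~1 (heavy switchings do not affect other heavy multiple edges or heavy loops), throughout Phase~1 we have $m(P,i,j)=O(M_2^2/M_1^2)$ and throughout Phase~2 $m(P,i,i)=O(M_2/M_1)$; combined with $M_2=O(\Delta^2)$, $M_1=\Theta(n)$, $\Delta=O(n^{1/(\gamma-1)})$ and $\gamma>11/4$, this gives $m^2=O(\Delta)=O(H_1)$, so the totals are $O(H_1+|\heavy|\,m(P,i,j))$ and $O(H_1+|\heavy|\,m(P,i,i))$ respectively.

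For $b_{i,j}(P,m)$ the pairing $P$ has $ij$ a non-edge, so an inverse heavy $m$-way switching merely picks an ordered $m$-tuple of ``good'' points of $i$ (those not in a heavy loop or heavy multiple edge at $i$, of which there are $d_i-W_{i,j}(P)$) and of $j$ (of which there are $d_j-W_{j,i}(P)$), matches them up, and recreates the $ij$-edge together with the light pairs $x_g$; the only failures are the configurations in which some $x_g$ would have both endpoints heavy, and inclusion--exclusion over the number $t\ge1$ of such slots turns $b_{i,j}(P,m)$ into $[d_i-W_{i,j}]_m[d_j-W_{j,i}]_m$ minus an explicit alternating sum of $O(m)$ terms whose only further inputs are $d_i$, $d_j$, $W_{i,j}(P)$, $W_{j,i}(P)$, and the numbers of good points of $i$ (resp.\ $j$) whose partner is heavy. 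All of these are read off from the partners of the $O(\Delta)$ points of $i$ and $j$, tallying the multiplicities $m(P,i,\cdot)$ and $m(P,j,\cdot)$ in the same pass to detect heavy multiple edges; this gives $O(\Delta+m)$. The loop version $b_i(P,m)$ is identical with $[d_i-W_i]_{2m}$ replacing the product.

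The one genuine obstacle is keeping these inclusion--exclusions from having $2^m$ terms: a direct expansion over which slots create a forbidden heavy edge would be exponential, and $m$ can be a small power of $n$. The resolution, as above, is that ``creating a forbidden heavy edge at $i$'' is a condition tied independently to each heavy vertex $w\notin N_H(i)\cup\{i,j\}$, so the exact contribution is governed by the elementary symmetric functions of the per-vertex light‑pair counts and by falling factorials, all computable in $O(|\heavy|\,m)$ after the $O(H_1)$ scan; for the $b$'s the analogous ``bad'' event needs two heavy vertices, contributing the factor $h^2$ visible in the bound $\LB^b$, and here only $O(m)$ terms survive. Carefully verifying that the closed forms obtained this way are exactly $f_{i,j}$, $f_i$, $b_{i,j}$, $b_i$ — in particular that the ``no effect on other heavy multiple edges'' property holds automatically, and that light loops occurring among the $x_g$ are counted correctly — is a routine but somewhat tedious case analysis, which is where the bulk of the Appendix argument goes.
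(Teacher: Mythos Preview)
Your proposal is correct and follows essentially the same approach as the paper: for part~(a) the paper writes $f_{i,j}(P)$ in terms of the generating function $\prod_{w\in I}(1+X_w z)\prod_{v\in J}(1+Y_v z)$ (whose coefficients are exactly your elementary symmetric functions) and computes its first $m+1$ coefficients by the same one-term-at-a-time recurrence in $O(|\heavy|\,m)$ time, while for part~(b) it uses the same inclusion--exclusion over the ``bad'' slots, observing that the size of each $\ell$-fold intersection depends only on $\ell$, so the alternating sum has $O(m)$ terms. The only organisational difference is that the paper multiplies the $I$- and $J$-factors into a single polynomial before extracting coefficients, so its final summation is $O(m)$ rather than your $O(m^2)$ convolution, and it therefore does not need your auxiliary estimate $m^2=O(H_1)$ (which is anyway valid in the parameter range used).
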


Next, we bound the time complexity of Phases 1 and 2. 
\begin{lemma}\lab{lem:timeheavy}
The expected running time for the first two phases is $o(n)$.
\end{lemma}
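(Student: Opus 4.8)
The plan is to bound the total expected running time of Phases 1 and 2 by combining the per-step computation cost from Lemma~\ref{lem:heavy-compute} with an estimate of how much ``work'' is actually triggered across all steps, where a step is only nontrivial when the relevant heavy multiple edge or heavy loop is present in the current pairing. First I would note that Phases 1 and 2 together consist of $|\heavyindicespair| + |\heavyindices| = O(h^2)$ steps, but the overwhelming majority are trivial: at step $k$ of Phase 1, if $\multiplicity(P_{k-1},i,j)\le 1$ we simply set $P_k = P_{k-1}$ at cost $O(1)$ (one lookup in the signature), and similarly in Phase 2. So the cost decomposes as $O(h^2)$ for the trivial steps plus $\sum$ (cost of nontrivial steps), and I must show both pieces are $o(n)$. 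For the first piece, $h^2 = n^{2-2\delta(\gamma-1)}$ by~\eqn{h}, and this is $o(n)$ precisely when $1-\delta > 1/(2(\gamma-1))$; since the constraints in~\eqn{deltaRange} already force $\delta < 1 - 1/(\gamma-1) < 1 - 1/(2(\gamma-1))$, this holds. (If the bookkeeping is instead arranged so one never iterates over absent edges — e.g.\ by first listing the $O(M_2^2/M_1^2)$ heavy multiple edges and $O(M_2/M_1)$ heavy loops present in $P_0$, which are the only steps that can be nontrivial since heavy switchings never create heavy multiedges or heavy loops — then the trivial-step cost is subsumed and one only needs the second piece.)

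For the nontrivial steps, the key point is that a heavy $m$-way switching at $(i,j)$ is performed at most once per pair $(i,j)$ (Phase 1 processes each pair once, and afterwards $ij$ has multiplicity $0$ or $1$ forever since later heavy switchings do not affect it), and likewise each heavy vertex has its loops removed once in Phase 2. By Lemma~\ref{lem:heavy-compute}, the cost of the nontrivial step at $(i,j)$ is $O\big(H_1 + h\, m(P,i,j) + \Delta + m\big)$, and at a heavy loop vertex $i$ it is $O\big(H_1 + h\, \multiplicity(P,i,i) + \Delta + \multiplicity(P,i,i)\big)$. Summing over the at most $4M_2^2/M_1^2$ heavy multiedges and $4M_2/M_1$ heavy loops guaranteed by membership in $\heavyaccept$ (cf.~\eqn{Wij}), and using $\sum_{i<j} m_{i,j}I_{m_{i,j}\ge 2}\le 4M_2^2/M_1^2$ and $\sum_i m_{i,i}\le 4M_2/M_1$ from~\eqn{Wij} to control the $\sum h\, m(P,i,j)$ terms, the total nontrivial cost is
\[
O\!\left(\frac{M_2^2}{M_1^2}\,(H_1+\Delta) + \frac{M_2}{M_1}\,(H_1+\Delta) + h\cdot\frac{M_2^2}{M_1^2} + h\cdot\frac{M_2}{M_1} + (\text{sum of }m\text{'s})\right).
\]
Now I substitute the power-law bounds: $M_1 = \Theta(n)$ and $M_2 = O(\Delta^2) = O(n^{2/(\gamma-1)})$ from~\eqn{Delta}, $\Delta = O(n^{1/(\gamma-1)})$, $h = n^{1-\delta(\gamma-1)}$ from~\eqn{h}, and $H_1 \le M_1 = O(n)$ — though in fact $H_1 = O\big(h^{(\gamma-2)/(\gamma-1)} n^{1/(\gamma-1)}\big)$ by~\eqn{di}, which is $o(n)$ and helps in the $(M_2^2/M_1^2)H_1$ term. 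The dominant contributions to check are $h M_2^2/M_1^2 = n^{1-\delta(\gamma-1)}\cdot n^{4/(\gamma-1)-2} = n^{4/(\gamma-1)-1-\delta(\gamma-1)}$ and $(M_2^2/M_1^2)\Delta = n^{4/(\gamma-1)-2}\cdot n^{1/(\gamma-1)} = n^{5/(\gamma-1)-2}$; each must be $o(n)$.

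The main obstacle — and the only genuinely nontrivial inequality — is verifying these exponents are strictly below $1$ under the hypotheses~\eqn{deltaRange} and the standing assumption $\gamma > 21/10+\sqrt{61}/10$. For $n^{4/(\gamma-1)-1-\delta(\gamma-1)} = o(n)$ I need $\delta(\gamma-1) > 4/(\gamma-1) - 2$, i.e.\ $\delta > (4/(\gamma-1)-2)/(\gamma-1)$; for $\gamma$ in the allowed range this quantity is negative or small and is dominated by the lower bound $\delta > 1/(2\gamma-3)$ already imposed, so it is automatic. For $n^{5/(\gamma-1)-2} = o(n)$ I need $5/(\gamma-1) < 3$, i.e.\ $\gamma > 8/3 \approx 2.667$, which holds since $\gamma > 2.881$. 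The remaining terms are smaller: $(M_2/M_1)\Delta = O(n^{3/(\gamma-1)-1}) = o(n)$ since $\gamma > 2 > 3/2$, and the $h(M_2/M_1)$ and $(M_2^2/M_1^2)H_1$ terms are handled the same way or are even easier. Thus each summand is $o(n)$, and since there are only $O(1)$ of them, the total expected running time of Phases 1 and 2 is $o(n)$, as claimed. I would relegate the elementary arithmetic of these exponent comparisons to the Appendix, presenting only the reductions to constraints already known to be implied by~\eqn{deltaRange}. \qed
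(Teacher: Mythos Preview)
Your proposal is correct and follows essentially the same approach as the paper: both sum the per-step cost $O(H_1+|\heavy|\,m_{i,j})$ from Lemma~\ref{lem:heavy-compute} over only those $(i,j)$ with $m_{i,j}\ge 2$, use the $\Phi_0$ bound $\sum m_{i,j}I_{m_{i,j}\ge 2}\le 4M_2^2/M_1^2$, and verify $H_1 M_2^2/M_1^2=o(n)$ under~\eqn{deltaRange}. Your version is slightly more careful in two places the paper glosses over --- you explicitly account for the $O(h^2)$ cost of iterating over trivial pairs (or alternatively for precomputing the list of heavy multiedges), and you track the $\Delta$ contribution from part~(b) of Lemma~\ref{lem:heavy-compute} separately, though in fact $\Delta\le H_1$ makes that term redundant.
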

\proof By the definition of $\Phi_0$ in~\eqn{Wij}, 
\[
\sum_{i<j\le h} \ex(m_{i,j}I_{m_{i,j}\ge 2}) = O(M_2^2/M_1^2)
\]
for every $P\in\Phi_0$. Let $P_0\in\Phi_0$ be the pairing obtained before entering Phase 1. Recall that $\MM(P_0)$ is the signature of $P_0$ and let $m_{i,j}$ denote the $ij$ entry of $\MM(P_0)$ for $(i,j)\in\heavyindicespair$.  Note that the heavy multiple edge switchings are applied only for $(i,j)\in \heavyindicespair$ such that $m_{i,j}\ge 2$. By Lemma~\ref{lem:heavy-compute}, for each such $(i,j)$, the running time is $O(m_{i,j})$ for sub-step (i), $O(H_1+|\heavy|m_{i,j})$ 
 for (ii), $O(1)$ for  (iii) and (iv), and 
$O(H_1+|\heavy|)$ 
 for (v). Hence, the total time required for switching away each heavy multiple edge is $O(H_1+|\heavy|m_{i,j})$ and hence, the time complexity for Phase 1 is
\[
\sum_{1\le i<j\le h} O((H_1+|\heavy|m_{i,j}) I_{m_{i,j}\ge 2}) =O(H_1)\sum_{1\le i<j\le h} \ex(m_{i,j}I_{m_{i,j}\ge 2})= O(H_1 M_2^2/M_1^2),
 \] 
as $|\heavy|=O(H_1)$. The above is bounded by $o(n)$ for any $\gamma$ in the range of Theorem~\ref{thm:main} and any choice of $\delta$ satisfying~\eqn{deltaRange}. Similarly, the time complexity for Phase 2 is $o(n)$.\qed
\ss

 The analysis for the running time of Phases 3--5 is similar to that of~\cite{GWreg}. In Section~\ref{sec:rho} we show that parameters $\rho_{\tau}(i)$ such as $\rho_{III}(i)$ can be computed inductively in at most $B_D=o(n)$ steps. Each switching step involves counting the number of certain structures (involving a bounded number of edges) in a pairing (e.g.\ the number of 6-cycles). Using direct brute-force counting methods yields the time complexity claimed in Theorem~\ref{thm:main}. The details are given in the Appendix under the heading ``Running time for Phases 3--5".


\remove{
Next, we bound the time complexity of Phases 3 and 4. Again, each step of the algorithm involves information on $f_{\tau}(P)$, $\b_{\tau}(P,\pairstars)$ and $b(P)$. However, it is easy to see that it is never necessary to compute $f_{\tau}(P)$, due to the way the $\UB_{\tau}(i)$ is defined. For instance, $\UB_{I}(i)=4iM_1^2$ which is the total number of ways to pick a double edge and label its end points, and pick two pairs,  with repetition allowed, and label their end points. If such a choice does not yield a valid switching then perform an f-rejection. It is easy to see that the f-rejection is perform with the correct probability, as defined in~\eqn{frejDef}. Thus, it suffices to compute $\b_{\tau}(P,\pairstars)$ and $b(P)$ only. 

We first discuss the computation of $b(P)$. By~\eqn{Zstar}, $b(P)=M_2L_2-Z^*(P)$. Note that $Z^*(P)$ counts pairs of $2$-paths involving at most $5$ vertices, or pairs of $2$-paths looking like structures in Figure~\ref{f:H} but involving double edges. As~\cite{MWgen}, $Z^*(P)$ can be computed by counting the number of 3-paths joining a pair of points. The number of such 3-paths can be computed within time $M_2\Delta$ as there are at most so many 3-paths in $P$. Then, we can maintain these data with proper data structure, by updating their values each time when a switching is performed. The time cost for updating the data is only $O(1)$ in every step. So the total computation time for $b(P)$ is bounded by $M_2\Delta+O(B_D)=O(M_2\Delta)$. 

Last we discuss the computation of  $\b_{\tau}(P,\pairstars)$ for $\tau\in\{III,IV,V,VI,VII\}$. Each such function counts the number of ways to choose a structure corresponding to one of those in Figure~\ref{f:H}, plus several other pairs with restrictions that between certain pairs of vertices
 there should be no edge. For instance, for 
$\tau=III$, $\b_{III}(P,\pairstars)$ counts the number of choices for pairs representing $H_1$ in Figure~\ref{f:H} and an extra pair that is not too close to the $H_1$. Clearly these structures can be efficiently counted by maintaining certain data using proper data structures, as was done for computing $b(P)$. However, for simplicity, we just use the bound from doing a brute-force search. The computation time required is always bounded by the total number of structures to be counted that appear in $P$. Hence, $\b_{\tau}(P,\pairstars)$ can be counted within time $M_3n^{2\delta}M_1$ for $\tau=III$; $M_2\Delta^2 n^{\delta} M_1^3$ for $\tau=IV$; $M_3n^{2\delta} M_1^4$ for $\tau=V$; $M_2\Delta^2 n^{\delta} M_1^6$ for $\tau=VI$ and $M_3n^{2\delta} M_1^7$ for $\tau=VII$. Let $\rho^*_{\tau}$ denote $\max_{0\le i\le \imax}\rho_{\tau}(i)$. Then the expected total cost for computing $\b_{\tau}(P,\pairstars)$ during the algorithm is bounded by
\[
B_D(M_3n^{2\delta}M_1\rho^*_{III}+M_2\Delta^2 n^{\delta} M_1^3 \rho^*_{IV}+ M_3n^{2\delta} M_1^4 \rho^*_{V}+M_2\Delta^2 n^{\delta} M_1^6 \rho^*_{VI} +M_3n^{2\delta} M_1^7 \rho^*_{VII}).
\]
With the value of $\rho^*_{\tau}$ in Lemma~\ref{lem:solution}, it is easy to see that the last term $B_DM_3n^{2\delta}M_1^7\rho^*_{VII}$ dominates the sum.
For the range of $\gamma$ in Theorem~\ref{thm:main} and by choosing sufficiently small $\delta$ satisfying~\eqn{deltaRange}, the above is bounded by $n^{7.58}$.

Moreover, since $B_D\rho^*_{\tau}=o(1)$ for $\tau\in\{IV,V,VI,VII\}$, the probability that these types of switchings are ever performed in \PL\ is $o(1)$, since a.a.s.\ Phase 5 lasts only $O(B_D)$ iterations by Lemma~\ref{lem:steps}. Therefore, a.a.s.\ the  the computation time of  $\b_{\tau}(P,\pairstars)$ for $\tau\in\{III,IV,V,VI,VII\}$ is bounded by
\[
B_D(M_3n^{2\delta}M_1\rho^*_{III})=O(n^{3.32}).
\]

It is easy to bound the computation time in Phases 3 and 4, using the brute-force searching, by $n^{3.1}$. Hence, we proved the time complexity claimed in Theorem~\ref{thm:main}. \qed

}


\section{Proof of Theorem~\ref{thm:approximate} }
\lab{sec:approximate}

The approximate sampler \PL*\ is simply \PL\ without rejections, except for rejections when checking $P\in \heavyaccept$ and $P\in\a_0$.  As a result, there is no need to compute the number of certain structures, as done in \PL. 
 \ms

\no {\em Proof of Theorem~\ref{thm:approximate}.}
We have already discussed above that it takes \PL, and thus \PL*\ as well, $O(n)$ time to find $P\in\a_0$ before entering the last three phases. The computation of $\rho_{III}(i)$ takes at most $B_D=o(n)$ units of time. It is easy then to see then that each iteration of \PL*\ in these three phases takes only $O(1)$ time, as it only involves switching several pairs, and it is shown in Section~\ref{sec:double} that the total number of iterations in these three phases is $O(B_L+B_D+B_T)=o(n)$. Hence,  \PL*\ is a linear time algorithm  in expectation. It only remains to show that the output of \PL*\ is close to uniform.

Clearly \PL\ and \PL*\ can be coupled so that they produce the same output if no rejection occurs, and no types other than $\tau=I,III$ are chosen in \PL\ in the last three phases. By Lemma~\ref{lem:uniformity}, 
any rejection occurs in Phase 3--5 with probability $o(1)$. We also prove (see~\eqn{xi} and Lemma~\ref{lem:solution}) that the probability of ever performing a switching of type other than $I$ and $III$ is $o(1)$ (the probability is $O(\xi B_D)$ which is guaranteed $o(1)$ by Lemma~\ref{lem:typerej}). Thus, the probability that \PL\ and \PL* have different output is $o(1)$. Let $A$ be an arbitrary subset of ${\mathcal G}_{{\bf d}}$. The probability that \PL\ outputs a graph in $A$ whereas \PL*\ does not is $o(1)$, as that happens only if they have different outputs. Hence,
\[
\pr_{PLD}(A)-\pr_{PLD*}(A)=o(1)\quad \mbox{for all $A\subseteq {\mathcal G}_{{\bf d}}$}.
\]
That is, the distribution of the output of \PL*\ differs  by  $o(1)$ in total variation distance from   the distribution of the output of \PL, which is uniform. \qed

\section{Second stage: phases for light switchings}
\lab{sec:double}

The second stage of \PL* has been described already in Section~\ref{sec:general}. In this section, we tune it into \PL\ with a rejection scheme, as well as introducing more types of switchings.

\jt{Recall that $M_k=\sum_{1\le i\le n}[d_i]_k$, $H_k=\sum_{i\in\heavy}[d_i]_k$ and $L_k=M_k-H_k$.} Define
\bel{imax}
B_L=\frac{4L_2}{M_1}; \quad B_D=\frac{4L_2M_2}{M_1^2} \quad B_T=\frac{2L_3M_3}{M_1^3}.
\ee
and
recall from~\eqn{def:a0} that
 \[
\a_0=\left\{P\in\heavyacceptone:\ 
\begin{array}{l}
  L(P)\le B_L;\  D(P)\le B_D;\  T(P)\le B_T;\\
 G(P)\ \mbox{contains no other types of multiple edges}
\end{array}\right\}.
\]
Roughly speaking, the only possible multiple edges in a pairing in $\accept$ are loops,  double edges and triple edges, and there are not too many of them.
Lemma~\ref{lem:afterheavy} guarantees that the resulting pairing $P$ after the first two phases is uniformly distributed in $\heavyacceptone$, and the following lemma guarantees that \PL\ restarts $O(1)$ times in expectation until entering the second stage, i.e.\ the last three phases. 
The proof of the lemma is presented in Appendix.
\begin{lemma}\lab{lem:A0rej} 
Assume $L_4=o(M_1^2)$ and $L_4M_4=o(M_1^4)$.
With probability at least $1/2$, a random $P\in \heavyacceptone$ is in $\a_0$.
\end{lemma}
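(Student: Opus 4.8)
The statement to prove is Lemma~\ref{lem:A0rej}: assuming $L_4 = o(M_1^2)$ and $L_4 M_4 = o(M_1^4)$, a uniformly random $P \in \heavyacceptone$ lies in $\a_0$ with probability at least $1/2$. Recall $\a_0$ is the subset of $\heavyacceptone$ on which the number of light loops $L(P)$, light double edges $D(P)$, and light triple edges $T(P)$ are bounded by $B_L, B_D, B_T$ respectively, and in which there are no light multiple edges of multiplicity $\ge 4$ and no double/triple edges involving a heavy vertex beyond what is already allowed. (Heavy multiple edges and heavy loops are already excluded in $\heavyacceptone$, so the only "other types" to rule out are light multiedges of multiplicity $\ge 4$.) The plan is to show that each of these bad events has small probability under the uniform distribution on $\heavyacceptone$, then apply a union bound to conclude the complement has probability $\ge 1/2$.

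\textbf{Step 1: transfer from $\heavyacceptone$ to $\Phi$.} Working directly on $\heavyacceptone$ is awkward, so first I would compare the uniform measure on $\heavyacceptone$ to the uniform measure on the full pairing space $\Phi$. By Lemma~\ref{lem:Phi0}, $\pr_\Phi(P \in \heavyaccept) \ge 1/4 + o(1)$, and a standard estimate (the probability that $G_{[\heavy]}(P)$ is simple is bounded below by a constant, using $H_2 = o(H_1^{2})$-type bounds that follow from the \plib\ property via~\eqn{Delta}) gives $\pr_\Phi(P \in \heavyacceptone) \ge c$ for some constant $c > 0$. Hence for any event $\mathcal B$, $\pr_{\heavyacceptone}(\mathcal B) \le \pr_\Phi(\mathcal B)/c$. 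So it suffices to bound each bad event's probability under the uniform pairing model and absorb the constant.

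\textbf{Step 2: first-moment bounds in the pairing model.} For light loops: $\ex_\Phi L(P) = \Theta(L_2/M_1)$ by the usual pairing-model computation (a loop at $v$ uses two points of $v$, paired together, probability $\sim 1/M_1$ each, summed over light $v$ gives $\sum_{v\in\light}[d_v]_2 / M_1 = L_2/M_1$). Since $B_L = 4L_2/M_1$, Markov gives $\pr_\Phi(L(P) > B_L) \le 1/4 + o(1)$. Similarly $\ex_\Phi D(P) = \Theta(L_2 M_2 / M_1^2)$ and $\ex_\Phi T(P) = \Theta(L_3 M_3/M_1^3)$ (a light double edge picks two points in $u$ and two in $v$, with the two resulting pairs each forming; summing $[d_u]_2[d_v]_2/M_1^2$ over light pairs gives essentially $L_2 M_2/M_1^2$, and analogously for triples), so by Markov with the constants $B_D = 4L_2M_2/M_1^2$, $B_T = 2L_3M_3/M_1^3$ these each have probability $\le 1/4 + o(1)$ (for triples the constant $2$ needs the expected value to be $\le (1+o(1)) L_3M_3/M_1^3$, which is what the computation gives). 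For multiplicity-$\ge 4$ light edges: $\ex_\Phi(\#\text{such}) = O(L_4 M_4 / M_1^4) = o(1)$ by hypothesis, so this event has probability $o(1)$; and any light edge of multiplicity exactly $4$ or more that involves a heavy endpoint is covered by the same computation (or a stronger one since $H_k$ is much smaller). Here the hypothesis $L_4 = o(M_1^2)$ is what controls second-moment-type corrections and the variance needed to make the Markov bounds on $D$ and $T$ actually give the stated constants rather than merely $O(1)$ factors.

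\textbf{Step 3: union bound.} Combining, $\pr_{\heavyacceptone}(P \notin \a_0) \le \frac{1}{c}\big( \tfrac14 + \tfrac14 + \tfrac14 + o(1)\big)$ — but this is $> 1/2$ once we divide by $c \le 1/4$, so the naive union bound over $\Phi$ is too lossy. The fix is to do the first-moment computations \emph{conditional on $P \in \heavyacceptone$} directly, not via the transfer in Step~1: conditioning on $G_{[\heavy]}$ being simple and on the $\heavyaccept$-constraints barely changes the expected number of light loops/doubles/triples, since those constraints only concern heavy–heavy interactions and the numbers of points involved ($O(M_2^2/M_1^2)$ and $O(M_2/M_1)$, which are $o(M_1)$) are negligible compared to $M_1$. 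So $\ex_{\heavyacceptone} L(P) = (1+o(1)) L_2/M_1$, etc., and Markov applied on $\heavyacceptone$ directly gives $\le 1/4 + o(1)$ for each of the three count events and $o(1)$ for the multiplicity-$\ge 4$ event. The union bound then yields $\pr_{\heavyacceptone}(P \notin \a_0) \le 3/4 + o(1) < \ldots$ — still not quite $1/2$. To get below $1/2$ one sharpens: the three events are nearly independent / negatively correlated in the pairing model, or more simply one uses that each expectation is at most a $\tfrac14$-fraction of its threshold \emph{with room to spare} because $B_D, B_T$ can be taken with slightly larger constants, or one notes the problem only needs probability $\ge 1/2$ and re-examines: actually $\ex L(P)/B_L \le 1/4$, and similarly for the others, so $\pr(\text{bad}) \le \tfrac14+\tfrac14+\tfrac14 = \tfrac34$, which forces us to improve one of the three to $o(1)$. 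The cleanest route is to observe that $L(P)$ concentrates (Poisson-type, since $L_2 = \Theta(M_1)$ so $\ex L(P) = \Theta(1)$) so $\pr(L(P) > B_L) = o(1)$; same for whichever of $D, T$ has bounded mean. Whichever two of the three quantities have bounded mean contribute $o(1)$; only the one with growing mean contributes via Markov a genuine $\le 1/4$. That gives $\pr_{\heavyacceptone}(P\notin\a_0) \le 1/4 + o(1) < 1/2$, proving the lemma.

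\textbf{Main obstacle.} The delicate point is Step~3: getting the total bad-event probability strictly below $1/2$ rather than merely below $1$. This requires either sharp (not just Markov) control — concentration of the $\Theta(1)$-mean quantities among $L, D, T$ — or carefully tracking which of $L_2/M_1$, $L_2M_2/M_1^2$, $L_3M_3/M_1^3$ actually grow with $n$ for \plib\ sequences (by~\eqn{Delta}, $M_1 = \Theta(n)$, $M_2, M_3$ are powers of $n$, so $L_2 M_2/M_1^2$ and $L_3M_3/M_1^3$ can grow while $L_2/M_1 = O(1)$). A secondary technicality is justifying that conditioning on $\heavyacceptone$ perturbs these first moments only by $1+o(1)$, which needs the bounds $M_2^2/M_1^2 = o(M_1)$ and the switching-count estimates already available from the \plib\ hypotheses; this is routine but must be stated. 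I expect the bulk of the actual proof to be these first-moment computations, deferred (as the excerpt says) to the Appendix.
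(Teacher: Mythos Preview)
Your overall strategy---first moments plus Markov plus a union bound---is exactly what the paper does, but you have miscomputed the expectations and then chased a rescue (concentration) that is both unnecessary and wrong.

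The combinatorial factors you dropped are the point. In the pairing model, the expected number of light loops is $\sim L_2/(2M_1)$, not $L_2/M_1$ (a loop at $v$ uses an unordered pair of points, contributing $\binom{d_v}{2}/(M_1-1)$). More generally, $\ex Y_k \le (1+o(1))\,L_k M_k/(k!\,M_1^k)$ for light multiple edges of multiplicity $k$: the $1/k!$ comes from the matching of $k$ points in $u$ to $k$ points in $v$. With $B_L = 4L_2/M_1$, $B_D = 4L_2M_2/M_1^2$, $B_T = 2L_3M_3/M_1^3$, Markov then gives probabilities at most $1/8$, $1/8$, $1/12$ respectively, summing to $1/3 + o(1) < 1/2$. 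The hypotheses $L_4 = o(M_1^2)$ and $L_4M_4 = o(M_1^4)$ dispose of double loops and light edges of multiplicity $\ge 4$ (both have expectation $o(1)$). No concentration is needed.

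Your Step~3 rescue is in fact incorrect: if $\ex L(P) = \Theta(1)$ then $L(P)$ is approximately Poisson with parameter $\Theta(1)$, so $\pr(L > B_L)$ is a fixed positive constant, not $o(1)$.

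On the conditioning: the paper does not transfer to $\Phi$ via a constant $c$. Instead it conditions on the heavy--heavy subpairing $P_{[\heavy]} = P'$ for an arbitrary simple $P'$, and computes the above moments in the residual random pairing of the remaining points. This is clean because (as you can check from the definition) $\heavyacceptone$ is exactly $\{P: G_{[\heavy]}(P)\text{ simple}\}$---the extra $\heavyaccept$ constraints become vacuous once $G_{[\heavy]}(P)$ is simple---so conditioning on $P'$ and averaging over simple $P'$ is the same as working in $\heavyacceptone$. This avoids your lossy transfer entirely.
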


The second stage consists of Phases 3, 4 and 5 during which light multiple edges (loops, triple edges and double edges) are  eliminated  in turn. These phases are similar to those in~\cite{GWreg} for regular graphs, except that here we use more types of switchings to boost several types of structures due to the non-regular degree sequence. 
  In each phase, the number of a certain type of light multiple edges will be reduced until there are no multiple edges of that type. There will be  f and b-rejections designed so that after each phase, the output is uniform \jt{conditioned on the event that} the number of multiple edges of a certain type is zero. In each phase, the set of pairings is partitioned into classes $\cup_{i=0}^{\imax} \state_i$, where $\state_i$ is the set of pairings with exactly $i$ multiple edges of the type that is treated in the phase; $\imax\in\{B_L,B_D,B_T\}$ is a pre-determined integer whose validity is guaranteed by the definition of $\accept$. Let $\a_i$ be the output after each phase for $i=3,4,5$. In Phase 3 (and 4), the number of loops (triple edges) will be reduced by one in each step. Hence, $\a_4$ will be the set of pairings in $\accept$ containing no loops or triple edges. The analysis for Phases 3 and 4 is quite simple,   almost the same as that in~\cite{MWgen} and~\cite{GWreg}. We postpone this until Sections~\ref{sec:loop}.  In Phase 5 the number of light double edges is reduced.   For this, we will use the boosting technique introduced in~\cite{GWreg}, but we use more booster switchings to cope with non-regular degrees.
We briefly describe the framework of Phase 5 here, pointing out the major differences compared with~\cite{GWreg}, with the detailed design, definitions and analysis given in the rest of this section. Phase 5 consists of a sequence of switching steps, where each step typically eliminates one double edge using a type I switching operation. This is the same operation used in~\cite{MWgen}. However, in some occasional switching steps, some other types of switchings are performed. The algorithm first determines certain parameters $\rho_{\tau}(i)$, $0\le i\le B_D$. In each switching step, \PL\ chooses a switching type $\tau$, e.g. $\tau=I$, from a given set of switching types, with probability $\rho_{\tau}(i)$, where $i$ is the number of double edges in the  current pairing. After choosing type $\tau$, \PL\ performs a random switching of  type $\tau$. Rejections can happen in each switching step with small probabilities, and these probabilities are carefully designed to maintain the   property that  the expected number of times a pairing is reached, throughout the course of the algorithm, depends only on the number of double edges the pairing has.  (This feature was introduced in~\cite{GWreg} and permits the use of switchings that occasionally increase the number of double edges, and is a marked departure from the method in~\cite{MWgen}.) As a consequence, 
the output after Phase 5 has uniform distribution. It follows that the output of \PL\ is uniform over simple graphs with   degree sequence {\bf d}. 

 The parameters $\rho_{\tau}(i)$ are designed to guarantee the uniform distribution of the output of \PL, and meanwhile to have a small probability of any rejection during Phase 5.  They are determined as the solution of a system of equations for which we give an efficient ($o(n)$ time) computation scheme. The analysis is a little different from~\cite{GWreg} due to the different types of switchings being used. Switchings in~\cite{GWreg} are classified into different classes (class A and class B), whereas the switchings in the present paper can all be classified as class A under that paradigm. However, the analysis of b-rejections is  more complicated than in~\cite{GWreg}, and as a convenience for the argument we need to introduce  ``pre-states" of a pairing, and ``pre-b-rejections". These concepts will be discussed in detail in Section~\ref{sec:outline}. 

As in~\cite{GWreg}, under various conditions on $M_k$, $H_k$ and $L_k$ for several small values of $k$, we bound the probability of any rejection during Phases 3--5 by $o(1)$ (see Lemmas~\ref{lem:xi}, \ref{lem:typerej}, \ref{lem:drejf}, \ref{lem:pre-b-rej}, \ref{lem:drejb}, \ref{lem:lrej}, and~\ref{lem:trej}). These conditions set constraints on $(\gamma,\delta)$. Eventually we will show that if $\gamma>21/10+\sqrt{61}/10\approx 2.881024968$ then there exists $\delta>0$ satisfying all conditions required in these lemmas; see Lemma~\ref{lem:uniformity}.

As mentioned before, we postpone the discussions of Phases 3 and 4 till Section~\ref{sec:loop} and we start our discussions of Phase 5 by assuming 
 that the algorithm enters Phase 5 with a uniform random pairing $P\in \a_4$, where $\a_4$ is the set of pairings in $\accept$ containing no loops or triple edges. By the definition of $\accept$, the number of double edges in $P$ is at most $\imax=B_D$, and they are all light. In Phase 5, $\state_i$ \jt{denotes} the set of pairings in $\a_4$ containing exactly $i$ double edges. Then, $\state_0$ is the set of pairings in $\Phi$ that produce simple graphs. 

\subsection{Phase 5: double edge reduction}



We will use five different types of switchings in the phase for the double edge reduction. We introduce one of them now, and the rest after some discussion. The most commonly used type is I, which was shown in less detail in Figure~\ref{f:typeI}. We formally define it as follows. \ss

\no {\em Type I switching}. Take a double edge $z$; note that one end of $z$ must be a light vertex. Label the end points of the two pairs in $z$ by $\{1,2\}$ and $\{3,4\}$ so that points 1 and 3 are in the same vertex and this vertex is light. Choose another two pairs $x$ and $y$ and label their ends by $\{5,6\}$ and $\{7,8\}$ respectively. Label the vertices that containing points $1\le i\le 8$ as shown in Figure~\ref{f:I} (same as Figure~\ref{f:typeI} but with points and vertices labelled as in the definition). If (a) $u_i$, $v_i$, $1\le i\le 3$ are six distinct vertices; (b) both $x$ and $y$ are single edges; (c) there are no edges between $u_1$ and $u_i$ for $i\in\{2,3\}$ and no edges between
$v_1$ and $v_i$ for $i\in\{2,3\}$, then replace the four pairs $\{2i-1,2i\}$, $1\le i\le 4$ by $\{1,5\}$, $\{3,7\}$, $\{2,6\}$ and $\{4,8\}$ as shown in Figure~\ref{f:I}.
\begin{figure}[htb]

 \hbox{\centerline{\includegraphics[width=7cm]{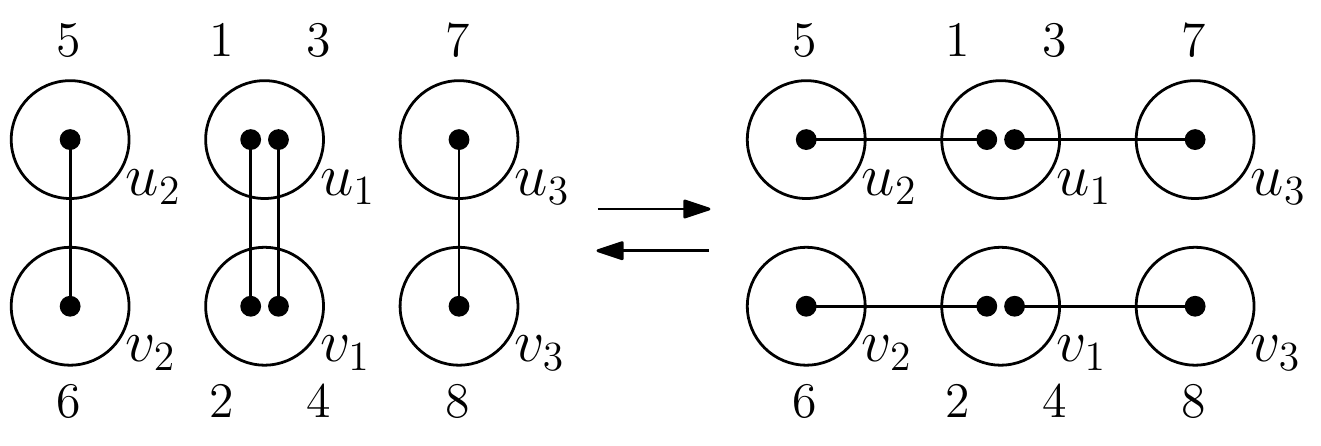}}}
\caption{type I}
\lab{f:I}

\end{figure}

The inverse of the above operation is called an inverse type I switching. Hence, to perform an inverse switching, we will choose a light 2-star and label the points in the two pairs by $\{5,1\}$ and $\{3,7\}$ so that 1 and 3 are in the same light vertex; choose another 2-star (not necessarily light) and label the points in the two pairs by $\{6,2\}$ and $\{4,8\}$ so that 2 and 4 are in the same vertex. Label all involved \jt{vertices} as shown in Figure~\ref{f:I}. If (a) all the six vertices are distinct; (b) both 2-stars chosen are simple; (c) there is no edge between $u_i$ and $v_i$ for all $1\le i\le 3$, then replace all four pairs $\{1,5\}$, $\{3,7\}$, $\{2,6\}$ and $\{4,8\}$ by  $\{2i-1,2i\}$, $1\le i\le 4$.

Note that from the definition of the type I switching, there can be multiple switchings of type I that \jt{switch} a pairing $P$ to another pairing $P'$, caused by different ways of labelling the points in the switched pairs.

 When a type I switching is performed, the number of double edges will decrease by one. Given a pairing $P$, let $\calf_1(P)$ denote the set of mappings from the numbers 5, 1, 3, 7  to the points of $P$ such that $\{5,1\}$ and $\{3,7\}$ are pairs of $P$ and   1 and 3  are in the same vertex, which is light. We call $(\{5,1\},\{3,7\})$ a {\em  light ordered 2-star}. Define $\calf_2(P)$ similarly from the numbers 6, 2, 4, 8 without the restriction to light vertices. Let $\calf(P)=\calf_1(P)\times \calf_2(P)$. The elements of $\calf(P)$ are called {\em doublets}.
As $|\calf_1(P)|=L_2$ and $|\calf_2(P)|=M_2$, we have $|\calf(P)|=M_2L_2$, which is independent of $P$. If a type I switching switches a pairing $P'$ to a new pairing $P$, then it creates a new element in $\calf(P)$ (but not all elements in $\calf(P)$ are created this way). 

Note the labels on the points in each type I switching induce, in an obvious way, a unique doublet in the resulting pairing. Now, for a given pairing $P$, define $\Z_0(P)$ to be the set of doubles in $\calf(P)$ that can be created in this way. 
Then, the number of ways that $P$ can be created via a type I switching is $|\Z_0(P)|$. 
Since $\Z_0(P)\subseteq \calf(P)$, we immediately have $|\Z_0(P)|\le M_2L_2$. For a power-law distribution-bounded degree sequence with $\gamma<3$, $|\Z_0(P)|$ varies a lot among pairings $P\in \state_i$. As we discussed before, a big variation of this number will cause a big probability of a b-rejection. It turns out that five main structures cause the undesired variation of this number: $H_i$, $1\le i\le 5$ in Figure~\ref{f:H}. In order to reduce the probability of a b-rejection, we boost pairings $P$ such that $G(P)$ does not contain sufficient number of copies of $H_0$, or in other words, $G(P)$ contains many copies of $H_i$ for some $1\le i\le 5$.  These force the use of switchings of type III, IV, V, VI and VII respectively (we leave out type II as there is a type of switching called type II used for the $d$-regular case which we do not use in this paper; but it is possible that using this type, together with strengthening other parts can lead to a weaker constraint on $\gamma$). These switchings will be defined in Section~\ref{sec:switchings} and are shown in  Figures~\ref{f:typeIII}  and~\ref{f:IV} -- \ref{f:VII}. Let $\Lambda=\{I,III,IV,V,VI,VII\}$ be the set of switching types to be used.  All types of switchings, except for type I, will switch a pairing to another pairing containing the same number of double edges. In other words, a type $\tau$ ($\tau\neq I$) switching does not change the number of double edges in a pairing.  

The basic method in this part of the algorithm and analysis is similar to that in~\cite{GWreg}  but there are new ingredients, which include the use of pre-states. We briefly describe how and why they are used.

Given a pairing $P$, let $\Z(P)$ be the subset of $\calf(P)$ that includes just those doublets induced (in the way described above) by applying a switching  of any type  to some other pairing to create $P$. Then, each switching (of any type) converting some $P'$ to $P$ induces  a doublet   $\pairstars\in\Z(P)$. However, except for type I, other type of switchings create extra pairs along with $\pairstars$. For instance, a type III switching creates one extra pair whereas a type IV switching 
creates four extra pairs (see Figures~\ref{f:typeIII} and~\ref{f:IV}). These extra pairs modify (slightly) the expected number of times that a given $\pairstars\in \Z(P)$ is created.
 In order to account for this, we will associate with each pairing $P$ a set of pre-states. These pre-states are basically a copy of $P$ plus a designated $\pairstars\in\Z(P)$, together with several pairs, depending on how $P$ is reached via a switching. Then, going from a pre-state to $P$, we may perform a pre-b-rejection. This rejection trims off the fluctuation caused by the extra pairs created together with $\pairstars$, ensuring that every $\pairstars$ \jt{is} created with the same expected number of times. 

For every pairing $P\in \state_i$, we want to make sure that the expected number of times that $P$ is created in the phase is independent of $P$; we do so by equalising the expected number of times that each $\pairstars\in\Z(P)$ is created, after the pre-b-rejection; denote this quantity by $q_i$ (i.e.\ this quantity is independent of $P$). Then the expected number of times that $P$ is created, before the b-rejection, is $q_i|\Z(P)|$. Then, the probability of a b-rejection depends only on the variation of $|\Z(P)|$ over all $P\in \state_i$.  The reason to use type III,IV,V,VI and VII aside from type I is to ensure that the variation of $|\Z(P)|$ is sufficiently small in a class $\state_i$. Let $\Z^*(P)=\calf(P)\setminus \Z(P)$. We will prove that 
$|\Z^*(P)|$ is sufficiently small compared with $|\Z(P)|$.

Next, we partition $\Z(P)$ to $\cup_{i=0}^5 \Z(P)$ according to the different restrictions put on to $\pairstars\in \Z(P)$ when different types of switchings are applied. 
Let $\Z_i(P)$ ($1\le i\le 5$) denote the set of $\pairstars\in\calf(P)$, as shown in Figure~\ref{f:I}, 
 satisfying
\begin{enumerate}
\item[(a)] all these four pairs \{5,1\},\{3,7\},\{6,2\},\{4,8\} are contained only in single edges; $v_1$ is a light vertex; and all six vertices $u_i$ and $v_i$, $1\le i\le 3$, are distinct;
\item[(b)] 
for each $1\le i\le 5$, $\pairstars$ corresponds to a copy of $H_i$ in Figure~\ref{f:H} in $G(P)$.
\end{enumerate}

\begin{figure}[htb]

 \hbox{\centerline{\includegraphics[width=12cm]{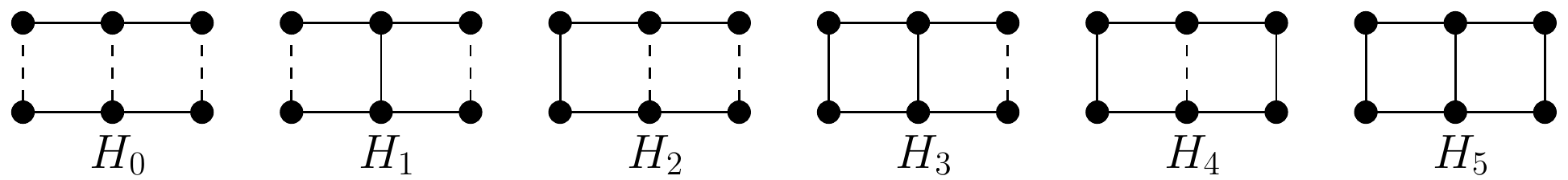}}}
\caption{A pair of $2$-paths}
\lab{f:H}

\end{figure}

Let $Z_i(P)=|\Z_i(P)|$ and $Z^*(P)=|\Z^*(P)|$.
Let $\state_{\tau}^+(P)$ \jt{denote} the number of type $\tau$ switchings that converts a pairing to $P$. Then, $\state_{I}^+(P)=Z_0(P)$. 
Clearly, 
\be
\sum_{i=0}^5Z_i(P)+Z^*(P)=|\calf(P)|=M_2L_2.\lab{Zstar}
\ee
We will show later that the variation of $\sum_{i=0}^5Z_i(P)$ caused by $Z^*(P)$ is sufficiently small for the range of $\gamma$ we consider. 
\remove{
\jcom{It seems that H-graph with the middle edge double also needs to be boosted; the overall probability of performing that switching is $\Delta^2n^{2\delta}/n^2=o(1)$;
  We do not need to boost 5-path with the middle edge double (use the bound on the number of 2-paths from a single vertex). We do not need to boost structures such that one of the 2-paths contains a double edge. 
  
  If we do not use b-class, f-rejection probability for type I is 
  
\[
\frac{\# 2-paths}{M_1}=n^{(2\gamma-3)/(\gamma-1)^2-1}.
\]
  Overall probability is $o(1)$ if $\gamma\ge 2.8$.
  
After raising $\gamma$ to $2.89$ we do not need to boost structures involving 5-vertices only, i.e.\ we do not need switching type V.

  We may need to boost the 6-cycle and the ladder.
  
  For 6-cycle, the probability of performing such a switching is
  $O(\Delta^8/n^6)$; overall probability $o(1)$.
  
  For ladder-minus one edge, the probability of performing such a switching is
  $O(\Delta^5 n^{\delta}/n^4)$; overall probability $o(1)$.
  
  For ladder, the probability of performing such a switching is
  $O(\Delta^9 n^{\delta}/n^7)$; overall probability $o(1)$.
  
  }
}

Note that each type $\tau\in \Lambda$ corresponds to a unique integer $t$ so that the created element $\pairstars\in\Z(P)$, \jt{formed} after a type $\tau$ switching creating a pairing $P$, is in $\Z_{t}(P)$. We define
\[
t=t(\tau) \ \mbox{to be this unique integer.}
\]
 For instance, $t(I)=0$, and by looking at Figures~\ref{f:typeIII},~\ref{f:IV} and~\ref{f:H}, we have $t(III)=1$ and $t(IV)=2$.

Before defining all types of switchings, we first give a general description of the whole approach.

\subsubsection{A brief outline}
\lab{sec:outline}
Note that a pairing can be reached more than once in Phase 5. Let $\sigma(P)$ denote the expected number of times that pairing $P$  
is reached. Our goal is to design the algorithm in such a way   that $\sigma(P)=\sigma(i)$ for every $P\in\state_i$ and for every $0\le i\le \imax$; this ensures that the output of Phase 5, if no rejection happens, is uniform over $\state_0$. \ms

\no {\bf Choosing a switching type}.\ss

 As in~\cite{GWreg}, in Phase 5, the algorithm starts with a uniform distribution on $\a_4$. In each step, the algorithm starts from a pairing $P\in\state_i$ obtained from the previous step, and chooses a switching type according to a pre-determined ``type distribution''. This distribution only depends on $i$, i.e.\ the class where $P$ is in, and is independent of $P$. We will use $\rho_{\tau}(i)$
to denote the probability that type $\tau$ switching is chosen for a pairing in $\state_i$. 
We require $\sum_{\tau}\rho_{\tau}(i)\le 1$ for every $i$. If this inequality is strict, then with the remain probability, the algorithm terminates. Such a rejection is called a t-rejection. \ms

 \no {\bf An f-rejection}.\ss
  
         If a type $\tau$ is chosen, then the algorithm chooses a random type $\tau$ switching $S$ that can be performed on $P\in\state_i$. An f-rejection may happen in order to equalise the expected number of times a particular type $\tau$ switching is performed, for every $P\in\state_i$. Let $f_{\tau}(P)$ denote the number of type $\tau$ switchings that can be performed on $P$, then, we will perform an f-rejection with probability
          \be\lab{frejDef}
          1-\frac{f_{\tau}(P)}{\UB_{\tau}(i)},
          \ee
          where $\UB_{\tau}(i)$ will be specified later as an upper bound for $\max_{P\in\state_i} f_{\tau}(P)$.
          Therefore, every switching $S$ is chosen and is not f-rejected with probability $\rho_{\tau}(i)/\UB_{\tau}(i)$. Hence, the expected number of times that $S$ is performed during Phase 5 is
          \[
          \sigma(i)\frac{\rho_{\tau}(i)}{\UB_{\tau}(i)},
          \]
          assuming that each pairing in $\state_i$ is reached with $\sigma(i)$ expected number of times.
 \ms
 
\no {\bf Pre-states and pre-b-rejection}.\ss         

We first give a formal definition of pre-states of a pairing $P'\in\state_j$. 
Given pairing $P'\in\state_j$, a switching type $\tau$ and an arbitrary $\pairstars\in \Z_{t(\tau)}(P')$, a pre-state of $P'$, corresponding to $\tau$ and $\pairstars$ is a copy of $P'$ in which $\pairstars$ is designated together with a set of pairs $X$ such that an inverse type $\tau$ switching can be performed on $X$ and $\pairstars$. Let $\B_{\tau}(P,\pairstars)$ denote the set of pre-states of $P$ corresponding to switching type $\tau$ and $\pairstars\in \Z_{t(\tau)}(P)$ and let $\b_{\tau}(P,\pairstars)=|\B_{\tau}(P,\pairstars)|$.  Define $\b_{\tau}(P)=\min_{\pairstars\in \Z_{t(\tau)}}\b_{\tau}(P,\pairstars)$. We will define later $\LBS_{\tau}(i)$ as a lower bound for $\min_{P\in\state_i}\b_{\tau}(P)$. In particular,  we will show that for every $0\le i\le \imax-1$, $\b_{I}(P,\pairstars)=1$ for any $P\in\state_i$ and for any $\pairstars\in \Z_{t(I)}(P)=\Z_0(P)$ (see~\eqn{LBS1}), which immediately implies that
\be
\LBS_{I}(i)=1, \quad \mbox{for all}\ 0\le i\le \imax-1. \lab{LBS1}
\ee

Now, let $P'$ be the pairing that $S$ converts $P$ into and let $\pairstars$ be the new pair of 2-stars created by $S$. Then, $\pairstars\in\Z_{t(\tau)}(P')$. Assume $P'\in\state_j$. Note that $S$ designates a pre-state of $P'$. We will perform a pre-b-rejection before entering $P'$, to ensure that every $\pairstars\in \Z_{t(\tau)}(P')$ are created with equal expected number of times, for every $P'\in\state_j$.

A pre-b-rejection will be performed in the following way. Note that our algorithm has \jt{been} designed so that given $\tau$, $P\in\state_i$ and $\pairstars\in \Z_{t(\tau)}(P)$, all pre-states of $P$ corresponding to $\tau$ and $\pairstars$ will be reached with equal expected number of times, which is
\[
y_i:=\sigma(i)\frac{\rho_{\tau}(i)}{\UB_{\tau}(i)},
\]
 which is
   independent of $P$, $P'$ and $\pairstars$. Then, a pre-b-rejection will be performed  with probability
\[
1-\frac{\LBS_{\tau}(j)}{\b_{\tau}(P',\pairstars)}.
\]
Now, for any $\pairstars\in \pairstars\in \Z_{t(\tau)}(P')$, the expected number of times that $\pairstars$ is created is
\be
\sum_{s\in \B_{\tau}(P',\pairstars)} y_i \frac{\LBS_{\tau}(j)}{\b_{\tau}(P',\pairstars)} = y_i\LBS_{\tau}(j)=\sigma(i)\frac{\rho_{\tau}(i)}{\UB_{\tau}(i)}\LBS_{\tau}(j),\lab{qi0}
\ee
which is independent of $\pairstars$. This ensures that every $\pairstars\in \Z_{t(\tau)}(P')$ are created with equal expected number of times, as desired. In fact, we will equalise this quantity for every switching type $\tau$ as well. This will be done by choosing $\rho_{\tau}(i)$ properly. We will later let $q(j)$ denote this equalised quantity, which depends only on $j$, i.e.\ the class $P'$ is in. \ms

\no {\bf A b-rejection}\ss

    If $S$ is not f-rejected or pre-b-rejected, then a b-rejection will be performed with probability
    \[
    1-\frac{\LB(j)}{b(P')},
    \]
    where $b(P')=|\Z(P')|$ and 
$\LB(j)$ will be a specified lower bound for $\min_{P'\in\state_j}|\Z(P')|$.    
    If no b-rejection occurred, then $P$ is switched to $P'$ and that completes a switching step. 
    The expected number of times that
      $P'$ is reached via a switching is
    \be
    q(j) \LB(j),\lab{sigma0}
    \ee
    as the expected number of times a $\pairstars\in\Z(P')$ is created is equalised to $q_j$, as we mentioned. This confirms that the expected number of times that $P'$ is reached is dependent only on the state in which $P'$ lies.\ms

\no {\bf The Markov chain}.\ss        

  This defines a Markov chain run on $\a_4$. Each state in the Markov chain is a pairing in $\a_4$ together with $\{\O,\R\}$; $\O$ is the state representing an output and $\R$ is the state representing a rejection.  A switching step from $P$ to $P'$ is a transition from $P$ to $P'$ in the chain. If a rejection (of any kind) happens in a step, then the chain enters state $\R$ and the algorithm terminates without an output. If $P\in \state_0$, then we interpret a type I switching on $P$ as outputting $P$. So if the chain reaches a pairing in $\state_0$, then it enters $\O$ in the next step with probability $\rho_I(0)$.

 \hbox{\centerline{\includegraphics[width=5cm]{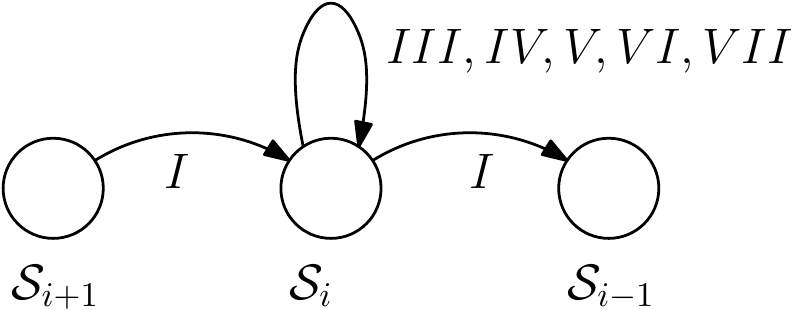}}}

Recall the following quantities that we are going to specify later:
\bean
\UB_{\tau}(i) && \mbox{an upper bound for}\ \max_{P\in \state_i} f_{\tau}(P);\\
\LBS_{\tau}(i) && \mbox{a lower bound for}\ \min_{P\in \state_i} \b_{\tau}(P)=\min_{P\in \state_i}\min_{\pairstars\in \Z_{i(\tau)}}\b_{\tau}(P,\pairstars);\\
\LB(i) && \mbox{a lower bound for}\ \min_{P\in \state_i} b(P).
\eean
\remove{
\subsection{Transition probabilities}
Given $P\in\state_i$ and a switching type $\tau$, the probability that type $\tau$ is chosen is $\rho_{\tau}(i)$. Assume $\tau$ is chosen. Then, an $f$-rejection occurs with probability $1-f_{\tau}(P)/\UB_{\tau}(i)$. Assume no $f$-rejection occurs, then each of the $f_{\tau}(P)$ switchings are chosen with probability $1/\UB_{\tau}(i)$. Assume $S$ is one of these switching that converts $P$ to another pairing $P'\in\state_j$. The number of pre-states of $P'$ corresponding to switching type $\tau$ is $\b_{\tau}(P')$. A pre-b-rejection is performed with probability $1-\LBS_{\tau}(j)/\b_{\tau}(P')$. Hence, given $P$, the probability that $S$ is chosen and is not f-rejected or pre-b-rejected is
\be
\frac{\rho_{\tau}(i)}{\UB_{\tau}(i)} \frac{\LBS_{\tau}(j)}{\b_{\tau}(P')}.\lab{after-pre-b-rej} 
\ee
Now, the probability that $P'$ is not b-rejected is $b(P')/\LB(j)$. Hence, the probability that $S$ is performed is
\[
\frac{\rho_{\tau}(i)}{\UB_{\tau}(i)} \frac{\LBS_{\tau}(j)}{\b_{\tau}(P')}\frac{b(P')}{\LB(j)}.
\]
Given $P\in\state_i$ and $P'\in\state_j$ and $\tau\in \Lambda$: the set of all switching types, let $\state_{\tau}(P,P')$ denote the set of type $\tau$ switchings that switches $P$ to $P'$. Then,
\[
q_{P,P'}=\sum_{\tau}\sum_{S\in \state_{\tau}(P,P')} \frac{\rho_{\tau}(i)}{\UB_{\tau}(i)} \frac{\LBS_{\tau}(j)}{\b_{\tau}(P')}\frac{b(P')}{\LB(j)}.
\]
}
In the next section, we will deduce a system involving $\rho_{\tau}(i)$ and $\sigma(i)$ using the above quantities, and use the solution of that system to specify $\rho_{\tau}(i)$, the type distribution function to be used in the algorithm.

\subsubsection{The system with $\rho_{\tau}(i)$ and $\sigma(i)$}

Note that $P_t$ is the pairing obtained after the $t$-th step of Phase 5.
Initially, each pairing in $\a_4$ can be $P_0$ with probability $1/|\a_4|$.

Note that while performing switchings, a pairing can be visited more than once. Recall that $\sigma(P)$ denotes the expected number of times that a given pairing $P$ is visited. We wish to ensure that $\sigma(P)=\sigma(i)$ for every $P\in\state_i$. 

Given a pairing $P\in\state_i$,  we want to equalise the expected number of times that any pair of 2-stars in $\Z(P)$ is created, after the pre-b-rejection. Recall that $q(i)$ denotes this quantity (see the discussion below~\eqn{qi0}). We first consider switchings of type I.  For such a switching $S$ converting $P'$ to $P$, $P'$ must be in $\state_{i+1}$. By~\eqn{qi0} (with $i$ in~\eqn{qi0} being $i+1$ and $j$ in~\eqn{qi0} being $i$ and $\tau=I$) and~\eqn{LBS1}, 
\be
q(i)=\sigma(i+1)\frac{\rho_{I}(i+1)}{\UB_{I}(i+1)}\LBS_{I}(i)=\sigma(i+1)\frac{\rho_{I}(i+1)}{\UB_{I}(i+1)}.	\lab{qi}
\ee
For any switching type $\tau\neq I$, a switching of type $\tau$ will switch a pairing to another pairing in the same class. So, for each $\tau\in\Lambda\setminus \{I\}$, let $S$ be a switching of type $\tau$ that converts a pairing $P'\in\state_i$ to $P$.
By~\eqn{qi0},
\[
q(i)=\sigma(i)\frac{\rho_{\tau}(i)}{\UB_{\tau}(i)}\LBS_{\tau}(i).
\]
Combining this with~\eqn{qi}, we must have
\[
\rho_{\tau}(i)=\frac{q(i)\UB_{\tau}(i)}{\sigma(i) \LBS_{\tau}(i)}=\frac{\sigma(i+1)}{\sigma(i)}\frac{\rho_{I}(i+1)}{\UB_{I}(i+1)} \frac{\UB_{\tau}(i)}{ \LBS_{\tau}(i)},
\]
for each $\tau\in \Lambda\setminus\{I\}$.


Finally, by~\eqn{sigma0} and~\eqn{qi},
\[
\sigma(i)=\LB(i)q(i)+1/|\a_4|=\sigma(i+1)\frac{\rho_{I}(i+1)}{\UB_{I}(i+1)}+1/|\a_4|,
\]
where $1/|\a_4|$ is the probability that $P_0=P$.
The boundary condition is $q(\imax)=0$.

Letting $x_i=\sigma(i)|\a_4|$, we have
\remove{
\bea
x_{\imax}&=&x_{\imax}\rho_I(\imax) \frac{\LB_B(\imax)}{\UB_{I}(\imax)}+1;\lab{initial}\\
x_0&=&x_{1}\rho_I(1)\frac{\LB_A(0)}{\UB_I(1)}+1;\\
x_i&=&x_{i+1}\rho_I(i+1)\frac{\LB_A(i)}{\UB_I(i+1)}+x_i\rho_I(i)\frac{\LB_B(i)}{\UB_{I}(i)}+1;\\
\rho_{\tau}(i)&=&\rho_I(i+1)\frac{x_{i+1}}{x_i}\frac{\UB_{\tau}(i)}{\UB_I(i+1) \LBS_{A,\tau}(i)}\quad \tau\in\{III,IV,V\};\\
\rho_{II}(i)&=&\rho_I(i+1)\frac{x_{i+1}}{x_i} \frac{\UB_{II}(i)}{\UB_{I}(i+1)} ;\\
\rho_{\tau}(i)&\ge &0; \rho_{\tau}(\imax)=0\ (\tau\neq I); \rho_{\tau}(0)=0; \sum_{\tau}\rho_{\tau}(i)\le 1.\lab{atmost1}
\eea
}
\bea
x_{\imax}&=&1;\lab{initial}\\
x_i&=&x_{i+1}\rho_I(i+1)\frac{\LB(i)}{\UB_I(i+1)}+1\quad \forall 0\le i\le \imax-1;\lab{xrec}\\
\rho_{\tau}(i)&=&\rho_I(i+1)\frac{x_{i+1}}{x_i}\frac{\UB_{\tau}(i)}{\UB_I(i+1) \LBS_{\tau}(i)}\quad \forall\tau\in\Lambda\setminus\{I\}, 1\le i\le \imax-1;\lab{rhorec}\\
\rho_{\tau}(i)&\ge &0;\quad \rho_{\tau}(\imax)=0\ (\tau\neq I);\quad  \sum_{\tau}\rho_{\tau}(i)\le 1.\lab{atmost1}
\eea

It follows immediately that
\be
\frac{x_{i+1}}{x_i}\le \frac{1}{\rho_I(i+1)} \frac{\UB_I(i+1)}{\LB(i)}.\lab{xratio}
\ee

We will show that there is a solution $\rho^*_{\tau}(i)$ and $x^*_i$ to the above system with $\rho^*_{I}(i)$ close to 1 for every $i$ and $\sum_{\tau\in\Lambda}\rho^*_{\tau}(i)=1-o(\imax)$.  Then we will set the type probability $\rho_{\tau}(i)$ in the algorithm to be $\rho^*_{\tau}(i)$ for every $\tau$ and $i$. A lemma in~\cite{GWreg} shows that with this setting, the expected number of times that a pairing in $\state_i$ is reached is $x^*_i/|\a_4|$, independent of $P\in \state_i$. Therefore, for each $P\in\state_0$, the probability that $P$ is outputted is equal to $(x^*_0/|\a_4|)\rho_I(0)$, and thus 
\begin{equation}
\mbox{the output of \PL\ is uniformly distributed in $\state_0$. }\lab{eq:uniform}
\end{equation}
Before proving the existence of such a solution, we need to define all types of \jt{switchings} and then \jt{specify} quantities $\UB_{\tau}(i)$, $\LB(i)$ and $\LBS_{\tau}(i)$ that have appeared in the system.

\subsubsection{Switchings}
\lab{sec:switchings}

In the definition of a switching (of each type), we will choose a set of pairs and label their end points in a certain way; then we replace this set of pairs by another set, without changing the given degree sequence. 
The type I switching has been formally defined in detail. The definition of other types is clear by the illustrations in Figures~\ref{f:typeIII}, and~\ref{f:IV}~--~\ref{f:VII}. All pairs involved in the switchings and the inverse switchings are required to be contained only in single edges.
\remove{
\no {\bf Type I switching}. Take a double edge $z$; note that one end of $z$ must be a light vertex. Label the end points of the two pairs in $z$ by $\{1,2\}$ and $\{3,4\}$ so that points 1 and 3 are in the same vertex and this vertex is light. Choose another two pairs $x$ and $y$ and label their ends by $\{5,6\}$ and $\{7,8\}$ respectively. Label the vertices that containing points $1\le i\le 8$ as shown in Figure~\ref{f:I}. If (a) $u_i$, $v_i$, $1\le i\le 3$ are six distinct vertices; (b) both $x$ and $y$ are single edges; (c) there are no edges between $u_1$ and $u_i$ for $i\in\{2,3\}$ and no edges between
$v_1$ and $v_i$ for $i\in\{2,3\}$, then replace the four pairs $\{2i-1,2i\}$, $1\le i\le 4$ by $\{1,5\}$, $\{3,7\}$, $\{2,6\}$ and $\{4,8\}$ as shown in Figure~\ref{f:I}.

The inverse of the above operation is called an inverse type I switching. Hence, to perform an inverse switching, we will choose a light 2-star and label the points in the two pairs by $\{5,1\}$ and $\{3,7\}$ so that 1 and 3 are in the same light vertex; choose another 2-star (not necessarily light) and label the points in the two pairs by $\{6,2\}$ and $\{4,8\}$ so that 2 and 4 are in the same vertex. Label all involved vertice as shown in Figure~\ref{f:I}. If (a) all the six vertices are distinct; (b) both 2-stars chosen are simple; (c) there is no edge between $u_i$ and $v_i$ for all $1\le i\le 3$, then replace all four pairs $\{1,5\}$, $\{3,7\}$, $\{2,6\}$ and $\{4,8\}$ by  $\{2i-1,2i\}$, $1\le i\le 4$.
}

A type I switching creates only a pair of 2-stars in $\Z_0(P)$ for some $P$ and no other extra pairs. Hence, given any $P\in\state_i$ ($i\le \imax-1$) and any $\pairstars\in\Z_0(P)$, there is only one pre-state of $P$ corresponding to type $I$ and $\pairstars$. In other words,
for every $0\le i\le \imax-1$, $\b_{I}(P,\pairstars)=1$ for any $P\in\state_i$ and for any $\pairstars\in \Z_{t(I)}(P)=\Z_0(P)$. This confirms~\eqn{LBS1}, i.e.
\[
\LBS_{I}(i)=1, \quad \mbox{for all}\ 0\le i\le \imax-1.
\]

\begin{figure}[htb]

 \hbox{\centerline{\includegraphics[width=8cm]{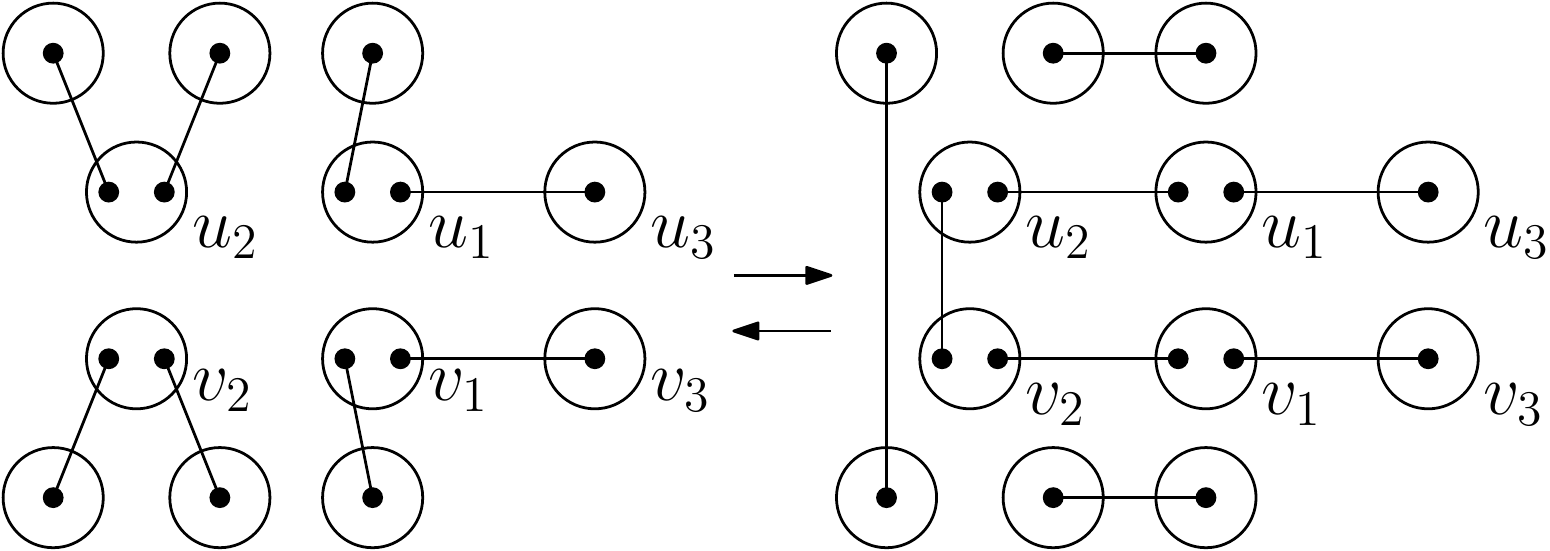}}}
\caption{type IV}
\lab{f:IV}

\end{figure}

\begin{figure}[htb]

 \hbox{\centerline{\includegraphics[width=8cm]{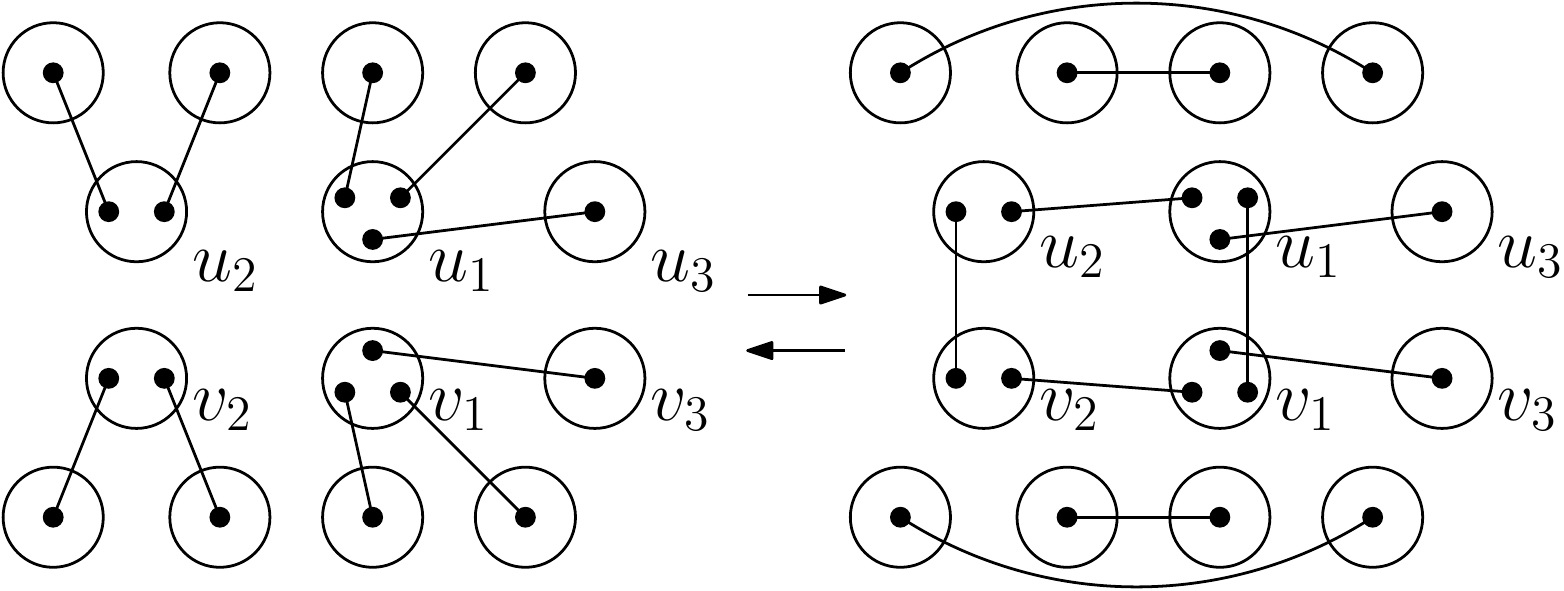}}}
\caption{type V}
\lab{f:V}

\end{figure}

\begin{figure}[htb]

 \hbox{\centerline{\includegraphics[width=10cm]{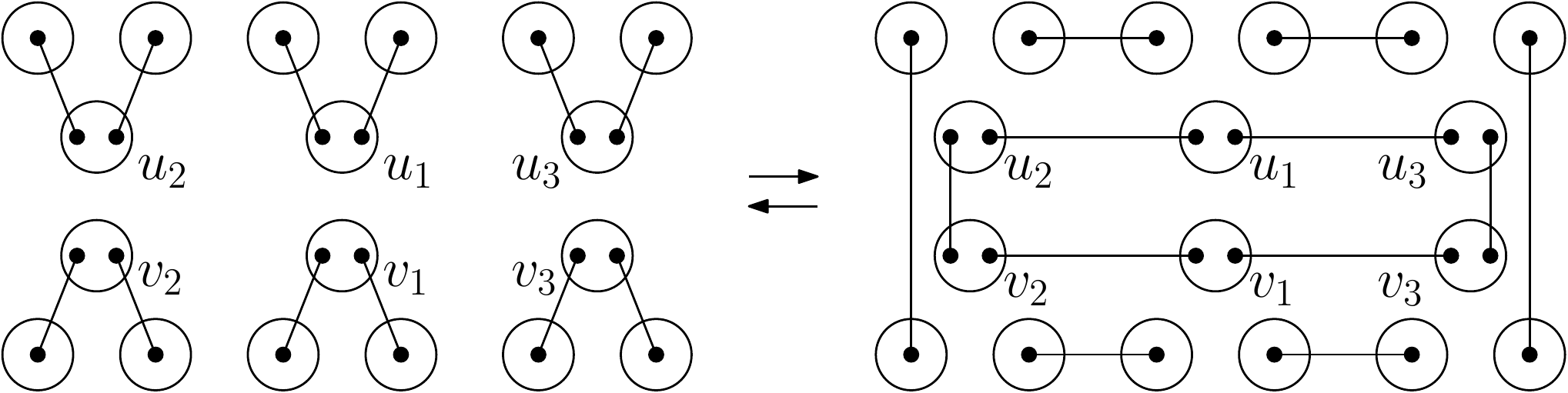}}}
\caption{type VI}
\lab{f:VI}

\end{figure}
\begin{figure}[htb]

 \hbox{\centerline{\includegraphics[width=10cm]{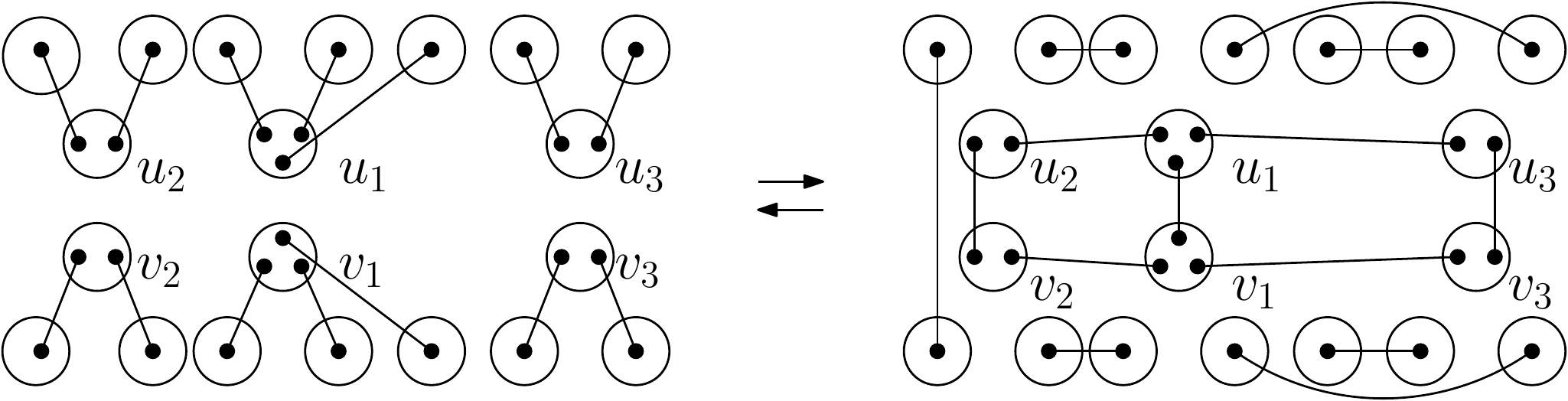}}}
\caption{type VII}
\lab{f:VII}

\end{figure}


\subsubsection{Specifying $\UB_{\tau}(i)$, $\LB(i)$ and $\LBS_{\tau}(i)$}

Define
\bea
&&\UB_I(i)= 4iM_1^2;\quad \UB_{III}(i)=M_3 L_3;\quad \UB_{IV}(i)=2M_2^3L_2;\lab{UB1} \\ 
&&\UB_{V}(i)=2M_2^2M_3L_3;\quad \UB_{VI}(i)=M_2^5L_2;\quad \UB_{VII}(i)=M_2^4 M_3L_3.\lab{UB2}
\eea
It is easy to verify that for every $P\in\state_i$ and $\tau\in \Lambda$, $f_{\tau}(P)\le \UB_{\tau}(i)$ for each $0\le i\le \imax$. The factor 2 in $\UB_{IV}(i)$ and $\UB_{V}(i)$ accounts for the two cases that (a) $u_2$ is adjacent to $v_2$; (b) $u_3$ is adjacent to $v_3$.

  Define
\bea
U_1&=&(Kn)^{1/(\gamma-1)}(Kn)^{(\gamma-2)/(\gamma-1)^2}=  (Kn)^{(2\gamma-3)/(\gamma-1)^2};\lab{0U1}\\
U_k&=& \frac{\gamma K^{k/(\gamma-1)}}{k+1-\gamma} n^{1-\delta(\gamma-k-1)}\quad \mbox{for}\ k\ge 2.\lab{0Uk}
\eea
The $U_k$ ($k\ge 1$) are upper bounds for the number of various types of structures in $P\in\a_0$. For instance, $U_1$ is an upper bound for the number of $2$-paths with one end at a given vertex (see the Appendix for details).  These $U_k$ will be used to define parameters such as $\LB(i)$ and to bound the probabilities of f-rejections, pre-b-rejections and b-rejections in Phase 5. 
        
Next we specify $\LB(i)$ and $\LBS_{\tau}(i)$. 
Define
\bel{LB0}
\LB(i)=M_2L_2-8i(d_h M_2+d_1 L_2)-(2id_1^2d_h^2+4iU_1^2+8M_2 U_1+L_4).
\ee
and
\bea
&&\LBS_I(i)=1; \ \LBS_{III}(i)=M_1-2U_1-4i;\ \LBS_{IV}(i)=(M_1-2U_1-4i)^3;\lab{LB1}\\
&&\LBS_{V}(i)=(M_1-2U_1-4i)^4;\ \LBS_{VI}(i)=(M_1-2U_1-4i)^6;\lab{LB2}\\ 
&&\LBS_{VII}(i)=(M_1-2U_1-4i)^7.\lab{LB3}
\eea

Simple counting arguments yield the following lemma, whose proof is presented in the Appendix.
\begin{lemma}\lab{lem:lowerbounds}
\begin{enumerate}
\item[(a)] For every $0\le i\le \imax$, $P\in\state_i$, $b(P)\ge \LB(i)$. 
\item[(b)] For every $\tau\in\Lambda$, $0\le i\le \imax$, $P\in\state_i$ and $\pairstars\in \Z_{t(\tau)}$, $\b_{\tau}(P,\pairstars)\ge \LBS_{\tau}(i)$.
\end{enumerate}
\end{lemma}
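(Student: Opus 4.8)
The plan is to prove Lemma~\ref{lem:lowerbounds} by direct counting: for each $P\in\state_i$ we need lower bounds on $b(P)=|\Z(P)|$ and on $\b_{\tau}(P,\pairstars)$, and the strategy in both cases is to start from an obvious upper-bound count (the total number of candidate configurations in $P$) and then subtract an over-count of the ``bad'' configurations that fail one of the validity conditions in the definitions of the switchings. This is exactly the inclusion–exclusion flavour used for $\UB_\tau(i)$, run in the opposite direction.

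For part (a): recall $\Z(P)=\calf(P)\setminus\Z^*(P)$ and $|\calf(P)|=M_2L_2$ by~\eqn{Zstar}. So it suffices to show $|\Z^*(P)|=Z^*(P)$ is at most $8i(d_hM_2+d_1L_2)+(2id_1^2d_h^2+4iU_1^2+8M_2U_1+L_4)$. A doublet $(\{5,1\},\{3,7\},\{6,2\},\{4,8\})\in\calf(P)$ lies in $\Z^*(P)$ precisely when it cannot be produced by any switching (of any type) converting some other pairing to $P$; equivalently, when it fails the common structural requirements shared by the legal switchings — namely that the four pairs all sit in single edges, the six vertices $u_i,v_i$ are distinct, $v_1$ light, and the pair-of-$2$-paths pattern is one of $H_0,\dots,H_5$ (or rather, not one of the excluded patterns). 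I would enumerate the failure modes one at a time: (i) one of the four pairs is part of a double (or heavier) edge — here a double edge of $P$ contributes at most $O(d_h M_2 + d_1 L_2)$ doublets using it, and there are at most $i$ double edges, giving the $8i(d_hM_2+d_1L_2)$ term; (ii) two of the vertices coincide, forcing the two $2$-paths to share a vertex in a degenerate way — counting by the shared vertex and the at most $U_1$ choices for a $2$-path through it, plus cases where a double edge is involved, gives the $2id_1^2d_h^2 + 4iU_1^2$ terms; (iii) the pair of $2$-paths forms an ``$H$-graph'' type structure on too few vertices or with the middle edge a double, accounting for $8M_2U_1$; and (iv) the two $2$-paths coincide as a set, i.e. $u_i=v_i$ for all $i$ — the number of such ``collapsed'' doublets is at most $\sum_v [d_v]_2^2 \le L_4$ (since $v_1$ must be light and this is $\sum_{v}[d_v]_2\cdot[d_v]_2$, dominated by $L_4$). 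Summing these bounds gives exactly $\LB(i)$; the bounds~\eqn{Delta} on $M_k$ and~\eqn{0U1}–\eqn{0Uk} on $U_k$ are invoked to know each term is of the claimed order, but for the lemma statement itself we only need the inequality $b(P)\ge\LB(i)$, which is purely combinatorial.

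For part (b): $\b_\tau(P,\pairstars)$ counts pre-states of $P$, i.e.\ the ways to attach to the designated $\pairstars\in\Z_{t(\tau)}(P)$ the extra pairs $X$ so that an inverse type-$\tau$ switching can be applied. For $\tau=I$ there are no extra pairs, so $\b_I(P,\pairstars)=1=\LBS_I(i)$, confirming~\eqn{LBS1} again. For $\tau\in\{III,IV,V,VI,VII\}$, the inverse switching chooses some number $r\in\{1,3,4,6,7\}$ of additional \emph{ordered} single pairs (reading off Figures~\ref{f:typeIII},~\ref{f:IV}--\ref{f:VII}): there are $M_1$ ways to pick each ordered endpoint-pair naively, but we must discard choices that hit the $O(U_1)$ points already forbidden (endpoints near $\pairstars$, so the $2U_1$ correction) or that reuse one of the previously chosen $\le 4i$ double-edge points, and each successive pair shrinks the pool by a bounded amount — giving the telescoped product $(M_1-2U_1-4i)^r$. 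Thus $\b_\tau(P,\pairstars)\ge(M_1-2U_1-4i)^{r(\tau)}=\LBS_\tau(i)$, matching~\eqn{LB1}--\eqn{LB3} with $r(III)=1$, $r(IV)=3$, $r(V)=4$, $r(VI)=6$, $r(VII)=7$.

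The main obstacle is part (a): getting an honest, complete case analysis of \emph{every} way a doublet can fail to be in $\Z(P)$, and checking that the crude per-case over-counts really do sum to at most the explicit expression in~\eqn{LB0} — in particular pinning down why the ``collapsed $2$-paths'' contribution is bounded by $L_4$ rather than $M_4$ (it uses that $v_1$, hence the collapsed vertex, must be light, or that one of the two coincident $2$-paths has its centre counted in $L_2$), and making sure the structures involving double edges of $P$ near $\pairstars$ are all absorbed into the $8i(d_hM_2+d_1L_2)$ and $2id_1^2d_h^2$ terms. Part (b) is comparatively routine bookkeeping once the figures are fixed. Since the excerpt defers this proof to the Appendix, I would likewise present the case enumeration compactly, grouping the failure modes as above and citing~\eqn{Delta},~\eqn{0U1},~\eqn{0Uk} only where an order-of-magnitude claim is actually needed downstream.
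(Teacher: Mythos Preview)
Your overall strategy matches the paper's: for (a), bound $Z^*(P)$ by case analysis and subtract from $M_2L_2$; for (b), count the extra ordered pairs, subtracting the forbidden choices. Part (b) is essentially right and agrees with the paper (each extra pair has at least $M_1-2U_1-4i$ valid choices: the $4i$ excludes double-edge points, the $2U_1$ excludes pairs whose ends are adjacent to two prescribed vertices, via the $2$-path bound~\eqn{U1}).

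In part (a), however, your case-by-case attribution of the terms in~\eqn{LB0} is scrambled. The paper's three cases are:
\begin{itemize}
\item[(a1)] one of the four pairs in the doublet lies in a double edge --- this gives $8i(d_hM_2+d_1L_2)$, and your (i) matches.
\item[(a2)] $u_j$ and $v_j$ are adjacent \emph{via a double edge} for some $j\in\{1,2,3\}$ --- this gives $2id_1^2d_h^2$ (case $j=1$: choose a double edge for $u_1v_1$, then points in each) plus $4iU_1^2$ (cases $j=2,3$: choose a double edge for $u_jv_j$, then a $2$-path from each end). This is not a vertex-coincidence condition as you wrote in your (ii).
\item[(a3)] the two $2$-stars share a vertex --- this gives $2M_2U_1 + L_4 + 6L_2U_1 \le 8M_2U_1 + L_4$. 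The $L_4$ arises specifically from $u_1=v_1$ (both centres coincide at a light vertex), so one chooses four distinct points there, giving $\sum_{v\ \mathrm{light}}[d_v]_4=L_4$; it is not $\sum_v[d_v]_2^2$, and it does not require $u_i=v_i$ for all $i$ as you wrote in (iv). Your (iii) misattributes $8M_2U_1$ to a double-edge structure; it is purely the vertex-sharing count.
\end{itemize}
So the three failure modes are: a double edge \emph{inside} the doublet, a double edge \emph{between} a matched pair $u_j,v_j$, and a \emph{shared vertex}. Once you rewrite your (ii)--(iv) along these lines the arithmetic lines up with~\eqn{LB0} exactly.
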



\subsection{Specifying $\rho_{\tau}(i)$}
\lab{sec:rho}

Let $\xi=\xi_n>0$ be a function of $n$ to be specified later. Consider the system~\eqn{initial}--\eqn{atmost1}. We will show that there is a unique solution $(x^*_i,\rho^*_{\tau}(i))$ to the system such that $\rho^*_I(i)+\rho^*_{III}(i)=1-\xi$  and $\sum_{\tau\neq I, III}\rho^*_{\tau}(i)<\xi$ for all $i$. 

\remove{
\[
\UB_{III}(i)\approx M_3 L_3;
\UB_{IV}(i)\approx M_2^3 L_2M_1; \UB_{V}(i)\approx M_3 L_2.
\]
Then we have
\bean
\rho_{III}(i)&=&O(\Delta n^{\delta}/n)\\
\rho_{IV}(i)&=&O(\Delta^4/n^3)\\
\rho_{V}(i)&=&O(\Delta/n).
\eean
It is easy to see that $\rho(IV) \imax=o(1)$ and $\rho(V)\imax=o(1)$. We only need to care about type III.

We can specify a function $\xi=o(1/\imax)$ such that $\sum_{\tau\notin\{I,III\}}\rho_{\tau}(i)\le \xi$. 
}

We first specify a recursive algorithm to compute $\rho^*_I(i)$ and $\rho^*_{III}(i)$ and $x^*_i$ satisfying~\eqn{initial}--\eqn{rhorec} so that
\[
\rho^*_I(i)+\rho^*_{III}(i)+\xi=1,\quad \mbox{for all $i$}.
\]

Base case: $\rho^*_I(\imax)=1-\xi$ and $\rho^*_{III}(\imax)=0$. From this we can compute $x^*_{\imax}$.

Inductive step: assume that we have computed $\rho^*_{I}(i+1)$, $\rho^*_{III}(i+1)$ and $x^*_{i+1}$.
For $i$: we can compute $\rho^*_{I}(i)$, $\rho^*_{III}(i)$ and $x^*_i$ by solving the following system.
\bean
x^*_i&=&x^*_{i+1}\rho^*_I(i+1)\frac{\LB(i)}{\UB_I(i+1)}+1;\\
\rho^*_{III}(i)&=&\rho^*_I(i+1)\frac{x^*_{i+1}}{x^*_i}\frac{\UB_{III}(i)}{\UB_I(i+1) \LBS_{III}(i)};\\
\rho^*_I(i)+\rho^*_{III}(i)&=&1-\xi.
\eean
Putting 
\[
C_1=x^*_{i+1}\rho^*_I(i+1)\frac{\LB(i)}{\UB_I(i+1)}+1;\ \ C_2= \rho^*_I(i+1)x^*_{i+1}\frac{\UB_{III}(i)}{\UB_I(i+1) \LBS_{III}(i)},
\]
we have 
\bean
x^*_i&=&C_1;\\
\rho^*_I(i)&=&1-\xi-\frac{C_2}{C_1};\\
 \rho_{III}(i)&=&\frac{C_2}{C_1}.
\eean
Hence, all $\rho^*_I(i)$, $\rho^*_{III}(i)$ and $x^*_i$ can be uniquely determined, once we fix $\xi$. We only have to verify that the solution satisfies~\eqn{atmost1} by choosing $\xi$ appropriately.
Define
\bel{xi}
\xi=\frac{32M_2^2}{M_1^3}.
\ee
\begin{lemma}\lab{lem:xi}
Assume $\delta+1/(\gamma-1)<1$ and $\xi=o(1)$.  Then,
\[
\max_{0\le i\le \imax}\sum_{\tau\in\Lambda\setminus\{I,III\}} \frac{\UB_{\tau}(i)}{\LBS_{\tau}(i)\LB(i)}\le \xi.
\]
\end{lemma}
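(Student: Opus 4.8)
The plan is to estimate each ratio $\UB_{\tau}(i)/(\LBS_{\tau}(i)\LB(i))$ for $\tau\in\{IV,V,VI,VII\}$ separately, sum the (at most four) contributions, and show that the maximum over $0\le i\le\imax$ is at most $\xi=32M_2^2/M_1^3$. The key simplification is that under the stated hypotheses all the lower bounds are comparable to their leading terms: since $i\le\imax=B_D=4L_2M_2/M_1^2$, and using $U_1=(Kn)^{(2\gamma-3)/(\gamma-1)^2}=n^{1+o(1)\cdot(\ldots)}$ together with the growth estimates~\eqn{Delta}, one checks that $M_1-2U_1-4i=(1+o(1))M_1$ when $\delta+1/(\gamma-1)<1$ (this is exactly the condition that $U_1=o(M_1)$, and $B_D=o(M_1)$ follows from $M_2=O(\Delta^2)$ and~\eqn{Delta}). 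Hence $\LBS_{\tau}(i)=(1+o(1))M_1^{k}$ for the appropriate exponent $k\in\{3,4,6,7\}$. Similarly, one shows $\LB(i)=(1+o(1))M_2L_2$: each of the subtracted terms in~\eqn{LB0} — namely $8i(d_hM_2+d_1L_2)$, $2id_1^2d_h^2$, $4iU_1^2$, $8M_2U_1$, and $L_4$ — is $o(M_2L_2)$ under the hypotheses (this uses $d_h=O(n^\delta)$ from~\eqn{dh}, $d_1=\Delta\le(Kn)^{1/(\gamma-1)}$, $M_2L_2=\Theta(n M_2)$ since $L_2\le M_2$ but also $M_1=\Theta(n)$, and the assumption $L_4=o(M_1^2)$ from Lemma~\ref{lem:A0rej}; the borderline term is typically $4iU_1^2\approx(L_2M_2/M_1^2)U_1^2$, whose smallness relative to $M_2L_2$ reduces to $U_1^2=o(M_1^2)$, i.e.\ again $\delta+1/(\gamma-1)<1$ after translating the exponent).

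With these asymptotics in hand, the four ratios become, up to $(1+o(1))$ factors,
\[
\frac{\UB_{IV}(i)}{\LBS_{IV}(i)\LB(i)}\sim\frac{2M_2^3L_2}{M_1^3\cdot M_2L_2}=\frac{2M_2^2}{M_1^3},\qquad
\frac{\UB_{V}(i)}{\LBS_{V}(i)\LB(i)}\sim\frac{2M_2^2M_3L_3}{M_1^4\cdot M_2L_2},
\]
\[
\frac{\UB_{VI}(i)}{\LBS_{VI}(i)\LB(i)}\sim\frac{M_2^5L_2}{M_1^6\cdot M_2L_2}=\frac{M_2^4}{M_1^6},\qquad
\frac{\UB_{VII}(i)}{\LBS_{VII}(i)\LB(i)}\sim\frac{M_2^4M_3L_3}{M_1^7\cdot M_2L_2}.
\]
Now I use $L_k\le M_k$ for all $k$, and the bounds $M_2=O(\Delta^2)$, $M_3=O(\Delta^3)$ from~\eqn{Delta} together with $M_1=\Theta(n)$. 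The type $IV$ term is exactly of order $M_2^2/M_1^3$; for the other three I need them to be $o(M_2^2/M_1^3)$, so that the whole sum is $(2+o(1))M_2^2/M_1^3$, comfortably below $\xi=32M_2^2/M_1^3$. For type $VI$: $M_2^4/M_1^6$ versus $M_2^2/M_1^3$ amounts to $M_2^2=o(M_1^3)$, which holds since $M_2=O(n^{2/(\gamma-1)})=o(n^{3/2})$ for $\gamma<3$. For types $V$ and $VII$, after cancelling $M_2^2$ (resp.\ $M_2^3$) one is left comparing $M_3L_3/(M_1L_2)$ (resp.\ $M_2^2M_3L_3/(M_1^4 L_2)$) against $1/M_1^2$; using $L_2,L_3\ge$ their heavy-free parts and the crude $M_3=O(n^{3/(\gamma-1)})$, $M_1=\Theta(n)$, these reduce to polynomial inequalities in $n$ that are strict for $\gamma<3$ and $\delta$ small. (In fact the author's remark after the statement of the system — that $\rho_{IV},\rho_V,\rho_{VI},\rho_{VII}$ times $\imax$ are $o(1)$ — is essentially this same computation; here we only need the weaker conclusion that the sum is $\le\xi$ rather than $\le\xi/\imax$, so there is slack.)

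The main obstacle, and the only genuinely delicate point, is the justification that $\LB(i)=(1+o(1))M_2L_2$ uniformly in $i\le\imax$: this is where the hypothesis $\delta+1/(\gamma-1)<1$ is actually used, through the term $4iU_1^2$, and where one must be careful that $L_2$ is not itself too small relative to $M_2$ — but for a \plib\ sequence $L_1=\Theta(M_1)=\Theta(n)$ since all but $h=o(n)$ vertices are light with degree $\ge1$, and $L_2$ is $\Theta(M_2)$ unless $M_2$ is dominated by heavy vertices, in which case one should instead bound $\LB(i)\ge M_2L_2-L_4-(\text{lower-order})$ directly and note $L_4=o(M_1^2)=o(M_1 L_1)=o(M_1 L_2\cdot(M_1/L_2))$ — so the argument still goes through as long as one keeps $M_2L_2$ rather than $M_2^2$ as the reference quantity throughout, which is exactly how $\LB(i)$ and the $\UB_\tau$ are written. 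Once the $(1+o(1))$ reductions are in place, the remaining verification is the routine polynomial bookkeeping sketched above, and the factor $32$ in~\eqn{xi} is generous enough to absorb all the hidden constants.
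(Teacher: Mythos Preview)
Your overall strategy matches the paper's: bound $\LB(i)$ and $\LBS_\tau(i)$ from below by (roughly) their leading terms, compute the four ratios, and show they sum to at most $\xi$. The paper does this with cruder ``at least half'' bounds rather than your $(1+o(1))$, which avoids the lengthy digression about $\LB(i)$ (and avoids invoking hypotheses such as $L_4=o(M_1^2)$ that do not appear in the statement of this lemma).

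There is, however, a genuine gap in your treatment of types $V$ and $VII$. The crucial inequality the paper uses is $L_3\le d_h L_2=O(L_2 n^{\delta})$, which holds because every light vertex has degree at most $d_h$ (so $[d_i]_3\le d_h[d_i]_2$ for light $i$). With this, the type $V$ to type $IV$ ratio is
\[
\frac{M_3L_3}{M_2L_2M_1}\;\le\;\frac{M_3\,d_h}{M_2M_1}\;\le\;\frac{\Delta\,d_h}{M_1}\;=\;O\bigl(n^{1/(\gamma-1)+\delta-1}\bigr)\;=\;o(1),
\]
and this is \emph{exactly} where the hypothesis $\delta+1/(\gamma-1)<1$ is used. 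The same bound handles type $VII$ versus $VI$. Your substitute ``$L_k\le M_k$'' goes the wrong way: it replaces $L_3$ in the numerator by the larger $M_3$, but then you need a lower bound on $L_2$ in the denominator, which the hypotheses do not provide. The hand-wave about ``polynomial inequalities in $n$ that are strict for $\gamma<3$ and $\delta$ small'' does not rescue this, because without cancelling $L_2$ against $L_3$ the bound simply fails for sequences with small $L_2$.

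A minor point: your claim ``$M_2=O(n^{2/(\gamma-1)})=o(n^{3/2})$ for $\gamma<3$'' is false for $\gamma\le 7/3$. What you actually need for type $VI$ is $M_2^2=o(M_1^3)$, and that is literally the hypothesis $\xi=o(1)$.
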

\proof Recall~\eqn{UB1}~--~\eqn{LB3} and it is easy to see that $\LB(i)\ge M_2L_2/2$ and each $\LBS_{\tau}(i)$ is at least a half of the first term in their expressions. Hence, for every $0\le i\le \imax$,
\[
\sum_{\tau\in\Lambda\setminus\{I,III\}} \frac{\UB_{\tau}(i)}{\LBS_{\tau}(i)\LB(i)}\le \frac{2}{M_2L_2}\left(\frac{4M_2^3L_2}{M_1^3}+\frac{4M_2^2M_3L_3}{M_1^4}+\frac{2M_2^5L_2}{M_1^6}+\frac{2M_2^4M_3L_3}{M_1^7}\right).
\]
Using the inequality that $L_3\le L_2d_h=O(L_2n^{\delta})$, and the fact that $M_2=O(n^{2/(\gamma-1)})$ by~\eqn{Delta}, and our assumptions that $\delta+1/(\gamma-1)<1$ and $\xi=o(1)$, the last three terms in the above formula are dominated by the first term. Hence,
\[
\sum_{\tau\in\Lambda\setminus\{I,III\}} \frac{\UB_{\tau}(i)}{\LBS_{\tau}(i)\LB(i)}\le \frac{32}{M_2L_2}\frac{M_2^3L_2}{M_1^3}=\xi.
\]
This holds uniformly for all $0\le i\le \imax$ and this completes the proof of this lemma.\qed\ss

Then we have the following lemma.
\begin{lemma}\lab{lem:solution} Assume $\xi=o(1)$ and $M_3L_3/M_2L_2M_1=o(1)$. System~\eqn{initial}--~\eqn{atmost1} has unique solution $(x^*_i,\rho^*_{\tau}(i))$ with $\rho^*_{I}(i)+\rho^*_{III}(i)=1-\xi$ for every $i$. Moreover,
\bean
\rho^*_I(i)&=&1-O\left(\frac{M_3L_3}{M_2L_2 M_1}\right);\quad \rho^*_{III}(i)=O\left(\frac{M_3L_3}{M_2L_2 M_1}\right);\\
\rho^*_{\tau}(i)&=&O\left(\frac{\UB_{\tau}(i)}{\LB(i)\LBS_{\tau}(i)}\right)\quad \mbox{for all}\  \tau\in\Lambda\setminus\{I,III\}.
\eean
\end{lemma}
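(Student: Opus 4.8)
\emph{Plan of the proof.} The strategy is to regard the backward recursion spelled out just above the lemma as an \emph{explicit construction} of the solution, and then to verify a posteriori that it meets the side constraints~\eqn{atmost1} and has the stated size. Concretely, at each index $i$ the three equations consisting of~\eqn{xrec}, the $\tau=III$ instance of~\eqn{rhorec}, and the normalisation $\rho^*_I(i)+\rho^*_{III}(i)=1-\xi$ form a triangular system whose closed form is exactly $x^*_i=C_1$, $\rho^*_{III}(i)=C_2/C_1$, $\rho^*_I(i)=1-\xi-C_2/C_1$ as displayed. Running this from the base case $x^*_{\imax}=1$, $\rho^*_I(\imax)=1-\xi$, $\rho^*_\tau(\imax)=0$ ($\tau\ne I$) down to $i=0$, and then \emph{defining} the remaining $\rho^*_\tau(i)$ for $\tau\in\Lambda\setminus\{I,III\}$ through~\eqn{rhorec}, produces one solution; uniqueness is immediate since every unknown is forced in turn. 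So the real content is the feasibility check and the size estimates.

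\emph{Feasibility.} First I would note that $x^*_i\ge 1$ by a trivial induction (the dropped term in~\eqn{xrec} is nonnegative), so every quotient above is well defined. The decisive observation is that discarding the $+1$ in~\eqn{xrec} already yields
\[
\rho^*_I(i+1)\,\frac{x^*_{i+1}}{x^*_i}\ \le\ \frac{\UB_I(i+1)}{\LB(i)},
\]
with \emph{no} prior control on the magnitude of $\rho^*_I$. Substituting this into~\eqn{rhorec} gives, for every $\tau\ne I$, the key inequality $\rho^*_\tau(i)\le \UB_\tau(i)/\big(\LB(i)\,\LBS_\tau(i)\big)$, which is precisely the third estimate claimed in the lemma. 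Summing this over $\tau\in\Lambda\setminus\{I,III\}$ and invoking Lemma~\ref{lem:xi} bounds that sum by $\xi$, whence $\sum_{\tau\in\Lambda}\rho^*_\tau(i)=(1-\xi)+\sum_{\tau\notin\{I,III\}}\rho^*_\tau(i)\le 1$; nonnegativity of each $\rho^*_\tau(i)$ with $\tau\ne I$ is clear from~\eqn{rhorec}, and $\rho^*_\tau(\imax)=0$ by construction. Nonnegativity of $\rho^*_I(i)$ is deferred to the size bound below.

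\emph{Sizes.} Specialising the inequality above to $\tau=III$ gives $\rho^*_{III}(i)\le M_3L_3/\big(\LB(i)\LBS_{III}(i)\big)$. By the estimate proved inside Lemma~\ref{lem:xi}, $\LB(i)=\Theta(M_2L_2)$, and $\LBS_{III}(i)=M_1-2U_1-4i=\Theta(M_1)$, since $U_1=n^{(2\gamma-3)/(\gamma-1)^2+o(1)}=o(n)$ for $\gamma$ in our range and $\imax=B_D=o(n)$ by~\eqn{imax} together with~\eqn{Delta}. Hence $\rho^*_{III}(i)=O\big(M_3L_3/(M_2L_2M_1)\big)=o(1)$ by hypothesis, so in particular $\rho^*_I(i)=1-\xi-\rho^*_{III}(i)\ge 0$ for large $n$, finishing the feasibility check. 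Finally, since $\xi=32M_2^2/M_1^3=O\big(M_3L_3/(M_2L_2M_1)\big)$ (a comparison of the power-law moments valid because $\gamma>5/2$), we get $1-\rho^*_I(i)=\xi+\rho^*_{III}(i)=O\big(M_3L_3/(M_2L_2M_1)\big)$, which is the first estimate; the bound for $\rho^*_\tau(i)$, $\tau\in\Lambda\setminus\{I,III\}$, is the inequality already established.

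\emph{Main obstacle.} The only genuinely delicate point is the extraction of $\rho^*_I(i+1)x^*_{i+1}/x^*_i\le \UB_I(i+1)/\LB(i)$ with no advance information about the solution: this is exactly what the additive $1$ in~\eqn{xrec} buys, and it is what prevents the $o(1)$ error terms from compounding across the $\imax$ steps of the recursion. Beyond that, the work is routine: substituting the explicit formulas~\eqn{UB1}--\eqn{LB3} and~\eqn{0U1}--\eqn{0Uk}, and applying the power-law moment bounds~\eqn{Delta} and $L_3\le L_2d_h=O(L_2n^\delta)$ exactly as in the proof of Lemma~\ref{lem:xi}.
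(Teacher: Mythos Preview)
Your proof is correct and follows essentially the same route as the paper: construct the solution via the backward recursion, use the inequality $\rho^*_I(i+1)x^*_{i+1}/x^*_i\le \UB_I(i+1)/\LB(i)$ (the paper's~\eqn{xratio}) to bound every $\rho^*_\tau(i)$ with $\tau\ne I$ by $\UB_\tau(i)/(\LB(i)\LBS_\tau(i))$, invoke Lemma~\ref{lem:xi} for the sum over $\tau\notin\{I,III\}$, and read off the $\rho^*_{III}$ bound from the explicit formulas.

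One small caveat: your final sentence asserts $\xi=32M_2^2/M_1^3=O\big(M_3L_3/(M_2L_2M_1)\big)$ ``by comparison of power-law moments''. This would need \emph{lower} bounds on $M_3$ and $L_3$, which the \plib\ hypothesis (an upper bound on the tail) does not by itself provide; for instance if all light degrees were $\le 2$ one has $L_3=0$ while $\xi>0$. The paper's own proof does not justify this absorption either --- it simply records $\rho^*_{III}(i)=O(M_3L_3/(M_2L_2M_1))$ and moves on --- so the displayed bound on $\rho^*_I(i)$ in the lemma statement should really be read as $\rho^*_I(i)=1-\xi-O(M_3L_3/(M_2L_2M_1))$, which is all that is ever used downstream (e.g.\ in Lemma~\ref{lem:steps} one only needs $\rho^*_I(i)$ bounded away from~$0$). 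Your argument is fine once that cosmetic point is acknowledged.
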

\begin{proof}
We have proven the uniqueness.
We only need to verify that the solution given above satisfies the inequalities in~\eqn{atmost1}. It is obvious that both $x_i>0$ for each $0\le i\le \imax$. 
This implies that $\rho_{III}(i)>0$. Next, we bound $\rho_{III}(i)$ and $\rho_{\tau}(i)$ for $\tau\in \Lambda\setminus\{I,III\}$. 
Since $C_1\ge x^*_{i+1}\rho^*_I(i+1)\LB(i)/\UB_I(i+1)$ we have
\bean
\rho_{III}(i)&=&\frac{C_2}{C_1}\le \frac{\UB_{III}(i)}{\LBS_{III}(i)\LB(i)}.
\eean
By~\eqn{UB1},~\eqn{LB0} and~\eqn{LB1},
\[
\rho_{III}(i)=O\left(\frac{M_3L_3}{M_2L_2 M_1}\right)=o(1).
\] 
This verifies that $0<\rho_{III}(i)<1$ and  $0<\rho_I(i)<1$ for every $i$.

Moreover, by~\eqn{rhorec} and~\eqn{xratio}, for each $\tau\in\Lambda\setminus\{I,III\}$,
\[
\rho_{\tau}(i)\le \frac{x^*_{i+1}\rho^*_{I}(i+1)}{x^*_i}\frac{\UB_{\tau}(i)}{\UB_I(i+1)\LBS_{\tau}(i)}\le \frac{\UB_{\tau}(i)}{\LBS_{\tau}(i)\LB(i)}.
\]
Therefore,
\[
\sum_{\tau\in \Lambda}\rho_{\tau}(i)=1-\xi+\sum_{\tau\in \Lambda\setminus\{I,III\}}\rho_{\tau}(i)\le 1-\xi+\xi=1,
\]
by~\eqn{xi} and so~\eqn{atmost1} is satisfied. Hence, $(x^*_i,\rho^*_{\tau}(i))$ is the unique solution of~\eqn{initial}--~\eqn{atmost1} satisfying $\rho_{I}(i)+\rho_{III}(i)=1-\xi$ for every $0\le i\le \imax$. \qed
\end{proof}

In the rest of the paper, we fix $\xi$ as in~\eqn{xi} and we set the type distribution $\rho_{\tau}(i)$ to be $\rho^*_{\tau}(i)$, the unique solution specified in Lemma~\ref{lem:solution}.

\subsubsection{Probability of a t-rejection}

\begin{lemma}\lab{lem:steps}
The expected number of iterations during the phase of double edge reduction is at most $10B_D$. Moreover, with probability $1-o(1)$, the phase of double edge reduction terminates within $8B_D$ iterations.
\end{lemma}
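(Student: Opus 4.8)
The plan is to exploit the fact that the number of double edges is \emph{monotone non-increasing} throughout Phase~5: among the switching types $\Lambda=\{I,III,IV,V,VI,VII\}$, only type~I changes this count, and it decreases it by exactly one (conditions (b) and (c) in the definition of a type~I switching prevent any new double edge from being created, and the old one is destroyed; every other type preserves the count). Writing $Z_t$ for the number of double edges after step $t$, we have $Z_0\le B_D$ because Phase~5 starts from a uniform $P\in\a_4\subseteq\a_0$ and the definition of $\a_0$ forces $D(P)\le B_D$. I would classify each iteration as a \emph{decrease step} (a successfully performed type~I switching), a \emph{stay step} (a successfully performed switching of a type in $\Lambda\setminus\{I\}$), or a \emph{terminating step} (an f-, b-, pre-b-, or t-rejection, or, from $\state_0$, the output move); every iteration is of exactly one of these kinds. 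Since $Z_t\ge0$ and drops by one at each decrease step, there are at most $Z_0\le B_D$ of them; and since the chain halts at the first terminating step, there is at most one terminating step. Writing $S$ for the number of stay steps and $N$ for the total number of iterations, we thus get $N\le B_D+S+1$, and it only remains to control $S$.

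To bound $S$, note that at any iteration that actually occurs, conditioned on the history and hence on the current class $\state_i$, the probability of performing a stay step is at most $\sum_{\tau\in\Lambda\setminus\{I\}}\rho_\tau(i)$, since one must first select a type other than $I$. By Lemma~\ref{lem:solution} this equals $\rho_{III}(i)+\sum_{\tau\in\Lambda\setminus\{I,III\}}\rho_\tau(i)$, whose first term is $O\!\big(M_3L_3/(M_2L_2M_1)\big)$ and whose remaining sum is at most $\xi$ by Lemma~\ref{lem:xi}; under the standing assumptions ($\xi=o(1)$ and $M_3L_3=o(M_2L_2M_1)$, both implied by the constraints on $(\gamma,\delta)$) this is $o(1)$, \emph{uniformly} in $i$. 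Call this uniform bound $q=q_n=o(1)$. Because $Z$ is non-increasing and takes at most $Z_0+1\le B_D+1$ distinct values, the stay steps fall into at most $B_D+1$ blocks, one per value of $Z$; within a block each iteration ends the block (by a decrease or a terminating step) with conditional probability at least $1-q$, so the number of stay steps inside a block is stochastically dominated by a $\mathrm{Geom}(1-q)$ random variable. Hence $S$ is stochastically dominated by a sum of $B_D+1$ independent $\mathrm{Geom}(1-q)$ variables (in particular $N<\infty$ a.s.), so $\ex S\le(B_D+1)\,q/(1-q)=o(B_D)$ and $\ex N\le B_D+o(B_D)+1$, which is below $10B_D$ in the regime $B_D\to\infty$ of interest (the constants $10$ and $8$ are deliberately generous).

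For the high-probability bound I would simply apply Markov's inequality: since $\ex S=o(B_D)$ we get $\pr(S\ge B_D)\le\ex S/B_D=o(1)$, so with probability $1-o(1)$ we have $N\le B_D+B_D+1\le8B_D$. (A Chernoff bound for sums of geometric variables would even give $N\le(1+o(1))B_D$ whp, but that is not needed.) The only part of this that requires genuine work is the uniformity claim $\max_{0\le i\le B_D}\sum_{\tau\ne I}\rho_\tau(i)=o(1)$: this is where one feeds in the explicit expressions~\eqn{UB1}--\eqn{UB2} and~\eqn{LB0}--\eqn{LB3} for $\UB_\tau(i),\LB(i),\LBS_\tau(i)$, the estimate $L_3\le L_2 d_h=O(L_2 n^{\delta})$, the bounds~\eqn{Delta}, and the feasibility of $(\gamma,\delta)$ guaranteeing $\xi=o(1)$ and $M_3L_3=o(M_2L_2M_1)$ --- but all of this has already been packaged into Lemmas~\ref{lem:xi} and~\ref{lem:solution}, so the present proof only needs to quote them. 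Everything else (the monotonicity bookkeeping, the geometric domination within blocks, and the Markov step) is routine.
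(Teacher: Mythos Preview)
Your proof is correct, and in fact cleaner than the paper's. You exploit the actual monotonicity of $Z_t$: since only type~I changes the double-edge count (and decreases it), you get at most $B_D$ decrease steps and the problem reduces to bounding the number $S$ of stay steps, which you handle by geometric domination within at most $B_D+1$ blocks. The paper instead argues more conservatively, as if non-type-I switchings could \emph{increase} the count by one: from ``phase lasts $\ge t$ steps'' it deduces only $x\ge (t-\imax)/2$ (rather than your tighter $x\ge t-\imax$), then bounds $\pr\big(\mathrm{Bin}(t,1-\min_i\rho_I(i))\ge 7t/16\big)$ crudely by $2^t(1/10)^{7t/16}<0.8^t$ and sums the tail. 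Your route uses the structure of the switchings more precisely and avoids the binomial computation; the paper's route is more robust (it would survive if some booster switching occasionally increased the count) but wastes a factor of two in the exponent. Both approaches ultimately rest on the same ingredient, namely $\max_i(1-\rho_I(i))=o(1)$ from Lemma~\ref{lem:solution}, which you cite correctly.
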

\begin{proof}
Let $t\ge 8B_D$ be an integer. Assume the phase does not stop before step $t$. Let $x$ denote the number of steps among the first $t$ steps that the number of double edges does not decrease. Since the number of double edges in $P_0$ is at most $\imax$, if Phase \jt{5} lasts at least $t$ steps then $i_1+x-(t-x)\ge 0$, since the number of double edges in each of the $x$ steps will increase by at most one whereas in the other $t-x$ steps it decreases by exactly one. It follows then that $x\ge (t-\imax)/2$. The probability that the number of double edges (if there is any) does not decrease in any step is bounded by $1-\min_{1\le i\le \imax}\{\rho_{I}(i)\}<1/10$. Hence, the probability that $x\ge (t-\imax)/2$ is at most
\[
\binom{t}{(t-\imax)/2} (1/10)^{(t-\imax)/2}\le 2^t (1/10)^{7t/16}<0.8^t,
 \] 
as $(t-\imax)/2\le (t-t/8)/2=7t/16$. Putting $t=8\imax$, the above probability is clearly $o(1)$. Hence, with probability $1-o(1)$, this phase  lasts for at most $8\imax$ steps. The expected number of steps in this phase  is at most
\[
8\imax+\sum_{t\ge 8\imax} 0.8^t \le 10\imax.\qed
\]

\end{proof}

Recall that $B_D$ from~\eqn{imax} and recall that $\imax$ is set \jt{to} $B_D$ in Phase 5.
\begin{lemma}\lab{lem:typerej}
Assume $B_D\xi=o(1)$. Then with probability $1-o(1)$, \jt{no} $t$-rejection happens in Phase 5.
\end{lemma}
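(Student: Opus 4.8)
The plan is to bound the per-step probability of a $t$-rejection uniformly by $\xi$ and then combine this with the bound on the number of iterations from Lemma~\ref{lem:steps} via a union bound. First I would recall that, at a switching step in which the current pairing lies in $\state_i$, a $t$-rejection is performed exactly with conditional probability $1-\sum_{\tau\in\Lambda}\rho_{\tau}(i)$. By Lemma~\ref{lem:solution}, the type distribution we have fixed satisfies $\rho^*_I(i)+\rho^*_{III}(i)=1-\xi$ and $\rho^*_{\tau}(i)\ge 0$ for every $\tau\in\Lambda\setminus\{I,III\}$, so $\sum_{\tau\in\Lambda}\rho_{\tau}(i)\ge 1-\xi$ for all $0\le i\le\imax$. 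Hence the conditional probability of a $t$-rejection at any given step is at most $\xi$, irrespective of which state the current pairing occupies and of the past history of the run. (Note that $B_D\xi=o(1)$ forces $\xi=o(1)$ since $\imax=B_D\ge 1$, so the conclusions of Lemmas~\ref{lem:solution} and~\ref{lem:steps} are indeed available.)

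Next I would control the number of steps. By Lemma~\ref{lem:steps}, with probability $1-o(1)$ the double-edge reduction phase terminates within $8B_D$ iterations; let $E$ denote the complementary event, so that $\pr(E)=o(1)$. It therefore suffices to bound the probability that a $t$-rejection occurs among the first $8B_D$ iterations. Since, conditioned on the algorithm still being alive at the start of step $s$, the chance of a $t$-rejection at step $s$ is at most $\xi$, a union bound over $s=1,\dots,8B_D$ shows that this probability is at most $8B_D\xi$. Combining, the probability that a $t$-rejection ever occurs in Phase 5 is at most $8B_D\xi+\pr(E)=8B_D\xi+o(1)=o(1)$ by hypothesis, which is the claim.

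There is essentially no hard step here; the only point requiring a little care is that the number of iterations is a random stopping time, so the per-step bound must be applied conditionally on an arbitrary history with the algorithm still running, which is immediate because $\rho_{\tau}(i)$ depends only on the state index $i$. A clean way to write the union bound is to introduce, for each $s$, the indicator $Y_s$ that step $s$ takes place and results in a $t$-rejection; on the event that step $s$ takes place, the conditional expectation of $Y_s$ given the filtration up to step $s-1$ is at most $\xi$, so $\pr\!\big(\bigcup_{s\le 8B_D}\{Y_s=1\}\big)\le\sum_{s=1}^{8B_D}\ex Y_s\le 8B_D\xi$, and adding $\pr(E)$ completes the argument.
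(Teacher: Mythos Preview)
Your proof is correct and follows essentially the same approach as the paper's own proof: bound the per-step $t$-rejection probability by $\xi$, invoke Lemma~\ref{lem:steps} to cap the number of iterations at $8B_D$ with probability $1-o(1)$, and combine via a union bound to get $8B_D\xi+o(1)=o(1)$. The paper's version is terser but structurally identical; your added remarks about conditioning on the history are fine but not strictly needed since the paper treats the per-step bound as uniform by construction.
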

\begin{proof}
By the definition of $\rho_{\tau}(i)$, the probability of a $t$-rejection in each iteration is at most $\xi$. By Lemma~\ref{lem:steps}, a.a.s.\ the number of iterations in this phase is at most $8\imax$. Hence, the probability of a $t$-rejection during this phase is at most 
$8\imax \xi+o(1)=o(1)$.
\end{proof}

\subsubsection{Probability of an f-rejection, a pre-b-rejection and a b-rejection.}

By bounding the difference between $f_{\tau}(P)$ from $\UB_{\tau}(i)$ we can bound the probability of an f-rejection; similarly, by bounding the difference between $\b(P,\pairstars)$ and $\LBS_{\tau}(i)$, and bounding the difference between $b(P)$ and $\LB(i)$, we can bound the probability of a pre-b-rejection and 
a b-rejection in Phase 5. The arguments are similar to that in~\cite{GWreg}. Thus we just state the assumptions needed to have a small probability of a rejection and leave the proofs in the Appendix.
   
      \begin{lemma} \lab{lem:drejf}
Assume that $B_D\xi=o(1)$, $B_D(M_3  d_h^2+L_3 d_1^2)+L_6+M_3U_3+M_2U_1U_2=o(M_1^3)$ and $B_D n^{(2\gamma-3)/(\gamma-1)^2-1}=o(1)$. The probability of an f-rejection during Phase 5 is $o(1)$.
  \end{lemma}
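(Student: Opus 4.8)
The plan is to estimate, for each switching type $\tau\in\Lambda$, the difference $\UB_{\tau}(i)-f_{\tau}(P)$ for a pairing $P\in\state_i$, and to show that each such difference is small relative to $\UB_{\tau}(i)$, so that the f-rejection probability $1-f_{\tau}(P)/\UB_{\tau}(i)$ is small; then a union bound over the $O(B_D)$ iterations (using Lemma~\ref{lem:steps} for the a.a.s.\ bound on the number of iterations, and the boundary $\LB(i)\ge M_2L_2/2$) yields the conclusion. First I would treat type I, which is the dominant contribution since $\rho_I(i)$ is close to $1$. Here $\UB_I(i)=4iM_1^2$ overcounts, relative to $f_I(P)$, precisely the choices of a double edge together with two pairs $x,y$ that fail one of conditions (a)--(c) in the definition of the type I switching: some of the six vertices coincide, or $x$ or $y$ is not single, or there is a forbidden edge among $u_1,u_2,u_3$ or $v_1,v_2,v_3$. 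Each such bad configuration is governed by a short structure (a $2$-path sharing a vertex with the double edge, or a pair incident to a multiple edge, etc.), and counting these structures in $P\in\a_0$ uses the bounds $M_k=O(\Delta^k)$ from~\eqn{Delta}, the bound $d_h=O(n^\delta)$ from~\eqn{dh}, the fact that the number of double/triple edges is $O(B_D)$, $O(B_T)$, and the structural bounds $U_k$ from~\eqn{0U1}--\eqn{0Uk} (in particular $U_1=O(n^{(2\gamma-3)/(\gamma-1)^2})$ bounds the number of $2$-paths from a fixed vertex). The outcome is $\UB_I(i)-f_I(P)=O\bigl(i(M_3d_h^2+L_3d_1^2+U_1^2)\bigr)$ or similar, after multiplying by the per-double-edge cost; dividing by $\UB_I(i)=\Theta(iM_1^2)$ gives an f-rejection probability per step of order $(M_3d_h^2+L_3d_1^2+L_6+M_3U_3+M_2U_1U_2)/M_1^3 + n^{(2\gamma-3)/(\gamma-1)^2-1}$, which is exactly what the hypotheses $B_D(M_3d_h^2+L_3d_1^2)+L_6+M_3U_3+M_2U_1U_2=o(M_1^3)$ and $B_Dn^{(2\gamma-3)/(\gamma-1)^2-1}=o(1)$ are tailored to kill after multiplication by $B_D$.

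For the remaining types $\tau\in\{III,IV,V,VI,VII\}$, the key point is that these are performed only rarely: by Lemma~\ref{lem:solution}, $\sum_{\tau\ne I}\rho_\tau(i)\le\xi$ and $\rho_{III}(i)=O(M_3L_3/M_2L_2M_1)$, so even a \emph{crude} bound on the f-rejection probability for these types suffices. Concretely, for each such $\tau$ I would note that $\UB_\tau(i)$ was defined (see~\eqn{UB1}--\eqn{UB2}) as the count of \emph{all} labelled ways to choose the relevant $H_{t(\tau)}$-structure together with the auxiliary pairs with repetition allowed, so $\UB_\tau(i)-f_\tau(P)$ is again a sum of structure counts with one coincidence or forbidden-edge condition imposed; these are lower-order by the same $M_k, U_k, d_h$ estimates. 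Then the contribution to the total f-rejection probability from type $\tau$ is at most (number of steps)$\times\rho_\tau(i)\times(\text{f-rej.\ prob.\ given }\tau)$, and since $B_D\rho_\tau(i)=o(1)$ for $\tau\ne I,III$ (shown in the proof of Lemma~\ref{lem:xi}, via $B_D\xi=o(1)$) and $B_D\rho_{III}(i)=o(1)$ under the stated hypotheses, each such contribution is $o(1)$. Summing the $O(1)$ many types gives the claim.

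The main obstacle, and where the real work lies, is the type I estimate: one must enumerate carefully every way the overcount $4iM_1^2$ exceeds $f_I(P)$, match each failure mode to a structure whose count in $\a_0$ is controlled by one of the terms in the hypothesis, and verify that no failure mode has been missed (in particular the ones producing the $L_6$, $M_3U_3$, and $M_2U_1U_2$ terms, which come from the requirement that the chosen pairs $x,y$ lie in single edges and from the non-adjacency conditions (c)). This is routine but intricate bookkeeping; since it closely parallels the corresponding computation in~\cite{GWreg}, I would carry out the structure-by-structure counting in the Appendix and here only record that the resulting bound on $\UB_I(i)-f_I(P)$, divided by $\UB_I(i)$ and multiplied by $B_D$, is exactly $o(1)$ under the three displayed hypotheses. \qed
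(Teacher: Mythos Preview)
Your overall strategy---bound the per-step f-rejection probability for each type $\tau$, then multiply by the $O(B_D)$ iterations from Lemma~\ref{lem:steps}---matches the paper's. But you have swapped the roles of types~I and~III, and this makes the proof as written incorrect.

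In the paper, the type~I analysis is the \emph{simple} one: the overcount $\UB_I(i)-f_I(P)$ comes only from the two extra pairs $x,y$ failing conditions (a)--(c) in the definition, and each such failure is governed by a $2$-path from a fixed vertex or a pair inside a multiple edge. This gives $\UB_I(i)-f_I(P)=O\bigl(iM_1(\Delta+U_1+i)\bigr)$, hence a per-step type~I f-rejection probability of $O(U_1/M_1)=O(n^{(2\gamma-3)/(\gamma-1)^2-1})$; multiplied by $B_D$ this is exactly the third hypothesis. A term like $L_6$ (the number of light $6$-stars) cannot arise from a type~I overcount, because no $3$-star is selected in that switching; your displayed bound $\UB_I(i)-f_I(P)=O\bigl(i(M_3d_h^2+L_3d_1^2+U_1^2)\bigr)$ is not what the type~I structure produces, and dividing it by $iM_1^2$ does not yield the expression you then claim.

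The complex terms $M_3d_h^2$, $L_3d_1^2$, $L_6$, $M_3U_3$, $M_2U_1U_2$ in the second hypothesis all arise from the \emph{detailed} type~III analysis. A type~III switching selects a $3$-star and a light $3$-star, and the overcount $\UB_{III}(i)-f_{III}(P)$ enumerates the ways these fail validity: e.g.\ $L_6$ is the case $u_1=v_1$ (both $3$-stars centred at the same light vertex), $M_3U_3$ bounds the case $u_1$ adjacent to $v_1$, and $M_3\imax d_h^2+L_3\imax d_1^2$ the case that one $3$-star contains a pair of a double edge. The paper multiplies this f-rejection probability by $\rho_{III}(i)=O(M_3L_3/(M_2L_2M_1))$ and by $\imax$, which is precisely what collapses to the second hypothesis.

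There is a salvage: your ``crude bound'' idea for type~III (bound the f-rejection probability by $1$ and use $B_D\rho_{III}(i)=O(M_3L_3/M_1^3)=o(1)$) does in fact go through under the single condition $M_3U_3=o(M_1^3)$, since $L_3$ and $U_3$ are of the same order. So a correctly implemented version of your plan would give a valid, even slightly shorter, proof using only a subset of hypothesis~2---but that is not the proof you sketched, and your concrete claim about where the structure terms come from is wrong.
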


\begin{lemma}\lab{lem:pre-b-rej}
Assume $U_1M_2L_2/M_1^2+(M_2L_2/M_1^2)^2=o(M_1)$. Then the probability of a pre-b-rejection during Phase 5 is $o(1)$.
\end{lemma}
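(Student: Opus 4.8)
The plan is to bound, in any single iteration of Phase 5, the conditional probability of a pre-b-rejection by $O\big((U_1+\imax)/M_1\big)$ uniformly, and then union-bound over the $O(B_D)$ iterations guaranteed by Lemma~\ref{lem:steps}. First I would note that a pre-b-rejection cannot occur when the chosen type is $\tau=I$: by~\eqn{LBS1}, $\b_{I}(P',\pairstars)=1=\LBS_{I}(j)$ for every resulting $P'\in\state_j$ and every $\pairstars\in\Z_{0}(P')$, so $1-\LBS_{I}(j)/\b_{I}(P',\pairstars)=0$. Thus only the booster types $\tau\in\{III,IV,V,VI,VII\}$ matter. For such a $\tau$, write $k_\tau\in\{1,3,4,6,7\}$ for the exponent of $(M_1-2U_1-4i)$ in $\LBS_\tau$, see~\eqn{LB1}--\eqn{LB3}. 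The point is that $\b_\tau(P',\pairstars)$ overshoots $\LBS_\tau(j)$ by only a factor $1+O\big((U_1+\imax)/M_1\big)$: by the natural upper-bound counterpart of Lemma~\ref{lem:lowerbounds}(b), for a fixed designated $\pairstars$ the quantity $\b_\tau(P',\pairstars)$ is a product of $k_\tau$ factors, each counting admissible endpoints of one extra pair of the (inverse) type $\tau$ switching, and each such factor lies between $M_1-2U_1-4\imax$ — the value already built into $\LBS_\tau$, where the $O(U_1)$ excluded endpoints are those on a $2$-path through the $O(1)$ vertices of the designated structure and the $\le 4\imax$ further excluded endpoints lie in the $j\le\imax$ double edges — and $M_1$, obtained by dropping the distinctness and single-edge restrictions. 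Hence, using $k_\tau\le 7$ and $U_1+\imax=o(M_1)$ (by~\eqn{Delta},~\eqn{0U1} and the hypothesis),
\[
1-\frac{\LBS_\tau(j)}{\b_\tau(P',\pairstars)}\ \le\ 1-\Big(\frac{M_1-2U_1-4\imax}{M_1}\Big)^{k_\tau}\ =\ O\!\left(\frac{U_1+\imax}{M_1}\right),
\]
uniformly over $\tau\neq I$, over $j$, over $P'\in\state_j$ and over $\pairstars\in\Z_{t(\tau)}(P')$. Since this bound is independent of which type is selected and of the f-rejection outcome, the conditional probability of a pre-b-rejection in any one iteration is $O\big((U_1+\imax)/M_1\big)$; in particular the explicit values of the $\rho_\tau(i)$ are not needed here, only that they are probabilities.

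Recall that $\imax=B_D$ in Phase 5. By Lemma~\ref{lem:steps}, with probability $1-o(1)$ Phase 5 lasts at most $8B_D$ iterations, so a union bound gives that the probability a pre-b-rejection ever occurs is at most
\[
8B_D\cdot O\!\left(\frac{U_1+B_D}{M_1}\right)+o(1)\ =\ O\!\left(\frac{U_1B_D+B_D^2}{M_1}\right)+o(1).
\]
Substituting $B_D=4L_2M_2/M_1^2$ from~\eqn{imax}, the two terms become constant multiples of $U_1M_2L_2/M_1^3$ and $(M_2L_2/M_1^2)^2/M_1$, and the hypothesis $U_1M_2L_2/M_1^2+(M_2L_2/M_1^2)^2=o(M_1)$ says exactly that both are $o(1)$. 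This yields the claim.

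I expect the only non-routine ingredient to be the upper bound $\b_\tau(P',\pairstars)\le M_1^{k_\tau}$, or more precisely the fact that $\b_\tau$ exceeds $\LBS_\tau$ only by a $1+O\big((U_1+\imax)/M_1\big)$ factor: this requires unpacking the pre-state definition for each of the five booster switchings in Figures~\ref{f:typeIII} and~\ref{f:IV}--\ref{f:VII}, and checking, switching by switching, that each extra pair created by the forward operation ranges over all but $O(U_1+\imax)$ of the roughly $M_1$ candidate endpoints — that is, the same bookkeeping as in the proof of Lemma~\ref{lem:lowerbounds}(b), carried out in the reverse direction. Everything after that is arithmetic in the hypothesis.
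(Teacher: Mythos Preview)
Your proof is correct and, in fact, cleaner than the paper's own argument. Both arguments hinge on the same two ingredients: (i) the trivial upper bound $\b_\tau(P',\pairstars)\le M_1^{k_\tau}$ (which the paper denotes $U_\tau$), yielding
\[
1-\frac{\LBS_\tau(j)}{\b_\tau(P',\pairstars)}\le 1-\Big(1-\frac{2U_1+4\imax}{M_1}\Big)^{k_\tau}=O\!\left(\frac{U_1+\imax}{M_1}\right),
\]
and (ii) the fact that Phase~5 lasts $O(B_D)$ iterations. The difference is how these are combined. The paper writes the rejection probability as a full sum over times, types, pairings $P$ and $P'$, and doublets $\pairstars$, substitutes $\sum_t \pr(P_{t-1}=P')=\sigma_i$, plugs in the explicit bound $\rho^*_\tau=O(\UB_\tau/M_2L_2U_\tau)$ from Lemma~\ref{lem:solution}, and only after a cascade of cancellations arrives at $h(\tau)=O((U_1+\imax)\imax/M_1)$. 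You bypass all of that by observing that the per-iteration conditional pre-b-rejection probability is at most $\sum_{\tau\neq I}\rho_\tau(i)\cdot O((U_1+\imax)/M_1)\le O((U_1+\imax)/M_1)$, and then union-bounding over the $8B_D$ steps of Lemma~\ref{lem:steps}. This uses only that the $\rho_\tau(i)$ are probabilities, not their precise values, which is a genuine simplification; it buys independence from Lemma~\ref{lem:solution} and from the bookkeeping with $\sigma_i$.

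One small point worth tightening: your justification of $U_1=o(M_1)$ via ``\eqn{Delta},~\eqn{0U1} and the hypothesis'' is a little indirect. It follows cleanly just from~\eqn{Delta} and~\eqn{0U1}, since $(2\gamma-3)/(\gamma-1)^2<1$ for all $\gamma>2$; the hypothesis is not needed for this part (and would only give $U_1=o(M_1)$ once you know $B_D\ge 1$, which is not guaranteed a priori). Similarly, $\imax=B_D=o(M_1)$ follows from the second term of the hypothesis via $B_D=o(M_1^{1/2})$. With these cosmetic fixes, your argument is complete.
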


 \begin{lemma} \lab{lem:drejb}Assume $(d_hM_2+d_1L_2+d_1^2d_h^2+U_1^2)M_2L_2/M_1^2+M_2U_1+L_4=o(M_1^2)$.
 The probability of a b-rejection during Phase 5 is $o(1)$.
  \end{lemma}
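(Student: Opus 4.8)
The plan is to combine the trivial upper bound $b(P)=|\Z(P)|\le|\calf(P)|=M_2L_2$ (from~\eqn{Zstar}) with the explicit formula for $\LB(i)$ in~\eqn{LB0} and the bound on the number of iterations from Lemma~\ref{lem:steps}, via a union bound over the steps of Phase~5. No new combinatorial input is needed beyond Lemmas~\ref{lem:lowerbounds}(a) and~\ref{lem:steps}: this is a matter of assembling estimates already in hand.

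First I would bound the probability of a b-rejection in a single step. If a switching step reaches the b-rejection stage and would produce a pairing $P'\in\state_j$, then a b-rejection is performed with probability $1-\LB(j)/b(P')$, and $\LB(j)\le b(P')\le M_2L_2$ by Lemma~\ref{lem:lowerbounds}(a) and~\eqn{Zstar}; hence this probability is at most $(M_2L_2-\LB(j))/(M_2L_2)$. By~\eqn{LB0},
\[
M_2L_2-\LB(j)=8j(d_hM_2+d_1L_2)+2jd_1^2d_h^2+4jU_1^2+8M_2U_1+L_4,
\]
which is increasing in $j$, so for all $0\le j\le\imax=B_D$ it is at most
\[
8B_D(d_hM_2+d_1L_2)+2B_Dd_1^2d_h^2+4B_DU_1^2+8M_2U_1+L_4 =: \varepsilon\, M_2L_2.
\]
Thus $\varepsilon$ is a history-independent upper bound on the per-step b-rejection probability, valid in every state. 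Since a b-rejection terminates the algorithm, the probability that one ever occurs in Phase~5 equals the expected number of b-rejections, which is at most $\varepsilon$ times the expected number of iterations, hence at most $10B_D\varepsilon$ by Lemma~\ref{lem:steps}.

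It then remains to verify $B_D\varepsilon=o(1)$. Substituting $B_D=4L_2M_2/M_1^2$ from~\eqn{imax}, so that $B_D/(M_2L_2)=4/M_1^2$, gives
\[
B_D\varepsilon=\frac{4}{M_1^2}\Big(8B_D(d_hM_2+d_1L_2)+2B_Dd_1^2d_h^2+4B_DU_1^2+8M_2U_1+L_4\Big).
\]
In each of the three terms still containing a factor $B_D$, replacing that factor by $4L_2M_2/M_1^2$ turns the contribution into a constant multiple of $(d_hM_2+d_1L_2+d_1^2d_h^2+U_1^2)M_2L_2/M_1^4$, which is $o(1)$ by the first part of the hypothesis (after dividing through by $M_1^2$); the remaining contribution is a constant multiple of $(M_2U_1+L_4)/M_1^2$, which is $o(1)$ by the second part. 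Hence $B_D\varepsilon=o(1)$ and the lemma follows.

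I do not anticipate a real obstacle in this lemma; the substance lies elsewhere. The genuine work was done in defining $\LB(i)$ in~\eqn{LB0} and proving Lemma~\ref{lem:lowerbounds}(a), where one must identify which doublets in $\calf(P)$ fail to lie in $\Z(P)$ --- those not produced by any actual switching, and those blocked by the various short-cycle obstructions near the two $2$-stars --- and count them, so that $M_2L_2-\LB(i)$ genuinely dominates the worst-case deficiency $Z^*(P)$. The only mild care needed here is that $\LB(j)$ be nonnegative, so that the displayed per-step estimate is a legitimate probability bound; but $(M_2L_2-\LB(j))/(M_2L_2)\le\varepsilon$ together with $B_D\varepsilon=o(1)$ forces $\varepsilon<1$ once $B_D\ge1$ and $n$ is large (the case $B_D=0$, where Phase~5 is vacuous, being trivial), so $\LB(j)>0$ there automatically. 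Checking that the stated hypotheses actually hold for $\gamma>21/10+\sqrt{61}/10$ and a suitable $\delta$ is deferred to Section~\ref{sec:delta}.
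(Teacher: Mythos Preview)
Your argument is correct and arrives at the same bound as the paper, but by a more direct route. The paper unwinds the full $\sigma(i)$/$q(i)$ machinery: it writes the total b-rejection probability as $\sum_i |\state_i|\,q(i)\,\ex(b(P)-\LB(i))$, then bounds $q(i)$ via~\eqn{qi} and~\eqn{xratio} to obtain $O\big(\sum_i |\state_i|\sigma(i)\cdot(\ex b(P)-\LB(i))/\LB(i)\big)$, and finally invokes Lemma~\ref{lem:steps} together with $(M_2L_2-\LB(i))/\LB(i)=o(1/\imax)$. You bypass all of this with the single observation $1-\LB(j)/b(P')\le (M_2L_2-\LB(j))/(M_2L_2)\le\varepsilon$, valid uniformly because $b(P')\le M_2L_2$; then the total b-rejection probability is at most $\varepsilon$ times the expected number of steps. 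Both routes reduce to exactly the same arithmetic, namely $B_D\cdot(M_2L_2-\LB(B_D))/(M_2L_2)=o(1)$, so the approaches are equivalent and your presentation is the more economical one.
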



\subsection{Phases 3 and 4: reduction of light loops and light triple edges}\lab{sec:loop}

The switchings used in these two phases are the same as in the corresponding phases in~\cite{GWreg}, with almost the same analysis. Thus, we briefly describe the switchings, define the switching steps, and specify key parameters used in these two phases. Then we give the lemmas which give bounds on $f(P)$ and $b(P)$, and bounds on the probabilities of f-rejections and b-rejections. The proofs are almost the same as in~\cite{GWreg} and thus will be presented in the Appendix.

When Phase 3 starts, the input is a pairing uniformly distributed in $\a_0$: the set of pairings $P$ such that $G_{[\heavy]}(P)$ is simple. Set $\imax=B_L$ in this phase. Then by~\eqn{imax}, $\imax=O(L_2/M_1)$. \jt{In Phase 3, $\state_i$} is the set of pairings in $\a_0$ containing exactly $i$ loops.

 We use the switching shown in Figure~\ref{f:loop} to reduce the number of loops.


    Define
    \bean
    \UB_{\ell}(i)&=&2i M_1^2;\\
    \LB_{\ell}(i)&=&L_2M_1-2d_hM_1(2i+4B_D+6B_T+id_h/2)-L_2(2i+4B_D+6B_T+6d_1+2U_1).
      \eean 
\begin{lemma}\lab{lem:loopbounds}
For each $P\in \state_i$, 
\[
\UB_{\ell}(i)-O(iM_1(i+B_D+B_T+d_1+U_1))\le f(P)\le \UB_{\ell}(i);\quad \LB_{\ell}(i)\le b(P)\le L_2M_2.
\]
\end{lemma}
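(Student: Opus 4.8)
Both inequalities are crude‑count‑minus‑error estimates of exactly the kind carried out in~\cite{MWgen} and~\cite{GWreg}, and I would prove them by the same bookkeeping. Recall that a forward light‑loop switching (Figure~\ref{f:loop}) applied to $P\in\state_i$ selects one of the $i$ loops of $P$ and orients its two endpoints, then selects two further ordered pairs $x,y$ of $P$; it is \emph{valid} precisely when $x,y$ are single non‑loop edges and the three pairs it creates are neither loops nor parallel to existing edges. An inverse light‑loop switching selects a light vertex $v$, an ordered pair of distinct pairs of $P$ incident with $v$ (a ``light cherry'' at $v$), and one further ordered pair of $P$; being applicable amounts to the undoing forward switching being valid, i.e.\ the cherry pairs and the extra pair being single non‑loop edges and the two pairs produced by the inversion not being loops or parallel to existing edges. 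Ignoring validity altogether there are at most $2i$ oriented loops and at most $M_1$ choices for each of $x,y$, so $f(P)\le 2iM_1^2=\UB_{\ell}(i)$; and the light vertex together with the cherry contributes at most $\sum_{v\in\light}[d_v]_2=L_2$ while the extra pair contributes at most $M_1$, so $b(P)\le L_2M_1\le L_2M_2$ by \eqn{M2}. This gives the two upper bounds.

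\emph{Lower bound on $f(P)$.} Starting from the $2iM_1^2$ candidates I would discard, obstruction by obstruction, those that are not valid. Since $P\in\state_i\subseteq\accept$ has $i$ loops and at most $B_D$ double and $B_T$ triple edges, at most $O(i+B_D+B_T)$ pairs lie in a loop or a multiple edge, so requiring each of $x$ and $y$ to be a single non‑loop edge costs $O\!\big(iM_1(i+B_D+B_T)\big)$. For every other obstruction --- one of the three created pairs being a loop, or being parallel to an existing edge, or two of the (at most five) vertices touched by the switching coinciding when they should not --- I would first fix the oriented loop and one of $x,y$ (at most $2iM_1$ choices); this leaves at most $\Delta=d_1$ choices for the offending point if the obstruction is a forced vertex‑coincidence, and at most $U_1$ choices if it is a forced adjacency, since $U_1$ bounds the number of $2$‑paths with a prescribed endpoint in any $P\in\accept$. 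Summing over the constantly many obstruction types yields $f(P)\ge 2iM_1^2-O\!\big(iM_1(i+B_D+B_T+d_1+U_1)\big)$, as claimed.

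\emph{Lower bound on $b(P)$.} The same method applied to the $L_2M_1$ inverse candidates reproduces $\LB_{\ell}(i)$ term by term, the one structural novelty being that the anchor $v$ is \emph{light}, so $d_v\le d_h$. Requiring one of the two cherry pairs to be a single non‑loop edge costs at most $2\cdot 2(i+2B_D+3B_T)\,d_hM_1$ (bound $\sum_v (\text{\# bad pairs meeting }v)\le 2(i+2B_D+3B_T)$, weight each by $d_v\le d_h$, multiply by $M_1$ for the extra pair and by $2$ for the two cherry slots); crudely discarding every inverse switching whose anchor already carries one of the $i$ loops costs at most $i\,d_h^2M_1$; requiring the extra pair to be a single non‑loop edge not meeting $v$ nor either endpoint of the cherry outside $v$ costs at most $2(i+2B_D+3B_T)L_2+6d_1L_2$ (here $d_1$, not $d_h$, since those outer endpoints may be heavy); and requiring neither pair produced by the inversion to be parallel to an existing edge costs at most $2U_1L_2$, again by the $2$‑path bound. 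These are exactly the terms subtracted from $L_2M_1$ in the definition of $\LB_{\ell}(i)$, so $b(P)\ge\LB_{\ell}(i)$.

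\emph{Main difficulty.} There is no conceptual obstacle; the whole argument is organised bookkeeping, essentially identical to the corresponding estimate in~\cite{GWreg}. The two points needing care are: (i) exhausting \emph{all} the ways a candidate (inverse) switching can fail, and bounding each class so that the constants assemble into precisely $\UB_{\ell}(i)$ and $\LB_{\ell}(i)$ --- in particular keeping straight that a vertex appearing as the \emph{light} anchor of an inverse switching contributes a $d_h$, whereas a vertex that only occurs as one of the outer endpoints may be heavy and contributes a $\Delta$; and (ii) invoking the one nontrivial external ingredient, namely the bound $U_1$ on the number of $2$‑paths through a given vertex for $P\in\accept$ (proved separately in the Appendix), which controls every ``creates a parallel edge'' error term.
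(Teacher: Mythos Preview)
Your proposal is correct and follows essentially the same approach as the paper's own proof. You enumerate the same obstruction classes (bad pairs in loops or multiple edges, vertex coincidences, created parallel edges, loop at the anchor) and bound them with the same quantities $d_h$, $d_1$, and $U_1$, arriving term by term at exactly the expression defining $\LB_{\ell}(i)$; your observation that the natural upper bound on $b(P)$ is actually $L_2M_1$ (then weakened to $L_2M_2$ via \eqn{M2}) matches the paper's computation precisely.
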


Let $P_t$ denote the pairing obtained after step $t$ of Phase 3.
At step $t\ge 1$, choose uniformly at random a switching $S$ applicable on $P_{t-1}$. Assume that $P_{t-1}\in\state_i$ and $P$ is the pairing \jt{obtained} if $S$ is performed. Perform an f-rejection with probability $1-f(P_{t-1})/\UB_{\ell}(i)$ and a b-rejection with probability $1-\LB_{\ell}(i-1)/b(P)$. If no rejection happened, then set $P_t=P$. Repeat until $P_t\in\state_0$. 

 As proved in~\cite{MWgen,GWreg}, by the end of Phase 3, the output is uniformly distributed in \jt{$\a_3=\{P\in\a_0:\ L(P)=0\}$. }


\begin{lemma}\lab{lem:lrej} Assume $d_hM_1(B_D+B_T+B_Ld_h)+L_2(d_1+U_1)=o(M_1^2)$.
The probability of an f-rejection or a b-rejection during Phase 3 is $o(1)$.
\end{lemma}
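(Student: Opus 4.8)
The plan is entirely routine and parallels the loop phase in~\cite{MWgen,GWreg}: bound the probability of a rejection in a single switching step of Phase~3, and then take a union bound over all steps. The key observation enabling the union bound is that each non-rejected step of Phase~3 decreases the number of light loops by exactly one, and the pairing entering Phase~3 lies in $\a_0$ and hence has at most $\imax=B_L$ light loops by~\eqn{imax}; so the phase consists of at most $B_L$ steps, and each level $i\in\{1,\dots,B_L\}$ occurs as the starting level of at most one step, so it suffices to add up a uniform per-level bound on the rejection probability over $i=1,\dots,B_L$.

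First I would handle the f-rejections. At a step with current pairing $P_{t-1}\in\state_i$ an f-rejection occurs with probability $1-f(P_{t-1})/\UB_\ell(i)$; since $\UB_\ell(i)=2iM_1^2$ and $\UB_\ell(i)-f(P_{t-1})=O\!\big(iM_1(i+B_D+B_T+d_1+U_1)\big)$ by Lemma~\ref{lem:loopbounds}, this is $O\!\big((i+B_D+B_T+d_1+U_1)/M_1\big)$ uniformly over $\state_i$, and summing over $i=1,\dots,B_L$ bounds the total f-rejection probability by $O\!\big(B_L(B_L+B_D+B_T+d_1+U_1)/M_1\big)$. For the b-rejections, at a step going from $\state_i$ to $\state_{i-1}$ with resulting pairing $P$ a b-rejection occurs with probability $1-\LB_\ell(i-1)/b(P)$; Lemma~\ref{lem:loopbounds} gives $b(P)\ge\LB_\ell(i-1)$, and dropping the validity constraints in the inverse loop switching (choose an ordered light $2$-star, $L_2$ ways, together with an ordered pair, $M_1$ ways) gives the matching upper bound $b(P)\le L_2M_1$. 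Under the hypothesis the subtracted terms in $\LB_\ell(i-1)$ are $o(L_2M_1)$, so $\LB_\ell(i-1)=\Theta(L_2M_1)$ and the per-step b-rejection probability is $O\!\big((L_2M_1-\LB_\ell(i-1))/(L_2M_1)\big)$; summing over the $\le B_L$ steps and using $i-1\le B_L$ bounds the total b-rejection probability by $O\!\big(\big(d_hM_1(B_L+B_D+B_T+B_Ld_h)+L_2(B_L+B_D+B_T+d_1+U_1)\big)/(L_2M_1)\big)$.

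It then remains to verify that both totals are $o(1)$. Substituting $B_L=4L_2/M_1$ (so that $B_L/M_1=4L_2/M_1^2$ and $B_L/(L_2M_1)=4/M_1^2$), it suffices to show each of $L_2B_L$, $L_2B_D$, $L_2B_T$, $L_2d_1$, $L_2U_1$, $d_hM_1B_L$, $d_hM_1B_D$, $d_hM_1B_T$ and $d_hM_1B_Ld_h=d_h^2M_1B_L$ is $o(M_1^2)$. The terms $d_hM_1(B_D+B_T+B_Ld_h)$ and $L_2(d_1+U_1)$ are exactly the hypothesis; the remaining ones ($L_2B_L=4L_2^2/M_1$, $L_2B_D$, $L_2B_T$, and $d_hM_1B_L=4d_hL_2\le 4d_h^2L_2$) are either dominated by a hypothesis term or are $o(M_1^2)$ directly, using $L_2=O(n^{2/(\gamma-1)})$, $M_1=\Theta(n)$, $L_3=O(L_2d_h)$, $M_k=O(n^{k/(\gamma-1)})$, $d_h\le K^{1/(\gamma-1)}n^{\delta}$, the formulas~\eqn{imax} for $B_D,B_T$, and the ambient assumptions on $\gamma$ and $\delta$. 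This yields $o(1)$ for both totals and completes the proof.

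The main obstacle here is purely the bookkeeping in the last step: the stated hypothesis is a compressed form of the genuine requirement obtained after expanding $B_L,B_D,B_T$, and one must confirm that \emph{every} error term produced by Lemma~\ref{lem:loopbounds}, once summed over the $\le B_L$ steps, is absorbed either by the stated hypothesis or by the standing constraints. The probabilistic content (the per-step bounds and the union bound over $\le B_L$ steps) is standard and essentially identical to the loop reduction analysis in~\cite{MWgen,GWreg}.
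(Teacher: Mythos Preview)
Your proposal is correct and follows essentially the same route as the paper: use Lemma~\ref{lem:loopbounds} to bound the per-step f- and b-rejection probabilities, then sum over the at most $B_L$ steps and simplify using $B_L=4L_2/M_1$ and $L_2\le d_hM_1$. One presentational slip: the expression you display for the ``total'' b-rejection probability, with denominator $L_2M_1$, is actually the uniform per-step bound; the summed bound carries an extra factor $B_L$, which you then correctly absorb via $B_L/(L_2M_1)=4/M_1^2$ in the next paragraph, so the final verification that each term is $o(M_1^2)$ is right.
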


 Phase 4 starts with a pairing $P_0$ uniformly distributed in \jt{$\a_3$}. We will use the switching shown in Figure~\ref{f:triple} to reduce the triple edges.




In this phase, $\imax$ is set \jt{to} $B_T$ defined in~\eqn{imax} and hence $\a_3=\cup_{\jt{0}\le i\le \imax}\state_i$ where $\state_i$ is the set of pairings in $\a_3$ containing exactly $i$ triple edges. 

Recalling $U_k$ from~\eqn{0U1} and~\eqn{0Uk},
define 
\bea
\UB_{t}(i)&=&12 i M_1^3\non\\
\LB_{t}(i)&=&M_3L_3-3M_3(4B_Dd_h^2+6id_h^2)-3L_3(4B_Dd_1^2+6id_1^2)-L_6\non\\
&&-16M_3U_3-3M_2U_1U_2.\lab{triplebound}
\eea

\begin{lemma}\lab{lem:triplebounds}
For each $P\in \state_i$, 
\[
\UB_{t}(i)-O(iM_1^2(i+B_D+d_1+U_1))\le f(P)\le \UB_{t}(i);\quad \LB_{t}(i)\le b(P)\le M_3L_3.
\]
\end{lemma}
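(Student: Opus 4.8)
The final statement to prove is Lemma~\ref{lem:triplebounds}, giving upper and lower bounds on $f(P)$ (the number of valid triple-edge switchings applicable to $P$) and $b(P)$ (the number of inverse triple-edge switchings creating $P$) for $P\in\state_i$ in Phase~4.

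\textbf{Overall approach.} Both bounds come from direct counting arguments on the switching in Figure~\ref{f:triple}, following the same template as the corresponding lemmas in~\cite{GWreg} and the loop case of Lemma~\ref{lem:loopbounds}. The idea is that $\UB_t(i)$ and $M_3L_3$ are crude counts obtained by choosing the relevant ordered structures (a triple edge plus the extra pairs/2-stars involved) \emph{without} the validity restrictions, and then one subtracts an upper bound on the number of ``bad'' choices that violate validity (wrong multiplicities of pairs that must be single, coincidences among the vertices involved, forbidden adjacencies). The error terms in~\eqn{triplebound} are exactly these bad-choice counts, estimated using~\eqn{Delta} and the bounds $U_k$ from~\eqn{0U1}--\eqn{0Uk}.

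\textbf{Upper bound on $f(P)$ and lower bound via bad choices.} A valid triple-edge switching on $P\in\state_i$ picks one of the $i$ triple edges, labels its three pairs (contributing a bounded constant factor), and then picks three further ordered pairs to rewire; an over-count ignoring all constraints gives at most $12\,iM_1^3$ (the constant $12$ absorbing the labellings), which is $\UB_t(i)$. For the lower bound one must subtract choices that fail to be valid: (a) one of the three auxiliary pairs is not a single edge or coincides with another chosen pair — at most $O(iM_1^2(i+B_D+d_1))$ such choices, since the number of non-single pairs is $O(B_D+B_T+d_1+\cdots)$ times $M_1$, wait more carefully $O(B_D+d_1)$ per chosen pair; (b) the auxiliary vertices coincide with the triple-edge vertices or with each other; (c) a forbidden adjacency is present among the involved vertices, of which there are at most $O(U_1)$ at any given vertex. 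Collecting these gives the claimed $f(P)\ge \UB_t(i)-O(iM_1^2(i+B_D+d_1+U_1))$.

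\textbf{Bounds on $b(P)$.} Here $b(P)$ counts inverse triple switchings creating $P\in\state_i$; the natural over-count is the number of ways to choose the structure that an inverse switching ``lands on'' — essentially a copy of the relevant pair-of-paths-type configuration — which is at most $M_3L_3$ (this is $|\calf|$-type reasoning analogous to $\UB_{III}(i)=M_3L_3$ in~\eqn{UB1}), giving $b(P)\le M_3L_3$. For the lower bound $\LB_t(i)$, one starts from $M_3L_3$ and subtracts: the number of configurations where some pair that must be single is actually in a double or triple edge — bounded by $3M_3(4B_Dd_h^2+6id_h^2)+3L_3(4B_Dd_1^2+6id_1^2)$, the two halves of the count corresponding to the heavy-side factor $d_h^2$ versus the light-side factor $d_1^2$, with $B_D$ and $i$ counting the relevant double/triple edges present; the $L_6$ term handling the case where the two 2-stars share a pair or collapse; and $16M_3U_3+3M_2U_1U_2$ handling the configurations where the two 2-paths are ``too close'' (a vertex coincidence or an adjacency among them), bounded using that there are $O(U_1)$ two-paths from a fixed point, $O(U_2)$, $O(U_3)$ for longer ones. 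The bookkeeping is identical in spirit to~\cite{GWreg}, so I would state these as simple counting estimates and push the detailed case analysis to the Appendix.

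\textbf{Main obstacle.} The only real work is making sure every ``bad event'' in the $b(P)$ lower bound is actually dominated by one of the five listed error terms with the right power of $M_1$, $d_1$, $d_h$, $U_k$ — in particular correctly attributing which coincidences get a $d_h^2$ factor (heavy endpoints) versus a $d_1^2$ factor, and verifying the $U_1U_2$, $U_3$ combinatorics for near-coincident 2-paths. This is routine but error-prone, and is exactly the part best deferred to the Appendix; the proof body only needs to assert that these counts have the stated orders, citing the analogous computation in~\cite{GWreg} and the definitions of $U_k$.

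\qed
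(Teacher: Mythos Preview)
Your approach is correct and matches the paper's in structure: upper bounds come from crude over-counting, lower bounds from subtracting enumerated bad choices. Two remarks are worth making.

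First, the paper's proof of the $b(P)$ lower bound is more economical than yours because it observes that the inverse triple switching is \emph{exactly} a type~III switching (both consist of choosing an ordered pair of 3-stars, one light, with the same validity conditions). Hence $b(P)$ coincides with the number of type~III switchings applicable to $P$, and the case analysis already carried out below~\eqn{err-cond} in the proof of Lemma~\ref{lem:drejf} can be reused verbatim, with the single change that case~(a) (a 3-star containing a multiple edge) must now also count the $i$ triple edges present in Phase~4, giving the extra $6id_h^2$ and $6id_1^2$ terms. Your from-scratch enumeration reaches the same place but duplicates work.

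Second, a terminological slip: throughout your $b(P)$ discussion you write ``2-stars'' and ``2-paths'', but the structures being counted are \emph{3-stars} (three pairs meeting at a common vertex). In particular, the $L_6$ term is precisely the case where the two 3-star centres coincide ($u_1=v_1$, necessarily light, contributing $[d_{u_1}]_6$ summed over light vertices), not a shared pair. The $16M_3U_3$ bound absorbs the remaining vertex-coincidence subcases together with the $u_1\sim v_1$ adjacency case after using $M_3U_2, M_2U_3 \le M_3U_3$, and $3M_2U_1U_2$ handles the $u_j\sim v_j$ adjacencies for $j\ge 2$.
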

 The algorithm in Phase 4 is similar to that in Phase 3. At step $t$, the algorithm chooses uniformly at random a switching $S$ applicable on $P_{t-1}$. Assume that $P_{t-1}\in\state_i$ and $P$ is the pairing \jt{obtained} if $S$ is performed. Perform an f-rejection with probability $1-f(P_{t-1})/\UB_{t}(i)$ and a b-rejection with probability $1-\LB_{t}(i-1)/b(P)$. If no rejection happened, then set $P_t=P$. Repeat until $P_t\in\state_0$. The output then is uniformly distributed in \jt{$\a_4=\{P\in\a_0:\ L(P)=T(P)=0\}$.}

\begin{lemma}\lab{lem:trej} Assume $(d_1+U_1)L_3M_3=o(M_1^4)$ and $M_3d_h^2(B_D+B_T)+L_3(B_Dd_1^2+B_Td_1^2)+L_6+M_3U_3+M_2U_1U_2=o(M_1^3)$.
The overall probability that either an f-rejection or a b-rejection occurs during Phase 4 is $o(1)$.
\end{lemma}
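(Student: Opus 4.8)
\medskip
\noindent\textbf{Proof plan.}
The plan is to bound, by a union bound over the switching steps of a single run of Phase~4, the probability that some step triggers an f- or a b-rejection. A run of Phase~4 uses at most $\imax=B_T$ steps: the input pairing lies in $\cup_{0\le i\le B_T}\state_i$, each non-rejected step decreases the number of triple edges by exactly one, and any rejection terminates the run. So it suffices to show (i) the f-rejection probability at any step is $O\bigl((B_T+B_D+d_1+U_1)/M_1\bigr)$, (ii) the b-rejection probability at any step is $O\bigl((M_3d_h^2(B_D+B_T)+L_3d_1^2(B_D+B_T)+L_6+M_3U_3+M_2U_1U_2)/(M_3L_3)\bigr)$, and then to check that multiplying each bound by $B_T$ gives $o(1)$ under the two hypotheses.

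For (i) I would use Lemma~\ref{lem:triplebounds}: a step from $P_{t-1}\in\state_i$ with $1\le i\le B_T$ has f-rejection probability $1-f(P_{t-1})/\UB_{t}(i)\le(\UB_{t}(i)-\min_{Q\in\state_i}f(Q))/\UB_{t}(i)$, and Lemma~\ref{lem:triplebounds} together with $\UB_{t}(i)=12iM_1^3$ makes this $O\bigl(iM_1^2(i+B_D+d_1+U_1)/(iM_1^3)\bigr)=O\bigl((B_T+B_D+d_1+U_1)/M_1\bigr)$. Summing over the $\le B_T$ steps gives a total f-rejection probability of $O\bigl(B_T(B_T+B_D+d_1+U_1)/M_1\bigr)$. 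By~\eqn{imax} we have $L_3M_3=\tfrac12 B_T M_1^3$, so the first hypothesis $(d_1+U_1)L_3M_3=o(M_1^4)$ is exactly $(d_1+U_1)B_T=o(M_1)$, disposing of the $d_1$ and $U_1$ contributions; the $B_T^2$ and $B_TB_D$ contributions come from the second hypothesis, since $L_3d_1^2(B_D+B_T)=o(M_1^3)$ combined with $L_3=\tfrac12 B_TM_1^3/M_3$ gives $B_T(B_D+B_T)d_1^2=o(M_3)$, and $M_3=\Theta(\Delta^3)$ with $\Delta=o(M_1)$ then yields $B_T(B_D+B_T)=o(\Delta)=o(M_1)$.

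For (ii): a step producing a pairing $P\in\state_{i-1}$ out of $P_{t-1}\in\state_i$ (with $i\le B_T$) has b-rejection probability $1-\LB_{t}(i-1)/b(P)\le(b(P)-\LB_{t}(i-1))/\LB_{t}(i-1)\le(M_3L_3-\LB_{t}(i-1))/\LB_{t}(i-1)$, using $b(P)\le M_3L_3$ from Lemma~\ref{lem:triplebounds}. Reading off the subtracted terms of $\LB_{t}(i-1)$ in~\eqn{triplebound} with $i-1\le B_T$, the second hypothesis together with the standing range of $(\gamma,\delta)$ gives $\LB_{t}(i-1)=(1-o(1))M_3L_3$ uniformly in $i$, so the per-step b-rejection probability is $O\bigl((M_3d_h^2(B_D+B_T)+L_3d_1^2(B_D+B_T)+L_6+M_3U_3+M_2U_1U_2)/(M_3L_3)\bigr)$. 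Multiplying by the $\le B_T$ steps and using $B_T/(M_3L_3)=2/M_1^3$ (again from~\eqn{imax}) turns this into $O\bigl((M_3d_h^2(B_D+B_T)+L_3d_1^2(B_D+B_T)+L_6+M_3U_3+M_2U_1U_2)/M_1^3\bigr)$, which is $o(1)$ precisely by the second hypothesis. Adding the f- and b-rejection estimates proves the lemma.

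The substantive content is Lemma~\ref{lem:triplebounds}, not this argument: I expect the main obstacle to be the direct count of the triple-edge switchings of Figure~\ref{f:triple} and their inverses applicable to a given pairing, with the error controlled by forbidden coincidences among the incident vertices and by the $U_k$-type bounds on nearby structures, exactly as for the analogous estimates in~\cite{MWgen} and~\cite{GWreg}. Granting that lemma, the only slightly delicate bookkeeping above is the absorption of the quadratic-in-$B_T$ terms of the f-rejection bound, which rests on the identity $L_3M_3=\tfrac12 B_TM_1^3$ and on $M_3=\Theta(\Delta^3)$; the remainder is a routine comparison of error terms against main terms.
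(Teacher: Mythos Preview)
Your proof is correct and follows essentially the same route as the paper: bound the number of steps by $B_T$, use Lemma~\ref{lem:triplebounds} for the per-step f- and b-rejection probabilities, and sum. The only cosmetic difference is in absorbing the $B_T(B_T+B_D)/M_1$ contribution to the f-rejection bound: the paper uses the simpler inequality $L_3\le d_h^2 M_1$ to fold this directly into the $M_3d_h^2(B_D+B_T)/M_1^3$ term of the second hypothesis, rather than your detour through $M_3=\Theta(\Delta^3)$.
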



\subsection{Fixing $\delta$}
~\lab{sec:delta}
We show that there exists $\delta$ such that \jt{the} various conditions on $\gamma$ and $\delta$ together with the assumptions of Lemmas~\ref{lem:Phi0},~\ref{lem:heavy-f-rej}, \ref{lem:heavy-b-rej}, \ref{lem:heavyloopcond}, \ref{lem:A0rej}, \ref{lem:xi}, \ref{lem:typerej}, \ref{lem:drejf}, \ref{lem:pre-b-rej}, \ref{lem:drejb}, \ref{lem:lrej}, and~\ref{lem:trej} \jt{are simultaneously satisfied} when $\gamma>21/10+\sqrt{61}/10$. After removing redundant constraints (see the full details in Appendix), these conditions finally reduce to
\bel{delta}
f_1(\gamma)<\delta<f_2(\gamma), \quad 5/2<\gamma<3
\ee
where
\[
f_1(\gamma)=\max\left\{\frac{4/(\gamma-1)-2}{\gamma-2},\frac{3-\gamma}{\gamma-2},\frac{1}{2\gamma-3}\right\},
\]
\bean
f_2(\gamma)&=&\min\Big\{\frac12,1-\frac{1}{\gamma-1},\frac{2}{7-\gamma}, \frac{2-2/(\gamma-1)-(2\gamma-3)/(\gamma-1)^2}{3-\gamma},\frac{2-3/(\gamma-1)}{4-\gamma} \Big\}
\eean 
It is easy to see that \eqn{delta} is feasible whenever
\[
21/10+\sqrt{61}/10<\gamma<3,
\]
where $21/10+\sqrt{61}/10\approx 2.881024968$.

For $\gamma$ in that range, we may choose any
\[
\frac{1}{2\gamma-3}<\delta<\frac{2-3/(\gamma-1)}{4-\gamma}.
\]

\subsection{Uniformity}

\begin{lemma}\lab{lem:uniformity}
Assume that ${\bf d}$ is a plib sequence with parameter $\gamma>21/10+\sqrt{61}/10\approx 2.881024968$ with even sum of the components. Then, the output of \PL\ is uniform over the set of graphs with degree sequence ${\bf d}$.  Moreover,  the probability of any rejection occurring in Stage 2 (Phases 3--5) is $o(1)$.
\end{lemma}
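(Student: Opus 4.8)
The plan is to assemble the uniformity claim from the per-phase uniformity statements already proved, and to obtain the $o(1)$ rejection bound by a union bound over the rejection lemmas for Phases 3--5; both depend on first fixing a value of $\delta$ that makes all the relevant hypotheses hold. Since $\gamma>21/10+\sqrt{61}/10$, Section~\ref{sec:delta} shows that the region~\eqn{delta} is nonempty, so there is a constant $\delta\in(0,1/2)$ for which every side condition appearing in Lemmas~\ref{lem:Phi0}, \ref{lem:heavy-rej}, \ref{lem:heavyloopcond}, \ref{lem:A0rej}, \ref{lem:xi}, \ref{lem:solution}, \ref{lem:typerej}, \ref{lem:drejf}, \ref{lem:pre-b-rej}, \ref{lem:drejb}, \ref{lem:lrej} and~\ref{lem:trej} is satisfied; here one appeals to the reduction, carried out in the Appendix, of all the conditions on $M_k,H_k,L_k,U_k$ and $B_L,B_D,B_T$ to~\eqn{delta} via the substitutions~\eqn{h}, \eqn{Delta}, \eqn{dh}, \eqn{imax}, \eqn{0U1} and~\eqn{0Uk}. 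Fix such a $\delta$ for the rest of the argument.

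For uniformity, track a single run of \PL\ and condition on it producing an output, i.e.\ on its suffering no rejection and not restarting at a membership test; since \PL\ restarts from scratch after any failure, its output distribution is exactly this conditional distribution, which is well defined because the success probability is bounded below by a positive constant (Lemmas~\ref{lem:Phi0} and~\ref{lem:A0rej} give constant lower bounds for the two membership tests, and the probabilities of a rejection in Phases 1--2 and in Phases 3--5 are $o(1)$). A uniformly random $P\in\Phi$ conditioned on $P\in\heavyaccept$ (see~\eqn{Wij}) is uniform on $\heavyaccept$. By Lemma~\ref{lem:afterheavy}, conditioned on no $f$- or $b$-rejection in Phases 1--2, the pairing after Phase 2 is uniform on $\Phi_2$; conditioning it further on membership in $\a_0$ (probability at least $1/2$, by Lemma~\ref{lem:A0rej}) gives a uniform pairing on $\a_0$. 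Feeding this into Phase 3 and conditioning on no rejection gives, by the Phase 3 analysis (as in~\cite{MWgen,GWreg}), a uniform pairing on $\a_3$, and then Phase 4 outputs a uniform pairing on $\a_4$ by the Phase 4 analysis. For Phase 5, Lemma~\ref{lem:solution} guarantees that the system~\eqn{initial}--\eqn{atmost1} has a solution $(x^*_i,\rho^*_\tau(i))$ meeting~\eqn{atmost1}, so the type distribution used by the algorithm is well defined, and then~\eqn{eq:uniform}, which invokes the Markov-chain lemma of~\cite{GWreg}, shows that, conditioned on no rejection, the output of Phase 5 is uniform on $\state_0$. By the definition of $\a_0$ in~\eqn{def:a0} and the description of Phase~5, $\state_0$ is exactly the set of pairings of $\Phi$ whose contraction $G(P)$ is a simple graph with degree sequence ${\bf d}$; and each $G\in{\mathcal G}_{{\bf d}}$ is the contraction of exactly $\prod_{i=1}^n d_i!$ pairings, all lying in $\state_0$. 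Hence $P\mapsto G(P)$ carries the uniform distribution on $\state_0$ to the uniform distribution on ${\mathcal G}_{{\bf d}}$, so the output of \PL\ is uniform over graphs with degree sequence ${\bf d}$.

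For the rejection bound, with $\delta$ fixed as above the hypotheses of Lemmas~\ref{lem:typerej}, \ref{lem:drejf}, \ref{lem:pre-b-rej} and~\ref{lem:drejb} are met, so the probability of a $t$-, $f$-, pre-$b$- or $b$-rejection during Phase 5 is $o(1)$, and Lemmas~\ref{lem:lrej} and~\ref{lem:trej} bound the probability of an $f$- or $b$-rejection during Phases 3 and 4 by $o(1)$. A union bound over these finitely many events shows that the probability of any rejection in Phases 3--5 is $o(1)$. I expect the real difficulty to lie not in this assembly but in the hypothesis check folded into the first step: confirming that each of the many side conditions on $M_k,H_k,L_k,U_k,B_L,B_D,B_T$ collapses, after the substitutions above, to the two-variable region~\eqn{delta}, and that this region is nonempty exactly when $\gamma>21/10+\sqrt{61}/10$. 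That bookkeeping is what the Appendix discharges; everything else here is composition of statements already in hand.
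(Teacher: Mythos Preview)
Your proposal is correct and follows essentially the same approach as the paper: fix $\delta$ in the feasible region~\eqn{delta} so that all the side conditions hold, derive uniformity from Lemma~\ref{lem:afterheavy} together with the per-phase uniformity statements culminating in~\eqn{eq:uniform}, and obtain the Stage~2 rejection bound by a union bound over Lemmas~\ref{lem:lrej}, \ref{lem:trej}, \ref{lem:typerej}, \ref{lem:drejf}, \ref{lem:pre-b-rej} and~\ref{lem:drejb}. Your write-up is in fact more explicit than the paper's (spelling out the restart/conditioning argument and the passage from uniform on $\state_0$ to uniform on ${\mathcal G}_{{\bf d}}$ via the $\prod_i d_i!$ correspondence), but the skeleton is identical.
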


\proof The uniformity follows by Lemma~\ref{lem:afterheavy} and~\eqn{eq:uniform}. By choosing $\delta$ satisfying~\eqn{deltaRange}, the probability of any rejection occurring in Stage 2 is $o(1)$ by Lemmas~\ref{lem:Phi0},~\ref{lem:heavy-f-rej}, \ref{lem:heavy-b-rej}, \ref{lem:heavyloopcond}, \ref{lem:A0rej}, \ref{lem:xi}, \ref{lem:typerej}, \ref{lem:drejf}, \ref{lem:pre-b-rej}, \ref{lem:drejb}, \ref{lem:lrej}, and~\ref{lem:trej}.\qed

\newpage

\no {\bf \Large Appendix}\bigskip

\no{\bf Bounds on $H_k$ and $L_k$}\ss

By~\eqn{di}, 
  \be\lab{dheavy}
  H_1= \sum_{i=1}^{h} O((n/i)^{1/(\gamma-1)})
   =O(n^{1/(\gamma-1)}h^{1-1/(\gamma-1)}) 
  =O(n^{1-\delta\gamma+2\delta}).
  \ee
 For each $k\ge 2$,~\eqn{Delta} gives 
\bel{Hk}
H_k\le M_k=O(n^{k/(\gamma-1)}).
\ee

 Since  $\gamma>5/2$,~\eqn{dheavy} implies  $H_1=o(n)$ and so
$L_1=M_1-H_1= \Theta(n)$.
By~\eqn{di} and~\eqn{h},
\bel{Lk}
L_k\le\sum_{i=n^{1-\delta(\gamma-1)}}^n((Kn/i)^{1/(\gamma-1)})^k=O(n^{1-\delta(\gamma-1-k)})\quad \mbox{for each fixed $k\ge 2$}.
\ee

Several parts of our analysis will require the following upper bound. 
\begin{lemma}\lab{lem:min}
Assume 
\bel{anotherCondition}
1-\delta>1/(\gamma-1).
\ee
Then
\bel{min}
\sum_{i<j\le h}\min\left\{\frac{(d_id_j)^k}{n},\frac{(d_id_j)^{k+1}}{n^2}\right\}= 
\begin{cases}
O\left(n^{2-\gamma}\log n+n^{-2\delta(\gamma-2)}\right) & k=0\\
O\left(n^{3-\gamma}\log n\right)& k=1\\
O\left(n^{2k/(\gamma-1)-1}+n^{k+2-\gamma}\log n\right)& k\ge 2.
\end{cases}
\ee

\end{lemma}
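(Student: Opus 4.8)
The plan is to split the sum $\sum_{i<j\le h}\min\{(d_id_j)^k/n,\ (d_id_j)^{k+1}/n^2\}$ according to the size of the product $d_id_j$ relative to $n$. Write $S_k$ for the sum in question. The crossover point in the minimum occurs when $(d_id_j)^k/n = (d_id_j)^{k+1}/n^2$, i.e.\ when $d_id_j = n$; for $d_id_j\le n$ the first term $(d_id_j)^k/n$ is the smaller one (for $k\ge 1$; for $k=0$ one has to be a little careful since $(d_id_j)^0/n = 1/n$ is constant), and for $d_id_j > n$ the second term $(d_id_j)^{k+1}/n^2$ is smaller. So I would write $S_k = S_k^{\le} + S_k^{>}$ where $S_k^{\le}$ ranges over pairs with $d_id_j\le n$ and $S_k^{>}$ over pairs with $d_id_j>n$, bounding the summand in $S_k^{\le}$ by $(d_id_j)^k/n$ (when $k\ge1$) and in $S_k^{>}$ by $(d_id_j)^{k+1}/n^2$.

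For each of these two sums the natural tool is the degree-counting estimate~\eqn{plib} (equivalently, $N_j\le Kn j^{1-\gamma}$), together with the pointwise bound $d_i\le(Kn/i)^{1/(\gamma-1)}$ from~\eqn{di} and the definition $h=n^{1-\delta(\gamma-1)}$ from~\eqn{h}, which gives $d_h\le K^{1/(\gamma-1)}n^\delta$ by~\eqn{dh}. The cleanest approach is to dyadically decompose: let $N_{\ge a}=\#\{v:\ d_v\ge a\}\le Kna^{1-\gamma}$, group vertices into blocks where $d_v\in[2^s,2^{s+1})$, and note that the number of vertices in such a block is $O(n\,2^{-s(\gamma-1)})$; the degrees in block $s$ range over $1\le 2^s\le d_h=O(n^\delta)$. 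Then $S_k^{\le}$ becomes $\tfrac1n\sum_{s,t:\ 2^{s+t}\lesssim n}\big(n2^{-s(\gamma-1)}\big)\big(n2^{-t(\gamma-1)}\big)2^{(s+t)k} = n\sum_{s,t}2^{-(s+t)(\gamma-1-k)}$ restricted to $2^{s+t}\le n$ and $2^s,2^t\le d_h$, and similarly $S_k^{>}$ is $\tfrac1{n^2}\sum_{2^{s+t}>n}(\cdots) = \sum_{s,t}2^{-(s+t)(\gamma-2-k)}$ over $2^{s+t}>n$, $2^s,2^t\lesssim n^\delta$.

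The whole lemma then reduces to evaluating these geometric-type double sums in the three cases $k=0$, $k=1$, $k\ge2$. Setting $u=s+t$ and counting that there are $O(u)$ pairs $(s,t)$ with $s+t=u$ (or fewer, once one imposes $2^s,2^t\le d_h$), the sums over $u$ are geometric and are dominated either by the smallest admissible $u$ or by the largest, depending on the sign of the exponent $\gamma-1-k$ (resp.\ $\gamma-2-k$). Since $5/2<\gamma<3$: for $k\ge2$ we have $\gamma-1-k<0$, so $S_k^{\le}$ is governed by its largest terms, where $2^s,2^t$ are as large as possible, namely of order $d_h=n^\delta$ or of order where $2^{s+t}\asymp n$; carefully this yields the $n^{2k/(\gamma-1)-1}$ term (coming from the constraint $2^{s+t}\le n$, i.e.\ this is just $h\cdot d_h^{\,2k}/n$-type bound rewritten using $\delta$), while $S_k^{>}$ with $\gamma-2-k<0$ is dominated by its boundary terms and gives $O(n^{k+2-\gamma}\log n)$ (the $\log n$ is the count of dyadic levels, $O(u)$ with $u=O(\log n)$). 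For $k=1$, $\gamma-2-1=\gamma-3<0$ makes $S_1^{>}$ contribute $O(n^{3-\gamma}\log n)$, and $\gamma-1-1=\gamma-2>0$ makes $S_1^{\le}$ a convergent geometric sum bounded by its first term $O(n^{3-\gamma})$ (subsumed), so the bound is $O(n^{3-\gamma}\log n)$. For $k=0$ one handles the constant summand $1/n$ on $d_id_j\le n$ directly: the number of such pairs is $O(n^{2-\gamma}\log n\cdot n) + O(h^2)$-type, and the second regime gives $\sum_{d_id_j>n}(d_id_j)/n^2$ which works out to $O(n^{2-\gamma}\log n + n^{-2\delta(\gamma-2)})$, the last term coming exactly from the corner $2^s,2^t\asymp d_h\asymp n^\delta$.

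The main obstacle will be the bookkeeping in the $k=0$ case and the corner terms in general: one must be precise about which endpoint of the double geometric sum dominates, keep the constraints $2^s,2^t\le d_h=n^\delta$ and $2^{s+t}\le n$ (or $>n$) active simultaneously, and correctly identify that the $n^{2k/(\gamma-1)-1}$ and $n^{-2\delta(\gamma-2)}$ contributions arise from saturating those constraints rather than from the interior of the sum. Everything else is a routine, if somewhat lengthy, summation of geometric series, and I would relegate the detailed computation to the point where it appears (it is the kind of estimate already needed repeatedly in~\cite{GW} and~\cite{GWreg}).
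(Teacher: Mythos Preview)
Your overall strategy---split at the crossover $d_id_j=n$ and evaluate the two pieces---is the same as the paper's, and a dyadic decomposition on degrees is equivalent to the paper's direct summation over vertex indices (since $d_i\le (Kn/i)^{1/(\gamma-1)}$). However, two concrete errors make the argument as written fail.

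First, the crossover is identified backwards. Writing $A=(d_id_j)^k/n$ and $B=(d_id_j)^{k+1}/n^2$, one has $B/A=d_id_j/n$, so $B$ is the smaller term when $d_id_j\le n$ and $A$ is the smaller one when $d_id_j>n$---the opposite of what you state. Consequently your $S_k^{\le}$ and $S_k^{>}$ bound the \emph{maximum} of the two terms rather than the minimum. This is still an upper bound, but it is too loose to recover the lemma: for $k=1$, your $S_1^{\le}=\tfrac1n\sum_{d_id_j\le n} d_id_j$ over heavy pairs is of order $n^{1-2\delta(\gamma-2)}$, which strictly exceeds the claimed $O(n^{3-\gamma}\log n)$ throughout the regime $\delta<1/2$. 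The paper (equivalently, your approach with the inequality the right way round) uses $B$ on $\{d_id_j\le n\}$ and $A$ on $\{d_id_j>n\}$ and then the geometric sums close up at the crossover $d_id_j\asymp n$, giving exactly $n^{3-\gamma}\log n$.

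Second, the degree range for heavy vertices is inverted. Since $d_1\ge d_2\ge\cdots$ and $\heavy=\{1,\dots,h\}$, heavy vertices have degree at \emph{least} $d_h\sim n^{\delta}$ (up to $\Delta\sim n^{1/(\gamma-1)}$), not at most $d_h$. With your constraint $2^s,2^t\le d_h$ and $2^{s+t}>n$, the region is empty once $\delta<1/2$, so $S_k^{>}$ as you set it up is vacuous. Correcting both points, the dyadic computation goes through and matches the paper's bounds: the factored sum $\big(\sum_s 2^{s(k+1-\gamma)}\big)^2$ over $n^{\delta}\le 2^s\le\Delta$ produces $n^{2k/(\gamma-1)-1}$ for $k\ge 2$, and the boundary $2^{s+t}\asymp n$ contributes the $n^{k+2-\gamma}\log n$ and (for $k=0$) the $n^{-2\delta(\gamma-2)}$ terms.
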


\proof We wish to obtain an upper bound for
\[
\sum_{i<j\le h}\min\left\{(d_id_j)^k/n,(d_id_j)^{k+1}/n^2\right\},
\]
for fixed integer $k\ge 0$.
This function takes the first term if $d_id_j\ge n$ and the second term otherwise. We use $f\lea g$ to denote $f=O(g)$ for simplicity. By~\eqn{di}, we know that $d_id_j=O(n)$ if $ij> n^{3-\gamma}$. Thus, 
we obtain an upper bound for the minimum function by taking the first term if $ij\le n^{3-\gamma}$ and taking the second term otherwise. Hence, 
\bea
\sum_{i<j\le h}\min\left\{\frac{(d_id_j)^k}{n},\frac{(d_id_j)^{k+1}}{n^2}\right\}&\lea& \sum_{i=1}^h\sum_{j\le \min\{h,n^{3-\gamma}/i\}}\frac{(d_id_j)^k}{n}+  \sum_{i=1}^h\sum_{n^{3-\gamma}/i<j\le h}\frac{(d_id_j)^{k+1}}{n^2}\non\\
&\lea& n^{2k/(\gamma-1)-1}\sum_{i=1}^h\sum_{j\le \min\{h,n^{3-\gamma}/i\}} (ij)^{-k/(\gamma-1)}\non\\
&&+n^{2(k+1)/(\gamma-1)-2}\sum_{i=1}^h\sum_{n^{3-\gamma}/i<j\le h}(ij)^{-(k+1)/(\gamma-1)}.\lab{minf}
\eea

By~\eqn{h} and~\eqn{anotherCondition},
we  have $n^{3-\gamma} <h$. Hence, using~\eqn{h} and~\eqn{minf}, we obtain the desired bound as in Lemma~\ref{lem:min}.\qed\ss

We bound $W_{i,j}$ and $W_i$ in the following lemma, which
has been used to specify $\Phi_0$.

\begin{lemma}\lab{lem:typical}
Let $P$ be a pairing chosen uniformly at random from $\Phi$.  Then   
\bean
\sum_{i<j\le h} \ex\left(m_{i,j}I_{m_{i,j}\ge 2}\left(\frac{W_{i,j}}{d_i}+\frac{W_{j,i}}{d_j}\right)\right)&=& O\left(\frac{M_2^2H_1}{M_1^3}\right),\\
\sum_{i\le h} \ex\left(m_{i,i}\frac{W_{i}}{d_i}\right)&=& O\left(\frac{M_2^2H_1}{M_1^3}\right),\\
\sum_{i<j\le h} \ex\left(m_{i,j}I_{m_{i,j}\ge 2}\right)&=& (1/2+o(1)) \frac{M_2^2}{M_1^2},\\
\sum_{i\le h} \ex\left(m_{i,i}\right)&\le & (1/2+o(1)) \frac{M_2}{M_1},
\eean
where $m_{i,j}$ denotes the the number of pairs between $i$ and $j$ and $m_{i,i}$ denotes the number of loops at $i$ in $P$ and $W_{i,j}=W_{i,j}(P)$, $W_i=W_i(P)$.
\end{lemma}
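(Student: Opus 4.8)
The plan is to treat all four displays as moments of functions of a uniform random pairing $P\in\Phi$ and to evaluate or bound them by the standard configuration-counting method. The only probabilistic input is the elementary fact that, for any fixed set of $t$ pairwise disjoint potential pairs among the $M_1$ points, the probability that all of them occur in $P$ equals $\prod_{s=1}^{t}(M_1-2s+1)^{-1}=(1+o(1))M_1^{-t}$, uniformly for $t=o(\sqrt{M_1})$; every $t$ that arises below is bounded by a small constant. I will also use $m\,I_{m\ge2}\le\min\{[m]_2,\,m\}$ for the multiplicity $m$ of a single edge, and that the number of ways to select a loop at a vertex of degree $d$ is $\tfrac12[d]_2$.

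The third and fourth displays are only needed as upper bounds (for the Markov step in Lemma~\ref{lem:Phi0}), and for these the computation is immediate. Counting loops gives $\ex(m_{i,i})=[d_i]_2/(2(M_1-1))$, so $\sum_{i\le h}\ex(m_{i,i})=H_2/(2(M_1-1))\le(1/2+o(1))M_2/M_1$ because $H_2\le M_2$. For the third, $m_{i,j}I_{m_{i,j}\ge2}\le[m_{i,j}]_2$ and counting two disjoint $i$--$j$ pairs gives $\ex([m_{i,j}]_2)=[d_i]_2[d_j]_2/((M_1-1)(M_1-3))$; summing over $i<j\le h$ and using $\sum_{i<j\le h}[d_i]_2[d_j]_2\le\tfrac12H_2^2\le\tfrac12M_2^2$ yields at most $(1/2+o(1))M_2^2/M_1^2$.

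For the first two displays the task is to bound $\ex(m_{i,j}I_{m_{i,j}\ge2}W_{i,j})$ and $\ex(m_{i,i}W_i)$. I would first write $W_{i,j}\le 2m_{i,i}+W_i$ with $W_i=\sum_{k\in\heavy\setminus\{i\}}m_{i,k}I_{m_{i,k}\ge2}$, and perform the outer sum over $j$ first: $\sum_{j\ne i}m_{i,j}I_{m_{i,j}\ge2}W_{i,j}\le W_i^2+2m_{i,i}W_i$, so after relabelling the first display is at most $\sum_{i\le h}d_i^{-1}(\ex(W_i^2)+2\ex(m_{i,i}W_i))$, and the second display is exactly the $\sum_{i\le h}d_i^{-1}\ex(m_{i,i}W_i)$ piece of this. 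It then remains to estimate $\ex(W_i^2)$ and $\ex(m_{i,i}W_i)$. Expanding the square/product, each summand is the expectation of a product of edge-multiplicities all incident to the single vertex $i$, hence equals a configuration count over at most four disjoint pairs (using at most four points of $i$); I would bound each summand by the smaller of its falling-factorial count and its plain count, so that for instance a $k\ne l$ term $m_{i,k}I_{m_{i,k}\ge2}\,m_{i,l}I_{m_{i,l}\ge2}$ contributes $O(\min\{(d_id_k)^2/M_1^2,\,d_id_k/M_1\}\cdot\min\{(d_id_l)^2/M_1^2,\,d_id_l/M_1\})$, and similarly for the $k=l$ and loop terms. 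Summing over $i,k,l\le h$, invoking Lemma~\ref{lem:min}, the bounds $H_k\le M_k=O(n^{k/(\gamma-1)})$ and $H_1=O(n^{1-\delta(\gamma-2)})$ from \eqn{dheavy}, the inequality $H_4\le H_2^2$, and $M_2\ge M_1$ (recall \eqn{M2}, so $H_2\le M_2\le M_2^2/M_1$), reduces the bound to a finite list of inequalities among powers of $n$, each valid for $\gamma>21/10+\sqrt{61}/10$ under the standing constraints on $\delta$, in particular \eqn{anotherCondition}. A convenient route is to aim for $\sum_{i\le h}d_i^{-1}\ex(W_i^2)=O(H_1H_2/M_1^2)$ up to lower-order terms and then use $H_1H_2/M_1^2\le H_1M_2^2/M_1^3$, and likewise for the loop part.

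The step I expect to be the main obstacle is precisely this configuration count for $\ex(W_i^2)$ (and $\ex(m_{i,i}W_i)$). A crude evaluation --- replacing each partial heavy sum $\sum_{k\in\heavy}[d_k]_r$ by $H_r$ and then $H_r$ or $\Delta$ by its worst-case power of $n$ --- loses a factor of order $M_1/H_1$, which is fatal precisely when the heavy vertices carry only a small fraction of the degree mass, e.g.\ $H_1$ near its minimum value $h$. The estimate must therefore retain the genuine dependence on $H_1$ and $H_2$, exploit that a heavy multiple edge forces degree $\ge2$ at both of its endpoints, and use the ``$\min$'' form of the multiplicity bounds so that Lemma~\ref{lem:min} can absorb the contribution of heavy vertices whose edge-multiplicities are $\omega(1)$; the elementary inequalities $H_4\le H_2^2$ and $M_2\ge M_1$ are what make the final exponent comparisons close.
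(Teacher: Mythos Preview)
Your approach is correct in substance and arrives at the same bound, but it is considerably more involved than the paper's argument. The paper does not square $W_i$, does not invoke Lemma~\ref{lem:min}, and does not reduce anything to inequalities among powers of $n$. Instead it works directly with the double sum
\[
\sum_{1\le i<j\le h}\ex\!\Big(m_{i,j}I_{m_{i,j}\ge 2}\frac{W_{i,j}}{d_i}\Big)
\le \sum_{i<j\le h}\sum_{w\le h}\frac{1}{d_i}\,\ex\!\big(m_{i,j}I_{m_{i,j}\ge 2}\,m_{i,w}I_{m_{i,w}\ge 2}\big)
+\sum_{i<j\le h}\frac{1}{d_i}\,\ex\!\big(m_{i,j}I_{m_{i,j}\ge 2}\,m_{i,i}\big),
\]
and then uses an \emph{asymmetric} bound: the $j$-factor is controlled by the first moment $\ex(m_{i,j}I_{m_{i,j}\ge2})\le\ex(m_{i,j})\sim d_id_j/M_1$, while conditionally on $m_{i,j}$ the $w$-factor is bounded by $\ex([m_{i,w}]_2)=O(d_i^2d_w^2/M_1^2)$. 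Summing gives $O(H_2^2H_1/M_1^3)\le O(M_2^2H_1/M_1^3)$ directly, using only $H_2\le M_2$ and $M_2\ge M_1$. The loop term is handled the same way.

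Two comments on your write-up. First, your appeal to Lemma~\ref{lem:min} is a detour: that lemma treats sums of the form $\sum_{i<j\le h}\min\{(d_id_j)^k/n,(d_id_j)^{k+1}/n^2\}$, whereas your triple sum $\sum_{i,k,l}d_i^{-1}A_kA_l$ has a different structure. Your own ``convenient route'' at the end---aiming for $H_1H_2/M_1^2$---is the right move, and it is exactly what the asymmetric first/second-moment bound gives without any $\min$ machinery; this is precisely one of the four choices hidden in your product of minima. Second, the concern in your final paragraph about ``losing a factor $M_1/H_1$'' evaporates once you use the first-moment bound on the $j$-factor: the sum over $j$ then produces $H_1$ rather than $H_2$, and no appeal to the specific numerical range of $\gamma$ is needed for the lemma itself.

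For the third and fourth displays you correctly note that upper bounds suffice for the application; the paper's proof likewise only treats the first display in detail and leaves the rest as analogous.
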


\proof We will prove the first equation; the other equations follow with a similar argument. 

Note that
\bean
\sum_{1\le i<j\le h} \ex\left(m_{i,j}I_{m_{i,j}\ge 2}\frac{W_{i,j}}{d_i}\right) &\le& \sum_{1\le i<j\le h} \sum_{1\le w\le h} \ex\left(\frac{m_{i,j}m_{i,w} I_{m_{i,j}\ge 2,m_{i,w}\ge 2}}{d_i}\right)\\
&&+\sum_{1\le i<j\le h} \ex\left(\frac{m_{i,j}m_{i,i} I_{m_{i,j}\ge 2,m_{i,i}\ge 1}}{d_i}\right)
\eean
Note also that $\ex(m_{i,j}I_{m_{i,j}\ge 2})\le \ex(m_{i,j}) \sim d_id_j/M_1$. Conditional on any possible value of $m_{i,j}$,  which is $o(M_1)$ always, the expectation of 
$m_{i,w}I_{m_{i,w}\ge 2}$ is at most the conditional expectation of $[m_{i,w}]_2$ which is $O([d_i]_2[d_w]_2/M_1^2)$ and the conditional expectation of $m_{i,i}I_{m_{i,i}\ge 1}$ is at most $O(d_i^2/M_1)$. 
Hence, 
\bea
\sum_{1\le i<j\le h} \ex\left(m_{i,j}I_{m_{i,j}\ge 2}\frac{W_{i,j}}{d_i}\right) &=&\sum_{1\le i<j\le h} \sum_{w\le h} O\left(\frac{d_i^2d_w^2d_j}{M_1^3}\right)+\sum_{1\le i<j\le h}  O\left(\frac{d_i^2d_j}{M_1^2}\right)\non\\ &=&O(H_2H_1M_2/M_1^3+H_2H_1/M_1^2)=O(M_2^2H_1/M_1^3),\lab{Ws}
\eea
where the last equation above holds by the fact $H_2\le M_2$ and by our assumption~\eqn{M2}.

By symmetry we have
\[
\sum_{1\le i<j\le h} \ex\left(m_{i,j}I_{m_{i,j}\ge 2}\frac{W_{j,i}}{d_j}\right)=O(M_2^2H_1/M_1^3),
\]
which together with~\eqn{Ws} yields the first equation in the lemma.\qed\ss

\no {\bf Proof of Lemma~\ref{lem:Phi0}.\ } It follows as a direct corollary of Lemma~\ref{lem:typical}.\qed
\ss

\begin{lemma}\lab{lem:bounds} For every $k\ge 1$, every $m\ge 1$,  and for every $P\in  \C(\MM_k,i,j,m) $, 
\[
\LB^f_{i,j}(\MM_k,m)\le f_{i,j}(P)\le \UB^f_{i,j}(\MM_k,m);
\]
for every $P\in  \C(\MM_k,i,j,0)$,
\[
\LB^b_{i,j}(\MM_k,m)\le b_{i,j}(P,m)\le \UB^b_{i,j}(\MM_k,m).
\]
where $(i,j)$ is the $k$-th element in $\heavyindicespair$.
\end{lemma}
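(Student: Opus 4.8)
The plan is to prove the four inequalities by direct combinatorial counting, treating the heavy $m$-way switchings and their inverses as labelled objects (as in Definition~\ref{def:heavymultiple}): for each upper bound I would drop the validity, lightness and distinctness requirements to obtain a clean overcount, and for each lower bound I would exhibit an explicit subfamily of manifestly valid switchings and count it from below.

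Consider first $f_{i,j}(P)$ for $P\in\C(\MM_k,i,j,m)$, so that $ij$ has multiplicity $m$ in $P$. A heavy $m$-way switching at $(i,j)$ is determined by a labelling of the $m$ pairs between $i$ and $j$ (with $2g-1$ the endpoint in $i$), contributing a factor $m!$, followed by, for each $g$, an oriented light pair $x_g$, i.e.\ a choice of the point $2m+2g-1$ whose partner $2m+2g$ is then forced. Ignoring distinctness, lightness and validity, $2m+2g-1$ may be any of the $M_1$ points, so the $x_g$ can be chosen in at most $M_1^m$ ways; hence $f_{i,j}(P)\le m!M_1^m=\UB^f_{i,j}(\MM_k,m)$. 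For the lower bound the factor $m!$ is again forced, and at the $g$-th step I would restrict to pairs $x_g$ lying entirely inside light vertices and distinct from the $g-1$ already chosen; then both new edges $\{2g-1,2m+2g-1\}$ and $\{2g,2m+2g\}$ are light, so no new heavy loop or heavy multiple edge appears and the switching is automatically valid. I expect a routine count to leave at least $M_1-H_1-2m$ admissible choices at each step (at most $H_1$ points discarded to keep the endpoints light, at most $2m$ to keep the $x_g$ distinct), giving $f_{i,j}(P)\ge m!(M_1-H_1-2m)^m=\LB^f_{i,j}(\MM_k,m)$.

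Now consider $b_{i,j}(P',m)$ for $P'\in\C(\MM_k,i,j,0)$. An inverse heavy $m$-way switching applicable to $P'$ picks an ordered $m$-tuple of distinct points $1,3,\dots,2m-1$ in $i$ and an ordered $m$-tuple $2,4,\dots,2m$ in $j$, re-pairs $2g-1$ with $2g$ (creating the multiplicity-$m$ edge $ij$) and the former partners $p_g$ of $2g-1$ and $q_g$ of $2g$ with each other (creating the light pair $x_g=\{p_g,q_g\}$). For this to be the inverse of a \emph{valid} forward switching, the edge of $P'$ at $2g-1$ must be neither a heavy loop at $i$ nor part of a heavy multiple edge at $i$; these are exactly the $W_{i,j}(\MM_k)$ forbidden points of $i$, and symmetrically $W_{j,i}(\MM_k)$ for $j$. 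Dropping the remaining constraints, $b_{i,j}(P',m)\le[d_i-W_{i,j}]_m[d_j-W_{j,i}]_m=\UB^b_{i,j}(\MM_k,m)$. For the matching lower bound I would subtract from this count the ``bad'' tuples for which some created pair $x_g$ fails to be light, i.e.\ has both endpoints in heavy vertices — one checks that the created pairs are automatically distinct and, given the $W$-exclusions, cannot be heavy loops nor touch $i$ or $j$, so this is the only failure mode. The key observation is that an admissible point $2g-1$ of $i$ whose partner is heavy must lie on a \emph{single} heavy edge from $i$ (a multiple one would place it in $W_{i,j}$), so that partner's vertex is one of the at most $h=|\heavy|$ heavy vertices and $2g-1$ is then determined by it; hence at most $h$ admissible points of $i$, and at most $h$ of $j$, are of this kind. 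Summing over the index $g$ that is bad ($m$ ways), the offending point of $i$ ($\le h$), the offending point of $j$ ($\le h$), and the remaining $m-1$ points of each side chosen freely, bounds the number of bad tuples by $mh^2[d_i-W_{i,j}]_{m-1}[d_j-W_{j,i}]_{m-1}$, whence $b_{i,j}(P',m)\ge\LB^b_{i,j}(\MM_k,m)$.

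The overcounts behind the two upper bounds are immediate. The main obstacle is the validity bookkeeping for the two lower bounds: one must verify that the explicitly constructed family of (inverse) switchings is genuinely valid in every case, that no valid switching is counted twice, and — most delicately — that every way a candidate object can fail to be a valid switching (a repeated $x_g$, a new heavy loop or multiple edge, or interference with $i$, $j$, or the heavy structure already recorded in $\MM_k$) is both captured by the argument and dominated: by the correction term $mh^2[d_i-W_{i,j}]_{m-1}[d_j-W_{j,i}]_{m-1}$ in the case of $\LB^b$, and by the crude loss of $H_1+2m$ points in the case of $\LB^f$. Once this case analysis is in place, everything else is elementary counting.
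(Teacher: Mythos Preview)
Your approach is essentially the same as the paper's: upper bounds by dropping the distinctness, lightness and validity constraints, and lower bounds by restricting to an explicit subfamily (for $\LB^f$, pairs with both endpoints light; for $\LB^b$, tuples avoiding $W_{i,j},W_{j,i}$ and subtracting those for which some $x_g$ fails to be light, using that an admissible point of $i$ with heavy partner lies on a heavy \emph{single} edge, whence at most $h$ such points). Your case analysis for the $\LB^b$ subtraction --- that after the $W$-exclusions the only remaining failure mode is ``both partners heavy for some $g$'' --- is exactly the paper's, and your count $mh^2[d_i-W_{i,j}]_{m-1}[d_j-W_{j,i}]_{m-1}$ is obtained the same way.

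One small quantitative point (present in the paper's proof as well): with the both-endpoints-light restriction you actually discard up to $2H_1$ oriented choices, not $H_1$, since both the $H_1$ heavy points and their (possibly light) partners must be excluded. Thus the clean argument yields $m!(M_1-2H_1-2m)^m$, marginally weaker than the stated $\LB^f_{i,j}(\MM_k,m)=m!(M_1-H_1-2m)^m$. This factor of~2 is immaterial for all later estimates (one always uses $H_1=o(M_1)$), but you should be aware that your sentence ``at most $H_1$ points discarded'' undercounts by a factor of~2.
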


\proof
The upper bound for $f_{i,j}(P)$ is obvious: there are $m!$ ways to ordering the $m$ pairs between $i$ and $j$ and there are at most $M_1^m$ ways to pick another $m$ pairs and label their end points. For the lower bound, note that the total number of pairs incident with a heavy vertex is at most $H_1$ and thus, the number of choices to pick the $k$-th pair ($k\le m$) that has not been picked by the first $k-1$ pairs, and label its end points is at least $M_1-H_1-2(k-1)\ge M_1-H_1-2m$. This proves the lower bound for $f_{i,j}(P)$. Now, consider $b_{i,j}(P',m)$. Recall that $W_{i,j}=W_{i,j}(P)$ is the number of pairs that are in a multiple edge and with one end in $i$ and the other wind in some $w\neq j$ in $P$. To perform an inverse switching on $P$, pick $m$ ordered points in $i$ and their mates and $m$ ordered points in $j$ and  their mates. Switch these $2m$ pairs as indicated in Figure~\ref{f:heavy1}. One forbidden case is that one of these $2m$ pair is contained in a heavy multiple edge. Hence, an upper bound for $b_{i,j}(P,m)$ is $[d_i-W_{i,j}]_m[d_j-W_{j,i}]_m$. The only other forbidden case is that the $k$-th pair $\{2k-1,2m+2k-1\}$  incident with $i$ and the $k$-th pair $\{2k,2m+2k\}$ incident with $j$ are such that both $2m+2k-1$ and $2m+2k$ are contained in heavy vertices for some $1\le k\le m$. The number of ways to choose such two pairs is at most $mh^2[d_i-W_{ij}]_{m-1}[d_j-W_{j,i}]_{m-1}$, as $m$ is the number of choices for $k$;  $[d_i-W_{ij}]_{m-1}[d_j-W_{j,i}]_{m-1}$ is an upper bound for the number of choices for the other $2(m-1)$ pairs and $h^2$ is an upper bound for the number of choices for $\{2k-1,2m+2k\}$ and $\{2k,2m+2k-1\}$; both are contained in a single edge, as there are only $h$ heavy vertices. Note that we do not need to consider the case that these pairs are contained in a multiple edge as this case has already been excluded in the upper bound $[d_i-W_{i,j}]_m[d_j-W_{j,i}]_m$. The lower bound follows.\qed
\ss

Note that Lemma~\ref{lem:heavy-rej} follows from the following two lemma.

\begin{lemma}\lab{lem:heavy-f-rej}
The probability of an $f$-rejection during Phase 1 is $o(1)$, if $\delta>(3-\gamma)/(\gamma-2)$, $\delta>1/(2\gamma-3)$, $\delta<1/2$ and $1-\delta>1/(\gamma-1)$.
\end{lemma}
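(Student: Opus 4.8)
The plan is to bound separately the total probability of the two kinds of f-rejection that occur in Phase~1: the one in sub-step~(ii) of an $m$-way switching step, and the one (``w.r.t.\ $S'$'') in sub-step~(v). In both cases I will reduce the bound to a sum over the heavy pairs $(i,j)$ that are actually processed --- that is, those with $m(P_0,i,j)\ge 2$ --- control each summand using the ratio bounds recorded in~\eqn{boundsmultiplefu}--\eqn{boundsmultiplebl} together with the elementary inequality $1-(1-x)^m\le mx$ (valid for $0\le x\le 1$), and then close the argument with the structural constraints built into $\heavyaccept$ in~\eqn{Wij} and the degree estimates~\eqn{Delta} and~\eqn{dheavy}.

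First I would record a structural observation used throughout: a heavy $m$-way switching, and its inverse $1$-way partner, only ever reconnect a heavy endpoint to an endpoint of a \emph{light} pair, and validity forbids creating a new heavy multiple edge or heavy loop; hence no heavy multiple edge is ever created during Phase~1 and heavy loops are untouched during Phase~1. Consequently, for each pair $(i,j)$ with $m=m(P_0,i,j)\ge 2$ the multiplicity of $ij$ is unchanged until $(i,j)$ is processed, such a pair is processed exactly once, and the signature $\MM_k$ governing the step that processes it is a deterministic function of $\MM(P_0)$ which moreover satisfies $W_{i,j}(\MM_k)\le W_{i,j}(P_0)$ and $W_{j,i}(\MM_k)\le W_{j,i}(P_0)$ (the heavy loops and heavy multiple edges still recorded by $\MM_k$ form a sub-collection of those in $P_0$).

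For sub-step~(ii): by Lemma~\ref{lem:bounds} the f-rejection probability is $1-f_{i,j}(P)/\UB^f_{i,j}(\MM_k,m)\le 1-\LB^f_{i,j}(\MM_k,m)/\UB^f_{i,j}(\MM_k,m)=1-(1-(H_1+2m)/M_1)^m\le m(H_1+2m)/M_1$; summing over the processed pairs and invoking the $\heavyaccept$-bounds $\sum_{i<j\le h}m(P_0,i,j)\,I_{m(P_0,i,j)\ge 2}\le 4M_2^2/M_1^2$ and $m(P_0,i,j)\le\Delta$, the total is $O(H_1M_2^2/M_1^3+\Delta M_2^2/M_1^3)$, where the first term is $O(\eta^2)=o(1)$ (recall $\eta=\sqrt{M_2^2H_1/M_1^3}=o(1)$) and the second, by~\eqn{Delta}, is $O(n^{5/(\gamma-1)-3})=o(1)$ since $\gamma>8/3$, which is forced by the hypotheses $(3-\gamma)/(\gamma-2)<\delta<1/2$. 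For sub-step~(v): the f-rejection there, conditional on being reached (which has probability at most $1$), is at most $1-\LB^b_{i,j}(\MM_k,1)/\UB^b_{i,j}(\MM_k,1)=h^2/\big((d_i-W_{i,j}(\MM_k))(d_j-W_{j,i}(\MM_k))\big)$ by~\eqn{boundsmultiplebu} and~\eqn{boundsmultiplebl}; since the pair is processed only when $m=m(P_0,i,j)\ge 2$, the $\heavyaccept$-bounds $m\,W_{i,j}(P_0)\le\eta d_i$ and $m\,W_{j,i}(P_0)\le\eta d_j$ together with the monotonicity of the previous paragraph give $W_{i,j}(\MM_k)\le \tfrac12\eta d_i=o(d_i)$ (and likewise for $j$), so this probability is at most $4h^2/(d_id_j)$. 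I would then sum over the processed pairs and average over $P_0$ --- conditioning on $\heavyaccept$ costs only a constant factor since $\pr(\heavyaccept)\ge 1/4+o(1)$ --- using $\pr(m(P_0,i,j)\ge 2)\le\tfrac12\ex[m(P_0,i,j)]_2=O(d_i^2d_j^2/M_1^2)$ (as in the proof of Lemma~\ref{lem:typical}); this gives a bound $O(h^2/M_1^2)\sum_{i<j\le h}d_id_j=O(h^2H_1^2/M_1^2)$, which by~\eqn{Delta} and~\eqn{dheavy} is $O(n^{2-2\delta(2\gamma-3)})=o(1)$ exactly when $\delta>1/(2\gamma-3)$. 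Adding the two estimates proves the lemma; the hypotheses $\delta<1/2$ and $1-\delta>1/(\gamma-1)$ enter only through the degree estimates and to keep the various exponents of $n$ negative.

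I expect the sub-step~(v) bound to be the main obstacle. The naive deterministic estimate $\sum_{(i,j):\,m(P_0,i,j)\ge 2}4h^2/(d_id_j)$ need not be $o(1)$ for every $P_0\in\heavyaccept$: the constraints in~\eqn{Wij} control $\sum m_{i,j}I_{m_{i,j}\ge 2}$ but not a sum weighted by $1/(d_id_j)$, and heavy vertices can have merely constant degree, so one genuinely has to average over the random initial pairing. The averaging succeeds because $\pr(m_{i,j}\ge 2)=O(d_i^2d_j^2/M_1^2)$ is small precisely for the low-degree heavy pairs, cancelling the $1/(d_id_j)$ blow-up and leaving the harmless sum $\sum_{i<j\le h}d_id_j\le H_1^2/2$; and for this average to be legitimate one needs the sub-step~(v) rejection probability to be a deterministic function of $\MM(P_0)$, which is exactly what the ``no heavy multiple edge is ever created'' observation secures.
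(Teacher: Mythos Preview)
Your treatment of sub-step~(v) is fine and lands on the same exponent $n^{2-2\delta(2\gamma-3)}$ as the paper. The gap is in sub-step~(ii).

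There you bound the total f-rejection probability by
\[
\sum_{(i,j):\,m(P_0,i,j)\ge 2} m(P_0,i,j)\,\frac{H_1+2m(P_0,i,j)}{M_1}
\;\le\; \frac{H_1}{M_1}\cdot\frac{4M_2^2}{M_1^2}+\frac{2\Delta}{M_1}\cdot\frac{4M_2^2}{M_1^2}
\;=\;O(\eta^2)+O(\Delta M_2^2/M_1^3),
\]
and then assert that $\eta^2=M_2^2H_1/M_1^3=o(1)$. But $\eta=o(1)$ is \emph{not} among the hypotheses of this lemma, and it is not implied by them: the four conditions on $(\gamma,\delta)$ are simultaneously satisfiable for all $\gamma>2+\sqrt{2}/2\approx 2.707$, yet at, say, $\gamma=2.71$ and $\delta=0.414$ one has
\[
\frac{4}{\gamma-1}-2-\delta(\gamma-2)\approx 0.045>0,
\]
so $\eta^2\asymp n^{0.045}\to\infty$. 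Thus your sub-step~(ii) bound does not close for all admissible $(\gamma,\delta)$, and the deterministic $\heavyaccept$ constraint $\sum m_{i,j}I_{m_{i,j}\ge 2}\le 4M_2^2/M_1^2$ is too coarse here.

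The paper avoids this by averaging over $P_0$ in sub-step~(ii) as well (exactly as you do in~(v)), but with the sharper two-sided estimate
\[
\ex\big(m_{i,j}I_{m_{i,j}\ge 2}\big)\le \min\Big\{\frac{d_id_j}{M_1},\ \frac{d_i^2d_j^2}{M_1^2}\Big\},
\]
whose sum over heavy pairs is $O(n^{3-\gamma}\log n)$ by Lemma~\ref{lem:min} (this is where the hypothesis $1-\delta>1/(\gamma-1)$ enters). Multiplying by the per-term factor $O(H_1/M_1)=O(n^{-\delta(\gamma-2)})$ gives $O(n^{\,3-\gamma-\delta(\gamma-2)}\log n)$, which is $o(1)$ exactly when $\delta>(3-\gamma)/(\gamma-2)$ --- precisely the first hypothesis. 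Neither of the one-sided bounds alone suffices here; the $\min$ is essential because the first moment $d_id_j/M_1$ wins for the very high-degree heavy pairs with $d_id_j>M_1$, and those are the ones that make $M_2^2/M_1^2$ larger than $n^{3-\gamma}$.
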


\begin{lemma}\lab{lem:heavy-b-rej}
The probability of a $b$-rejection during Phase 1 is $o(1)$, if $\delta>(3-\gamma)/(\gamma-2)$, $\delta>1/(2\gamma-3)$, $\delta<1/2$ and $1-\delta>1/(\gamma-1)$.
\end{lemma}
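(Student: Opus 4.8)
The plan is to mirror the proof of Lemma~\ref{lem:heavy-f-rej}. During Phase~1 a $b$-rejection can occur only in sub-step~(ii) of a step (the $b$-rejection against the forward heavy $m$-way switching creating $P'\in\C(\MM_k,i,j,0)$, performed with probability $1-\LB^b_{i,j}(\MM_k,m)/b_{i,j}(P',m)$) or in sub-step~(v) (the $b$-rejection against the inverse heavy $1$-way switching creating $P''$, performed with probability $1-\LB^f_{i,j}(\MM_k,1)/f_{i,j}(P'')$). By Lemma~\ref{lem:bounds}, $\LB^b_{i,j}(\MM_k,m)\le b_{i,j}(P',m)\le\UB^b_{i,j}(\MM_k,m)$ and $f_{i,j}(P'')\le\UB^f_{i,j}(\MM_k,1)$, so each per-step probability is at most the corresponding ``gap ratio''; from \eqn{boundsmultiplefl}, \eqn{boundsmultiplebu}, \eqn{boundsmultiplebl},
\[
1-\frac{\LB^b_{i,j}(\MM_k,m)}{\UB^b_{i,j}(\MM_k,m)}=\frac{m\,h^2}{(d_i-W_{i,j}-m+1)(d_j-W_{j,i}-m+1)},\qquad
1-\frac{\LB^f_{i,j}(\MM_k,1)}{\UB^f_{i,j}(\MM_k,1)}=\frac{H_1+2}{M_1},
\]
where $W_{i,j}=W_{i,j}(\MM_k)$, $W_{j,i}=W_{j,i}(\MM_k)$. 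Since the probability of \emph{some} $b$-rejection in Phase~1 is at most the expected sum, over all steps at which a switching actually occurs, of the relevant per-step probability, it suffices to bound that expected sum by $o(1)$. By the properties of heavy switchings in Section~\ref{sec:defheavy}, the multiplicity of every other heavy multiple edge and the quantities $W_{i,j}$ are unchanged throughout Phase~1, so the steps at which a switching takes place are exactly the pairs $(i,j)$, $i<j\le h$, with $m_{i,j}:=m(P_0,i,j)\ge2$, and then $m=m_{i,j}$, $W_{i,j}=W_{i,j}(P_0)$, $W_{j,i}=W_{j,i}(P_0)$, with $P_0$ uniform on $\Phi_0$.

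The sub-step~(v) part is immediate: by the third constraint defining $\Phi_0$ in \eqn{Wij}, at most $2M_2^2/M_1^2$ pairs are ever switched, each contributing at most $(H_1+2)/M_1=O(H_1/M_1)$, hence a total of $O(M_2^2H_1/M_1^3)=O(\eta^2)=o(1)$ since $\eta=o(1)$ (the standing assumption, cf.\ Lemma~\ref{lem:Phi0}).

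For sub-step~(ii) the issue is the denominator $(d_i-W_{i,j}-m+1)(d_j-W_{j,i}-m+1)$. Because the $m$ pairs of the heavy edge $ij$ and the $W_{i,j}$ points of $i$ in other heavy structures are disjoint subsets of the $d_i$ points of $i$, each factor is $\ge1$; on $\Phi_0$ the first constraint of \eqn{Wij} gives $W_{i,j}\le\eta d_i/m\le\eta d_i/2$ (and symmetrically for $j$) once $m_{i,j}\ge2$. Split the switched pairs by whether (a) $m_{i,j}\le\tfrac14 d_i$ and $m_{i,j}\le\tfrac14 d_j$, or (b) $m_{i,j}>\tfrac14 d_i$ or $m_{i,j}>\tfrac14 d_j$ --- the latter meaning the heavy edge saturates a constant fraction of one endpoint's points. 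In case~(a) each factor exceeds $\tfrac12 d_i$, resp.\ $\tfrac12 d_j$, so the contribution is at most $4h^2\sum_{i<j\le h}m_{i,j}I_{m_{i,j}\ge2}/(d_id_j)$; passing to a uniform $P\in\Phi$ (which lies in $\Phi_0$ with probability $\ge\tfrac14$) and using $\ex_{P\sim\Phi}(m_{i,j}I_{m_{i,j}\ge2})\le\min\{\ex_{P\sim\Phi}m_{i,j},\ex_{P\sim\Phi}[m_{i,j}]_2\}=O(\min\{d_id_j/M_1,(d_id_j)^2/M_1^2\})$, this expectation is $O\bigl(h^2\sum_{i<j\le h}\min\{1/n,d_id_j/n^2\}\bigr)$, which Lemma~\ref{lem:min} with $k=0$ --- the step using $1-\delta>1/(\gamma-1)$ --- bounds by $O\bigl(h^2(n^{2-\gamma}\log n+n^{-2\delta(\gamma-2)})\bigr)=o(1)$ once $\delta>1/(2\gamma-3)$ and $\gamma>5/2$. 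In case~(b) one of $d_i,d_j$ is at most $4m_{i,j}=O(M_2^2/M_1^2)$; using the trivial bound $1$ for the saturated factor, the gap ratio is at most $m_{i,j}h^2/(d_j-W_{j,i}-m_{i,j}+1)$ (or its symmetric version), and the total contribution of such pairs is bounded using $m_{i,j}I_{m_{i,j}>d_i/4,\,m_{i,j}\ge2}\le[m_{i,j}]_{k}$ with $k=\max\{2,\lceil d_i/4\rceil\}$ together with $\ex_{P\sim\Phi}[m_{i,j}]_{k}=O((d_id_j)^k/M_1^k)$ and the power-law bounds \eqn{di}--\eqn{Delta} on the number of low-degree heavy vertices: the contribution from multiplicities well above a constant is negligible, and the binding sub-case (both endpoints of bounded degree) reduces to $O(h^4 M_2^4/M_1^6)=o(1)$, which again follows from $\delta>1/(2\gamma-3)$. (In the $\delta$-range actually used, $\LB^b_{i,j}(\MM_k,m)\le 0$ throughout case~(b) --- the denominator can be as small as $1$ while $m h^2$ is large --- so there the $b$-rejection is certain and it is enough to bound $\ex_{P\sim\Phi}$ of the number of case-(b) pairs, which is weaker than the above.) The constraints $\delta>(3-\gamma)/(\gamma-2)$ and $\delta<1/2$ enter exactly as in Lemma~\ref{lem:heavy-f-rej}.

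The main obstacle is case~(b). In the regular setting of~\cite{GWreg} every vertex has the same, not-too-small degree, so a heavy multiple edge of multiplicity exceeding an absolute constant is astronomically unlikely; here the definition of a \plib{} sequence permits $d_h$ to be arbitrarily small, so a heavy multiple edge can consume essentially all of an endpoint's points and drive $\LB^b_{i,j}(\MM_k,m)$ non-positive. The delicate point is to show that, for a uniform random $P_0\in\Phi_0$, such saturating heavy multiple edges are absent with probability $1-o(1)$ (and, where $\LB^b$ stays positive, contribute vanishing total rejection probability); this is precisely where the power-law structure of $\bf d$ --- the bound \eqn{di}, the estimates on $H_k,L_k$, and Lemma~\ref{lem:min} --- together with all four constraints on $(\gamma,\delta)$ are required.
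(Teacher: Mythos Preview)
Your overall skeleton matches the paper's: split into sub-step~(ii) and sub-step~(v), bound the per-step gap ratio by $(\UB^b-\LB^b)/\UB^b$ (resp.\ $(\UB^f-\LB^f)/\UB^f$), sum over $(i,j)$, and invoke Lemma~\ref{lem:min} with $k=0$ (resp.\ $k=1$). Your treatment of sub-step~(v) via the third constraint of $\Phi_0$ is actually a bit cleaner than the paper's, which instead bounds $\pr(Y_{i,j}\ge 2)$ pairwise and applies Lemma~\ref{lem:min} with $k=1$; both give $o(1)$ under the stated hypotheses.

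Where you diverge from the paper is sub-step~(ii). The paper simply asserts
\[
1-\frac{\LB^b_{i,j}(\MM_k,m)}{b_{i,j}(P',m)}=O\!\left(\frac{mh^2}{d_id_j}\right)
\]
``by~\eqn{boundsmultiplebu},~\eqn{boundsmultiplebl} and~\eqn{Wij}'' and then bounds $\sum_{i<j}\tfrac{h^2}{d_id_j}\ex(Y_{i,j}I_{Y_{i,j}\ge 2})$ via Lemma~\ref{lem:min}. You are right that this line needs $(d_i-W_{i,j}-m+1)(d_j-W_{j,i}-m+1)=\Omega(d_id_j)$, which~\eqn{Wij} delivers for $W_{i,j}$ but not automatically for $m$; you therefore split off a case~(b) where $m>\min\{d_i,d_j\}/4$. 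That instinct is sound, but your case~(b) argument is not a proof as written: the bound ``$O(h^4M_2^4/M_1^6)$'' is asserted rather than derived, the ``binding sub-case'' is never precisely delineated, and the parenthetical claim that $\LB^b\le 0$ throughout case~(b) in the stated $\delta$-range is false in general --- it would require the extra constraint $\delta<(2\gamma-3)/(2(\gamma-1)^2)$, which is \emph{not} implied by the four hypotheses of the lemma.

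If you want to make case~(b) rigorous, the clean route is: cap the rejection probability at~$1$ and bound $\sum_{i<j\le h}\pr(m_{i,j}>\min\{d_i,d_j\}/4)$ directly, using $\pr(m_{i,j}\ge k)\le [d_i]_k[d_j]_k/(k!\,M_1^k)$ with $k=\max\{2,\lceil \min\{d_i,d_j\}/4\rceil\}$. For $\min\{d_i,d_j\}$ bounded this gives $O(h^2/M_1^2)=o(1)$; for $\min\{d_i,d_j\}\to\infty$ the factorial kills everything. Alternatively, one checks that in case~(b) the crude bound $mh^2/(d_id_j)\ge 1$ holds once $h^2\ge C\Delta$, i.e.\ $\delta<(2\gamma-3)/(2(\gamma-1)^2)$ --- which is satisfied for the $\delta$ actually chosen in~\eqn{deltaRange}, though not for every $\delta$ allowed by the lemma statement --- and then the paper's single $O(mh^2/(d_id_j))$ bound is a genuine upper bound throughout.
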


\no {\em Proof for Lemma~\ref{lem:heavy-f-rej}.\ }
Let $(i,j)$ be the k-th element in $\heavyindicespair$ and $P$ be a random pairing in $\C(\MM_{k-1})$. Let $Y_{i,j}$ be the random variable denoting the multiplicity of $\{i,j\}$ in $P$. Note that $Y_{i,j}$ is the $ij$ entry of $\MM_{k-1}$ and thus the $ij$ entry of $\MM_{0}=\MM(P_0)$ by the definition of $\MM_{k-1}$. 

An f-rejection can happen in sub-step (ii) or (v).

For (ii), the probability of an f-rejection is 
\[
\sum_{m\ge 2}\pr(\multiplicity(P,i,j)=m)\left(1-\frac{f_{i,j}(P)}{\UB_{i,j}^f(\MM_k,m)}\right)I_{\multiplicity(P,i,j)=m}.
\]
By~\eqn{boundsmultiplefu} and~\eqn{boundsmultiplefl}, if $\multiplicity(P,i,j)=m$ then $\UB_{i,j}^f(\MM_k,m)-f_{i,j}(P)\le \UB_{i,j}^f(\MM_k,m)-\LB_{i,j}^f(\MM_k,m)=O(m!mM_1^{m-1}n^{1-\delta\gamma+2\delta})$. Hence,
the above probability is at most
\[
\sum_{m\ge 2}\pr(\multiplicity(P,i,j)=m)\cdot O\left(mn^{-\delta\gamma+2\delta}\right)=O(n^{-\delta\gamma+2\delta})\ex (Y_{i,j} I_{Y_{i,j}\ge 2}).
\]
We will bound $\ex (Y_{i,j} I_{Y_{i,j}\ge 2})$ by 
\[
O\left(\min\left\{\frac{d_id_j}{n},\frac{d_i^2d_j^2}{n^2}\right\}\right).
\]
Then the above probability is bounded by
\[
O(n^{-\delta\gamma+2\delta})\min\left\{d_id_j/n,d_i^2d_j^2/n^2\right\}.
\]

For (v), the probability that $P_k$ is not set as $P_k^*$ is
at most one.
The probability of an f-rejection at sub-step (v) is
\[
\pr(Y_{i,j}\ge 2)(1-b_{i,j}(P')/\UB_{i,j}^b(\MM_k,1))=\min\left\{\frac{d_id_j}{n},\frac{d_i^2d_j^2}{n^2}\right\}\cdot O(h^2/d_id_j),
\]
by~\eqn{boundsmultiplebu} and~\eqn{boundsmultiplebl}.

Hence, the overall probability of an f-rejection is at most
\bean
&&\sum_{i<j\le h}O(n^{-\delta\gamma+2\delta}+h^2/d_id_j)\min\left\{d_id_j/n,d_i^2d_j^2/n^2\right\}\\
&&=O(n^{-\delta\gamma+2\delta+3-\gamma}\log n)+O(h^2n^{-2\delta(\gamma-2)})=O(n^{3-\gamma-\delta(\gamma-2)}\log n+n^{2-2\delta(2\gamma-3)}),
\eean
by~\eqn{h} and~\eqn{min} and using $\delta<1/2$.

To ensure that the overall probability of an f-rejection is $o(1)$, we need
$\delta>(3-\gamma)/(\gamma-2)$, $\delta>1/(2\gamma-3)$ besides the conditions $\delta<1/2$ and $1-\delta>1/(\gamma-1)$.\qed

\no {\em Proof for Lemma~\ref{lem:heavy-b-rej}.\ }
A b-rejection can happen in sub-step (ii) or (v) in a switching step. For (ii), the probability that a b-rejection happens is
\[
\sum_{m\ge 2}\pr(\multiplicity(P,i,j)=m)\left(1-\frac{\LB^b_{i,j}(\MM_k,m)}{b_{i,j}(P',m)}\right).
\]
By~\eqn{boundsmultiplebu},~\eqn{boundsmultiplebl} and~\eqn{Wij},
\[
1-\frac{\LB^b_{i,j}(\MM_k,m)}{b_{i,j}(P',m)}=O\left(\frac{mh^2}{d_id_j}\right).
\]
Hence the above probability is at most a constant times
\[
\sum_{m\ge 2}\pr(Y_{i,j}=m)\frac{mh^2}{d_id_j}=\frac{h^2}{d_id_j}\ex(Y_{i,j}I_{Y_{i,j}\ge 2}).
\]
The overall probability of a b-rejection in sub-step (ii) is at most a constant times
\bean
\sum_{i<j\le h}\frac{h^2}{d_id_j}\ex(Y_{i,j}I_{Y_{i,j}\ge 2})&\le& O\left(\sum_{i<j\le h}\frac{h^2}{d_id_j} \min\left\{\frac{d_id_j}{n},\frac{d_i^2d_j^2}{n^2}\right\}\right)\\
&\le& O\left(h^2 n^{-2\delta(\gamma-2)} \right)\quad \mbox{by~\eqn{min}}\\
&\le& O\left(n^{2-2\delta(2\gamma-3)}\right) \quad\mbox{by~\eqn{h}}.
\eean

For (v), the probability that a b-rejection happens is at most
\[
\pr(m(P,i,j)\ge 2) \left(1-\frac{\LB_{i,j}^f(\MM_k,1)}{f_{i,j}(P'')}\right).
\]
By~\eqn{boundsmultiplefu} and~\eqn{boundsmultiplefl},
\[
1-\frac{\LB_{i,j}^f(\MM_k,1)}{f_{i,j}(P'')}=O\left(\frac{H_1}{M_1}\right)=O(n^{-\delta\gamma+2\delta}),
\]
by~\eqn{dheavy}.
Hence, the above probability is at most a constant times
\[
\pr(Y_{i,j}\ge 2)\cdot n^{-\delta\gamma+2\delta}\le O\left( n^{-\delta\gamma+2\delta}\min\left\{\frac{d_id_j}{n},\frac{d_i^2d_j^2}{n^2}\right\}\right).
\]
Now the overall probability of a b-rejection for (v) is at most a constant multiple of
\[
n^{-\delta\gamma+2\delta}\sum_{i<j\le h}\min\left\{\frac{d_id_j}{n},\frac{d_i^2d_j^2}{n^2}\right\}\le O\left( n^{-\delta\gamma+2\delta+3-\gamma}\log n\right),
\]
which is $o(1)$ for $\gamma$ and $\delta$ that satisfy the conditions of the lemma.\qed

\no{\em Proof for Lemma~\ref{lem:heavyloopcond}.\ } The argument is analogous to, but simpler, than those in Lemmas~\ref{lem:heavy-f-rej} and~\ref{lem:heavy-b-rej}, so we only give a simple sketch. The probability of an $f$-rejection in Phase 2 is
\[
\sum_{i\le h}\ex(Y_{i,i})\cdot O(n^{-\delta\gamma+2\delta})=\sum_{i\le h}O([d_i]_2/n)\cdot O(n^{-\delta\gamma+2\delta}),
\]
where $Y_{i,i}$ is the number of loops at $i$.  Noting that $\sum_{i\le h} [d_i]_2\le M_2$, the  above probability is $O(n^{2/(\gamma-1)-\delta\gamma+2\delta})$, which is $o(1)$ if $2/(\gamma-1)<1+\delta(\gamma-2)$. 

The probability of a $b$-rejection in Phase 2 is at most a constant multiple of 
\[
\sum_{i\le h} \ex(Y_{i,i}) \frac{h^2}{d_i^2} =\sum_{i\le h} O(d_i^2/n) \frac{h^2}{d_i^2}= O\left(\frac{h^3}{n}\right),
\]
which is $o(1)$ by~\eqn{h} if $\delta(\gamma-1)>2/3$.\qed

\no {\em Proof for Lemma~\ref{lem:afterheavy}.\
}
Recall that $G(P)$ denotes the multigraph obtained from $P$. For any vertex set $S\subseteq [n]$, let $G_{[S]}(P)$ denote the subgraph of $G(P)$ induced by $S$.
We first confirm that the output of Phase 1, if no rejection occurs, is uniformly distributed in $\Phi_1$, defined by 
\bel{Phi1}
\Phi_1=\{P\in\heavyaccept:\ G_{[\heavy]}(P) \ \mbox{contains no multiple edges $ij$ with $i\neq j$} \}.
\ee

Given a pairing $P\in  \C(\MM_k,i,j,0)$, let $\state^+_{i,j,m}(P)$ denote the  set  of inverse heavy $m$-way switchings that convert  $P$ to some pairing $P'$ in $ \C(\MM_k,i,j,m) $. Then  $|\state^+_{i,j,m}(P)|=b_{i,j}(P,m)$.
Conversely, let $\state^-_{i,j}(P)$ denote the set of heavy  switchings that convert $P \in  \C(\MM_k,i,j,m)$ to some $P'$ in $ \C(\MM_k,i,j,m) $; then $|\state^-_{i,j}(P)|=f_{i,j}(P)$.

For each $k\ge 0$, let $(i,j)$ be the $k$-th element in $\heavyindicespair$. 
 We prove by induction on $k$ that for every $k\ge 0$, the pairing obtained after the $k$-th step of Phase 1 is uniformly distributed in $\C(\MM_k)$.

The base case $k=0$ is trivially true as $P_0$ is uniformly distributed in $\C(\MM_0)$, conditional on the value of $\MM_0$. Assume $k\ge 1$ and that the statement is true for $k-1$.  Note that $\C(\MM_{k-1})$ denote the set of pairings $P$ such that the multiplicity of $(i,j)$ in $P$ is 0 or 1 for all $(i',j')\preceq (i,j)$.

  Consider the $k$-th step in Phase 1.
Let $P_k^*$ be as in (iii). Then, for any pairing $P\in  \C(\MM_k,i,j,0) $,
\bea
\pr(P_k^*=P\mid \multiplicity(P_{k-1},i,j)\ge 2)&=&\sum_{m\ge 2}\sum_{P'\in \state^+_{i,j,m}(P)}\frac{1}{\UB^f_{i,j}(\MM_k,m)} \frac{\LB^b_{i,j}(\MM_k,m)}{b_{i,j}(P,m)}\non\\
&=&\sum_{m\ge 2}\frac{\LB^b_{i,j}(\MM_k,m)}{\UB^f_{i,j}(\MM_k,m)}.\lab{Pstar}
\eea
Note that the final expression above does not depend on $P$.

Now, for any pairing $P\in \C(\MM_k,i,j,0) $:
\bea
\pr(P_k=P)&=&\pr(P_{k-1}=P)+\frac{\pr(P_k^*=P\mid \multiplicity(P_{k-1},i,j)\ge 2)}{1+\UB^b_{i,j}(\MM_k,1)/\LB^f_{i,j}(\MM_k,1)}\non\\
&=&\pr(P_{k-1}=P)+\frac{1}{1+\UB^b_{i,j}(\MM_k,1)/\LB^f_{i,j}(\MM_k,1)}\sum_{m\ge 2}\frac{\LB^b_{i,j}(\MM_k,m)}{\UB^f_{i,j}(\MM_k,m)};\lab{P0}
\eea
for any pairing $P\in \C(\MM_k,i,j,1) $:
\bea
&&\pr(P_k=P)=\pr(P_{k-1}=P)\non\\
&&+\sum_{P'\in\state^-(i,j)(P)}\pr(P_k^*=P'\mid \multiplicity(P_{k-1},i,j)\ge 2)\frac{\UB^b_{i,j}(\MM_k,1)/\LB^f_{i,j}(\MM_k,1)}{1+\UB^b_{i,j}(\MM_k,1)/\LB^f_{i,j}(\MM_k,1)}\frac{b_{i,j}(P')}{\UB^b_{i,j}(\MM_k,1)}\frac{\LB^{f}_{i,j}(\MM_k,1)}{f_{i,j}(P)}\non\\
&&=\pr(P_{k-1}=P)+\sum_{P'\in\state^-(i,j)(P)}\sum_{m\ge 2}\frac{\LB^b_{i,j}(\MM_k,m)}{\UB^f_{i,j}(\MM_k,m)}\frac{\UB^b_{i,j}(\MM_k,1)/\LB^f_{i,j}(\MM_k,1)}{1+\UB^b_{i,j}(\MM_k,1)/\LB^f_{i,j}(\MM_k,1)}\frac{1}{\UB^b_{i,j}(\MM_k,1)}\frac{\LB^{f}_{i,j}(\MM_k,1)}{f_{i,j}(P)}\non\\
&&=\pr(P_{k-1}=P)+\frac{1}{1+\UB^b_{i,j}(\MM_k,1)/\LB^f_{i,j}(\MM_k,1)}\sum_{m\ge 2}\frac{\LB^b_{i,j}(\MM_k,m)}{\UB^f_{i,j}(\MM_k,m)},\lab{P1}
\eea
since $|\state^-(i,j)(P)|=f_{i,j}(P)$.

By~\eqn{P0} and~\eqn{P1} and noting that $\C(\MM_k)= \C(\MM_k,i,j,0) \cup  \C(\MM_k,i,j,1) $, for any $P\in\C(\MM_k)$,
\bean
\pr(P_k=P)&=&\pr(P_{k-1}=P)+\frac{1}{1+\LB_{i,j}(\MM_k,1)/\UB_{i,j}(\MM_k,0)}\sum_{m\ge 2}\frac{\LB^b_{i,j}(\MM_k,m)}{\UB^f_{i,j}(\MM_k,m)}\\
&=&\frac{1}{|\C(\MM_{k-1})|}+\frac{1}{1+\LB_{i,j}(\MM_k,1)/\UB_{i,j}(\MM_k,0)}\sum_{m\ge 2}\frac{\LB^b_{i,j}(\MM_k,m)}{\UB^f_{i,j}(\MM_k,m)},
\eean
which is independent of $P$. Hence, $P_k$ is uniformly distributed in $\C(\MM_k)$. 

With an analogous but simpler analysis as in Section~\ref{sec:heavy-uniform}, we can show that the output of Phase 2, if no rejection happens, is uniformly distributed on $\Phi_2$.
This completes the proof. \qed\ss

\no{\em Proof of Lemma~\ref{lem:A0rej}. }
\remove{
Define 
\bel{k0}
k_0=\left\lfloor \frac{1+\delta-\delta\gamma}{1-\delta-1/(\gamma-1)}\right\rfloor.
\ee

We need another two conditions here
\[
\jt{1+\delta-\delta\gamma>0,\ 1-\delta-1/(\gamma-1).} 
\]

\jcom{The stopping point is again very close to the mysterious position: for $\gamma\approx 2.708$, we get $k_0=478$.}

}
Given a pairing $P$, let $P_{[\heavy]}$ be the set of pairs in $P$ whose end vertices are both in $\heavy$. Conditional on $(P_0)_{[\heavy]}$ be a given simple pairing $P'$, $P_0$ can be generated by pairing up all remaining points in $\heavy$ to points in $\light$ and then take a uniform pairing over all remaining points in $\light$. Let $\Phi(P')$ be this conditional probability space. It suffice to prove that for every $P'$, with probability at least $1/2$, a random pairing in $\Phi(P')$ contains at most $B_L$ loops, $B_D$ double edges, $m_{k,0}$ multiple edges with multiplicity $k$, for $3\le k\le k_0$ and no other types of multiple edges.

Let $d'_i=d_i- d_i(P')$ where $d_i(P')$ is the degree of $i$ in $P'$. In other words, $d'_i$ is the number of remaining points in vertex $i$ after removing all points used in $P'$. Let $H'_1=\sum_{i\in\heavy} d'_i$. Clearly, $d'_i\le d_i$ and so 
$H'_1\le H_1=O(n^{1-\delta\gamma+2\delta})=o(n)$
by~\eqn{dheavy} and by our assumption that $\gamma>5/2$.

Recall that the generation of $\Phi(P')$ has two stages: in the first stage, each remaining point in $\heavy$ is paired randomly to a point in $\light$; in the second stage, a uniform pairing is taken over all remaining $L'_1:=L_1-H'_1$ points in $\light$.

Let $X$ denote the number of double loops in $\Phi(P')$: two loops at the same vertex. Since $P'$ is simple, the only possible way a double loop can be created is that for some vertex $i\in \light$, there are four points $p_j$, $1\le j\le 4$, in $i$ such that $p_1$ is paired to $p_2$ and $p_3$ is paired to $p_4$. The probability for this to happen is
\[
\frac{1}{(L'_1-1)(L'_1-3)}\sim L_1^{-2}=O(n^{-2}),
\]
as $L_1=M_1-o(n)=\Theta(n)$ and $H'_1\le H_1=o(n)$.
The total number of ways to choose $\{p_j:\ 1\le j\le 4\}$ is at most
  \[
  \sum_{i\in\light} 3\binom{d_i}{4}.
  \]
  Hence,
  \[
  \ex X=O\left(\frac{L_4}{L_1^2}\right)=O(L_4/M_1^2)=o(1).
  \]
by~\eqn{Lk} and the assumption that $\delta(5-\gamma)<1$.

Let $Y$ denote the number of multiple edges with multiplicity at least $4$. Such multiple edges can be created either in the first stage or in the second stage; one of the end vertices must be light.
\[
\ex Y=O(M_{4}L_{4}/L_1^{4})=O(n^{4/(\gamma-1)+1-\delta(\gamma-5)-4})=o(1).
\]
by~\eqn{Hk} and~\eqn{Lk}. 

Let $L=L(P)$ denote the number of loops in $P$ for $P\in\Phi(P')$. Then,
\[
\ex L= \sum_{i\in\light}\frac{\binom{d_i}{2}}{L'_1-1} \sim \frac{L_2}{2M_1}.
\]
Hence, with probability at most $1/8+o(1)$ will $L$ exceeds $B_L$.

Let $Y_k$ denote the number of multiple edges with multiplicity $k$. 
It only remains to verify that with probability at least $1/4+o(1)$, $Y_2\le B_D$ and $Y_3\le B_T$.
\bean
\ex Y_k&\le& (1+o(1))\left(\frac{\sum_{i\in[n]}\sum_{j\in\light}\binom{d_i}{k} \binom{d_j}{k} k!}{L_1^k}\right)\\
&\le& (1+o(1))\frac{L_kM_k}{k! M_1^k}.
\eean
Hence, with probability $1=o(1)$, there are no multiple edges with multiplicity greater than $3$ by our assumptions in the lemma. Moreover, with probability at most $1/8+o(1)$, the number of double edges is more than $B_D$, and with probability at most $1/12+o(1)$, the number of triple edges is more than $B_T$. By taking the union bound, the probability that $P$ does not satisfy conditions in (b) is less than $1/2+o(1)$. \qed
\ss

\no {\em Proof of Lemma~\ref{lem:lowerbounds}.} We first prove part (b). It is easy to see that, for any fixed $\pairstars\in \Z_{t(\tau)}$, the only condition for the choices of the additional pairs is that (a) each pair is contained in a single edge; (b) for each pair, their end vertices are not allowed to be adjacent to two certain vertices. There are at most $4i$ choices to violate (a) and at most $2U_1$ choices to violate (b) since the number of 2-paths starting from a given vertex is bounded by $U_1$ by~\eqn{U1}. Therefore, the number of choices for each pair is at least $M_1-4i-2U_1$. The lower bounds in (b) follows by the illustrations of the inverse switchings of each type shown in Figures~\ref{f:typeIII}--\ref{f:VII}.

For (a), it is sufficient to bound $Z^*(P)$ uniformly by $8i(d_h M_2+d_1 L_2)+(2id_1^2d_h^2+4iU_1^2+8M_2 U_1+L_4)$ by~\eqn{Zstar}. Recall that $Z^*(P)$ is the size of $\Z^*(P)=\calf(P)\setminus \Z(P)$. Elements in $\Z^*(P)$ must fall into at least one of the following cases.
\begin{enumerate}
\item[(a1)] one of the two stars contains a pair that is contained in a double edge;
\item[(a2)] $u_j$ and $v_j$ are adjacent with a double edge, for some $j=1,2,3$;
\item[(a3)] the two stars share a vertex.
 \end{enumerate} 

For (a1), there are $4i$ choices to choose a point that is contained in a double edge. Label this point 1 or 3 or 2 or 4. If it is labelled 1 (or 3), there are at most $d_h$ ways to choose the point 3 (or 1) and there are at most $M_2$ ways to choose the other 2-star. If it is labelled 2 (or 4), there are at most $d_1$ ways to choose the point 4 (or 2) and then there are at most $L_2$ ways to choose the other light 2-star. Hence, the number of choices for (a1) is at most $4i \cdot 2\cdot d_h\cdot M_2+4i\cdot 2\cdot d_1\cdot L_2=8i(d_h M_2+d_1 L_2)$.

For (a2), we discuss two cases: $j=1$ and $j=2,3$. If $j=1$, there are at most $2i$ ways to choose vertices $u_1$ and $v_1$. There are at most $d_h^2$ ways to choose points and label them by 1 and 3 in $u_1$ and at most $d_1^2$ ways to choose points and label them by 2 and 3  in $v_1$. So the number of choices in this case is at most $2id_1^2d_h^2$. If $j=2$, then there are $2i$ ways to choose vertices $u_2$ and $v_2$; then at most $U_1^2$ ways to fix a 2-path starting at $u_2$ and a 2-path starting at $v_2$. Therefore, the number of choices for $j=2$ is at most $2i U_1^2$. Similarly, this holds for $j=3$. Hence, the total number of choices for (a2) is at most $2id_1^2d_h^2+4iU_1^2$.

  For (a3), we discuss three cases: (i) $v_1\in \{u_2,u_3\}$, the number of such choices is at most $2 M_2 U_1$; (ii) $v_1=u_1$, then $u_1$ must be light and so the number of such choices is at most $L_4$; (iii) $v_j\in \{u_1,u_2,u_3\}$ for some $j=2,3$, then the number of such choices is at most $6 L_2 U_1$. Hence, the total number of choices for (a3) is at most $2 M_2 U_1+L_4+6L_2 U_1\le 8M_2 U_1+L_4$ as $L_2\le M_2$.
  
     This implies the lower bound  for $b(P)$ as desired. \qed  \ss

Before bounding the probabilities of various types of f- and b-rejections, we bound the number of certain structures in $P$, uniformly for all $P\in\a_0$. These bounds turn out to be $U_k$ defined in~\eqn{0U1} and~\eqn{0Uk}.\ss

\no {\bf 2-paths at $v$}\ss

For an integer $i\ge 1$, an $i$-path in a pairing $P$ is a sequence of $i$ pairs: $\{p_1,p_2\},\ldots,\{p_{2i-1},p_{2i}\}$ such that $p_{2j}$ and $p_{2j+1}$ are in the same vertex for each $1\le j\le i-1$. 

Let $A_i(P,v)$ denote the number of \jt{$i$-paths} starting at $v$ in $P$ and let
\[
A_i({\bf d})=\max_{P\in \Phi}\max_{v\in[n]} A_i(P,v).
\]
We will bound $A_2=A_2({\bf d})$.

For any $u\in[n]$, the degree of $u$ is at most $d_1=\Delta$. The number of 2-paths from $u$ is at most the sum of degrees of vertices incident with $u$, 
which is at most 
\[
\sum_{i=1}^{\Delta}(Kn/i)^{1/(\gamma-1)}=(Kn)^{1/(\gamma-1)}\sum_{i=1}^{d_1} i^{-1/(\gamma-1)}\le (Kn)^{1/(\gamma-1)} d_1^{(\gamma-2)/(\gamma-1)}.
\]
Using $d_1\le (Kn)^{1/(\gamma-1)})$ by~\eqn{Delta}, we have
\be
A_2\le  (Kn)^{1/(\gamma-1)}(Kn)^{(\gamma-2)/(\gamma-1)^2}= (Kn)^{(2\gamma-3)/(\gamma-1)^2}=U_1. \lab{U1}
\ee

\remove{
A 2-bloom at vertex $v$ consists of four pairs $\{p_{2i-1},p_{2i}\}$, $1\le i\le 4$ such that $p_1\in v$; $p_{2}$ and $p_3$ are in the same vertex and $p_4$, $p_5$ and $p_7$ are in the same vertex that is light. See Figure~\ref{f:bloom} as an example.
\begin{figure}[htb]

 \hbox{\centerline{\includegraphics[width=7cm]{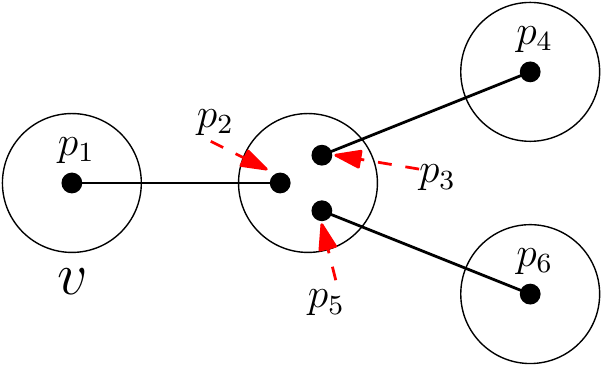}}}
\caption{2-bloom}
\lab{f:bloom}

\end{figure}

Let $B(P,v)$ denote the number of 2-blooms at $v$ in $P$ and define
\[
B({\bf d})=\max_{P\in\Phi}\max_{v\in [n]} B(P,v).
\]
Next we bound $B=B({\bf d})$.

We have estimated that the total degree of the first $\Delta$ vertices is at most $\phi$ for some $\phi=O(n^{(2\gamma-3)/(\gamma-1)^2})$. Hence, given $v$, the number of ways to choose the point $p_4$ is at most $\phi$. Note that the vertex containing $p_4$ is light and thus has degree at most $d_{h+1}$. Hence, the number of 2-blooms at $v$ is at most \jcom{not correct!!!}
   \bean
   \sum_{j=h+1}^{h+\phi}d_j^2&\lea& \sum_{j=h+1}^{h+\phi}(n/j)^{2/(\gamma-1)}\\
   &\lea& n^{2/(\gamma-1)} h^{(\gamma-3)/(\gamma-1)}=n^{a}
   \eean
   where 
   $\phi=n^{(2\gamma-3)/(\gamma-1)^2}$ and
   \[
   a=\frac{2}{\gamma-1}+(1-\delta(\gamma-1))\frac{\gamma-3}{\gamma-1}=1+\delta(3-\gamma).
   \]
}

\no {\bf Light k-blooms at $v$}\ss

A light $k$-bloom at vertex $v$ consists of $k+1$ pairs $\{p_{2i-1},p_{2i}\}$, $1\le i\le k+1$ such that $p_1\in v$; $p_{2}$, $p_{2i-1}$ for $2\le i\le k+1$ are in the same vertex that is light.
See Figure~\ref{f:bloom} for an example where $k=2$. Note that a light $k$-bloom at $v$ is also a light $(k+1)$-star at $u$ where $u$ is the vertex containing points $p_2$ and $p_{2i-1}$, $2\le i\le k+1$.
\begin{figure}[htb]

 \hbox{\centerline{\includegraphics[width=5cm]{bloom}}}
\caption{2-bloom}
\lab{f:bloom}

\end{figure}
Let $B_k(P,v)$ denote the number of light $k$-blooms at $v$ in $P$; and let $B_k({\bf d})=\max_{P\in\Phi}\max_{v\in[n]}B_k(P,v)$. We will bound $B_k=B_k({\bf d})$ for $k\ge 2$. Similar to the calculations for $A_2$,
\bean
B_k&\le& \sum_{i=h+1}^{h+\Delta} d_i^k \le \sum_{i=h+1}^{h+\Delta} (Kn/i)^{k/(\gamma-1)}\le (Kn)^{k/(\gamma-1)} \sum_{i\ge h+1} i^{-k/(\gamma-1)}\\
&\le& \frac{\gamma}{k+1-\gamma}(Kn)^{k/(\gamma-1)} h^{(\gamma-k-1)/(\gamma-1)}.
\eean
By~\eqn{h}, it follows that
\bel{Uk}
B_k\le \frac{\gamma K^{k/(\gamma-1)}}{k+1-\gamma} n^{1-\delta(\gamma-k-1)}=U_{k}\quad \mbox{for}\ k\ge 2.
\ee

\no {\em Proof of Lemma~\ref{lem:drejf}.}
For a given pairing $P$, we first bound the probability that a random switching performed on $P$ is f-rejected. Assume $P\in\state_i$; the probability that a type $\tau$ switching is performed on $P$ and is f-rejected, is
\[
\rho_{\tau}(i)\left(1-\frac{f_{\tau}(P)}{\UB_{\tau}(i)}\right).
\]
Summing over all $\tau\in\Lambda$ we obtain that assuming $P$ arises in the algorithm, the probability that a random switching performed on $P$ is $f$-rejected is
\be\lab{fprob1}
\sum_{\tau}\rho_{\tau}(i)\left(1-\frac{f_{\tau}(P)}{\UB_I(i)}\right).
\ee
Now we bound the probability that an f-rejection happens during Phase 5. Recall that $P_t$ is the pairing obtained after step $t$ of Phase 3. Then, by~\eqn{fprob1}, the probability of an f-rejection during Phase 3 is
\[
\sum_{t\ge 0} \sum_{1\le i\le \imax} \sum_{P\in \state_i} \pr(P_t=P) \left(\sum_{\tau}\rho_{\tau}(i)\left(1-\frac{f_{\tau}(P)}{\UB_{\tau}(i)}\right)\right). 
\]
Note that $\sum_{t\ge 0}\pr(P_t=P)$ is the expected number of times that $P$ is reached, which is $\sigma(i)$ for every $P\in\state_i$. Hence, the above probability is
\be\lab{fprob}
\sum_{1\le i\le \imax}\sum_{P\in\state_i}  \sigma(i) \left(\sum_{\tau}\rho_{\tau}(i)\left(1-\frac{f_{\tau}(P)}{\UB_{\tau}(i)}\right)\right). 
\ee

Note that for each $\tau$,
   \bean
\sum_{1\le i\le \imax} \sum_{P\in\state_i}\sigma(i)\rho_{\tau}(i) \left(1-\frac{f_{\tau}(P)}{\UB_{\tau}(i)}\right)=\sum_{1\le i\le \imax} \sigma(i)\rho_{\tau}(i)|\state_i|\cdot \ex\left(1-\frac{f_{\tau}(P)}{\UB_{\tau}(i)}\right),
 \eean
where the above expectation is taken on a random pairing $P\in\state_i$. It will be sufficient to show that the product of this expectation and $\rho_{\tau}(i)$ is $o(1/\imax)$ for every $1\le i\le\imax$ and for every $\tau$, since then
\bean
\sum_{1\le i\le \imax}\sum_{P\in\state_i}  \sigma(i) \left(\sum_{\tau}\rho_{\tau}(i)\left(1-\frac{f_{\tau}(P)}{\UB_{\tau}(i)}\right)\right)&=&\sum_{\tau}\sum_{1\le i\le \imax} \sigma(i)\rho_{\tau}(i)|\state_i|\cdot \ex\left(1-\frac{f_{\tau}(P)}{\UB_{\tau}(i)}\right)\\
&=&o(1/\imax) \sum_{1\le i\le \imax}\sum_{P\in\state_i}  \sigma(i) |S_i| 
\eean
and the lemma follows since $\sum_{1\le i\le \imax}\sum_{P\in\state_i}  \sigma(i) |S_i| $ is the expected number of steps that the phase takes, which is $O(\imax)$ by Lemma~\ref{lem:steps}.

For $\tau\in \Lambda\setminus\{I,III\}$ we have $\rho_{\tau}\le \xi$ and so 
\be
\rho_{\tau}(i)\ex\left(1-\frac{f_{\tau}(P)}{\UB_{\tau}(i)}\right)=O(\rho_{\tau}(i))=o(1/\imax)\lab{err-cond}
\ee
by our assumption $B_D\xi=o(1)$.  

For $\tau=III$, by Lemma~\ref{lem:solution}, 
\[
\rho_{III}(i)=O\left(\frac{M_3L_3}{M_2L_2 M_1}\right).
\]
The basic operation of type III switching is as in Figure~\ref{f:typeIII}, repeated \jt{in Figure~\ref{f:typeIIIagain}} for convenience. 

\begin{figure}[htb]

 \hbox{\centerline{\includegraphics[width=10cm]{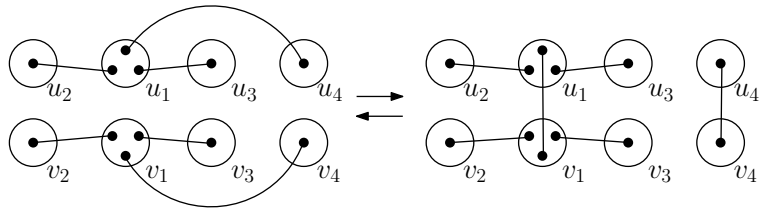}}}
 \caption{Type III again}
 \lab{f:typeIIIagain}

\end{figure}

To perform a type III switching on $P$, we randomly and independently choose a 3-star and a light 3-star. Label the points in the stars and the related vertices as in the figure (here $u_1$ is light). Perform an f-rejection if a type III switching cannot be performed with the given choice.  Now $\ex(1-f_{\tau}(P)/\UB_{\tau}(i))$ is the probability that an f-rejection is performed. 

An f-rejection happens if
 \begin{enumerate}
\item[(a)] one of the 3-stars contains a double edge;
\item[(b)] the two 3-stars share a vertex;
\item[(c)] there is an edge between $u_{j}$ and $v_{j}$ for some $j=1,2,3,4$.
\end{enumerate} 
  The number of choices for (a) is at most $3M_3\cdot 4\imax d_h^2+3L_3\cdot 4\imax d_1^2$.
  
  For (b), there are four cases: (b1), $u_1=v_1$. Since $u_1$ is light, the number of such choices is at most $L_6$; (b2), $v_1\in\{u_2,u_3,u_4\}$. Recall the light $2$-bloom defined  above Figure~\ref{f:bloom}. The number of choices for (b2) is at most $3M_3B_2\le 3M_3U_2$; (b3), $v_j=u_1$ for some $j=2,3,4$. The number of such choices is at most $3M_2B_3\le 3M_2U_3$; (b4), $v_j\in\{u_2,u_3,u_4\}$ for some $j=2,3,4$, the number of such choices is at most $9M_3B_2\le 9M_3U_2$. Hence, the total number of choices for (b) is at most $L_6+12M_3U_2+3M_2U_3$.
  
  For (c), there are two cases: (c1), $u_1$ is adjacent to $v_1$. The number of such choices is at most $M_3B_3\le M_3U_3$; (c2), $u_j$ is adjacent to $v_j$ for some $j=2,3,4$. The number of such choices is at most $3M_2A_2B_2\le 3M_2U_1U_2$. So the total number of choices for (c) is bounded by $M_3U_3+3M_2U_1U_2$.
  
Hence, the probability of an f-rejection in each step is bounded by
\[
\frac{M_3 \imax d_h^2+L_3\imax d_1^2+L_6+M_3U_3+M_2U_1U_2}{M_3L_3},
\]
  by noting that $M_3U_2=O(M_3U_3)$ and $M_2U_3=O(M_3U_3)$.
      Since $\rho_{III}(i)=O(M_3L_3/M_2L_2M_1)$ and $\imax=O(M_2L_2/M_1^2)$,~\eqn{err-cond} is satisfied for $\tau=III$ as long as 
       \[
       (M_3  d_h^2+L_3 d_1^2)\frac{M_2L_2}{M_1^2}+L_6+M_3U_3+M_2U_1U_2=o(M_1^3),
       \]
         which is guaranteed by the hypotheses of the lemma.
         
   Finally, for $\tau=I$, we use the bound $\rho_{I}(i)\le 1$. An f-rejection occurs if $u_2$ and $v_2$ is chosen such that they are not distinct from $u_1$ and $v_1$, or there is an existing edge between $u_1$ and $u_2$ or an existing edge between $v_1$ and $v_2$; same for $u_3$ and $v_3$. The total number of such invalid choices for $u_2$ and $v_2$ is $O(\Delta+A_2)$, where $A_2$  is bounded by $O(n^{(2\gamma-3)/(\gamma-1)^2})$ in~\eqn{U1}. Hence, the probability of an f-rejection in each step is bounded by $O(n^{(2\gamma-3)/(\gamma-1)^2}-1)$, which satisfies~\eqn{err-cond} by the assumption of the lemma.\qed
\ss 


\no {\em Proof of Lemma~\ref{lem:pre-b-rej}.}
  Note that a pre-b-rejection only happens when type $\tau\neq I$ switchings are performed. 
At a given step $t$, we estimate the probability that a given switching $S=(P',P,\pairstars)$ is performed and is pre-b-rejected at step $t$. Here $S$ converts $P'$ \jt{to} $P$ and $\pairstars$ is the ordered pair of 2-stars created by $S$. Note that there can be several switchings converting $P'$ to $P$, corresponding to different $\pairstars$, due to a different labelling of points in the involved pairs that are switched.  Let $\tau=\tau(P',P)$ denote the type of $S$. Assume that $P\in\state_i$ and so $P'\in\state_i$. 

The probability that $S$ is performed, conditional on $P_{t-1}=P'$, is
\[
\frac{\rho_{\tau}(i)}{\UB_{\tau}(i)}.
\]
Conditional on that, the probability that $S$ is pre-b-rejected is
$1-\LBS_{\tau}(i)/\b(P,\pairstars)$.

Let $\state^+_{\tau}(P)$ denote the set of pairings $P'$ that can be switched to $P$ via a type $\tau$ switching. The probability that a pre-b-rejection occurs during Phase 5 is at most
\bean
&&\sum_{t\ge 1}\sum_{\tau\neq I}\sum_{1\le i\le \imax} \sum_{P\in \state_i}\sum_{P'\in\state_i\cap \state^+_{\tau}(P)}\sum_{\pairstars} \pr(P_{t-1}=P') \frac{\rho_{\tau}(i)}{\UB_{\tau}(i)} \left(1-\frac{\LBS_{\tau}(i)}{\b(P,\pairstars)}\right)\\
 &&=\sum_{1\le i\le \imax}\sum_{P\in \state_i}\sum_{\tau\neq I}   \sum_{P'\in\state_i\cap \state^+_{\tau}(P)}\sum_{\pairstars}  \sigma_i \frac{\rho_{\tau}(i)}{\UB_{\tau}(i)} \left(1-\frac{\LBS_{\tau}(i)}{\b(P,\pairstars)}\right),
 \eean
as $\sum_{t\ge 1}\pr(P_{t-1}=P')=\sigma(P')$ which is $\sigma_j$ for every $P'\in\state_j$. The $\pairstars$ in the summation are determined by $(P',P)$, and there can be several of them, due to different labelling of points in the pairs that are switched. 
For each $\tau\neq I$, define $U_{\tau}$ to be the same as $\LBS_{\tau}(i)$ but without terms $-2U_1-4i$. Note that $U_{\tau}$ is a trivial upper bound for $\b(P,\pairstars)$.
Now,
by Lemma~\ref{lem:lowerbounds}, for every $\tau\neq I$,
 \[
 1-\frac{\LBS_{\tau}(i)}{\b(P,\pairstars)} = O\left(\frac{U_1+i}{M_1}\right).
 \]
  Noting that given $\tau$ and $\pairstars\in \Z_{t(\tau)}(P)$, $\sum_{P'\in\state_i\cap \state^+_{\tau}(P)}1 =\b(P,\pairstars)\le U_{\tau}$, and noting also that $|\Z_{t(\tau)}(P)|\le M_2L_2$ for any $\tau$, the above probability of a pre-b-rejection becomes
\bean
O\left(\sum_{1\le i\le \imax}\sum_{P\in \state_i} \sum_{\tau\neq I}   M_2L_2 U_{\tau} \sigma_i \frac{\rho_{\tau}(i)}{\UB_{\tau}(i)} \frac{U_1+i}{M_1} \right)=O\left( M_2L_2\frac{U_1+\imax}{M_1}\sum_{\tau\neq I} U_{\tau}\sum_{1\le i\le \imax}\sum_{P\in \state_i}   \sigma_i \frac{\rho_{\tau}(i)}{\UB_{\tau}(i)} \right).
\eean
Note that $\UB_{\tau}(i)$ does not depend on $i$ for every $\tau\neq I$; so we may write $\UB_{\tau}$ for $\UB_{\tau}(i)$ and rewrite the above probability as 
$O\left(\sum_{\tau\neq I} h(\tau)\right)$,
where 
\[
h(\tau)= M_2L_2\frac{(U_1+\imax)\imax}{M_1} \frac{U_{\tau}}{\UB_{\tau}} \rho^*_{\tau},\ \quad \rho^*_{\tau}=\max_{1\le i\le \imax} \rho_{\tau}(i),
\]
since by Lemma~\ref{lem:steps}, $\sum_{1\le i\le \imax}\sum_{P\in \state_i}   \sigma_i\le 10 \imax$ and so
\[
\sum_{1\le i\le \imax}\sum_{P\in \state_i}   \sigma_i \rho_{\tau}(i)\le \rho^*_{\tau}\sum_{1\le i\le \imax}\sum_{P\in \state_i}   \sigma_i =O(\imax \rho^*_{\tau}).
\]
Since $\LB(i)=\Omega(M_2L_2)$ and $\LBS_{\tau}(i)=\Omega(U_{\tau})$ for each $i$, 
by Lemma~\ref{lem:solution},
$\rho^*_{\tau}=O\left(\UB_{\tau}/M_2L_2U_{\tau}\right)$. Hence,
\[
h(\tau)= M_2L_2\frac{(U_1+\imax)\imax}{M_1} \frac{U_{\tau}}{\UB_{\tau}} O\left(\frac{\UB_{\tau}}{M_2L_2U_{\tau}}\right)=O\left(\frac{(U_1+\imax)\imax}{M_1}\right), \quad \mbox{for each $\tau\neq I$.}
\]
Noting that $\imax=O(M_2L_2/M_1^2)$, now $h(\tau)=o(1)$ for \jt{every} $\tau\neq I$ by the hypotheses of the lemma and so the probability of a pre-b-rejection in Phase 5 is $o(1)$.\qed 
 
\ss


 \no{\em Proof of Lemma~\ref{lem:drejb}.}
Now, we bound the probability of a b-rejection during Phase 5. 
At a given step $t$, we estimate the probability that a given switching $S=(P',P)$ is performed and is b-rejected at step $t$.  Let $\tau=\tau(P',P)$ denote the type of $S$. Assume that $P'\in \state_j$ and $P\in\state_i$. Hence, $j=i+1$ if $\tau=I$ and $j=i$ otherwise.

The probability that $S$ is performed, conditional on $P_{t-1}=P'$, is
\[
\frac{\rho_{\tau}(j)}{\UB_{\tau}(j)}.
\]
Conditional on that, the probability that $S$ is b-rejected is
$1-\LB(i)/b(P)$. Let $\state_{\tau}(P',P)$ denote the set of type $\tau$ switchings that \jt{converts} $P'$ to $P$.
So the probability that a b-rejection occurs during Phase 3 is at most
\bean
&&\sum_{t\ge 1}\sum_{\tau}\sum_{1\le i\le \imax}\sum_{1\le j\le\imax} \sum_{P\in \state_i}\sum_{P'\in\state_j\cap \state^+_{\tau}(P)} \sum_{S\in \state_{\tau}(P',P)} \pr(P_{t-1}=P') \frac{\rho_{\tau}(j)}{\UB_{\tau}(j)} \left(1-\frac{\LB(i)}{b(P)}\right)\\
 &&=\sum_{1\le i\le \imax}\sum_{P\in \state_i}\sum_{\tau} \sum_{1\le j\le\imax}  \sum_{P'\in\state_j\cap \state^+_{\tau}(P)} \sum_{S\in \state_{\tau}(P',P)} \sigma_j \frac{\rho_{\tau}(j)}{\UB_{\tau}(j)} \left(1-\frac{\LB(i)}{b(P)}\right),
 \eean
as $\sum_{t\ge 1}\pr(P_{t-1}=P')=\sigma(P')$ which is $\sigma_j$ for every $P'\in\state_j$.

Note that after fixing $i$, for every $\tau$ and $j$ such that 
 $\state_j\cap \state^+_{\tau}(P)$ is non-empty for $P\in\state_i$, $\rho_{\tau}(j)$ were chosen so that
 \[
 \frac{\sigma_j\rho_{\tau}(j)}{\UB_{\tau}(j)}=q(i).
 \]
We also have
\[
b(P)=\sum_{\tau} \sum_{1\le j\le\imax}  \sum_{P'\in\state_j\cap \state^+_{\tau}(P)}\sum_{S\in \state_{\tau}(P',P)}1.
\]
So the above expression is
\[
\sum_{1\le i\le \imax}\sum_{P\in \state_i} q(i) (b(P)-\LB(i))=\sum_{1\le i\le \imax}|S_i| q(i) \ex(b(P)-\LB(i)),
\]
where the expectation is taken over a random $P\in \state_i$.

The above is
\bean
\sum_{1\le i\le \imax}|S_i| q(i) \ex(b(P)-\LB(i))&\le& \sum_{1\le i\le \imax}|S_i| \frac{\sigma(i+1)}{\UB_{I}(i+1)} \ex(b(P)-\LB(i))\\
&=&O\left(\sum_{1\le i\le \imax}|S_i| \sigma(i)\frac{\ex b(P)-\LB(i)}{\LB(i)}\right)
\eean
by~\eqn{qi} and~\eqn{xratio}.
By Lemma~\ref{lem:steps}, $\sum_{1\le i\le \imax}|S_i| \sigma(i)=O(\imax)$ and so the above is $o(1)$ if we can show that for every $0\le i\le \imax$, 
\be
\frac{\ex b(P)-\LB(i)}{\LB(i)}=o(1/\imax).\lab{expb-rej}
\ee

 By~\eqn{Zstar},~\eqn{LB0} and Lemma~\ref{lem:lowerbounds} we have
 \[
 b(P)= M_2L_2-O(\imax(d_hM_2+d_1L_2+d_1^2d_h^2+U_1^2)+M_2U_1+L_4).
 \]
 We also have $\imax=O(M_2L_2/n^2)$.
 Hence,
 \[
 \frac{\ex b(P)-\LB(i)}{\LB(i)}\cdot \imax=O\left(\frac{\imax(d_hM_2+d_1L_2+d_1^2d_h^2+U_1^2)+M_2U_1+L_4}{M_1^2}\right).
 \]
 By the hypotheses of the lemma, the probability of a b-rejection during Phase 5 is $o(1)$. \qed
\ss


\no {\em Proof of Lemma~\ref{lem:loopbounds}.}
   The proof is a slight modification of~\cite[Lemma 15]{GWregJournal}.   For the upper bound of $f(P)$, the number of ways to choose a loop and label its end points in $P$ is $2i$. Then, the number of ways to choose another two pairs and label their end points is at most $M_1^2$. This shows that $f(P)\le \UB_{\ell}(i)$ for any $P\in\state_i$. 
For the lower bound, note that each pair of $y$ and $z$ cannot be contained in a loop or a multiple edge, the total number of such choices is bounded by $O(iM_1(i+B_D+B_T))$. Moreover, $u_1$ and $u_2$ (same for $u_3$ and $u_4$) cannot be the same vertex as $u_0$ or be adjacent to $u_0$. The total number of forbidden choices like this is bounded by $iM_1(d_1+U_1)$. This verifies the lower bound for $f(P)$.

Next we prove the bounds for $b(P)$. To perform an inverse switching on $P$, we pick a light 2-star and label the points in the pairs by \{3,1\} and \{2,5\}, with 1 and 2 being in the same light vertex $u_0$, together with another pair $x$ whose end points are labeled by $4$ and $6$. There are $L_2$ ways to pick the light 2-star and $M_1$ ways to pick and label $x$. This yields the upper bound for $b(P)$. We need to exclude choices so that 
\begin{enumerate}
\item[(a)] the 2-star contains a loop, or a double edge or a triple edge; or there is a loop at $u_0$;
\item[(b)] $x$ is a loop or is contained in a multiple edge;
\item[(c)] $x$ and the 2-star share a vertex;
\item[(d)] $u_1$ and $u_2$ are adjacent or $u_3$ and $u_4$ are adjacent. 
\end{enumerate}

We first show that the number of choices for (a) is at most $2(2id M_1+ 4B_DdM_1+ 6B_Td M_1)$. We first count the choices so that \{1,3\} is a loop. There are $2i$ ways to choose a loop and label their end points by 1 and 3. Then, there are at most $d_h$ ways to choose $2$ and $5$ as $u_0$, the vertex containing point $1$ has degree at most $d_h$. Therefore, the number of ways to choose the 2-star and $x$ so that \{1,3\} is a loop is at most $2id_hM_1$. Similarly, the number of such choices so that $\{1,3\}$ is contained in a double edge or a triple edge is at most $4B_D d_hM_1$ and $6B_Td_h M_1$ respectively as there are at most $B_D$ double edges and $B_T$ triple edges. The extra factor $2$ accounts for the case that \{2,5\} is a loop or is contained in a double edge or triple edge. The number of choices so that there is a loop at $u_0$ is at most $id_h^2 M_1$ as there are $i$ ways to choose $u_0$.
Hence, the number of choices for (a) is at most $2d_hM_1(2i+4B_D+6B_T+id_h/2)$.

The number of choices for (b) is at most $L_2(2i+4B_D+6B_T)$, for (c) is at most $6L_2 d_1$ and for (d) is at most $2L_2U_1$. Thus 
\bean
b(P)&\ge& L_2M_1-2d_hM_1(2i+4B_D+6B_T+id_h/2)-L_2(2i+4B_D+6B_T+6d_1+2U_1)=\LB(i).\qed
\eean

\ss

 \no {\em Proof of Lemma~\ref{lem:lrej}.}
  By Lemma~\ref{lem:loopbounds}, the probability of an f-rejection in each iteration is
\[
O\left(\frac{i+B_D+B_T+d_1+U_1}{M_1}\right),
\]
and the probability of a b-rejection in each iteration is
\[
O\left(\frac{d_hM_1(\imax+B_D+B_T+\imax d_h)+L_2(\imax+B_D+B_T+d_1+U_1)}{L_2M_1}\right)
\]
Since the number of loops decreases by one in each iteration, Phase 3 lasts for at most $\imax\le B_L=O(L_2/M_1)$ steps. 
 Now the probability of an f-rejection or a b-rejection in Phase 3 is bounded by
 \bean
 &&O\left(\frac{d_hM_1(B_L+B_D+B_T+B_Ld_h)+L_2(i_0+B_D+B_T+d_1+U_1)}{M_1^2}\right)\\
 &&= O\left(\frac{d_hM_1(B_D+B_T+B_Ld_h)+L_2(d_1+U_1)}{M_1^2}\right),
  \eean
  by noting that $L_2\le d_h M_1$. Hence, this probability is $o(1)$ by our hypotheses.\qed

\ss


\no {\em Proof of Lemma~\ref{lem:triplebounds}.}
For $f(P)$: there are $i$ ways to choose a triple edge and then $12$ ways to label their end points and then at most $M_1^3$ ways to choose the other $3$ pairs. This gives the upper bound. The lower bound follows by an analogous argument as in Lemma~\ref{lem:loopbounds}.

For $b(P)$: in order to perform an inverse switchings on $P$, we choose an ordered pair of 3-stars, the first being light, and label their end points as in Figure~\ref{f:triple} (here $u_1$ is required to be light). The number of such choices is $L_3M_3$. This gives the upper bound for $b(P)$. Compare this inverse switching with the type III switching in Phase 5. It is easy to see that $b(P)$ equals the number of type III switchings applicable on $P$. Hence, the same analysis in Lemma~\ref{lem:drejf} for the lower bound on the number of type III switchings on a pairing $P$ (the arguments below~\eqn{err-cond}) works here for the lower bound on $b(P)$, except that for case (a): one of the 3-stars contains a multiple edge, we need to consider also contribution from triple edges instead of just double edge. This gives  $3M_3(4B_Dd_h^2+6id_h^2)+3L_3(4B_Dd_1^2+6id_1^2)$. The discussions for cases (b,c) are exactly the same. Thus, we obtain that 
  %
  %
  %
  %
  \bean
  b(P)&\ge& M_3L_3-3M_3(4B_Dd_h^2+6id_h^2)-3L_3(4B_Dd_1^2+6id_1^2)-L_6-12M_3U_2-3M_2U_3\\
  &&-M_3U_3-3M_2U_1U_2\ge \LB_{t}(i),
  \eean
  by noting that $M_3U_3\ge M_2U_3$ and $M_3U_3\ge M_3U_2$. \qed
     \ss
     

 \no {\em Proof of Lemma~\ref{lem:triplebounds}. }
The number of iterations in Phase 4 is at most $\imax=O(L_3M_3/M_1^3)$. In iteration $t$, assuming $P_{t-1}\in\state_i$, the probability of an f-rejection is
\[
1-\frac{f(P_{t-1})}{\UB{t}(i)}=O\left(\frac{\imax+B_D+d_1+U_1}{M_1}\right)
\]
by Lemma~\ref{lem:triplebounds}.

Similarly, the probability of a b-rejection in iteration $t$ is at most
\[
1-\frac{\LB_{t}(i-1)}{\max_{P\in \state_{i-1}}b(P)}=O\left(\frac{M_3(B_Dd_h^2+id_h^2)+L_3(B_Dd_1^2+id_1^2)+L_6+M_3U_3+M_2U_1U_2}{M_3L_3}\right).
\]
Hence, the probability of an f-rejection or a b-rejection during Phase 4 is
\[
O\left(\frac{(B_T+B_D+d_1+U_1)L_3M_3}{M_4}+\frac{M_3d_h^2(B_D+B_T)+L_3(B_Dd_1^2+B_Td_1^2)+L_6+M_3U_3+M_2U_1U_2}{M_1^3}\right)
\]
which is $o(1)$ by our lemma hypothesis by noting that $(B_T+B_D)L_3M_3=O((B_T+B_D)M_3M_1d_h^2)$.\qed

\no{\em Proof for Lemma~\ref{lem:heavy-compute}.\ } We prove bounds for the computation time of $f_{i,j}(P)$ and $b_{i,j}(P,m)$ only; the arguments for $f_i(P)$ and $b_i(P,m)$ are similar. 

Given $P$, let $m$ denote the number of pairs between $i$ and $j$. Assume that $m\ge 1$ and we want to compute $f_{i,j}(P)$ exactly. We say that an ordered pair $(p_1,p_2)$ in $P$ is {\em nice} if either (i) both $p_1$ and $p_2$ are contained in light vertices, or (ii) $p_1$ is contained in a light vertex, and $p_2$ is contained in a heavy vertex $w\neq j$ such that there is no pair between $j$ and $w$, or (iii) $p_2$ is contained in a light vertex and $p_1$ is contained in a heavy vertex $w\neq i$ such that there is no pair between $i$ and $w$. To perform a heavy multiple edge switching, we need to order the $m$ pairs between $i$ and $j$ and switch them away together with another $m$ nice ordered pairs, as shown in Figure~\ref{f:heavy1}. There are $m!$ ways to order the $m$ pairs between $i$ and $j$. Then, $f_{i,j}(P)$ equals to $m!$ multiplied by the number of ways to sequentially choose $m$ distinct nice ordered pairs, $(p_{k_1},p_{k_2}),\ldots, (p_{k_{2m-1}},p_{k_{2m}})$ satisfying
\begin{enumerate}
 \item[(a)] no $p_{k_{2g-1}}$ and $p_{k_{2h-1}}$ are in the same heavy vertex for some $1\le g<h\le m$;\lab{conda}
 \item[(b)]  no $p_{k_{2g}}$ and $p_{k_{2h}}$ are in the same heavy vertex for some $1\le g<h\le m$.\lab{condb}
  \end{enumerate} 
  We describe a computing scheme for this number. Let $I$ be the set of heavy vertices that are not adjacent to $i$ and $J$ be the set of heavy vertices that are not adjacent to $j$ in $P$; note that $I\cap J$ is not necessarily empty. Let $0\le \ell\le m$. We first compute the number of choices such that there are exactly $\ell$ nice ordered pairs of cases (ii) or (iii), denoted by $(p_{k_1},p_{k_2}),\ldots, (p_{k_{2\ell-1}},p_{k_{2\ell}})$, and exactly $m-\ell$ nicely ordered pairs of case (i). Let $Z$ denote the total number of nice ordered pairs of case (i) in $P$.  For each vertex $w\in I$, it can contain at most one point from $p_{k_{2g-1}}$, for $1\le g\le \ell$, and for each vertex $v\in J$, it can contain at most one point from $p_{k_{2g}}$, for $1\le g\le \ell$. For each $w\in I$, we can compute $X_w$, the number of nice ordered pairs $(p_1,p_2)$ with $p_1\in w$. For each $v\in J$, we can compute $Y_v$, the number of nice ordered pairs $(p_1,p_2)$ with $p_2\in v$.  Now, the generating function for the choices of the nice ordered pairs satisfying conditions (a,b) is
  \[
  \prod_{w\in I}(1+X_wz) \prod_{v\in J} (1+Y_v z).
  \]
Thus, the number of such choices with exactly $\ell$ pairs of cases (ii) or (iii) is
\[
\binom{m}{\ell}[Z]_{m-\ell} [z^{\ell}]\prod_{w\in I}(1+X_wz) \prod_{v\in J} (1+Y_v z),
 \] 
 where $\binom{m}{\ell}$ is the number of ways to choose the $\ell$ out of $m$ pairs of nice ordered pairs to be of cases (ii) or (iii). For the $m-\ell$ pairs of case (i), there are $[Z]_{m-\ell}$ ways to sequentially take $m-\ell$ distinct pairs. There are then $[z^{\ell}]\prod_{w\in I}(1+X_wz) \prod_{v\in J} (1+Y_v z)$ ways to sequentially choose $\ell$ nice ordered pairs satisfying conditions (a,b).
 So,
 \[
 f_{i,j}(P)=\sum_{0\le \ell\le m}\binom{m}{\ell}[Z]_{m-\ell} \cdot [z^{\ell}]\prod_{w\in I}(1+X_wz) \prod_{v\in J} (1+Y_v z).
 \]
 Thus, computing all of these numbers take $O(H_1)$-time. Similarly, computing $Z$ takes $O(H_1)$-time as $Z=M_1-Z'$, where $Z'$ counts the number of pairs incident with at least one heavy vertex, and so computing $Z'$ takes only $O(H_1)$-time. Next, we show how to compute
 $[z^{\ell}]\prod_{w\in I}(1+X_wz) \prod_{v\in J} (1+Y_v z)$ for each $0\le \ell\le m$. 

Denote $I$ by $w_1,w_2,\ldots w_g$ and $J$ by $v_1,v_2,\ldots,v_{h}$. Let ${\bf a}=(a_0,\ldots, a_m)$ be a vector, initialised by $a_0=1$ and $a_k=0$ for all $1\le k\le m$. Note that ${\bf a}$ is the coefficient sequence of $P_0(z)=1$ (we only track the first $m+1$ coefficients).

Let $P_1(z)=(1+X_{w_1}z)$ and $P_2(z)=P_1(z)(1+X_{w_2}z)$ and repeat. Thus, $P_{g+h}(z)=\prod_{w\in I}(1+X_wz) \prod_{v\in J} (1+Y_v z)$. We will update ${\bf a}$ so that after iteration $k$, ${\bf a}$ is the coefficient sequence of $P_k(z)$. By the inductive definition of $P_k(z)$, it is easy to see that, for each $0\le \ell\le m$,
\be
[z^{\ell}] P_k(z) =
\left\{
\begin{array}{ll}
 {[z^{\ell}]} P_{k-1}(z)+X_{w_k}{[z^{\ell-1}]} P_{k-1}(z) & \mbox{for $1\le k\le g$}\\
 {[z^{\ell}]} P_{k-1}(z)+Y_{v_{k-g}}{[z^{\ell-1}]} P_{k-1}(z) & \mbox{for $g+1\le k\le g+h$}
 \end{array}
 \right. \lab{update}
\ee
\remove{
\[
a^{k}_{\ell} =
\left\{
\begin{array}{ll}
 a^{k-1}_{\ell}+X_{w_k}a^{k-1}_{\ell-1} & \mbox{for $1\le k\le g$}\\
 a^{k-1}_{\ell}+Y_{v_{k-g}}a^{k-1}_{\ell-1} & \mbox{for $g+1\le k\le g+h$}
 \end{array}
 \right.
\]
}

Note that~\eqn{update} gives the updating rule for ${\bf a}$ in each iteration. Since each iteration runs in $O(m)$-time, the total number of steps for computing 
$[z^{\ell}]\prod_{w\in I}(1+X_wz) \prod_{v\in J} (1+Y_v z)$ for every $0\le \ell\le m$   is $O(m(|I|+|J|))=O(m|\heavy|)$. 
Thus, the total number of steps required to compute $f_{i,j}(P)$ is $O(H_1+m|\heavy|+m)=O(H_1+m|\heavy|)$.

 \remove{

      Let $X$ denote the number of nicely ordered pairs $(p_1,p_2)$ in $P$.  It takes $O(\Delta^2)$-time to compute $X$. To perform a heavy multiple edge switching on $P$, we much choose $m$ distinct nicely ordered pairs. Hence, there are $[X]_m$ ways to do it. But we must exclude choices such that there are two nicely ordered pairs $(p_1,p_2)$ and $(p_3,p_4)$ such that either $p_1$ and $p_3$ are in the same heavy vertex, or $p_2$ and $p_3$ are in the same heavy vertex. Let $B$ be all such choices. Then,
\[
f_{i,j}(P)=m![X]_m-|B|.
\]
\jcom{I will fill the formula for |B|. Should be easy with generating functions. But good if you can see whether I miss anything in the above formula.}
               It takes $O(H_1)$-running time to compute $H$, the number of heavy pairs in $P$. So, there are $2^m[M_1/2-H]_m$ ways to choose $m$ distinct light pairs and label their end vertices. We call them ordered pairs as their end vertices are labelled. For each $1\le k\le m$, let $B_k$ be the set of choices of these $m$ distinct ordered pairs such that switching the $k$-th pair between $i$ and $j$ and the $k$-th of the $m$ chosen pairs as described in Definition~\ref{def:heavymultiple} would create a heavy multiple edge or a heavy loop. Then 
\[
f_{i,j}(P)=m!(2^m[M_1/2-H]_m-|\cup_{1\le k\le m} B_k|).
 \] 
 \jcom{There is another type of bad choices. If two pairs are chosen that share a heavy vertex $w$. Then after switching, there can be a double edge created between $i$ and $w$. Are there other bad edges?}

By the inclusion-exclusion principle,
\bean
|\cup_{1\le k\le m} B_k|&=&\sum_{k=1}^m|B_k|-\sum_{1\le k_1<k_2\le m}|B_{k_1}\cap B_{k_2}|+\sum_{1\le k_1<k_2<k_3\le m}|B_{k_1}\cap B_{k_2}\cap B_{k_3}|\\
&&-\cdots+(-1)^{m-1}|B_1\cap B_2\cap\cdots\cap B_k|.
\eean
Hence, it sufficient to show that we can compute the size of each intersection above efficiently. For each $B_k$, the end points of the $k$-th pair are labelled $2m+2k-1$ and $2m+2k$. Let $w_1$ denote the vertex containing $2m+2k-1$ and $w_2$ denote the vertex containing $2m+2k$. Since it is a light pair and switching $\{2k-1,2k\}$ and $\{2m+2k-1,2m+2k\}$ to $\{2k-1,2m+2k-1\}$ and $\{2k,2m+2k\}$ would create a new heavy multiple edge or heavy loop, it must be that either (i) $w_1=i$ and $w_2$ is light, or (ii) $w_1\neq i$ is heavy and there is one (or more) pair between $w_1$ and $i$ and $w_2$ is light, or (iii) $w_1$ is light and $w_2=j$, or (iv) $w_1$ is light, $w_2\neq j$ is heavy and there is one (or more) pair between $w_2$ and $j$.  Let $X_1$, $X_2$, $X_3$ and $X_4$ denote the number of choices for the $k$-th pair in each of the above four cases. Clearly, these four numbers can be computed in $O(\Delta^2)$-running time. Given the choice of the $k$-th pair, there are exactly $2^{m-1}[M_1/2-H-1]_{m-1}$ ways to choose the other $m-1$ distinct ordered light pairs. Hence, 
\[
\sum_{k=1}^m |B_k|=m(X_1+X_2+X_3+X_4)2^{m-1}[M_1/2-H-1]_{m-1},
\] 
which can be computed in $O(\Delta^2)$-running time.

Now we consider $|B_{k_1}\cap \cdots \cap B_{k_{\ell}}|$ for any $2\le \ell\le m$. The $k_{h}$-th pair must fall into the above four cases for every $1\le h\le \ell$. Let $\ell_h$, $1\le h\le 4$ denote the number of these $\ell$ pairs that are in case (i), (ii), (iii) and (iv) respectively. Then, given $(\ell_1,\ell_2,\ell_3,\ell_4)$, the number of ways to choose these $\ell$ distinct ordered light pairs is $\prod_{h=1}^4\binom{X_h}{\ell_h}$ and the number of ways to choose the other $m-\ell$ distinct ordered light pairs is
  $2^{m-\ell}[M_1/2-H-\ell]_{m-\ell}$. Hence, for every $2\le \ell\le m$,
  \[
  \sum_{1\le k_1<\cdots<k_{\ell}\le m} |B_{k_1}\cap \cdots \cap B_{k_{\ell}}|= \binom{m}{\ell} 2^{m-\ell}[M_1/2-H-\ell]_{m-\ell} \sum_{\ell_1,\ell_2,\ell_3,\ell_4}\prod_{h=1}^4\binom{X_h}{\ell_h},
  \]
        where each $X_h$ has already been computed and the summation is over all $(\ell_1,\ell_2,\ell_3,\ell_4)$ such that $\sum_{h=1}^4 \ell_h=\ell$. Since
        \[
        \sum_{\ell_1,\ell_2,\ell_3,\ell_4}\prod_{h=1}^4\binom{X_h}{\ell_h}=[z^{\ell}] \prod_{h=1}^4 \sum_{n\ge 0}\binom{X_h}{n} z^n=[z^{\ell}] (1+z)^{X_1+X_2+X_3+X_4}=\binom{X_1+X_2+X_3+X_4}{\ell},
        \]
 the computation of  $\sum_{1\le k_1<\cdots<k_{\ell}\le m} |B_{k_1}\cap \cdots \cap B_{k_{\ell}}|$ takes $O(1)$-time as $X_h$ has been computed already. Hence, the total time complexity for computing $f_{i,j}(P)$ is $O(H_1+\Delta^2)+O(m)=O(\Delta^2)$.      
         }

Next, we bound the computation time for $b_{i,j}(P,m)$, where $P$ is a pairing with no pairs between $i$ and $j$ and $m\ge 1$ is an integer. Recall that $W_{i,j}(P)$ denotes the number of points in $i$ that belong to heavy loops or heavy multiple edges with one end in $i$ and the other end not in $j$ in $P$. Now for $1\le k\le m$, let $B_k$ denote the set of choices of pairs $\{2g-1,2m+2g-1\}$, $\{2g,2m+2g\}$ for $1\le g\le m$, as in Definition~\ref{def:heavymultiple} for the inverse switching, such that (a) point $2g-1$ is not counted by $W_{i,j}(P)$ and point $2g$ is not counted by $W_{j,i}(P)$, and (b) switching $\{2k-1,2m+2k-1\}$ and $\{2k,2m+2k\}$ to $\{2k-1,2k\}$, $\{2m+2k-1,2m+2k\}$ would create new heavy loops or heavy multiple edges.  Then,
\[
b_{i,j}(P,m)=[d_i-W_{i,j}]_m[d_j-W_{j,i}]_m-|\cup_{1\le k\le m} B_k|.
\]

It is easy to see that $W_{i,j}$ and $W_{j,i}$ can be computed in $O(\Delta)$-time. By the inclusion-exclusion principle, we can express $|\cup_{1\le k\le m} B_k|$ by
\[
\sum_{k=1}^m|B_k|-\sum_{1\le k_1<k_2\le m}|B_{k_1}\cap B_{k_2}|+\sum_{1\le k_1<k_2<k_3\le m}|B_{k_1}\cap B_{k_2}\cap B_{k_3}|-\cdots+(-1)^{m-1}|B_1\cap B_2\cap\cdots\cap B_k|.
\]
If a set of $2m$ pairs are in $B_k$, then we must have that the vertex containing point $2m+2g-1$, denoted by $w_1$, and the vertex containing point $2m+2g$, denoted by $w_2$, are both heavy. Let $Y_1$ and $Y_2$ denote the number of points in $i$ and $j$ respectively that are paired with a point in a heavy vertex via a single edge in $P$. Then, $|B_k|=Y_1Y_2[d_i-W_{i,j}-1]_{m-1}[d_j-W_{j,i}-1]_{m-1}$ for every $1\le k\le m$. Hence,
\[
\sum_{k=1}^m|B_k|=mY_1Y_2[d_i-W_{i,j}-1]_{m-1}[d_j-W_{j,i}-1]_{m-1}.
\]
Clearly, both $Y_1$ and $Y_2$ can be computed in $O(\Delta)$-time.

Now for any $2\le \ell\le m$, consider any $|B_{k_1}\cap B_{k_2}\cap\cdots\cap B_{k_{\ell}}|$. It is easy to see that given $k_1,\ldots,k_{\ell}$,
\[
|B_{k_1}\cap B_{k_2}\cap\cdots\cap B_{k_{\ell}}|=[Y_1]_{\ell}[Y_2]_{\ell}[d_i-W_{i,j}-\ell]_{m-\ell}[d_j-W_{j,i}-\ell]_{m-\ell}.
\]
Thus,
\[
\sum_{1\le k_1<\cdots<k_{\ell}\le m}|B_{k_1}\cap B_{k_2}\cap\cdots\cap B_{k_{\ell}}|=\binom{m}{\ell}[Y_1]_{\ell}[Y_2]_{\ell}[d_i-W_{i,j}-\ell]_{m-\ell}[d_j-W_{j,i}-\ell]_{m-\ell}.
\]
Since $Y_1$ and $Y_2$ have been computed, computing each $\sum_{1\le k_1<\cdots<k_{\ell}\le m}|B_{k_1}\cap B_{k_2}\cap\cdots\cap B_{k_{\ell}}|$ takes only $O(1)$-time. Hence, the computation of $b_{i,j}(P,m)$ takes $O(\Delta+m)$ steps.
\qed
\ss

\no {\bf Running time in Phases 3--5}

We first discuss the time complexity of Phase 5, which significantly dominates that of the other two phases.

 Again, each step of the algorithm involves information on $f_{\tau}(P)$, $\b_{\tau}(P,\pairstars)$  and $b(P)$. However, it is easy to see that it is never necessary to compute $f_{\tau}(P)$, due to the way the $\UB_{\tau}(i)$ is defined. For instance, $\UB_{I}(i)=4iM_1^2$ which is the total number of ways to pick a double edge and label its end points, and pick two pairs,  with repetition allowed, and label their end points. If such a choice does not yield a valid switching then  an f-rejection is performed. It is easy to see that the f-rejection is performed with the correct probability, as defined in~\eqn{frejDef}. Thus, it suffices to compute $\b_{\tau}(P,\pairstars)$ and $b(P)$ only. 

Consider the computation of $b(P)$. By~\eqn{Zstar}, $b(P)=M_2L_2-Z^*(P)$. Note that $Z^*(P)$ is the number of pairs of $2$-paths involving at most $5$ vertices, or pairs of $2$-paths looking like structures in Figure~\ref{f:H} but involving double edges. It has been shown in the proof of Lemma~\ref{lem:lowerbounds} that $Z^*(P)\le 8i(d_h M_2+d_1 L_2)+(2id_1^2d_h^2+4iU_1^2+8M_2 U_1+L_4)$. By the assumption in Lemma~\ref{lem:drejb}, we have $Z^*(P)=o(M_1^2)$. Hence, computing $Z^*(P)$ takes no more than $M_1^2$ units of time by brute-force search. So the total computation time for $b(P)$ is at most $M_1^2$.


Next consider the computation of  $\b_{\tau}(P,\pairstars)$ for $\tau\in\{III,IV,V,VI,VII\}$. Each such function counts the number of ways to choose a structure corresponding to one of those in Figure~\ref{f:H}, plus several other pairs with restrictions that between certain pairs of vertices
 there should be no edge. 
An important quantity in these bounds will be  an upper bound $I_j$ on the number  of $j$-stars which have a given vertex $v$ as a leaf. These are similar to  light $(j-1)$-blooms at $v$, but without the lightness condition. For $j=2$, it also corresponds to a 2-path starting at $v$. Thus, 
\be
I_2\le U_1=O(n^{(2\gamma-3)/(\gamma-1)^2}).\lab{I2}
\ee
 A  $j$-star is a set of $j$ pairs all of which include a point in some given vertex $u$. We clearly have $I_j\le \sum_{i=1}^{\Delta} d_i^{j-1}$ where the degrees of the graph are $d_1\ge d_2\ge \cdots$; here $d_i$ represents the possible degree of vertex $u$. Hence for any $j\ge 3$,
\bea
I_j&\le& \sum_{i=1}^{\Delta} d_i^{j-1} \le \sum_{i=1}^{\Delta} (Kn/i)^{(j-1)/(\gamma-1)}\le (Kn)^{(j-1)/(\gamma-1)} \sum_{i\ge 1} i^{-(j-1)/(\gamma-1)}.\lab{Ik}
\eea

 We first focus on 
$\tau=III$, as a simple illustration of the method. Here, we have $\b_{III}(P,\pairstars)$ countings the choices  for pairs representing $H_1$ in Figure~\ref{f:H} and an extra pair that is not too close to the $H_1$. 
Using  brute force, we can find all copies of  $H_1$ in time $O(L_2I_3)$ by breadth-first search starting at light vertices $v$ (recall $L_k$ below~\eqn{dh}) and investigating all 3-stars with $v$ as a leaf. For each copy of $H_1$, we can count the valid choices of the extra pair $u_4v_4$ in Figure~\ref{f:typeIII} by inclusion-exclusion: make a list of each possible forbidden event (vertex coincidences, and edges present that are not allowed) and then, for each subset $S$ of the forbidden events, count that choices for which all events in $S$ hold. For $S=\emptyset$, the number is $2M_1$, without any computation. For all other $S$, the counting is done by breadth-first search. For the $S$ involving a vertex coincidence, there is a  trivial bound of $O(\Delta)$; the pair $u_4v_4$ can be chosen in this many ways starting at the specified vertex. Once the subgraph with the vertex coincidence is constructed, any other events specified in $S$ can be checked to see if this particular subgraph satisfies them all. Similarly, it is easy to see that all with an edge coincidence can be bounded above by $O(I_2)$.   Hence $O(L_2I_3I_2)$ is the complexity for this case.

Now consider the other types $\tau$. Each of these involves choosing a 2-path with a light vertex in the middle, as well as two other pairs $u_1u_2$ and $u_1u_3$,  together with a set of up to seven edges other edges with various forbidden events. We use an inclusion-exclusion method as for type III.

 We treat type VII in detail, as the most complicated case. We can check every occurrence of $H_1$ as in the type III case, in time $O(L_2I_3)$, and check each for the existence of the extra pairs $u_2v_2$ and $u_3v_3$ in constant time. Then take any subset $S$ of all the single events that must be avoided, and check for each copy of $H_1$ how many choices of the remaining seven pairs satisfy all events in $S$.

 Note that   for $S$ to be satisfied, there must   exist a   set of edges inducing a graph of a given isomorphism type, and containing $H$ at a certain location. Pick such an isomorphism type, $W$ say. The first thing to notice is that if we delete the vertices in $H_1$, $W$ falls into components that can be treated separately: for each component we can find the number of ways to place that component, plus the specified edges in $W$ that join in to $H$, and multiply together the counts for the various components. From this it is easy to see that the worst case is when there is only one such component. (If that is not clear now, it is at the end of this proof.) Consider the graph $W_0$  consisting of $W-H_1$  together with {\em one} edge  of $W$ from $H_1$ to $W-H_1$. (If there is no such edge, then  the number of possibilities for $W$ can be computed in the following manner independently of $H_1$, and it will be easy to see by the time we reach the end of this proof that this case is insignificant.)   Take a spanning tree $T$ of $W_0$. We just need to find all possible attachments of $T$ to the copy of $H$, as each can be investigated in constant time for whether  the required extra edges are present. Since there was only one component in $W$, the seven edges involve at most eight vertices, and so $T$ has at most eight edges. One leaf, call it   $w$, of $T$, is a vertex of $H_0$. We can decompose $T$, as an abstract tree, into stars, as follows. If we pull off all leaves of $T$ other than $W$, the new leaves are centres of stars. We remove the edges of these stars from $T$ and repeat with the tree formed from the remaining edges. In this way, $T$ is decomposed into stars that all contain at least two edges each, except perhaps there might be one star containing  a single  edge   incident with $w$. Moreover, each star contains the edge that goes from its centre towards $w$.  These stars can be placed in the graph, starting with the one containing $w$, and then successively building outwards. The number of possibilities for a single edge is at most $\Delta$, and for a $k$-star it is at most $I_k$. The time taken to do this is thus the maximum of $R_8$ and $\Delta R_7$, where $R_k$ is the maximum, over all partitions $\pi_1+\cdots +\pi_\ell$ of $k$, with each $\pi_i\ge 2$, of $\prod_i I_{\pi_i}$. Using the bounds for $I_j$ in~\eqn{I2} and~\eqn{Ik} it is easy to see that the maximum is achieved by $n^{7/(\gamma-1)}$, the upper bound for $I_8$. 

The argument is easy to adapt to the other types. For each type, we obtain the following bounds on the running time in a single switching step. For types IV and VI,  $M_2L_2$ is an upper bound for $u_i$ and $v_i$, $1\le i\le 3$, and the rest of the arguments are the same as above, adapted to the smaller number of extra edges involved. For convenience, we repeat the bound obtained above for type III.
 
\[
\mbox{III: } L_2 I_3 I_2\,,\quad \mbox{IV: } M_2 L_2 I_4\,,\quad \mbox{V: } L_2I_3I_5\,,\quad \mbox{VI: }  M_2L_2I_7\,,\quad \mbox{VII: }  L_2I_3I_8.
\]
Let $\rho^*_{\tau}$ denote $\max_{0\le i\le \imax}\rho_{\tau}(i)$. Then the expected total cost for computing $\b_{\tau}(P,\pairstars)$ during the algorithm, for a given $\tau$, is bounded  by the above quantity times $B_D\rho^*_{\tau}$.  With the value of $\rho^*_{\tau}$ in Lemma~\ref{lem:solution}, it is easy to see after some computations that 
  the type VII  term, $B_DM_2^3M_3L_3I_3I_8/M_1^7$   dominates. 
  Recall that $B_D=4L_2M_2/M_1^2$ from~\eqn{imax}. For the range of $\gamma$ in Theorem~\ref{thm:main} and for sufficiently small $\delta$ satisfying~\eqn{deltaRange}, this shows the total running time   is bounded above by 
  
 \[
  O( M_2^4M_3L_2L_3I_3I_8/M_1^9)=O(n^{4.081})
  \]
  
 for the range of $\gamma$ in Theorem~\ref{thm:main} and for sufficiently small $\delta$ satisfying~\eqn{deltaRange}.
 

Moreover, since $B_D\rho^*_{\tau}=o(1)$ for $\tau\in\{IV,V,VI,VII\}$ and   a.a.s.\ Phase 5 lasts only $O(B_D)$ iterations by Lemma~\ref{lem:steps}, the probability that at least one switching of one of  these types  is   performed in one run of \PL\ is $o(1)$. Therefore, a.a.s.\ the    computation time of  $\b_{\tau}(P,\pairstars)$ for $\tau\in\{III,IV,V,VI,VII\}$ is bounded by
\[
 B_D L_2 I_3 I_2\rho^*_{III}=O(  M_3 L_2 L_3  I_2 I_3  /M_1^3) = O(n^{2.89}).
\]
This can be improved by the judicious use of data structures.  Before running through all possible $H_1$, we can compute and  record the number of single edges incident with every vertex, the number of 2-paths starting at any vertex, and the numbers of 2-paths and of 3-paths connecting any two vertices. The time taken for this pre-computation is at most the time taken to generate all 3-paths, which is $O(M_2\Delta)$. Multiplying this by   $B_D$ still gives easily $O(n^2)$. Using this information, it is easy to compute, for a given copy of $H_1$, the number of choices for the extra edge satisfying a given set of constraints corresponding to $S$, in constant time. Then the expected complexity for this case is reduced to $O(B_D L_2I_3\rho^*_{III})=O(n^{2.107})$. 

It is easy to bound the  expected  computation time in Phase  3 (reduction of loops) using   brute-force searching, by $O(n^{2.087})$. For Phase 4, we can consider  for each pair of 2-stars ($L_2M_2$ possibilities), exactly how many ways to extend it to a pair of 3-stars (ordered of course), using the same data structures as introduced just above for type III switchings and a similar inclusion-exclusion scheme. The resulting complexity is  $O(B_TL_2M_2) =O(n^{2.107})$. 

 Hence, we obtain the time complexities claimed in Theorem~\ref{thm:main}. 

We remark that slight improvements in the running time bounds can be gained by making more use of data structures as we did for type III.  In~\cite{MWgen}, very significant gains were possible in the case of regular graphs by this means, but in the present case no such large further gains are easily apparent. So, to keep the argument simple, we don't pursue this here.

\no {\bf Constraints on $(\gamma,\delta)$}

Various constraints on $M_k$, $L_k$ and $U_k$ were placed to enforce  a small probability of a rejection during the algorithm. In this section, we list all these constraints and we deduce a condition on $\gamma$ under which we may find an appropriate $\delta$ so that all these constraints are satisfied. That completes the definition of $\light$ and $\heavy$ and thus completes the definition of the algorithm.

We always assumed $5/2<\gamma<3$; we also assumed $1-\delta>1/(\gamma-1)$ in~\eqn{anotherCondition}. Collecting constraints from Lemma~\ref{lem:Phi0}, Lemmas~\ref{lem:heavy-f-rej},~\ref{lem:heavy-b-rej} and~\ref{lem:heavyloopcond}, we have the following constraints for $\delta$ and $\gamma$.
\bea
5/2&<&\gamma<3\lab{0}\\
\frac{4}{\gamma-1}-2&<&\delta(\gamma-2)\lab{20}\\
\frac{3-\gamma}{\gamma-2}&<&\delta\\
\frac{1}{2\gamma-3}&<&\delta<\frac12 \lab{21}\\
\frac{1}{\gamma-1}&<&1-\delta \lab{1}\\
\frac{2}{\gamma-1}&<&1+\delta(\gamma-2)\lab{18}\\
\delta(\gamma-1)&>&\frac23.\lab{19}
\eea

Lemma~\ref{lem:A0rej} requires $L_4=o(M_1^2)$ and $L_4M_4=o(M_1^4)$.
Using~\eqn{Lk} and~\eqn{Delta},   these are implied by 
\bea
\delta(5-\gamma)&<&1 \lab{4}\\
\delta(5-\gamma)+\frac{4}{\gamma-1}&<&3. \lab{9}
\eea

Recall from~\eqn{xi} that
\[
\xi=\frac{32M_2^2}{M_1^3}.
\]
Lemma~\ref{lem:xi} requires that $\delta+1/(\gamma-1)<1$ and $\xi=o(1)$. The first condition is taken into consideration in~\eqn{1}. The constraint $\xi=o(1)$ requires $4/(\gamma-1)<3$ and thus $\gamma>7/3$ which is guaranteed by~\eqn{0}.

Recall from~\eqn{imax} that
\[
B_L=\frac{4L_2}{M_1}; \quad B_D=\frac{4L_2M_2}{M_1^2} \quad B_T=\frac{2L_3M_3}{M_1^3},
\]
Lemma~\ref{lem:typerej} requires $B_D\xi=o(1)$  which is guaranteed by 
\bel{2}
\delta(3-\gamma)+\frac{6}{\gamma-1}<4.
\ee

Lemma~\ref{lem:drejf} requires $B_D\xi=o(1)$, $B_D(M_3  d_h^2+L_3 d_1^2)+L_6+M_3U_3+M_2U_1U_2=o(M_1^3)$ and $B_Dn^{(2\gamma-3)/(\gamma-1)^2-1}=o(1)$,
which are implied by~\eqn{2} and
\bea
\delta(5-\gamma)+\frac{5}{\gamma-1}&<&4, \lab{5}\\
\delta(7-2\gamma)+\frac{4}{\gamma-1}&<&3,\lab{10}\\
\delta(7-\gamma)&<&2, \lab{6}\\
\delta(4-\gamma)+\frac{3}{\gamma-1}&<&2, \lab{8}\\ 
\delta(3-\gamma)+\frac{2}{\gamma-1}+\frac{2\gamma-3}{(\gamma-1)^2}&<&2. \lab{3}
\eea

Lemma~\ref{lem:pre-b-rej} requires $U_1M_2L_2/M_1^2+(M_2L_2/M_1^2)^2=o(M_1)$ which is implied by~\eqn{3} and
\bea
2\delta(3-\gamma)+\frac{4}{\gamma-1}&<&3.\lab{11}
\eea

Lemma~\ref{lem:drejb} requires $(d_hM_2+d_1L_2+d_1^2d_h^2+U_1^2)M_2L_2/M_1^2+M_2U_1+L_4=o(M_1^2)$ which is implied by~\eqn{4},~\eqn{9},
\[
\frac{2}{\gamma-1}+\frac{2\gamma-3}{(\gamma-1)^2}<2\quad \mbox{(already implied by~\eqn{3})}
\]
 and
\bea
2\delta(3-\gamma)+\frac{3}{\gamma-1}&<&2\lab{12}\\
\delta(3-\gamma)+\frac{2}{\gamma-1}+\frac{2(2\gamma-3)}{(\gamma-1)^2}&<&3. \lab{7}
\eea

Lemma~\ref{lem:lrej} requires $d_hM_1(B_D+B_T+B_Ld_h)+L_2(d_1+U_1)=o(M_1^2)$ which is implied by~\eqn{4},
\bean
\delta(4-\gamma)+\frac{2}{\gamma-1}&<&2 \quad \mbox{(implied by~\eqn{8})}\\
\delta(5-\gamma)+\frac{3}{\gamma-1}&<&3 \quad \mbox{(implied by~\eqn{9})}\\
\eean
and
\bea
\delta(3-\gamma)+\frac{1}{\gamma-1}&<&1\lab{13}\\
\delta(3-\gamma)+\frac{2\gamma-3}{(\gamma-1)^2}&<&1.\lab{14}
\eea

Lemma~\ref{lem:trej} requires
$(d_1+U_1)L_3M_3=o(M_1^4)$ and $M_3d_h^2(B_D+B_T)+L_3(B_Dd_1^2+B_Td_1^2)+L_6+M_3U_3+M_2U_1U_2=o(M_1^3)$, which is implied by~\eqn{5},~\eqn{6},~\eqn{3},~\eqn{8},~\eqn{10}  and
\bea
\delta(4-\gamma)+\frac{4}{\gamma-1}&<&3\lab{15}\\
\delta(4-\gamma)+\frac{3}{\gamma-1}+\frac{2\gamma-3}{(\gamma-1)^2}&<&3 \quad \remove{\mbox{\jt{come from $U_1L_3M_3=o(M_1^4)$}}}\lab{16}\\
\delta(6-\gamma)+\frac{6}{\gamma-1}&<&5\lab{17}\\
2\delta(4-\gamma)+\frac{5}{\gamma-1}&<&4. \lab{last}
\eea
\remove{
By~\eqn{0}, $1/2<1/(\gamma-1)<1$, $(2\gamma-3)/(\gamma-1)^2>1/(\gamma-1)$  and $(2\gamma-3)/(\gamma-1)^2<1$, and so~\eqn{9} implies~\eqn{4} and~\eqn{5},~\eqn{12} implies~\eqn{11},~\eqn{3} implies~\eqn{7},~\eqn{14} implies~\eqn{13} and is implied by~\eqn{3},~\eqn{8} implies~\eqn{15},~\eqn{16} implies~\eqn{10} and~\eqn{12}. Moreover, $7-2\gamma>4-\gamma$ and so~\eqn{16} implies~\eqn{8}. Using~\eqn{1}, we have that~\eqn{9} implies~\eqn{17} and~\eqn{16} implies~\eqn{last}. Using $1/(\gamma-1)<2/3$ by~\eqn{0}, we have that~\eqn{16} implies~\eqn{2}. Using $\delta(\gamma-2)+1/(\gamma-1)<\delta+1/(\gamma-1)<1$ by~\eqn{0} and~\eqn{1}, we have that~\eqn{16} implies~\eqn{9}. After removing redundant constraints~\eqn{2},~\eqn{5},~\eqn{7}--~\eqn{15},~\eqn{17},~\eqn{last}, system~\eqn{0}--\eqn{last} reduces to
}

By~\eqn{0}, $1/2<1/(\gamma-1)<1$, $(2\gamma-3)/(\gamma-1)^2>1/(\gamma-1)$  and $(2\gamma-3)/(\gamma-1)^2<1$, and so~\eqn{9} implies~\eqn{4} and~\eqn{5},~\eqn{12} implies~\eqn{11},~\eqn{3} implies~\eqn{7},~\eqn{14} implies~\eqn{13} and is implied by~\eqn{3},~\eqn{8} implies~\eqn{15}. Using~\eqn{1}, we have that~\eqn{9} implies~\eqn{17}. Next we show that~\eqn{8} implies~\eqn{9},~\eqn{2},~\eqn{10},~\eqn{12} and~\eqn{last}. Using $\gamma>2$ we have that~\eqn{8} implies~\eqn{12} and~\eqn{9} implies~\eqn{10}. By~\eqn{0}, $1/(\gamma-1)<2/3$ and so $3/(\gamma-1)<2$. Using this together with $\gamma<3$ we have that~\eqn{12} implies~\eqn{2}. Using~\eqn{1} that $\delta+1/(\gamma-1)<1$ we have that~\eqn{8} implies~\eqn{9}, and~\eqn{12} implies~\eqn{last}. Thus, we have proved that, using~\eqn{0} and~\eqn{1},~\eqn{8} implies~\eqn{9},~\eqn{2},~\eqn{10},~\eqn{12} and~\eqn{last}.  Moreover, since $5/2<\gamma<3$, it is easy to see that $(2\gamma-3)/(\gamma-1)^2<1$ and so~\eqn{8} implies~\eqn{16}. Since $\gamma<3$, we have $2/(\gamma-1)>1$ and so~\eqn{20} implies~\eqn{18}. Finally, it is easy to see that for all $5/2\le \gamma<3$, $1/(2\gamma-3)>2/3(\gamma-1)$ and so~\eqn{21} implies~\eqn{19}. 
 After removing the redundant constraints~\eqn{18}~--~\eqn{10} and~\eqn{11}~--~\eqn{last}, system~\eqn{0}--\eqn{last} reduces to the following

\bean
5/2&<&\gamma<3\\
\frac{4}{\gamma-1}-2&<&\delta(\gamma-2)\\
\frac{3-\gamma}{\gamma-2}&<&\delta\\
\frac{1}{2\gamma-3}&<&\delta<\frac12\\
\frac{1}{\gamma-1}&<&1-\delta\\
\delta(4-\gamma)+\frac{3}{\gamma-1}&<&2 \\
\delta(7-\gamma)&<&2\\
\delta(3-\gamma)+\frac{2}{\gamma-1}+\frac{2\gamma-3}{(\gamma-1)^2}&<&2,
\eean
 which leads to~\eqn{delta}.


\begin{thebibliography}{10}

\bibitem{BA}
A.L. Barab\'{a}si and R. Albert. 
\newblock Emergence of scaling in random networks.
\newblock {\em science} 286.5439 (1999): 509--512.
\bibitem{BD} J. Blitzstein and P. Diaconis.
\newblock A sequential importance sampling algorithm for generating random graphs with prescribed degrees. \newblock {\em Internet mathematics} 6.4 (2011): 489--522.


\bibitem{BKS}
M.~Bayati, J.~H. Kim, and A.~Saberi.
\newblock A sequential algorithm for generating random graphs.
\newblock {\em Algorithmica}, 58(4):860--910, 2010.

\bibitem{BC}
E.~A. Bender and E.~R. Canfield.
\newblock The asymptotic number of labeled graphs with given degree sequences.
\newblock {\em J. Combinatorial Theory Ser. A}, 24(3):296--307, 1978.

\bibitem{B}
B.~Bollob{\'a}s.
\newblock A probabilistic proof of an asymptotic formula for the number of
  labelled regular graphs.
\newblock {\em European J. Combin.}, 1(4):311--316, 1980.

\bibitem{CL} F.~Chung and L.~Lu.
\newblock The volume of the giant component of a random graph with given
expected degrees.
\newblock {\em SIAM J. Discrete Math.}, 20(2):395--411, 2006.


\bibitem{CDG}
C.~Cooper, M.~Dyer, and C.~Greenhill.
\newblock Sampling regular graphs and a peer-to-peer network.
\newblock {\em Combin. Probab. Comput.}, 16(4):557--593, 2007.



\bibitem{GW}
P.~Gao and N.~Wormald.
\newblock Enumeration of graphs with a heavy-tailed degree sequence.
\newblock {\em Advances in Mathematics}, 287:412--450, 2016.

\bibitem{GWreg}
P.~Gao and N.~Wormald.
\newblock Uniform generation of random regular graphs.
\newblock  {\em Proc.\ FOCS'15}: 1218--1230, IEEE.
 
 
\bibitem{GWregJournal}
P.~Gao and N.~Wormald.
\newblock Uniform generation of random regular graphs.
\newblock  {\em arXiv}: 1511.01175.
 
 
\bibitem{G4}
C.~Greenhill.
\newblock The switch markov chain for sampling irregular graphs.
\newblock {\em Proc. SODA'15}: 1564--1572, SIAM.

\bibitem{Janson}
S.~Janson.
\newblock The probability that a random multigraph is simple.
\newblock {\em Combin. Probab. Comput.}, 18(1-2):205--225, 2009.

\bibitem{JMS}
M.~Jerrum, B. McKay and A.~Sinclair.
\newblock  When is a graphical sequence stable? 
\newblock {\em Random graphs, Vol.\ 2} (Pozna{\'n}, 1989), 101--115.


\bibitem{JS}
M.~Jerrum and A.~Sinclair.
\newblock Fast uniform generation of regular graphs.
\newblock {\em Theoret. Comput. Sci.}, 73(1):91--100, 1990.

\bibitem{KV2}
J.~H. Kim and V.~H. Vu.
\newblock Sandwiching random graphs: universality between random graph models.
\newblock {\em Adv. Math.}, 188(2):444--469, 2004.

\bibitem{KV}
J.~H. Kim and V.~H. Vu.
\newblock Generating random regular graphs.
\newblock {\em Combinatorica}, 26(6):683--708, 2006.

\bibitem{MWgen}
B.~D. McKay and N.~C. Wormald.
\newblock Uniform generation of random regular graphs of moderate degree.
\newblock {\em J. Algorithms}, 11(1):52--67, 1990.

\bibitem{N}
M. E. Newman.
\newblock The structure and function of complex networks.
\newblock {\em SIAM review} 45.2 (2003): 167--256.

\bibitem{SW}
A.~Steger and N.~C. Wormald.
\newblock Generating random regular graphs quickly.
\newblock {\em Combin. Probab. Comput.}, 8(4):377--396, 1999.
\newblock Random graphs and combinatorial structures (Oberwolfach, 1997).

\bibitem{T}
G.~Tinhofer. 
\newblock On the generation of random graphs with given properties and known distribution.
\newblock {\em Appl. Comput. Sci., Ber. Prakt. Inf} 13 (1979): 265--297.

\bibitem{V}
R. Van Der Hofstad. 
\newblock Random graphs and complex networks.
\newblock {\em Cambridge Series in Statistical and probabilistic Mathematics} 43 (2016).

\bibitem{Zhao}
J.~Y.~Zhao.
\newblock Expand and Contract: Sampling graphs with given degrees and other combinatorial families.
\newblock arXiv preprint arXiv:1308.6627 (2013).
\end{thebibliography}
\end{document}